 \newtheorem{Theorem}{Theorem}[section]
 \newtheorem{Corollary}[Theorem]{Corollary}
 \newtheorem{Lemma}[Theorem]{Lemma}
 \newtheorem{Proposition}[Theorem]{Proposition}
 \newtheorem{Question}[Theorem]{Question}
 \newtheorem{Remark}[Theorem]{Remark}
 \numberwithin{equation}{section}
\begin{document}

\title[Minimal $L^2$ integrals]
 {Concavity of minimal $L^2$ integrals related to multipler ideal
sheaves}

\author{Qi'an Guan}
\address{Qi'an Guan: School of
Mathematical Sciences, Peking University, Beijing 100871, China.}
\email{guanqian@math.pku.edu.cn}

\author{Zhitong Mi}
\address{Zhitong Mi: School of
Mathematical Sciences, Peking University, Beijing 100871, China.}
\email{zhitongmi@pku.edu.cn}

\thanks{}

\subjclass[2020]{14F18, 32D15, 32Q15, 32U05}

\keywords{strong openness conjecture, multiplier ideal sheaf, plurisubharmonic function, sublevel set}

\date{\today}

\dedicatory{}

\commby{}


\begin{abstract}
In this note, we present the concavity of the minimal $L^2$
integrals related to multiplier ideals sheaves on Stein manifolds.
As applications, we obtain a necessary condition for the concavity degenerating to linearity, a characterization for 1-dimensional case,
and a characterization for the equality in 1-dimensional optimal $L^{2}$ extension problem to hold.
\end{abstract}

\maketitle

\section{Introduction}
The multiplier ideal sheaf associated to plurisubharmonic functions plays an important role in complex geometry and algebraic geometry (see e.g. \cite{Tian},\cite{Nadel},\cite{Siu96},\cite{DEL},\\
\cite{DK01},\cite{DemaillySoc},
\cite{DP03},\cite{Lazarsfeld},\cite{Siu05},\cite{Siu09},\cite{DemaillyAG},
\cite{Guenancia}). We recall the definition of the multiplier ideal sheaves as follows.
\par
\emph{Let $\varphi$ be a plurisubharmonic function (see \cite{Demaillybook})on a complex manifold. It is known that the multiplier ideal sheaf $\mathcal{I}(\varphi)$ was defined as the sheaf of germs of holomorphic functions $f$ such that $|f|^2e^{-\varphi}$ is locally integrable (see \cite{DemaillyAG})}.
\par
In \cite{DemaillySoc}, Demailly posed the so-called strong openness conjecture on multiplier ideal sheaves (SOC for short) i.e. $\mathcal{I}(\varphi)=\mathcal{I}_+(\varphi):=\mathop{\cup} \limits_{\epsilon>0}\mathcal{I}((1+\epsilon)\varphi)$. When $\mathcal{I}(\varphi)=\mathcal{O}$, SOC degenerates to the openness conjecture posed by Demailly-Koll\'ar \cite{DK01}.
\par
The dimension two case of OC was proved by Favre-Jonsson \cite{FavreJonsson}, and the dimension two case of SOC was proved by Jonsson-Musta\c{t}$\breve{a}$
\cite{JonssonMustata}. OC was proved by Berndtsson \cite{Berndtsson2}. SOC was proved by Guan-Zhou \cite{GZSOC}, see also \cite{lempert} and \cite{Hiep}.\par
In \cite{Berndtsson1}, Berndtsson established an effectiveness result of OC. Simulated by Berndtsson's effectiveness result of OC, continuing the solution of SOC \cite{GZSOC}, Guan-Zhou \cite{GZeff} establish an effectiveness result of SOC.\par
Recently, Guan \cite{G16} established a sharp version of the effectiveness result of SOC by considering the minimal
$L^2$ integrals defined on the sub-level set of plurisubharmonic function,
and established the concavity of the minimal
$L^2$ integrals on pseudoconvex domain in $\mathbb{C}^n$.\par
In the present note, we generalize the above concavity property.

\subsection {A general concavity property}
Let $X$ be an $n$-dimensional Stein manifold, and
let $K_X$ be the canonical line bundle on X. Let $dV_X$ be a continuous volume form with no zero point on $X$. We define $|g|^2=i^{n^{2}}\frac{g\wedge \bar{g}}{dV_X}$ for any holomorphic $(n,0)$ form $g$. Let $\psi<-T$ be a plurisubharmonic function on X, and let $\varphi$ be a Lebesgue measurable function on X, such that
$\varphi+\psi$ is a plurisubharmonic function on X, where $T \in (-\infty,+\infty)$.\par
We call a positive smooth function $c$ on $(T,+\infty)$ in class $\mathcal{G}_T$
if the following three statements hold:\par
(1) $\int^{+\infty}_T c(t)e^{-t}dt< +\infty;$\par
(2) $c(t)e^{-t}$ is decreasing with respect to t;\par
(3) for any compact subset $K \subset X$, $e^{-\varphi}c(-\psi)$ has a positive
lower bound on $K$. \par
Especially, if $\varphi\equiv 0$, then $(3)$ is equivalent to $\liminf_{t \to +\infty}c(t)>0$.
\par
Let $Z_0$ be a subset of $\{\psi=-\infty\}$ such that $Z_0 \cap
Supp(\mathcal{O}/\mathcal{I}(\varphi+\psi))\neq \emptyset$. Let $U \supset Z_0$ be
an open subset of X and let $f$ be a holomorphic $(n,0)$ form on $U$.
Let $\mathcal{F} \supset \mathcal{I}(\varphi+\psi)|_U$ be a coherent subsheaf of
$\mathcal{O}$ on $U$.\par
Denote
\begin{equation}
\begin{split}
\inf\{\int_{ \{ \psi<-t\}}|\tilde{f}|^2e^{-\varphi}c(-\psi)dV_X: &\tilde{f}\in
H^0(\{\psi<-t\},\mathcal{O} (K_X)  ) \\
\& & \exists \;open\; set\; U' \;s.t\; Z_0 \subset U' \subset U \\
&and \; (\tilde{f}-f)\in
H^0(\{\psi<-t\} \cap U' ,\mathcal{O} (K_X) \otimes \mathcal{F}) \}
\end{split}
\end{equation}
by $H(t;c)$ ($H(t)$ for short without misunderstanding), where $c\in
\mathcal{G_T}$.\par
If there is no holomorphic $(n,0)$ form $\tilde{f}$ on $\{\psi< -t\}$ satisfying
$(\tilde{f}-f)\in
H^0(\{\psi<-t\} \cap U' ,\mathcal{O} (K_X) \otimes \mathcal{F})$ for some open subset $U'$ which
satisfies $Z_0 \subset U' \subset U$ , then we set $H(t)=-\infty$.\par
In the present note, we obtain the following concavity of $H(t)$.
\begin{Theorem}
$H(h^{-1}(r)) $ is concave with respect to $r \in (0,\int^{+\infty}_T
c(t)e^{-t}dt]$, where $h(t)=\int^{+\infty}_t c(t_1)e^{-t_1}dt_1, t\in [T,+\infty
)$.
\label{maintheorem}
\end{Theorem}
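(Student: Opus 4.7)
The plan is to prove the standard three-point characterization of concavity. Writing $\Phi(r):=H(h^{-1}(r))$ on $(0,h(T)]$ and noting that $h$ is strictly decreasing, concavity of $\Phi$ is equivalent to
\[
H(t_0)\ \ge\ \lambda H(t_1)+(1-\lambda)H(t_2)
\]
for all $T\le t_1<t_0<t_2$ and $\lambda\in(0,1)$ with $h(t_0)=\lambda h(t_1)+(1-\lambda)h(t_2)$. As a preliminary, $H$ is nonincreasing in $t$: if $t_1<t_2$ then $\{\psi<-t_2\}\subset\{\psi<-t_1\}$, and restricting any admissible $\tilde f$ for $H(t_1)$ to $\{\psi<-t_2\}$ preserves admissibility while shrinking the integration domain, so $H(t_2)\le H(t_1)$. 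Thus the convex combination above lies between $H(t_2)$ and $H(t_1)$, and the statement is a genuine lower bound on $H(t_0)$.

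\textbf{Step 1 (reduction).} Since $X$ is Stein, exhaust it by relatively compact strongly pseudoconvex open sets $D_\nu\Subset D_{\nu+1}\Subset X$, and on each $D_\nu$ approximate $\psi$ and $\varphi+\psi$ from above by decreasing sequences of smooth strictly plurisubharmonic functions. Condition $(1)$ of $\mathcal{G}_T$ together with the positive lower bound in condition $(3)$ gives uniform weighted $L^2$ control, which lets one pass to weak limits of near-minimizers along each smoothing and exhaustion; coherence of $\mathcal F$ preserves the admissibility constraint under these limits.

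\textbf{Step 2 (key $L^2$ extension).} Following the optimal Ohsawa--Takegoshi--type extension that powers the solution of SOC in \cite{GZSOC} and the pseudoconvex-domain concavity result in \cite{G16}, the core step is: given a near-minimizer $F_0$ of $H(t_0)$ on $\{\psi<-t_0\}$, extend it to a holomorphic $(n,0)$-form $\tilde F$ on $\{\psi<-t_1\}$ that is still admissible for $H(t_1)$ and satisfies
\[
\int_{\{-t_0\le\psi<-t_1\}}|\tilde F|^2e^{-\varphi}c(-\psi)\,dV_X\ \le\ \frac{h(t_1)-h(t_0)}{h(t_0)-h(t_2)}\int_{\{-t_2\le\psi<-t_0\}}|F_0|^2e^{-\varphi}c(-\psi)\,dV_X.
\]
The construction would be $\tilde F=\chi(\psi)F_0-u$, with $\chi$ a cutoff supported in $\{\psi<-t_0\}$ and transitioning on the annulus $\{-t_0\le\psi<-t_1\}$, and $u$ the solution to $\bar\partial u=\bar\partial(\chi(\psi)F_0)$ supplied by the sharp H\"ormander--Demailly $L^2$ estimate against a twisted plurisubharmonic weight $\varphi+\psi+\eta(-\psi)$, with $\eta$ a very specific primitive-type function built from $h$ and $c$. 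Combining the resulting bound on $\int_{\{\psi<-t_1\}}|\tilde F|^2e^{-\varphi}c(-\psi)\,dV_X$ with the automatic bound $H(t_2)\le\int_{\{\psi<-t_2\}}|F_0|^2e^{-\varphi}c(-\psi)\,dV_X$, and letting the near-minimization parameter $\epsilon\to 0$, yields the three-point inequality after rearrangement.

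The main obstacle is Step 2: the extension must be built with \emph{no} loss in the constants, which requires the sharp form of the Ohsawa--Takegoshi--Guan--Zhou extension (constant $1$, not merely $1+\epsilon$). The monotonicity in condition $(2)$ of $\mathcal{G}_T$, that $c(t)e^{-t}$ is decreasing, is precisely the hypothesis making a suitable pair $(\chi,\eta)$ exist so that the $\bar\partial$-error on the transition annulus is absorbed against the weight with coefficient exactly $1$. All remaining pieces --- undoing the smoothing and exhaustion from Step 1, passing to weak limits, and restoring lower semicontinuity of $H$ --- are standard closing arguments.
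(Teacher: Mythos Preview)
Your plan is in the right neighborhood --- a sharp Ohsawa--Takegoshi--type extension is indeed the engine --- but the way you structure Step~2 does not close, and the paper takes a genuinely different (infinitesimal) route for exactly the reason your direct three-point attempt runs into trouble.

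\textbf{The gap in Step 2.} Two things fail. First, the $\bar\partial$-solution $u$ lives on all of $\{\psi<-t_1\}$, so $\tilde F=\chi(\psi)F_0-u$ does \emph{not} agree with $F_0$ on $\{\psi<-t_0\}$. When you assemble the three-point inequality you therefore need, besides your annulus bound, the inequality $\int_{\{\psi<-t_0\}}|\tilde F|^2e^{-\varphi}c(-\psi)\,dV_X\le H(t_0)$; but $\tilde F$ is admissible for $H(t_0)$, so the opposite inequality holds by definition. This can only be reconciled via a Hilbert-space orthogonality identity for the \emph{exact} minimizer $F_{t_0}$ (the paper's Lemma~\ref{existence of F}, equality~\eqref{orhnormal F}), which gives $\int_{\{\psi<-t_0\}}|\tilde F|^2=\int_{\{\psi<-t_0\}}|F_{t_0}|^2+\int_{\{\psi<-t_0\}}|\tilde F-F_{t_0}|^2$ and lets the excess be absorbed on the other side. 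You never invoke this, and with near-minimizers it is not available. Second, the sharp extension lemma (the paper's Lemma~\ref{lemma2.1}) does not output your annulus-to-annulus bound: it controls $\int_X|\tilde F-(1-b(\psi))F_0|^2e^{-\varphi+v(\psi)}c(-v(\psi))\,dV_X$ with a factor $\int_T^{t_0+B}c(t)e^{-t}\,dt$ and source term $\frac1B\int_{\{-t_0-B<\psi<-t_0\}}|F_0|^2e^{-\varphi}$, the weight being $e^{-\varphi}$, not $e^{-\varphi}c(-\psi)$. Converting back to $c(-\psi)$ costs a factor $e^{t_0+B}/\inf_{[t_0,t_0+B]}c$, which is harmless only as $B\to 0$; for a finite $t_2-t_0$ you lose the constant and your ratio $\frac{h(t_1)-h(t_0)}{h(t_0)-h(t_2)}$ is not attained.

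\textbf{How the paper proceeds instead.} Precisely because the finite-width estimate is lossy, the paper works infinitesimally: it first proves existence and uniqueness of the exact minimizer $F_{t_0}$ together with the orthogonality identity (Lemma~\ref{existence of F}), then applies Lemma~\ref{lemma2.1} with $B\to 0$ and uses the orthogonality to turn the output into the derivative inequality
\[
\frac{H(T)-H(t_0)}{h(T)-h(t_0)}\ \le\ \frac{1}{c(t_0)e^{-t_0}}\liminf_{B\to 0+}\frac{H(t_0)-H(t_0+B)}{B}
\]
(Lemma~\ref{lemma2.6}, then Lemma~\ref{lemma2.7}). Separately it establishes lower semicontinuity of $r\mapsto H(h^{-1}(r))$ (Lemma~\ref{semicontinuous}), and concludes concavity via the elementary real-variable criterion in Lemma~\ref{lemma2.8}. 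If you want to salvage your three-point strategy, you would have to (i) work with the exact minimizer and its orthogonality, and (ii) let the inner width $t_2-t_0\to 0$ --- at which point you have reproduced the paper's argument.
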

Especially, when $c(t)\equiv 1$ and $T=0$, Theorem \ref{maintheorem} degenerates to the
concavity of the minimal $L^2$ integrals related to multiplier ideals in \cite{G16} (Proposition 4.1 in \cite{G16}).
\par
Theorem $1.1$ implies the following.
\begin{Corollary}
For any $c \in \mathcal{G}_T$,the following three statements are equivalent \par
(1) $H(h^{-1}(r)) $ is linear with respect to $r \in (0,\int^{+\infty}_T
c(t)e^{-t}dt]$, i.e.,
\begin{equation}
H(t)=\frac{H(T)}{\int^{+\infty}_T c(t)e^{-t}dt}\int^{+\infty}_t c(t_1)e^{-t_1}dt_1
\end{equation}
holds for any $t\in [T,+\infty)$;\par
(2) $\frac{H(h^{-1}(r_0))}{r_0}\le \frac{H(T)}{\int^{+\infty}_T c(t)e^{-t}dt}$
holds for some $r_0 \in (0,\int^{+\infty}_T c(t)e^{-t}dt)$, i.e.,
\begin{equation}
\frac{H(t_0)}{\int^{+\infty}_{t_0} c(t_1)e^{-t_1}dt_1}\le
\frac{H(T)}{\int^{+\infty}_T c(t)e^{-t}dt}
\end{equation}
holds for some $t_0 \in (T,+\infty)$;\par
(3)$\lim_{r \rightarrow 0+0}\frac{H(h^{-1}(r))}{r}\le
\frac{H(T)}{\int^{+\infty}_T c(t)e^{-t}dt}$ holds, i.e.,
\begin{equation}
\lim_{t \rightarrow +\infty}\frac{H(t)}{\int^{+\infty}_{t}
c(t_1)e^{-t_1}dt_1}\le \frac{H(T)}{\int^{+\infty}_T c(t)e^{-t}dt}
\end{equation}
holds.
\label{corollary1.2}
\end{Corollary}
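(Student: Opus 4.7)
Write $R := \int_T^{+\infty} c(t) e^{-t}\, dt$, $M := H(T)/R$, and $G(r) := H(h^{-1}(r))$ on $(0, R]$. By Theorem \ref{maintheorem}, $G$ is concave on $(0, R]$; moreover $G(R) = H(T) = MR$, and $G \ge 0$ since $H$ is defined as an infimum of nonnegative integrals. I will work with the concave function $\tilde{G}(r) := G(r) - Mr$, which satisfies $\tilde{G}(R) = 0$ and $\tilde{G}(r) \ge -Mr$ on $(0, R]$. In these terms, statements (1), (2), (3) read as $\tilde{G} \equiv 0$, $\tilde{G}(r_0) \le 0$ for some $r_0 \in (0, R)$, and $\lim_{r \to 0^+} \tilde{G}(r)/r \le 0$, respectively. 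The implications $(1) \Rightarrow (2)$ and $(1) \Rightarrow (3)$ are immediate.

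The first key step will be to show that $r \mapsto \tilde{G}(r)/r$ is non-increasing on $(0, R]$. Applying concavity of $\tilde{G}$ at the triple $0 < \epsilon < r_1 < r_2 \le R$ gives
\[
  \tilde{G}(r_1) \;\ge\; \frac{r_2 - r_1}{r_2 - \epsilon}\, \tilde{G}(\epsilon) + \frac{r_1 - \epsilon}{r_2 - \epsilon}\, \tilde{G}(r_2);
\]
substituting the lower bound $\tilde{G}(\epsilon) \ge -M\epsilon$ and letting $\epsilon \to 0^+$ yields $\tilde{G}(r_1) \ge (r_1/r_2)\, \tilde{G}(r_2)$, i.e.\ $\tilde{G}(r_1)/r_1 \ge \tilde{G}(r_2)/r_2$. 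The implication $(3) \Rightarrow (2)$ then follows at once: the non-increasing quantity $\tilde{G}(r)/r$ is bounded above by its limit as $r \to 0^+$, which (3) forces to be $\le 0$, so $\tilde{G}(r_0) \le 0$ for every $r_0 \in (0, R)$.

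For $(2) \Rightarrow (1)$, let $r_0 \in (0, R)$ with $\tilde{G}(r_0) \le 0$. I first rule out the strict case $\tilde{G}(r_0) < 0$: concavity applied to $r < r_0 < R$ together with $\tilde{G}(R) = 0$ gives $\tilde{G}(r) \le \tilde{G}(r_0)(R - r)/(R - r_0)$, whose limit as $r \to 0^+$ is the strictly negative constant $\tilde{G}(r_0) R/(R - r_0)$; this contradicts the lower bound $\tilde{G}(r) \ge -Mr \to 0$. Hence $\tilde{G}(r_0) = 0 = \tilde{G}(R)$, and the monotonicity of $\tilde{G}(r)/r$ established above, combined with $\tilde{G}(r_0)/r_0 = \tilde{G}(R)/R = 0$, forces $\tilde{G}(r)/r = 0$ on $[r_0, R]$ and $\tilde{G}(r)/r \ge 0$ on $(0, r_0]$. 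On the other hand, concavity of $\tilde{G}$ on $(0, R]$ places it below the extension of the zero chord through $(r_0, 0)$ and $(R, 0)$, so $\tilde{G}(r) \le 0$ on $(0, r_0]$. Combining these bounds gives $\tilde{G} \equiv 0$ on $(0, R]$, which is (1).

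The main subtlety is the rigidity argument in $(2) \Rightarrow (1)$ that promotes $\tilde{G}(r_0) \le 0$ to $\tilde{G} \equiv 0$: it crucially uses the nonnegativity of $G$ inherited from the definition of $H$, which forces $\tilde{G}(r) \to 0$ as $r \to 0^+$ and thereby prevents any nontrivial concave profile from being compatible with the boundary conditions at $r_0$ and $R$.
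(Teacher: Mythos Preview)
Your argument is correct. The paper itself gives no explicit proof of Corollary~\ref{corollary1.2}; it simply states that the corollary ``follows from Theorem~\ref{maintheorem}'' and leaves the elementary concavity reasoning to the reader. Your proof supplies exactly those details: concavity of $G(r)=H(h^{-1}(r))$ together with the two boundary constraints $G(R)=H(T)$ and $G\ge 0$ force the equivalence, and your reduction to $\tilde G=G-Mr$ makes the ``secant-slope'' mechanism transparent. So your approach is essentially the same as what the paper has in mind, just written out in full.

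One minor remark: you implicitly use $H(T)<+\infty$ (so that $M$ is finite) and $H(t)\ge 0$ (ruling out the convention $H(t)=-\infty$ when no admissible $\tilde f$ exists). Both are the nondegenerate cases the paper itself works in (cf.\ the hypotheses in Theorem~\ref{theorem1.3} and Lemma~\ref{semicontinuous}); in the degenerate cases the statement is vacuous or trivial. It would do no harm to say this in one sentence at the start.
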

\subsection{Applications}
Following the notations and assumptions in Section $1.1$, we present some applications of Theorem \ref{maintheorem}.
\subsubsection{linear case: necessary condition}
\begin{Theorem}
Assume that $H(T;c)<+\infty$.
If $H(h^{-1}(r);c)$ is linear with respect to $r \in
(0,\int^{+\infty}_T c(t)e^{-t}dt]$, then there exists a holomorphic $(n,0)$ form
$F$ on $X$ such that $(F-f)\in H^0(U',K_M \otimes \mathcal{F})$, where $U'$ is an open subset of $X$ satisfies $Z_0 \subset U' \subset U$, and
\begin{equation}
\int_{\{\psi<-t\}} c(-\psi)|F|^2e^{-\varphi}dV_X=H(t)=H(T)\frac{\int^{+\infty}_t
c(t_1)e^{-t_1}dt_1}{\int^{+\infty}_T c(t_1)e^{-t_1}dt_1}
\end{equation}
holds for any $t\in [T,+\infty)$.
\label{theorem1.3}
\end{Theorem}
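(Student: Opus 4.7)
The plan is to produce $F$ as the unique Hilbert-space norm minimizer of the problem $H(T;c)$ on $X$, and then use the linearity assumption to force the same $F$ to minimize $H(t;c)$ on every sublevel set $\{\psi<-t\}$. First note that, since $\psi<-T$ throughout $X$, the problem $H(T;c)$ is posed on all of $X$; by the hypothesis $H(T;c)<+\infty$, the admissible class is non-empty. I would verify that this class is closed and convex in the weighted Hilbert space of $(n,0)$-forms on $X$ that are square-integrable against $c(-\psi)e^{-\varphi}dV_X$: convexity is obvious, and closedness uses that $L^2$-convergence implies local uniform convergence of holomorphic sections, so both holomorphy and the sheaf condition $(\tilde f-f)\in H^0(U',\mathcal{O}(K_X)\otimes\mathcal{F})$ pass to the limit for a fixed small $U'\supset Z_0$. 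The projection theorem then supplies a unique minimizer $F\in H^0(X,\mathcal{O}(K_X))$ with $\int_X c(-\psi)|F|^2 e^{-\varphi}dV_X=H(T;c)$.

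For every $t\ge T$ the restriction $F|_{\{\psi<-t\}}$ is automatically admissible for $H(t;c)$, so
\[
\widetilde H(t):=\int_{\{\psi<-t\}}c(-\psi)|F|^2 e^{-\varphi}dV_X\;\ge\; H(t),
\]
with equality at $t=T$. To upgrade this to equality at every $t_0>T$, I would fix $t_0$ and let $G$ be the analogous unique minimizer of $H(t_0;c)$ on the Stein sublevel set $\{\psi<-t_0\}$ (constructed exactly as above). The key input, extracted from the proof of Theorem~\ref{maintheorem}, is a sharp $L^2$-extension: $G$ extends to a holomorphic $(n,0)$-form $\widetilde G$ on $X$, still satisfying $(\widetilde G-f)\in H^0(U',\mathcal{O}(K_X)\otimes\mathcal{F})$ on a common $U'\supset Z_0$, with
\[
\int_X c(-\psi)|\widetilde G|^2 e^{-\varphi}dV_X\;\le\; H(t_0)+\bigl(H(T)-H(t_0)\bigr)\;=\;H(T).
\]
The precise constant $H(T)-H(t_0)$ is exactly the extension cost provided by the concavity of Theorem~\ref{maintheorem} at the instant it degenerates to linearity; in particular the inequality from Corollary~\ref{corollary1.2}(2) collapses to equality at $t_0$. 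Hence $\widetilde G$ is an admissible competitor for $H(T;c)$ of minimal norm, so by uniqueness $\widetilde G\equiv F$; restricting yields $F|_{\{\psi<-t_0\}}=G$, which forces $\widetilde H(t_0)=H(t_0)$ and completes the proof.

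The main obstacle is the sharp extension invoked above. It requires revisiting the Ohsawa--Takegoshi-type $L^2$-extension that underlies the proof of Theorem~\ref{maintheorem} and observing that the equality case in the concavity forces the extension constant to be sharp; equivalently, every admissible form on $\{\psi<-t_0\}$ can be extended to $X$ with $L^2$-norm increase at most $H(T)-H(t_0)$. Additional care is needed to preserve the sheaf-theoretic compatibility with $f$ near $Z_0$: the extension must still lie in $f+H^0(U',\mathcal{O}(K_X)\otimes\mathcal{F})$ on a common $U'\supset Z_0$, which is typically arranged by subtracting $f$ in a neighbourhood of $Z_0$, extending the holomorphic discrepancy $G-f$, and then adding $f$ back.
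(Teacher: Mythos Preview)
Your overall architecture matches the paper's: extend the minimizer $G=F_{t_0}$ from $\{\psi<-t_0\}$ to $X$ via the Ohsawa--Takegoshi type Lemma~\ref{lemma2.1}, use linearity to see the extension $\widetilde G$ has total norm exactly $H(T)$, and invoke uniqueness of the $H(T)$-minimizer to get $\widetilde G=F$. The gap is in your last sentence, ``restricting yields $F|_{\{\psi<-t_0\}}=G$''. The $L^2$ extension here is \emph{not} a literal extension from the open set $\{\psi<-t_0\}$: Lemma~\ref{lemma2.1} only produces a holomorphic $\widetilde G$ on $X$ with
\[
\int_X\bigl|\widetilde G-(1-b(\psi))G\bigr|^2 e^{-\varphi+v(\psi)}c(-v(\psi))\,dV_X
\]
controlled, and in the limit $B\to 0$ one obtains $\widetilde F_0$ with a bound on $\int_X|\widetilde F_0-\mathbb{I}_{\{\psi<-t_0\}}F_{t_0}|^2$. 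There is no reason a priori that $\widetilde F_0|_{\{\psi<-t_0\}}=F_{t_0}$; the construction only guarantees $(\widetilde F_0-F_{t_0})\in\mathcal{I}(\varphi+\psi)$ near $Z_0$. So from $\widetilde G=F$ you cannot jump to $F|_{\{\psi<-t_0\}}=G$.

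The paper closes exactly this gap by a two-weight comparison that your outline omits. The extension estimate is naturally stated with the weight $e^{-\psi+v_{t_0}(\psi)}c(-v_{t_0}(\psi))$, which strictly dominates $c(-\psi)$ on $\{\psi<-t_0\}$ (since $c(t)e^{-t}$ is decreasing and $v_{t_0}(\psi)=-t_0>\psi$ there). Linearity forces equality throughout the chain
\[
\int_X|\widetilde F_0-\mathbb{I}_{\{\psi<-t_0\}}F_{t_0}|^2 e^{-\varphi}c(-\psi)
=\int_X|\widetilde F_0-\mathbb{I}_{\{\psi<-t_0\}}F_{t_0}|^2 e^{-\varphi}e^{-\psi+v_{t_0}(\psi)}c(-v_{t_0}(\psi)),
\]
and since the two weights differ \emph{strictly} on $\{\psi<-t_0\}$, the integrand $|\widetilde F_0-F_{t_0}|^2$ must vanish there. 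This is what actually yields $\widetilde F_0|_{\{\psi<-t_0\}}=F_{t_0}$, and hence (after identifying $\widetilde F_0=F$) the desired $F|_{\{\psi<-t_0\}}=F_{t_0}$. Your sketch should incorporate this pointwise-weight argument; without it, uniqueness of the global minimizer alone is insufficient.
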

When $c(t)\equiv1$, $\varphi$ is a smooth plurisubharmonic function on $X$, and $\{\psi=-\infty\}$ is a closed subset of $X$, Xu \cite{xuwang} also get the Theorem \ref{theorem1.3} independently.
\par
We now consider the linearity of $H(h_c^{-1}(r);c)$ for various $c \in \mathcal{G}_T$ and $c\in C^{\infty}[T,+\infty)$, where $h_c(t)=\int_t^{+\infty}c(t_1)e^{-t_1}dt_1$. We have the following result.

\begin{Corollary}Let $c \in \mathcal{G}_T$ and $c\in C^{\infty}[T,+\infty)$. If $H(T;c)<+\infty$ and $H(h^{-1}(r);c)$ is linear with respect to $r \in
(0,\int^{+\infty}_T c(t)e^{-t}dt]$. Let $F$ be the holomorphic $(n,0)$ form on $X$ such that $\int_{\{\psi<-t\}} c(-\psi)|F|^2e^{-\varphi}dV_X=H(t;c)$ for any $t\ge T$.
Then for any other $\tilde{c}\in \mathcal{G}_T$ and $\tilde{c}\in C^{\infty}[T,+\infty)$, which satisfies $H(T;\tilde{c})<+\infty$ and $(\log\tilde{c}(t))'\ge (\log c(t))'$,  we have
\begin{equation}
\begin{split}
\int_{\{\psi<-t\}} \tilde{c}(-\psi)|F|^2e^{-\varphi}dV_X=H(t;\tilde{c})=&
\frac{H(T;\tilde{c})}{\int^{+\infty}_T \tilde{c}(t_1)e^{-t_1}dt_1}\int^{+\infty}_t
\tilde{c}(t_1)e^{-t_1}dt_1\\
=&k\int^{+\infty}_t
\tilde{c}(t_1)e^{-t_1}dt_1
\end{split}
\end{equation}
holds for any $t\in [T,+\infty)$, where $k=\frac{H(T;c)}{\int^{+\infty}_T c(t_1)e^{-t_1}dt_1}$.
\label{corollary1.4}
\end{Corollary}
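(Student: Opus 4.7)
The plan is to use Theorem~\ref{theorem1.3} to pin down the pushforward measure $(-\psi)_{\ast}(|F|^{2}e^{-\varphi}dV_{X})$ explicitly as $ke^{-s}ds$ on $(T,+\infty)$, to deduce the upper bound $H(t;\tilde c)\leq kh_{\tilde c}(t)$ by testing with $F$, and finally to obtain the matching lower bound through an integration-by-parts argument that converts the linearity of $H(\cdot;c)$ at every level into a pointwise inequality for $H(\cdot;\tilde c)$, using that $a:=\tilde c/c$ is non-decreasing (equivalent to the hypothesis $(\log\tilde c)'\geq(\log c)'$).

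By Theorem~\ref{theorem1.3}, $\int_{\{\psi<-t\}}c(-\psi)|F|^{2}e^{-\varphi}dV_{X}=kh_{c}(t)$ for every $t\geq T$, where $k=H(T;c)/h_{c}(T)$. Setting $\nu:=(-\psi)_{\ast}(|F|^{2}e^{-\varphi}dV_{X})$, this reads $\int_{t}^{+\infty}c(s)\,d\nu(s)=k\int_{t}^{+\infty}c(s)e^{-s}ds$ for every $t\geq T$, and since $c$ is smooth and strictly positive on $[T,+\infty)$ the signed measure $d\nu-ke^{-s}ds$ must vanish, i.e.\ $d\nu(s)=ke^{-s}ds$. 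Consequently
\begin{equation*}
\int_{\{\psi<-t\}}\tilde c(-\psi)|F|^{2}e^{-\varphi}dV_{X}=\int_{t}^{+\infty}\tilde c(s)\,d\nu(s)=kh_{\tilde c}(t),
\end{equation*}
and since the extension constraint in the definition of $H$ is insensitive to the weight, $F$ is automatically a candidate for $H(t;\tilde c)$ and we obtain $H(t;\tilde c)\leq kh_{\tilde c}(t)$.

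For the matching lower bound, fix $t\geq T$ and let $\tilde F$ be any candidate for $H(t;\tilde c)$. Then $\tilde F$ (or its restriction to $\{\psi<-t'\}$) is also a candidate for $H(t';c)$ for every $t'\geq t$, so the linearity of $H(\cdot;c)$ yields
\begin{equation*}
A(t'):=\int_{\{\psi<-t'\}}c(-\psi)|\tilde F|^{2}e^{-\varphi}dV_{X}-kh_{c}(t')\geq 0 \quad\text{for all } t'\geq t.
\end{equation*}
Writing $\tilde\nu:=(-\psi)_{\ast}(|\tilde F|^{2}e^{-\varphi}dV_{X})$ and $d\mu:=d\tilde\nu-ke^{-s}ds$, one has $dA(s)=-c(s)\,d\mu(s)$, and with $\tilde c=ac$ integration by parts on $[t,+\infty)$ gives
\begin{equation*}
\int_{t}^{+\infty}\tilde c(s)\,d\mu(s)=-\int_{t}^{+\infty}a(s)\,dA(s)=a(t)A(t)+\int_{t}^{+\infty}A(s)a'(s)\,ds\geq 0,
\end{equation*}
because $A\geq 0$, $a>0$, and $a'\geq 0$. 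This is exactly $\int_{\{\psi<-t\}}\tilde c(-\psi)|\tilde F|^{2}e^{-\varphi}dV_{X}\geq kh_{\tilde c}(t)$, and taking the infimum over $\tilde F$ yields $H(t;\tilde c)\geq kh_{\tilde c}(t)$.

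Combining the two bounds gives $H(t;\tilde c)=kh_{\tilde c}(t)$ for every $t\geq T$; evaluating at $t=T$ identifies $k$ with $H(T;\tilde c)/h_{\tilde c}(T)$ and delivers all the equalities in the statement, with $F$ extremal for the $\tilde c$-problem as well. The principal technical obstacle is justifying the vanishing of the boundary contribution $a(s)A(s)\to 0$ as $s\to+\infty$ used in the integration by parts; for this I would bound $|A(s)|\leq\int_{s}^{+\infty}c(u)\,d\tilde\nu(u)+kh_{c}(s)$ and exploit the monotonicity inequality $a(s)c(u)\leq a(u)c(u)=\tilde c(u)$ for $u\geq s$ to dominate $a(s)|A(s)|$ by $\int_{s}^{+\infty}\tilde c(u)\,d\tilde\nu(u)+kh_{\tilde c}(s)$, both of which are tails of finite integrals by the hypotheses $H(T;\tilde c)<+\infty$ and $\tilde c\in\mathcal{G}_{T}$.
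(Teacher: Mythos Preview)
Your argument is correct and takes a genuinely different route from the paper's.

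For the upper bound, both you and the paper compute $\int_{\{\psi<-t\}}\tilde c(-\psi)|F|^{2}e^{-\varphi}dV_{X}=kh_{\tilde c}(t)$, but the paper does this via Riemann-sum partitions of $\{-t_{1}\leq\psi<-t_{2}\}$ into thin annuli on which $\tilde c/c$ is approximately constant (equations \eqref{5.13}--\eqref{5.15}), whereas you identify the pushforward measure $d\nu=ke^{-s}ds$ directly from the equality $\int_{t}^{+\infty}c(s)\,d\nu(s)=kh_{c}(t)$ and then simply integrate $\tilde c$ against it. Your measure-theoretic identification is cleaner and makes the computation immediate.

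For the lower bound the difference is more substantial. The paper invokes Proposition~\ref{observation}, an annulus-by-annulus comparison $\int_{\{-t_{0}\leq\psi<-t_{1}\}}c(-\psi)|\tilde F|^{2}e^{-\varphi}\geq\int_{\{-t_{0}\leq\psi<-t_{1}\}}c(-\psi)|F|^{2}e^{-\varphi}$ derived from the orthogonality in Lemma~\ref{existence of F}, and again passes through Riemann sums (equations \eqref{3.10}--\eqref{5.12}). You bypass Proposition~\ref{observation} entirely: the pointwise inequality $A(t')\geq0$ follows just from the definition of $H(t';c)$ as an infimum plus linearity, and the passage from $c$ to $\tilde c$ is handled by a single Lebesgue--Stieltjes integration by parts exploiting the monotonicity of $a=\tilde c/c$. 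This is more economical and shows that the annulus comparison is not strictly needed here.

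Two remarks worth making explicit. First, your lower-bound argument implicitly uses that any candidate $\tilde F$ for $H(t;\tilde c)$ with finite $\tilde c$-integral also has finite $c$-integral; this follows from $c\leq(c(T)/\tilde c(T))\tilde c$ since $a=\tilde c/c$ is non-decreasing, and is exactly what the paper isolates as Step~1. Second, the paper additionally argues $F_{\tilde c}=F$ via Proposition~\ref{observation}; in your framework this is immediate from the uniqueness part of Lemma~\ref{existence of F}, since you have shown that $F$ attains $H(t;\tilde c)$.
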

We now consider the relation between the linearity of $H(t)$ and the weight function $\varphi$. Let $c(t)\in \mathcal{G}_T$. Denote
\begin{equation}\nonumber
\begin{split}
\inf\{\int_{ \{ \psi<-t\}}c(-\psi)|\tilde{f}|^2e^{-\varphi}dV_X: &\tilde{f}\in
H^0(\{\psi<-t\},\mathcal{O} (K_X)  ) \\
\& & \exists \;open\; set\; U' \;s.t\; Z_0 \subset U' \subset U \\
&and \; (\tilde{f}-f)\in
H^0(\{\psi<-t\} \cap U' ,\mathcal{O} (K_X) \otimes \mathcal{F}) \}
\end{split}
\end{equation}
by $H(t;\varphi)$. We have the following result.
\begin{Corollary}If there exists a Lebesgue measurable function $\tilde{\varphi}$ such that $\psi+\tilde{\varphi}$ is a plurisubharmonic function on $X$ and satisfies
\par
(1) There exists constant $C_1,C_2>T$ such that
$$\tilde{\varphi}|_{\{\psi< -C_1\}\cup\{\psi\ge -C_2\}}=\varphi|_{\{\psi< -C_1\}\cup\{\psi\ge -C_2\}}.$$
\par
(2) $\tilde{\varphi}\ge\varphi$ on $X$ and $\tilde{\varphi}>\varphi$ on a open set $U$ of $X$.
\par
(3) $\tilde{\varphi}-\varphi$ is bounded on $X$.
\par
Then $H(h^{-1}(r);\varphi)$ can not be linear with respect to $r\in (0,\int_T^{+\infty}c(t)e^{-t}dt)]$.
\label{corollary1.5}
\end{Corollary}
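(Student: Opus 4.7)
The plan is to argue by contradiction: assume $H(h^{-1}(r);\varphi)$ is linear in $r\in(0,\int_T^{+\infty}c(t)e^{-t}dt]$. By Theorem~\ref{theorem1.3} applied to the $\varphi$-problem, there is a holomorphic $(n,0)$-form $F$ on $X$ with $(F-f)\in H^0(U',K_X\otimes\mathcal{F})$ satisfying $\int_{\{\psi<-t\}}c(-\psi)|F|^2 e^{-\varphi}dV_X=kh(t)$ for all $t\ge T$, where $k:=H(T;\varphi)/h(T)>0$ (the case $k=0$ is trivial). Passing to a connected component of the Stein manifold $X$, the nontrivial holomorphic form $F$ cannot vanish identically on any nonempty open subset; in particular $F\not\equiv 0$ on the open set $U$ of condition~(2).

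Next I would use $F$ as a trial function in the $\tilde{\varphi}$-problem, which is admissible because $\mathcal{I}(\tilde{\varphi}+\psi)\subseteq\mathcal{I}(\varphi+\psi)\subseteq\mathcal{F}$. By condition~(1), and since condition~(2) forces $C_1>C_2$, the perturbation $\tilde{\varphi}-\varphi$ is supported in the annulus $\{-C_1\le\psi<-C_2\}$, with condition~(3) guaranteeing finiteness of all relevant integrals. Splitting the integral accordingly gives
\[
\int_{\{\psi<-t\}}c(-\psi)|F|^2 e^{-\tilde{\varphi}}dV_X=\begin{cases}kh(t),& t\ge C_1,\\ kh(t)-A,& T\le t\le C_2,\end{cases}
\]
with $A:=\int_{\{-C_1\le\psi<-C_2\}}c(-\psi)|F|^2(e^{-\varphi}-e^{-\tilde{\varphi}})dV_X>0$ by condition~(2) combined with $F\not\equiv0$ on $U$. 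Consequently $H(t;\tilde{\varphi})=kh(t)=H(t;\varphi)$ for $t\ge C_1$ (the two problems coincide on $\{\psi<-t\}\subseteq\{\psi<-C_1\}$), while $H(t;\tilde{\varphi})\le kh(t)-A$ for $T\le t\le C_2$.

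Applying Theorem~\ref{maintheorem} to $\tilde{\varphi}$, the function $G(r):=H(h^{-1}(r);\tilde{\varphi})$ is concave on $(0,h(T)]$; together with $G(r)\le kr$ (from $H(\cdot;\tilde{\varphi})\le H(\cdot;\varphi)$) and equality at $r=h(C_1)$, the monotonicity of $G(r)/r$ forces $G(r)=kr$ on $(0,h(C_1)]$. Combined with $G(h(C_2))\le kh(C_2)-A$, a chord-slope computation (using the decreasing slopes of a concave function anchored at $(0,0),(h(C_1),kh(C_1)),(h(C_2),kh(C_2)-A)$) yields
\[
G(h(T))\le kh(T)-A\cdot\frac{h(T)-h(C_1)}{h(C_2)-h(C_1)}<kh(T)-A.
\]
The main obstacle is producing the final contradiction from this strict upper bound. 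My plan is to exploit uniqueness of minimizers: any extremizer $\tilde{F}$ of $H(T;\tilde{\varphi})$ restricts on $\{\psi<-C_1\}$ to an admissible function for $H(C_1;\varphi)=kh(C_1)$, and by strict convexity of the $L^2$-functional the $\varphi$-extremal at $t=C_1$ is uniquely realized by $F|_{\{\psi<-C_1\}}$; coupled with analytic continuation on the connected manifold $X$, this should force $\tilde{F}\equiv F$ on $X$ and hence $H(T;\tilde{\varphi})=kh(T)-A$, contradicting the strict inequality above. Making this rigidity argument quantitatively rigorous---carefully exploiting the orthogonality $F\perp(\tilde F-F)$ in the $\varphi$-weighted $L^2$ and the local $\mathcal{F}$-constraint near $Z_0$---is the crux of the argument.
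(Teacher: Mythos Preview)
Your overall contradiction strategy and the use of Theorem~\ref{theorem1.3} to produce $F$ match the paper. You correctly observe that $H(t;\tilde\varphi)=H(t;\varphi)=kh(t)$ for $t\ge C_1$, that $F$ used as a trial function gives $H(t;\tilde\varphi)\le kh(t)-A$ for $T\le t\le C_2$ with $A>0$, and your chord--slope bound $G(h(T))<kh(T)-A$ is valid.

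The gap is exactly where you flag it. The rigidity claim ``$\tilde F|_{\{\psi<-C_1\}}$ must be the $\varphi$-minimizer, hence equals $F$'' does not follow: uniqueness of the minimizer for $H(C_1;\varphi)$ tells you nothing unless you already know $\int_{\{\psi<-C_1\}}c(-\psi)|\tilde F|^2e^{-\varphi}=kh(C_1)$, and there is no a~priori reason for the global $\tilde\varphi$-minimizer at level $T$ to restrict to the $\varphi$-minimizer at level $C_1$. The orthogonality in Lemma~\ref{existence of F} only yields $\int_{\{\psi<-C_1\}}c(-\psi)|\tilde F|^2e^{-\varphi}=kh(C_1)+\int_{\{\psi<-C_1\}}c(-\psi)|\tilde F-F|^2e^{-\varphi}$, which is an inequality in the wrong direction for your purpose.

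What the paper supplies instead is a \emph{lower} bound on the slope of $G$ near $r=h(T)$, via Proposition~\ref{observation} (the linear case forces $F$ to minimize the weighted mass on every annulus, not just on sublevel sets). Applying it on $\{-t_1\le\psi<-T\}\subset\{\psi\ge-C_2\}$, where $\tilde\varphi=\varphi$, gives for the $\tilde\varphi$-minimizer $\tilde F_T$
\[
H(T;\tilde\varphi)-H(t_1;\tilde\varphi)\ \ge\ \int_{\{-t_1\le\psi<-T\}}c(-\psi)|\tilde F_T|^2e^{-\varphi}\ \ge\ \int_{\{-t_1\le\psi<-T\}}c(-\psi)|F|^2e^{-\varphi}\ =\ k\bigl(h(T)-h(t_1)\bigr)
\]
for $T<t_1<C_2$. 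Combined with concavity (slope $\le G(h(T))/h(T)\le k$), this forces $G(h(T))=kh(T)$, which already contradicts your bound $G(h(T))\le kh(T)-A$. The paper then goes further, deducing that $G$ is linear on all of $(0,h(T)]$ and applying Theorem~\ref{theorem1.3} to the $\tilde\varphi$-problem to get a form $\tilde F$ minimizing at \emph{every} level; at level $C_1$ the weights agree, so $\tilde F=F$ there and hence on $X$. That is the rigorous route to the identity $\tilde F=F$ you were aiming for, but it hinges on first proving linearity of $H(\cdot;\tilde\varphi)$ via Proposition~\ref{observation}. (Minor note: the inclusion you wrote, $\mathcal{I}(\tilde\varphi+\psi)\subseteq\mathcal{I}(\varphi+\psi)$, is reversed; this is harmless here since the sheaf $\mathcal{F}$ in the definition of $H$ is the same for both problems and $\tilde\varphi=\varphi$ near $Z_0$.)
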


If $\varphi+\psi$ is a plurisubharmonic function on X and $\varphi+\psi$ is strictly plurisubharmonic at $z_0 \in X$. Denote
\begin{equation}\nonumber
\begin{split}
\inf\{\int_{ \{ \psi<-t\}}&|\tilde{f}|^2e^{-\varphi}c(-\psi)dV_X: \tilde{f}\in
H^0(\{\psi<-t\},\mathcal{O} (K_X)  ), \\
\& &\  \exists \;open\; set\; U' \;s.t\; Z_0 \subset U' \subset U\ and \\
& \; (\tilde{f}-f)\in
H^0(\{\psi<-t\} \cap U' ,\mathcal{O} (K_X) \otimes \mathcal{F}) \}
\end{split}
\end{equation}
by $H(t;\varphi)$. It follows from Corollary \ref{corollary1.5} that we have
\begin{Corollary}  $H(h^{-1}(r);\varphi)$ can not be linear with respect to $r\in (0,\int_{T}^{+\infty}c(t)e^{-t}dt]$.
\label{corollary1.6}
\end{Corollary}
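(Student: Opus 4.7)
The plan is to derive Corollary \ref{corollary1.6} as a direct application of Corollary \ref{corollary1.5}: I will use the strict plurisubharmonicity of $\varphi+\psi$ at $z_0$ to construct a specific perturbation $\tilde\varphi$ of $\varphi$ satisfying conditions (1)--(3) of Corollary \ref{corollary1.5}, and then invoke that corollary to conclude that $H(h^{-1}(r);\varphi)$ cannot be linear.

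The first step exploits the strict plurisubharmonicity to gain room for perturbation. By hypothesis, there exist a coordinate chart $V$ around $z_0$ with local coordinates $w$ centered at $z_0$, together with a constant $\epsilon>0$, such that $\varphi+\psi-\epsilon|w|^2$ remains plurisubharmonic on $V$. This says precisely that one may add any sufficiently small smooth function to $\varphi+\psi$ on $V$ while preserving plurisubharmonicity. Accordingly, I pick a smooth nonnegative bump $\eta\geq 0$ on $X$, compactly supported in $V$, not identically zero, and with $C^2$-norm small enough that $\eta+\epsilon|w|^2$ is still plurisubharmonic on $V$ (achieved by multiplying a fixed bump by a small scalar). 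Define $\tilde\varphi:=\varphi+\eta$. On $V$ we have the decomposition $\tilde\varphi+\psi=(\varphi+\psi-\epsilon|w|^2)+(\eta+\epsilon|w|^2)$, a sum of two plurisubharmonic functions; outside $V$, $\tilde\varphi+\psi=\varphi+\psi$. By local plurisubharmonicity, $\tilde\varphi+\psi$ is plurisubharmonic on $X$. Conditions (2) and (3) of Corollary \ref{corollary1.5} are then immediate: $\tilde\varphi\geq\varphi$ with strict inequality on the open set $\{\eta>0\}$, and $\tilde\varphi-\varphi=\eta$ is smooth and compactly supported, hence bounded.

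The remaining step is to verify condition (1), namely $\mathrm{supp}(\eta)\subset\{-C_1\leq\psi<-C_2\}$ for some constants $T<C_2<C_1$. The upper bound $\psi\leq -C_2$ on the compact set $\mathrm{supp}(\eta)$ is straightforward: by upper semicontinuity of $\psi$, after shrinking $V$ we may ensure $\sup_{\mathrm{supp}(\eta)}\psi<-T$, and we take $C_2$ slightly above $T$. The subtler half is the lower bound $\psi\geq -C_1$ on $\mathrm{supp}(\eta)$, which is the main technical obstacle: \emph{a priori} $\psi$ can be unbounded below near $z_0$ even when $\varphi+\psi$ is strictly plurisubharmonic there, so one must choose $V$ and the location of $\mathrm{supp}(\eta)$ inside a sub-region where $\psi$ is locally bounded below. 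I expect this to be handled by shrinking to a point where $\varphi+\psi$ is continuous (forced by the strict plurisubharmonicity) and choosing the bump away from the polar set of $\psi$; once arranged, a large enough $C_1>T$ works. With (1)--(3) all verified for $\tilde\varphi$, Corollary \ref{corollary1.5} immediately yields that $H(h^{-1}(r);\varphi)$ is not linear on $(0,\int_T^{+\infty}c(t)e^{-t}dt]$, proving Corollary \ref{corollary1.6}.
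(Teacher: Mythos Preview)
Your approach is essentially the same as the paper's: both construct $\tilde\varphi=\varphi+(\text{small smooth nonnegative bump})$, use the strict plurisubharmonicity of $\varphi+\psi$ near $z_0$ to guarantee that $\tilde\varphi+\psi$ remains plurisubharmonic, and then invoke Corollary~\ref{corollary1.5}. The one step you leave as an expectation---arranging $\operatorname{supp}\eta\subset\{-C_1\le\psi<-C_2\}$---is made explicit in the paper by first picking a point $z_1$ in the coordinate neighborhood with $z_1\notin\{\psi=-\infty\}$, then choosing $V\ni z_1$ with $V\subset\subset U$ and $V\cap\{\psi=-\infty\}=\emptyset$, and centering the bump $\rho$ in $V$ rather than at $z_0$; this is exactly the ``choose the bump away from the polar set of $\psi$'' maneuver you anticipated.
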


\subsubsection{Equality in optimal $L^2$ extension problem: necessary condition}
Following Guan-Zhou \cite{GZ14Ann}, for a suitable pair $(X,Y)$, where $Y$ is a closed complex subvariety of a complex manifold $X$, given a holomorphic function $f$ (or a holomorphic section of some vector bundle) on $Y$ satisfying suitable $L^2$
conditions, we can find an $L^2$ holomorphic extension $F$ on $X$ together with an optimal $L^2$ estimate for $F$ on $X$.
\par
For example, let $X$ be a Stein manifold, and let $Y$ be a $n-k$
dimensional complex submanifold of $X$. Let $\psi<0$ be a plurisubharmonic function on $X$,
such that for any point $Y$ of $X$ , $\psi-2klog|\omega''|$ is bounded
near $x$, where $\omega=(\omega',\omega'')$ is the local coordinate near $x$ such
that $\{\omega''=0\}=Y$ near $x$.\\
Following \cite{Ohsawa5} (see also \cite{GZ14Ann}), one can define the measure $dV_X[\psi]$ on $Y$
\begin{equation}
\int_Y fdV_X[\psi]=\limsup_{t \rightarrow \infty}
\frac{2(n-k)}{\sigma_{2n-2k-1}}\int_X|f|^2e^{-\varphi}\mathbb{I}_{\{-1-t<\psi<-t\}}dV_X
\label{measure1.8}
\end{equation}
for any nonnegative continuous function $f$ with $suppf \subset\subset X$, where
$\mathbb{I}_{\{-1-t<\psi<-t\}}$ is the characteristic function of the set
$\{-1-t<\psi<-t\}$. Here denote by $\sigma_m$ the volume of the unit sphere in $R^{m+1}$.Let
$\varphi$ be a locally upperbounded Lebesgue measurable function on X, such that
$\varphi+\psi$ is plurisubharmonic on X.
\par
Let $c(t)\in \mathcal{G}_T$. It was established in \cite{GZ14Ann} (see also \cite{GZsci}) that for any holomorphic $(n,0)$ form $f$ on $Y$, such
that
\begin{equation}
\int_Y |f|^2e^{-\varphi}dV_X[\psi]<+\infty
\end{equation}
there exists a holomorphic $(n,0)$ form $F$ on $X$ such that $F|_Y=f$ and
\begin{equation}
\int_X c(-\psi)|F|^2e^{-\varphi}dV_X\leq (\int_0^{+\infty}c(t)e^{-t}dt) \frac{\pi^k}{k!}\int_Y |f|^2e^{-\varphi}dV_X[\psi]<+\infty
\end{equation}
\par
To simplify our notation, denote that $\|f\|_{L^2}:=(\int_0^{+\infty}c(t)e^{-t}dt) \frac{\pi^k}{k!}\int_Y |f|^2e^{-\varphi}dV_X[\psi]$ and $\|F\|_{L^2}:=\int_X c(-\psi)|F|^2e^{-\varphi}dV_X$.
We will consider the following question
\begin{Question}\textbf{(Equality in optimal $L^2$ extension problem)}
Under which (necessary or sufficient) condition, equality $\|f\|_{L^2}=\inf\{\|F\|_{L^2}: F$ is a holomorphic extension of $f$ from $Y$ to $X\}$ holds?
Moreover, can one obtain the characterization (necessary and sufficient condition)?
\end{Question}
Theorem \ref{theorem1.3} shows that the following necessary condition for the equality $\|f\|_{L^2}=\inf\{\|F\|_{L^2}\}$ to hold.\\
\begin{Theorem}
\label{corollary1.7}
Let $f$ be holomorphic $(n,0)$ form
on $Y$, such that
\begin{equation}
\int_Y |f|^2e^{-\varphi}dV_X[\psi]<+\infty
\end{equation}
If for any holomorphic $(n,0)$ form $\tilde{F}$ on $X$, which is a holomorphic extension of $f$ from $Y$ to $X$ i.e. $\tilde{F}|_Y=f$, then $\tilde{F}$ satisfies
\begin{equation}
\int_X c(-\psi)|\tilde{F}|^2e^{-\varphi}dV_X\ge(\int_0^{+\infty}c(t)e^{-t}dt)\frac{\pi^k}{k!}\int_Y |f|^2e^{-\varphi}dV_X[\psi]
\label{1.22}
\end{equation}
and there exists a holomorphic $(n,0)$ form $F$ on $X$ such that
\begin{equation}
\int_X c(-\psi)|F|^2e^{-\varphi}dV_X=(\int_0^{+\infty}c(t)e^{-t}dt)\frac{\pi^k}{k!}\int_Y |f|^2e^{-\varphi}dV_X[\psi]
\label{1.23}
\end{equation}
Then for any $t \geq 0$, there exists a unique holomorphic $(n,0)$ form $F_t$ on
$\{\psi<-t\}$ such that $F_t|_Y=f$ and
\begin{equation}
\int_{\{\psi<-t\}} c(-\psi)|F_t|^2e^{-\varphi}dV_X=(\int_t^{+\infty}c(t_1)e^{-t_1}dt_1)\frac{\pi^k}{k!}\int_X |f|^2e^{-\varphi}dV_M[\psi]
\end{equation}
In fact, $F_t=F|_{\{\psi<-t\}}$.
\end{Theorem}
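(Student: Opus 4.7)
My plan is to recognize the hypothesis as forcing the minimal $L^2$ function of Theorem \ref{maintheorem} to be linear in $r$, then invoke Theorem \ref{theorem1.3} to extract the pointwise identity and apply Hilbert-space uniqueness to identify the extremal form with the hypothesized $F$. To fit the setting into the abstract framework, take $Z_0 = Y$, $U$ a tubular neighborhood of $Y$ in $X$, and $\mathcal{F}$ the ideal sheaf of holomorphic functions vanishing on $Y$; then the condition $(\tilde f - f)\in H^0(\{\psi<-t\}\cap U',\, K_X\otimes \mathcal{F})$ is exactly $\tilde f|_Y = f$ (using that $\psi-2k\log|\omega''|$ is bounded along $Y$, so $\mathcal{I}(\varphi+\psi)\subseteq \mathcal{F}$ near $Y$ and the sheaf hypothesis of Theorem \ref{maintheorem} holds). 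Set $K := \frac{\pi^k}{k!}\int_Y|f|^2 e^{-\varphi}dV_X[\psi]$ and $h(t) := \int_t^{+\infty}c(t_1)e^{-t_1}dt_1$. The lower-bound hypothesis \eqref{1.22} yields $H(0;c)\ge h(0) K$, and the equality-extension $F$ of \eqref{1.23} gives $H(0;c)\le h(0) K$, so $H(0;c) = h(0)K$.

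Next, applying the optimal $L^2$ extension theorem of \cite{GZ14Ann} on the Stein open subset $\{\psi<-t\}$ (viewing $c$ as an element of $\mathcal{G}_t$, which is admissible since $c$ is positive and smooth on $(t,+\infty)$, $c(s)e^{-s}$ is decreasing, and $\int_t^{+\infty}c(s)e^{-s}ds<+\infty$) produces a holomorphic extension of $f$ whose $L^2$ norm is at most $h(t) K$, hence $H(t;c)\le h(t) K$ for every $t\ge 0$.

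Theorem \ref{maintheorem} now gives concavity of $G(r):=H(h^{-1}(r);c)$ on $(0,h(0)]$. Since $c(-\psi)|F|^2e^{-\varphi}$ is $L^1$ on $X$ and $\{\psi<-t\}$ shrinks to a pluripolar, hence measure-zero, set as $t\to+\infty$, dominated convergence gives $H(t;c)\le \int_{\{\psi<-t\}}c(-\psi)|F|^2 e^{-\varphi}dV_X\to 0$, so $G(0^+)=0$. For a nonnegative concave $G$ with $G(0^+)=0$, the quotient $G(r)/r$ is decreasing; combined with $G(h(0))/h(0)=K$ this gives $G(r)\ge Kr$ on $(0,h(0)]$. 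Together with the reverse inequality $G(r)\le Kr$ from the preceding step, we conclude $G(r)=Kr$, i.e., $H(h^{-1}(r);c)$ is linear.

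Linearity allows us to apply Theorem \ref{theorem1.3}, producing a holomorphic $(n,0)$-form $\widetilde F$ on $X$ with $\widetilde F|_Y=f$ and $\int_{\{\psi<-t\}}c(-\psi)|\widetilde F|^2 e^{-\varphi}dV_X = h(t) K$ for every $t\ge 0$. At $t=0$ both $\widetilde F$ and the hypothesized $F$ realize $H(0;c)=h(0)K$; since the affine set of $L^2$ extensions of $f$ sits inside a Hilbert space whose squared norm is strictly convex, the minimizer on each $\{\psi<-t\}$ is unique, so at $t=0$ we must have $\widetilde F=F$. Hence $F_t:=F|_{\{\psi<-t\}}$ is the unique minimizer on $\{\psi<-t\}$, with $\int_{\{\psi<-t\}}c(-\psi)|F_t|^2 e^{-\varphi}dV_X=h(t)K$, completing the proof. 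The main technical point is the sharp level-set bound $H(t;c)\le h(t)K$ in the second step: a naive shift-of-$\psi$ argument yields only the looser $H(t;c)\le e^t h(t) K$, so one must either invoke the sharp level-set form of the Guan--Zhou extension theorem or approximate the truncated weight $c\,\mathbb{I}_{[t,+\infty)}$ by members of $\mathcal{G}_0$ and pass to the limit; with that bound in hand, the remainder is a direct combination of Theorems \ref{maintheorem} and \ref{theorem1.3} with Hilbert-space uniqueness.
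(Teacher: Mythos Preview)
Your proof is correct and follows essentially the same route as the paper: set up $H(t;c)$ with $Z_0=Y$ and $\mathcal{F}$ the ideal sheaf of $Y$ (which coincides with $\mathcal{I}(\psi)$ near $Y$, the paper's choice), use the hypotheses to pin $H(0)=h(0)K$, invoke the sharp level-set form of the Guan--Zhou extension theorem to get $H(t)\le h(t)K$, combine with the concavity of Theorem \ref{maintheorem} to force linearity, and then apply Theorem \ref{theorem1.3} together with Hilbert-space uniqueness (the paper's Lemma \ref{existence of F}) to identify $F_t=F|_{\{\psi<-t\}}$. The only cosmetic difference is that the paper chains the inequalities directly as $H(0)\le \tfrac{h(0)}{h(t)}H(t)\le h(0)K=H(0)$, whereas you argue via $G(0^+)=0$ and monotonicity of $G(r)/r$; both are equivalent.
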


\begin{Remark} It follows from Corollary \ref{corollary1.4} and Theorem \ref{corollary1.7} that for any $\tilde{c}\in \mathcal{G}_T$ which satisfies $(\log\tilde{c}(t))'\ge (\log c(t))'$, the holomorphic $(n,0)$ form $F$ satisfies
$$H(t;\tilde{c})=\int_{\{\psi<-t\}} \tilde{c}(-\psi)|F|^2e^{-\varphi}dV_X=(\int_t^{+\infty}\tilde{c}(t_1)e^{-t_1}dt_1)\frac{\pi^k}{k!}\int_Y |f|^2e^{-\varphi}dV_X[\psi].$$
\end{Remark}

Recall that the pluricomplex Green function $G(z,\omega)$ on a pseudoconvex
domain $D\subset \mathbb{C}^n$ satisfies $G_D(z,\omega)<0$ and
$G_D(z,\omega_0)=\log|z-\omega_0|+O(1)$ near $\omega_0 \in D$ (see \cite{Demailly87}).
Let $\psi(z)=2nG_D(z,0)$, $f\equiv 1$ and $\mathcal{F}=(z_1,\cdots,z_n)$, and let
$\varphi \equiv 0$ and $c(t) \equiv 1$. Let $D_t=\{\psi(z)<t\}$. Note that
$K_{D_t}(0,0)=\frac{1}{H(t)}$, then the combination of Corollary 1.2 and Theorem
1.3 implies the following restriction property of Bergman kernels.
\begin{Corollary}
The following two statements are equivalent
\par
(1) $\frac{K_{D_{t_0}}(0,0)}{K_{D}(0,0)}= e^{t_0}$ holds for some $t_0 \in
(0,+\infty)$;
\par
(2) $\frac{K_{D_t}(z,0)}{K_{D}(z,0)}=e^t$ holds for any $t\in(0,+\infty)$ and any $z \in D_t$.
\label{corollary1.8}
\end{Corollary}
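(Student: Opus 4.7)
The plan is to recast Corollary~\ref{corollary1.8} as a statement about the minimal $L^2$ integrals $H(t)$ and then feed it into Corollary~\ref{corollary1.2} and Theorem~\ref{theorem1.3}. In the present setting $\varphi\equiv 0$, $c\equiv 1$, $T=0$, $f\equiv 1$, $\mathcal{F}=(z_1,\ldots,z_n)$ and $Z_0=\{0\}$, the condition $\tilde f-f\in H^0(U',\mathcal{O}(K_X)\otimes\mathcal{F})$ near $0$ amounts to $\tilde f(0)=1$, so
\[
H(t)=\inf\Bigl\{\int_{D_t}|\tilde f|^2\,dV_X:\tilde f\in\mathcal{O}(D_t),\ \tilde f(0)=1\Bigr\}.
\]
The standard extremal characterization of the Bergman kernel then yields $K_{D_t}(0,0)=1/H(t)$, and the (Hilbert-space unique) minimizer $F_t^{\ast}\in\mathcal{O}(D_t)$ satisfies $K_{D_t}(\cdot,0)=K_{D_t}(0,0)\,F_t^{\ast}$ on $D_t$. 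This dictionary is the starting point for both directions.

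The implication $(2)\Rightarrow(1)$ is immediate by specializing $(2)$ to $z=0$ and $t=t_0$. For $(1)\Rightarrow(2)$, I will first rewrite the hypothesis $K_{D_{t_0}}(0,0)/K_D(0,0)=e^{t_0}$ in $H$-form as
\[
\frac{H(t_0)}{\int_{t_0}^{+\infty}e^{-t_1}\,dt_1}=\frac{H(t_0)}{e^{-t_0}}=H(0)=\frac{H(0)}{\int_0^{+\infty}e^{-t}\,dt},
\]
which is exactly the (equality case of the) inequality appearing in condition $(2)$ of Corollary~\ref{corollary1.2}. Invoking that corollary produces the linearity $H(t)=H(0)e^{-t}$ for every $t\in[0,+\infty)$.

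Next I will apply Theorem~\ref{theorem1.3} to obtain a \emph{single} holomorphic function $F$ defined on all of $D$ with $F(0)=1$ and $\int_{D_t}|F|^2\,dV_X=H(t)$ for every $t\ge 0$. For each $t$ the restriction $F|_{D_t}$ therefore attains the infimum defining $H(t)$; by Hilbert-space uniqueness of the Bergman minimizer one gets $F|_{D_t}=F_t^{\ast}$, and in particular $F=F_0^{\ast}$ on $D$. Plugging these into the extremal formulas gives $K_{D_t}(z,0)=K_{D_t}(0,0)F(z)$ and $K_D(z,0)=K_D(0,0)F(z)$, so for $z\in D_t$
\[
\frac{K_{D_t}(z,0)}{K_D(z,0)}=\frac{K_{D_t}(0,0)}{K_D(0,0)}=\frac{H(0)}{H(t)}=e^t
\]
wherever $K_D(\cdot,0)$ does not vanish, and the holomorphic identity $K_{D_t}(\cdot,0)=e^t K_D(\cdot,0)$ extends to all of $D_t$.

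The only genuinely non-routine step is the simultaneous identification $F|_{D_t}=F_t^{\ast}$ for every $t>0$: this is what rigidifies the comparison of Bergman kernels across the whole nested family $\{D_t\}_{t\ge 0}$. That a $t$-uniform extremizer $F$ exists at all is precisely what Theorem~\ref{theorem1.3} delivers; once it is in hand, the remainder is a mechanical translation between $H(t)$ and $K_{D_t}(\cdot,0)$.
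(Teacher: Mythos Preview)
Your proposal is correct and follows essentially the same approach as the paper: both use Corollary~\ref{corollary1.2} to upgrade the single-point equality to linearity of $H(-\log r)$, then invoke Theorem~\ref{theorem1.3} to produce a single extremizer $F$ on $D$ whose restrictions realize each $H(t)$, and finally translate this through the Bergman-kernel extremal characterization to obtain $K_{D_t}(z,0)/K_D(z,0)=e^t$. Your write-up is in fact a bit more explicit than the paper's about the uniqueness of the Bergman minimizer and about handling possible zeros of $K_D(\cdot,0)$ via holomorphic continuation.
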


\subsubsection{Characterizations for 1-dimensional case}

In this section,
we present a characterization for the concavity degenerating to linearity for 1-dimensional case,
and a characterization for the equality in 1-dimensional optimal $L^{2}$ extension problem to hold.

Let $X$ be an open Riemann Surface which admits a nontrivial Green function $G_{X}(z,w)$.
\par
Let $\psi=kG_{X}(z,z_0)$, where $k\ge 2$ is a real number and $z_0$ is a point of $X$.
\par
Let $U$ be a open neighborhood of $z_0$ in $X$ and $f$ be a holomorphic $(1,0)$ form on U. Let $\varphi$ be a subharmonic function on $X$. Let $c(t)\in C^{\infty}[0,+\infty)$ and $c(t)\in \mathcal{G}_0$. Denote
\begin{equation}
\begin{split}
H(t;c,2\varphi):=\inf\{\int_{ \{ \psi<-t\}}&c(-\psi)|\tilde{F}|^2e^{-2\varphi}dV_X: \tilde{F}\in
H^0(\{\psi<-t\},\mathcal{O} (K_X)  ), \\
\& &\  \exists \;open\; set\; U' \;s.t\; Z_0 \subset U' \subset U\ and \\
& \; (\tilde{F}-f)\in
H^0(\{\psi<-t\} \cap U' ,\mathcal{O} (K_X) \otimes \mathcal{I}(\psi+2\varphi)|_U) \},
\label{def1.15}
\end{split}
\end{equation}
\par
We have the following necessary conditions for the minimal $L^2$ integrals $H(h^{-1}(r);c,2\varphi)$ to be linear with respect to $r\in(0,\int_0^{+\infty}c(t_1)e^{-t_1}dt_1]$.

\begin{Theorem}\label{corollary1.9}Assume that $0<H(0;c,2\varphi)<+\infty$.
If $H(h^{-1}(r);c,2\varphi)$ is linear with respect to $r\in(0,\int_0^{+\infty}c(t_1)e^{-t_1}dt_1]$,
then $\varphi=\log|f_{\varphi}|+v$, where $f_{\varphi}$ is a holomorphic function on $X$ and $u$ is a harmonic function on $X$.
\end{Theorem}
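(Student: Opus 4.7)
The strategy is first to extract a global extremal holomorphic $(1,0)$ form via Theorem \ref{theorem1.3}, then to exploit the one-complex-dimensional structure (via the Riesz decomposition of $\varphi$ and the harmonicity of $\psi$ off $z_0$) to pin down the form of $\varphi$ using Corollaries \ref{corollary1.5} and \ref{corollary1.6}.

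First, applying Theorem \ref{theorem1.3} with weight $2\varphi$ and coherent subsheaf $\mathcal{I}(\psi+2\varphi)|_U$, the hypotheses $0<H(0;c,2\varphi)<+\infty$ and linearity of $H(h^{-1}(r);c,2\varphi)$ produce a holomorphic $(1,0)$ form $F$ on $X$ with $(F-f)\in H^{0}(U',K_{X}\otimes\mathcal{I}(\psi+2\varphi)|_U)$ for some open $U'\supset\{z_{0}\}$, satisfying
\[
\int_{\{\psi<-t\}}c(-\psi)|F|^{2}e^{-2\varphi}\,dV_{X}=k_{0}\int_{t}^{+\infty}c(s)e^{-s}\,ds
\]
for every $t\ge 0$, where $k_{0}=H(0;c,2\varphi)/\int_{0}^{+\infty}c(s)e^{-s}ds>0$. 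Differentiating in $t$ via the co-area formula (applicable since $\psi=kG_{X}(\cdot,z_{0})$ is real-analytic on $X\setminus\{z_{0}\}$ with critical set of measure zero) yields the key level-set identity
\[
\int_{\{\psi=-t\}}|F|^{2}e^{-2\varphi}\,\frac{d\sigma}{|\nabla\psi|}=k_{0}\,e^{-t}
\]
for almost every $t>0$.

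Next, I would translate the conclusion $\varphi=\log|f_{\varphi}|+v$ into a statement about the Riesz measure $\mu_{\varphi}=\frac{1}{2\pi}\Delta\varphi$: by Weierstrass's theorem on the open Stein Riemann surface $X$, the decomposition exists if and only if $\mu_{\varphi}$ is a locally finite positive-integer-weighted sum of Dirac masses. The rest of the argument rules out every other possibility. If $\mu_{\varphi}$ has a nonzero absolutely continuous density at some $p\in X\setminus\{z_{0}\}$, then $\psi+2\varphi$ is strictly subharmonic at $p$ (because $\psi$ is harmonic near $p$), contradicting linearity via Corollary \ref{corollary1.6}. For any singular-continuous part of $\mu_{\varphi}$ on $X\setminus\{z_{0}\}$, one constructs a harmonic-majorant patch $\tilde\varphi$ of $\varphi$ on a small coordinate disk lying in an annular region $\{-C_{1}\le\psi<-C_{2}\}$ and avoiding atoms of $\mu_{\varphi}$; this $\tilde\varphi$ verifies the three hypotheses of Corollary \ref{corollary1.5} (with $2\varphi$ playing the role of that corollary's $\varphi$), giving a contradiction. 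For non-integer atomic weights, the level-set identity rewritten in the conformal coordinate $w=\exp((\psi+i\psi^{*})/k)$ (in which $\{\psi=-t\}$ becomes the circle $\{|w|=e^{-t/k}\}$) becomes the circle-mean identity
\[
\frac{1}{2\pi}\int_{0}^{2\pi}|G(re^{i\theta})|^{2}e^{-2\varphi(re^{i\theta})}\,d\theta=C\,r^{k-2},
\]
where $F=G\,dw$ locally; the integrality of the atomic weights is extracted using the power-series structure of circle-means of $|G|^{2}e^{-2\varphi}$ for holomorphic $G$ combined with the Riesz decomposition of $\varphi$ near the atom.

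Once $\mu_{\varphi}=\sum_{j}n_{j}\delta_{a_{j}}$ with $n_{j}\in\mathbb{Z}_{>0}$ and $\{a_{j}\}$ locally finite, Weierstrass's theorem on $X$ produces a holomorphic $f_{\varphi}$ on $X$ with divisor $\sum_{j}n_{j}[a_{j}]$. Then $\varphi-\log|f_{\varphi}|$ has vanishing Riesz measure and is locally bounded above, so equals a harmonic function $v$ on $X$, giving the required decomposition.

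The main obstacle is the fractional-atomic case: increasing $\varphi$ near a non-integer atom of $\mu_{\varphi}$ while keeping $\tilde\varphi-\varphi$ bounded above is delicate, since the naive replacement of a non-integer atom by an integer atom causes $\tilde\varphi-\varphi$ to blow up logarithmically at the atom. The argument instead relies on the sharp circle-mean identity in the conformal coordinate to force integrality of the atomic weights; the absolutely continuous and singular-continuous cases, by contrast, are disposed of cleanly by Corollary \ref{corollary1.6} and a harmonic-majorant construction in Corollary \ref{corollary1.5}, respectively.
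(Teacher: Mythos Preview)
Your overall strategy—reduce to showing $\mu_\varphi:=\frac{1}{2\pi}\Delta\varphi$ is a sum of integer Dirac masses, then invoke Weierstrass—is the same as the paper's. But your case-by-case mechanism for ruling out the ``bad'' parts of $\mu_\varphi$ does not go through as written, and it is here that your approach diverges from the paper.

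\textbf{Absolutely continuous part via Corollary~\ref{corollary1.6}.} That corollary (see its proof) needs $i\partial\bar\partial(2\varphi+\psi)\ge\epsilon\,\omega$ on an \emph{open} neighborhood of some point. A nonzero $L^1$ density for $\mu_\varphi$ does not yield this: the density can be supported on a nowhere-dense set of positive measure, so there is no open set on which $i\partial\bar\partial\varphi$ dominates a positive multiple of $\omega$. Thus Corollary~\ref{corollary1.6} is not directly applicable.

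\textbf{Singular-continuous part via Corollary~\ref{corollary1.5}.} The Poisson modification $\tilde\varphi$ on a small disk $D$ gives $\tilde\varphi\ge\varphi$ with equality on $\partial D$, and $\tilde\varphi+\psi$ is subharmonic. But condition~(3) of Corollary~\ref{corollary1.5} requires $\tilde\varphi-\varphi$ bounded. Since $\varphi$ can equal $-\infty$ on a (polar) subset of $D$ even when $\mu_\varphi$ has no atoms there, $\tilde\varphi-\varphi$ need not be bounded, and the corollary does not apply.

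\textbf{Non-integer atoms via circle means.} Your conformal coordinate $w=\exp((\psi+i\psi^*)/k)$ exists only near $z_0$, and the level sets $\{\psi=-t\}$ are circles centered at $z_0$. An atom of $\mu_\varphi$ located at some $a\neq z_0$ is not ``seen'' by small circles near $z_0$; and for the single $t$ with $a\in\{\psi=-t\}$ the level-set identity (which holds only for a.e.\ $t$) carries no information. So the circle-mean argument does not force integrality of atoms away from $z_0$, and you acknowledge it is already delicate at $z_0$.

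The paper bypasses all three issues with a single construction (its Lemma~\ref{lemma for cor1.9}): whenever $(i\partial\bar\partial\varphi)|_{X\setminus E}\neq 0$ (where $E$ is the set of integer Lelong numbers), one localizes with a cutoff $\theta$ and sets
\[
\tilde\varphi(z)=\varphi(z)-\big\langle\,\theta\cdot i\partial\bar\partial\varphi\,,\,2G_X(z,\cdot)\,\big\rangle,
\]
a Green-potential subtraction. This produces $\tilde\varphi\in\mathrm{PSH}(X)$ with $\tilde\varphi>\varphi$, $\mathcal I(2\tilde\varphi)=\mathcal I(2\varphi)$, $\tilde\varphi(z)\to\varphi(z)$ as $\psi(z)\to 0$, and $\varphi-\tilde\varphi$ bounded below outside a compact set. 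These properties are exactly what is needed to compare $H(\cdot;2\tilde\varphi)$ with $H(\cdot;2\varphi)$ directly (using Theorem~\ref{maintheorem} and Proposition~\ref{observation}), and neither Corollary~\ref{corollary1.5} nor \ref{corollary1.6} is invoked. The point is that this $\tilde\varphi$ is allowed to be unbounded above $\varphi$ on a compact set—precisely the flexibility your Corollary~\ref{corollary1.5} route lacks—while the equality of multiplier ideals still guarantees the two minimization problems have the same competitors.
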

Now, in the definition of $H(t;c,2\varphi)$, we take $\psi=2G_X(z,z_0)$, where $z_0\in X$ is a point.
\par
Let $(V_{z_0},w)$ be a local coordinate neighborhood of $z_0$ satisfying $w(z_0)=o$ and $G_X(z,z_0)=\log|w|+u(w)$ on $V_{z_0}$, where $u(w)$ is a harmonic function on $V_{z_0}$.
Let $U=V_{z_0}$. Let $f$ be a holomorphic $(n,0)$ form on $X$. Let $\varphi$ be a subharmonic function on $X$.
\par
Let $c_{\beta}(z)$ be the logarithmic capacity which is locally defined by
$$c_{\beta}(z_0):=\exp(\lim\limits_{z\to z_0} G_X(z,z_0)-log|w(z)|)$$
\par
To state our result, we introduce the following notations (see \cite{FarkasKra}).
\par
Let $p:\Delta\to X$ be the universal covering from unit disc $\Delta$ to $X$.
We call the holomorphic function $f$ (resp. holomorphic $(1,0)$ form $F$) on $\Delta$ is a multiplicative function (resp. multiplicative differential (Prym differential)) if there is a character $\chi$, where $\chi\in Hom(\pi_1(X),C^*)$ and $|\chi|=1$, such that $g^*f=\chi(g)f$ (resp. $g^*F=\chi(g)F$) for every $g\in \pi_1(X)$ which naturally acts on the universal covering of $X$. Denote the set of such kinds of $f$ (resp. F) by $\mathcal{O}^{\chi}(X)$ (resp. $\Gamma^{\chi}(X)$).
\par
As $p$ is a universal covering, then for any harmonic function $h$ on $X$, there exists a $\chi_h$ and a multiplicative function $f_h \in \mathcal{O}^{\chi_h}(X)$, such that $|f_h|=p^*e^{h}$. And if $g \in \mathcal{O} (X)$ and $g$ has no zero points on $X$. Then $\log|g|$ is harmonic function on $X$ and we know $\chi_h=\chi_{h+\log|g|}$ (for the proof, see Appendix \ref{Appendix4.3}).
\par
For Green function $G_X(\cdot,z_0)$, one can find a $\chi_{z_0}$ and a multiplicative function $f_{z_0} \in \mathcal{O}^{\chi_{z_0}}(X)$, such that $|f_{z_0}|=p^*e^{G_X(\cdot,z_0)}$.
\par

Using Theorem \ref{corollary1.9} and the solution of extend Suita conjecture in \cite{GZ14Ann}(see Theorem \ref{theorem1.10}),
we have the following characterization for $H(h^{-1}(r);c,2\varphi)$ to be linear.
\begin{Theorem}\label{corollary1.11}Assume that $0<H(0;c,2\varphi)<+\infty$. The minimal $L^2$ integral function
$H(h^{-1}(r);c,2\varphi)$ is linear with respect to $r$ if and only if the following statements hold:
\par
(1) $\varphi=\log|f_{\varphi}|+v$, where $f_{\varphi}$ is a holomorphic function on $X$ and $v$ is a harmonic function on $X$.
\par
(2) $\chi_{-v}=\chi_{z_0}$.
\end{Theorem}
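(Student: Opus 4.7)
The plan is to prove both implications by combining Theorem \ref{corollary1.9} with the solution of the extended Suita conjecture in \cite{GZ14Ann}, using the universal cover $p:\Delta\to X$ to bridge the multiplicative structure on $\Delta$ with single-valuedness on $X$.

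For the ``only if'' direction, item (1) is already supplied by Theorem \ref{corollary1.9}, so the remaining task is to deduce the character identity (2). By linearity together with Theorem \ref{theorem1.3}, there is an extremal holomorphic $(1,0)$-form $F$ on $X$ realizing the infimum $H(t;c,2\varphi)$ for every $t\ge 0$, with $F-f\in \mathcal{I}(\psi+2\varphi)|_U$ near $z_0$. The plan is to pull $F$ back to $\Delta$, where $p^*F$ is $\pi_1(X)$-invariant, while the auxiliary product $p^*f_\varphi\cdot f_{z_0}/f_{-v}$ carries character $\chi_{z_0}\chi_{-v}^{-1}$ and modulus $p^*(|f_\varphi|e^{G_X(\cdot,z_0)+v})$. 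The equality case of the weighted Suita-type extension estimate from \cite{GZ14Ann} forces $|p^*F|$ to be a constant multiple of $p^*(|f_\varphi|e^{G_X(\cdot,z_0)+v})$; combined with the invariance of $p^*F$, matching modulus with such a multiplicative object forces the character $\chi_{z_0}\chi_{-v}^{-1}$ to be trivial, which is (2).

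For the ``if'' direction, assume (1) and (2). By (2), the quotient $f_{z_0}/f_{-v}$ has trivial character on $\Delta$ and therefore descends to a single-valued holomorphic function $g_0$ on $X$ with a simple zero at $z_0$. Using $g_0$ and $f_\varphi$, I would construct an explicit holomorphic $(1,0)$-form $F$ on $X$, essentially proportional to $f_\varphi\cdot dg_0$, normalized so that $F-f\in\mathcal{I}(\psi+2\varphi)|_U$ near $z_0$. A direct computation on the universal cover, where $|p^*F|^2 e^{-2p^*\varphi}$ reduces to the single-valued density built from $|f_{-v}|^2\,|d(f_{z_0}/f_{-v})|^2$, then shows that $\int_X c(-\psi)|F|^2e^{-2\varphi}dV_X$ saturates the weighted extension bound from \cite{GZ14Ann}. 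Theorem \ref{corollary1.7}, applied with $Y=\{z_0\}$, propagates the equality from $t=0$ to every sub-level $\{\psi<-t\}$, producing the linear identity
\[
H(t;c,2\varphi)=H(0;c,2\varphi)\frac{\int_t^{+\infty}c(t_1)e^{-t_1}dt_1}{\int_0^{+\infty}c(t_1)e^{-t_1}dt_1},
\]
which is exactly the linearity of $H(h^{-1}(r);c,2\varphi)$.

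The hard part is the ``if'' direction: converting the abstract character equality $\chi_{-v}=\chi_{z_0}$ into a concrete extremal form and then verifying saturation of the weighted $L^2$-extension inequality. The delicate interplay is among the weight $e^{-2\varphi}$, the singular factor $e^{-\psi}=e^{-2G_X(\cdot,z_0)}$, and the harmonic component $v$; in particular, one must arrange the correct vanishing order of $F$ at $z_0$ so that $F-f\in\mathcal{I}(\psi+2\varphi)$, and one must track characters carefully so that the construction genuinely descends from $\Delta$ to $X$. Once these points are handled using the equality characterization from \cite{GZ14Ann}, the remainder of the argument reduces to the concavity and linearity machinery already established earlier in the paper.
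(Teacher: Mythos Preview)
Your approach is correct in spirit but takes a noticeably different and more laborious route than the paper. The paper's proof is organized around a clean \emph{reduction to the harmonic-weight case}: using Proposition \ref{variousC} it first reduces to $c\equiv 1$; then, via the bijection $F\mapsto F\cdot f_\varphi$ (Proposition \ref{proposition3.3}) and Remark \ref{remark3.2}, it shows that $H(-\log r;2\varphi)$ is linear if and only if $H(-\log r;2v)$ is linear, where $v$ is the harmonic part of $\varphi$. After this reduction, the paper simply invokes the extended Suita conjecture (Theorem \ref{theorem1.10}, cited from \cite{GZ14Ann}) as a black box: linearity of $H(-\log r;2v)$ is equivalent to $H(0;2v)=\pi e^{-2v(z_0)}c_\beta^{-2}(z_0)|h(z_0)|^2$ (by Proposition \ref{proposition3.2} and concavity), and Theorem \ref{theorem1.10} says this equality holds precisely when $\chi_{-v}=\chi_{z_0}$. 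No universal-cover computation, no explicit extremal form, no Theorem \ref{corollary1.7} is needed.

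Your proposal, by contrast, works directly with $\varphi$ and essentially re-derives the equality characterization of the extended Suita conjecture inline on the universal cover. This is not wrong, but you should be aware of two points. First, your key claim in the ``only if'' direction---that equality in the weighted extension estimate forces $|p^*F|$ to be a constant multiple of $p^*(|f_\varphi|e^{G_X(\cdot,z_0)+v})$---is not a statement that \cite{GZ14Ann} hands you directly for the weight $2\varphi$; it is only stated there for harmonic weight, so you are implicitly doing the reduction $F\mapsto F/f_\varphi$ anyway, just without saying so. Second, your ``if'' direction (explicitly constructing $F$ from $g_0=f_{z_0}/f_{-v}$ and verifying saturation) is exactly the content of the ``if'' half of Theorem \ref{theorem1.10}, so you are reproving a result the paper is content to cite. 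Your route gives more geometric insight into why the character condition appears, but the paper's reduction-then-cite strategy is shorter and avoids redoing work already in \cite{GZ14Ann}.
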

The representation $\varphi=\log|f_{\varphi}|+v$ is not unique. If $f_1\in\mathcal{O}(X)$ and $f_1$ has no zero points on $X$.
Then $\varphi=\log|\frac{f_{\varphi}}{f_1}|+(\log|f_1|+v)$ is another representation of $\varphi$.
Since $\chi_{-v}=\chi_{-v-\log|f_1|}$ (see Lemma \ref{Aproposition4.1}),
we know the condition (2) in Theorem \ref{corollary1.11} is free for the choice of the specific representation of $\varphi$.
\par
Let $f\equiv dw$ on $V_{z_0}$  under the local coordinate $w$ on $V_{z_0}$. We also assume that $\varphi(z_0)>-\infty$.
\par
Now we illustrate the relation between  $H(h^{-1}(r);c,2\varphi)$ is linear with respect to $r$ and the equality $\|f\|_{L^2}=\inf\{\|F\|_{L^2}\}$ holds,
where $$\|f\|_{L^2}=(\int_0^{+\infty}c(t)e^{-t}dt) \frac{\pi^k}{k!}\int_Y |f|^2e^{-2\varphi}dV_X[\psi]$$ and $\|F\|_{L^2}=\int_X c(-\psi)|F|^2e^{-2\varphi}dV_X$.
Direct calculation shows that when $\psi=2G_X(z,z_0)$, $f=dw$, the $L^2$ norm $\|f\|_{L^2}$ of  $f$ defined by \eqref{measure1.8} is
$$\|f\|_{L^2}=(\int_0^{+\infty}c(t_1)e^{-t_1}dt_1)
\pi\frac{e^{-2\varphi(z_0)}}{c^2_{\beta}(z_0)}.$$
We will show that (see Proposition \ref{proposition3.2}) that $\lim\limits_{t\to +\infty} \frac{H(t;c,2\varphi)}{\int_t^{+\infty}c(t_1)e^{-t_1}dt_1}=\pi\frac{e^{-2\varphi(z_0)}}{c^2_{\beta}(z_0)}$.
\par
We also want to point out that, when $H(-\log r;c,2\varphi)$ is linear with respect to $r$, there exists (see Lemma \ref{existence of F}) a holomorphic extension $F$ of $f$ on $X$ such that the $L^2$ norm of $F$ is equal to $(\int_0^{+\infty}c(t_1)e^{-t_1}dt_1)\pi\frac{e^{-2\varphi}(z_0)}{c^2_{\beta}(z_0)}$ and the $L^2$ norm of $F$ is minimal among all the holomorphic extension of $f$ from $z_0$ to $X$. This shows that $H(-\log r;c,2\varphi)$ is linear with respect to $r$ implies $\|f\|_{L^2}=\inf\{\|F\|_{L^2}\}$.
\par
When we have $\|f\|_{L^2}=\inf\{\|F\|_{L^2}\}$, it follows from $\lim\limits_{t\to +\infty} \frac{H(t;c,2\varphi)}{\int_t^{+\infty}c(t_1)e^{-t_1}dt_1}
=\pi\frac{e^{-2\varphi(z_0)}}{c^2_{\beta}(z_0)}$,
$\|f\|_{L^2}=(\int_0^{+\infty}c(t_1)e^{-t_1}dt_1)
\pi\frac{e^{-2\varphi(z_0)}}{c^2_{\beta}(z_0)}$ and the concavity of $H(-\log r;2\varphi)$ that $H(-\log r;2\varphi)$ is linear with respect to $r$.

Theorem \ref{corollary1.11} shows the following characterization for the equality in optimal $L^{2}$ extension problem to hold.

\begin{Theorem}\label{theorem1.11}
The equality $\|f\|_{L^2}=$ inf $\{\|F\|_{L^2}: F$ is a holomorphic extension of $f$ from $Y$ to $X\}$ holds if and only if the following statements hold
\par
(1) $\varphi=\log|f_{\varphi}|+v$, where $f_{\varphi}$ is a holomorphic function on $X$ and $v$ is a harmonic function on $X$.
\par
(2) $\chi_{-v}=\chi_{z_0}$.
\end{Theorem}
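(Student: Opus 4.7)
The plan is to reduce Theorem \ref{theorem1.11} to the linearity characterization already established in Theorem \ref{corollary1.11}. The key intermediate step is to prove the equivalence
\begin{equation*}
\|f\|_{L^{2}}=\inf\{\|F\|_{L^{2}}\} \;\Longleftrightarrow\; H(h^{-1}(r);c,2\varphi) \text{ is linear in } r\in\bigl(0,\textstyle\int_{0}^{+\infty}c(t)e^{-t}dt\bigr],
\end{equation*}
after which the two conditions in Theorem \ref{corollary1.11} will translate directly into the desired characterization.

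For the direction ``linearity $\Rightarrow$ equality'', I would invoke the existence lemma announced just before the statement (Lemma \ref{existence of F}): when $H(h^{-1}(r);c,2\varphi)$ is linear in $r$, there exists a global holomorphic extension $F$ of $f$ on $X$ whose weighted $L^{2}$ norm equals $\bigl(\int_{0}^{+\infty}c(t_{1})e^{-t_{1}}dt_{1}\bigr)\pi e^{-2\varphi(z_{0})}/c_{\beta}^{2}(z_{0})$. By the residue-type calculation of $dV_{X}[\psi]$ at the single point $z_{0}$ recalled in the paragraph preceding the statement, this value coincides with $\|f\|_{L^{2}}$. Combined with the optimal $L^{2}$ extension estimate applied to any extension $\tilde F$ of $f$, this $F$ attains the infimum and forces $\|f\|_{L^{2}}=\inf\{\|F\|_{L^{2}}\}$.

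For the converse direction, I would combine three ingredients: (a) the asymptotic
\begin{equation*}
\lim_{t\to +\infty}\frac{H(t;c,2\varphi)}{\int_{t}^{+\infty}c(t_{1})e^{-t_{1}}dt_{1}}=\pi\frac{e^{-2\varphi(z_{0})}}{c_{\beta}^{2}(z_{0})}
\end{equation*}
furnished by Proposition \ref{proposition3.2}; (b) the closed form $\|f\|_{L^{2}}=\bigl(\int_{0}^{+\infty}c(t)e^{-t}dt\bigr)\pi e^{-2\varphi(z_{0})}/c_{\beta}^{2}(z_{0})$ from the direct computation of the residue measure $dV_{X}[\psi]$ at $z_0$; and (c) the concavity of $H(h^{-1}(r);c,2\varphi)$ in $r$ guaranteed by Theorem \ref{maintheorem}. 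Under the equality hypothesis, the secant slope $H(0;c,2\varphi)/\int_{0}^{+\infty}c(t)e^{-t}dt$ equals $\pi e^{-2\varphi(z_{0})}/c_{\beta}^{2}(z_{0})$, which by (a) matches the limit slope of $H(h^{-1}(r))$ as $r\to 0^{+}$; together with concavity this is exactly the situation covered by Corollary \ref{corollary1.2}(3), which forces $H(h^{-1}(r);c,2\varphi)$ to be affine in $r$.

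With the equivalence in place, Theorem \ref{corollary1.11} immediately produces the claimed conditions (1) $\varphi=\log|f_{\varphi}|+v$ with $f_{\varphi}\in\mathcal{O}(X)$ and $v$ harmonic on $X$, and (2) $\chi_{-v}=\chi_{z_{0}}$. The main obstacle I anticipate lies in input (a) of the reverse direction: establishing Proposition \ref{proposition3.2} requires a delicate asymptotic analysis of the minimal extensions on shrinking sublevel sets $\{\psi<-t\}$, localizing near $z_{0}$, producing near-extremal candidates by hand from the local expansion $G_{X}(\cdot,z_{0})=\log|w|+u(w)$, and matching the leading constant through the logarithmic capacity $c_{\beta}(z_{0})$. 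Once that boundary asymptotic is granted, the remainder of the argument is a clean packaging of the concavity from Theorem \ref{maintheorem}, the slope comparison of Corollary \ref{corollary1.2}, and the structural description of Theorem \ref{corollary1.11}.
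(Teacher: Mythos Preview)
Your proposal is correct and follows essentially the same route as the paper: the paper's argument for Theorem \ref{theorem1.11} (contained in the discussion paragraphs immediately preceding its statement) is precisely the reduction to Theorem \ref{corollary1.11} via the equivalence ``equality $\Leftrightarrow$ linearity of $H(h^{-1}(r);c,2\varphi)$'', with the forward implication supplied by Lemma \ref{existence of F} and the reverse implication by combining Proposition \ref{proposition3.2}, the explicit formula for $\|f\|_{L^{2}}$, and the concavity of Theorem \ref{maintheorem} (via Corollary \ref{corollary1.2}). Your anticipated obstacle regarding Proposition \ref{proposition3.2} is also where the paper invests its technical effort.
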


When $\varphi\equiv0$, Theorem \ref{theorem1.11} is the solution of equality part of Suita conjecture \cite{GZ14Ann}.
When $\varphi$ is harmonic, Theorem \ref{theorem1.11} is the solution of extended Suita conjecture \cite{GZ14Ann}.

\section{Proof of Theorem 1.1}
In this section, we modify some techniques in \cite{G16} and prove the Theorem \ref{maintheorem}.

\subsection{$L^2$ methods related to $L^2$ extension theorem}
Let $c(t)$ be a positive function in $C^{\infty}((T,+\infty))$ satisfying
$\int^{\infty}_{T}c(t)e^{-t}dt<\infty$ and
\begin{equation}
(\int^t_T
c(t_1)e^{-t_1}dt_1)^2>c(t)e^{-t}\int^t_T(\int^{t_2}_Tc(t_1)e^{-t_1}dt_1)dt_2
\end{equation}
for any $t\in(T,+\infty)$, where $T\in(-\infty,+\infty)$. This class of functions
is denoted by $C_T$. Especially, if $c(t)e^{-t}$ is decreasing with respect to
t and $\int^{\infty}_{T}c(t)e^{-t}dt<\infty$, then inequality $(2.1)$ holds.
\par
In this section, we present the following Lemma, whose various forms already
appear in\cite{G16,GZ14Ann,GZ15} etc.
\begin{Lemma}
Let $B \in (0, +\infty)$ and $t_0 \ge T$ be arbitrarily given. Let $X$ be
an n-dimensional Stein manifold. Let $d\lambda_n$ be a continuous volume form on $X$ with no zero point. Let $\psi < -T$ be a
plurisubharmonic function on X. Let $\varphi$ be a plurisubharmonic function on X.
Let F be a holomorphic (n,0) form on $\{\psi< -t_0\}$, such that
\begin{equation}
\int_{K\cap \{\psi<-t_0\}} {|F|}^2d\lambda_n<+\infty
\end{equation}
for any compact subset $K$ of $X$, and
\begin{equation}
\int_X \frac{1}{B} \mathbb{I}_{\{-t_0-B< \psi < -t_0\}}  {|F|}^2
e^{{-}\varphi}d\lambda_n\le C <+\infty
\end{equation}
Then there exists a holomorphic (n,0) form $\widetilde F$ on X, such that
\begin{equation}
\int_X {|\widetilde F-(1-b(\psi))F|}^2
e^{{-}\varphi+v(\psi)}c(-v(\psi))d\lambda_n\le C\int^{t_0+B}_{T}c(t)e^{{-}t}dt
\end{equation}
where $b(t)=\int^{t}_{-\infty}\frac{1}{B} \mathbb{I}_{\{-t_0-B< s < -t_0\}}ds$,
$v(t)=\int^{t}_{0}b(s)ds$ and $c(t)\in C_T$.
\label{lemma2.1}
\end{Lemma}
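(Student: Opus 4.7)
\textbf{Proof proposal for Lemma \ref{lemma2.1}.}

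My plan is the standard truncation-and-correction argument via a twisted $L^{2}$-estimate of Berndtsson-Chen/Guan-Zhou type, as in \cite{G16,GZ14Ann,GZ15}. First I would set $\alpha := (1-b(\psi))F$; since $b(\psi) \equiv 1$ on $\{\psi \ge -t_0\}$, this extends by zero to a Lipschitz $(n,0)$-form on all of $X$ (agreeing with $F$ wherever $F$ is defined). It is not holomorphic, but
\[
\bar\partial\alpha = -b'(\psi)\,\bar\partial\psi \wedge F, \qquad b'(\psi) = \tfrac{1}{B}\mathbb I_{\{-t_0-B<\psi<-t_0\}},
\]
and $|b'(\psi)|^{2}$ is exactly the coefficient appearing in hypothesis (2.3). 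The plan is to solve $\bar\partial u = \bar\partial\alpha$ with a suitable weighted estimate and set $\widetilde F := \alpha - u$; then $\widetilde F$ is holomorphic on $X$ and $\widetilde F - (1-b(\psi))F = -u$, so the left-hand side of (2.4) equals $\int_X |u|^{2} e^{-\varphi + v(\psi)} c(-v(\psi))\, d\lambda_n$.

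To produce $u$, I would exhaust $X$ by relatively compact Stein open sets $X_j$, smoothly approximate $\psi$ (and $b$, $v$) from above so that H\"ormander's theorem applies, and solve $\bar\partial u_j = \bar\partial\alpha$ on each $X_j$ via a twisted Bochner-type estimate with weight $e^{-\varphi+v(\psi)}$ and twist factor determined by $c(-v(\psi))$. The necessary positivity is provided by plurisubharmonicity of $\varphi$ and $\psi$ together with $v''=b'\ge 0$, which makes $v(\psi)$ plurisubharmonic and contributes $b'(\psi)\,i\partial\psi\wedge\bar\partial\psi$ to the effective curvature on the transition strip. A standard weak $L^{2}$ compactness argument along the exhaustion then yields $u$ on all of $X$.

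The main obstacle is the quantitative book-keeping: the twist must be tuned so that the curvature inequality holds everywhere and so that the constant on the right-hand side collapses to exactly $C \int_T^{t_0+B} c(t)e^{-t}dt$. This is where condition (2.1) is used. Writing the twist as a function of $-v(\psi)$ and pushing forward via $t=-\psi$, the Cauchy-Schwarz step in the twisted estimate produces an ODE inequality on $(T,+\infty)$ which is precisely (2.1); hence (2.1) guarantees existence of a valid twist on the whole half-line. With this twist the $\bar\partial\eta\wedge\partial\bar\eta$ error term is absorbed into the curvature, and the resulting bound reads
\[
\int_{X_j} |u_j|^{2} e^{-\varphi+v(\psi)} c(-v(\psi))\, d\lambda_n \;\le\; \left(\int_X \tfrac{1}{B}\mathbb I_{\{-t_0-B<\psi<-t_0\}} |F|^{2} e^{-\varphi}\, d\lambda_n\right) \int_T^{t_0+B} c(t)e^{-t}\, dt,
\]
whose first factor is $\le C$ by (2.3). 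Passing to the weak limit and setting $\widetilde F := \alpha - u$ completes the proof.
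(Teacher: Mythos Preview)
Your proposal is correct and follows essentially the same route as the paper: define the truncated form $(1-b(\psi))F$, solve $\bar\partial u=\bar\partial\alpha$ via the twisted Bochner--Kodaira estimate with twist functions $\eta=s(-v(\psi))$, $\phi=u(-v(\psi))$ chosen from an ODE system (whose solvability is exactly condition (2.1)), and pass to the limit along an exhaustion with regularizations of $\psi$, $\varphi$, and the cutoff $b$. The paper carries out the limit procedure in more granular steps (separate passages for smoothing $\psi$, smoothing $\varphi$, sending $\epsilon\to 0$ in $v_\epsilon$, and $k\to\infty$ in the exhaustion), and writes down the explicit solution $u(t)=-\log\int_T^t c(t_1)e^{-t_1}dt_1$, but the architecture is the same as what you sketched.
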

It is clear that $\mathbb{I}_{\{-t_0,+\infty\}}\le b(t) \le
\mathbb{I}_{\{-t_0-B,+\infty\}}$ and $\max\{t,-t_0-B\} \le v(t) \le \max\{t,-t_0\}$.
\par

\subsection{Some properties of $H(t)$}
Following the notations and assumption in Section 1.1, we present some properties related to $H(t)$ in the present section.\par
Let $Z_0$ be a subset of $\{\psi=-\infty\}$ such that $Z_0 \cap
Supp(\mathcal{O}/\mathcal{I}(\varphi+\psi))\neq \emptyset$. Let $U \supset Z_0$ be
an open subset of X. Let $\mathcal{F} \supset \mathcal{I}(\varphi+\psi)|_U$ be a coherent subsheaf of $\mathcal{O}$ on $U$.
\par
We firstly introduce a property of coherent analytic sheaves which will be used frequently in our discussion of $H(t)$.
\begin{Lemma}(Closedness of Submodules, see \cite{Grauert}) Let $N$ be a submodule of $\mathcal{O}^q_{\mathbb{C}^n,0}$, $1\le q < +\infty$, let $f_j \in \mathcal{O}^q_{\mathbb{C}^n}(U)$ be a sequence of $q$-tuples holomorphic in an open neighborhood $U$ of the origin. Assume that the $f_j$ converge uniformly in $U$ towards a $q$-tuple $f \in \mathcal{O}^q_{\mathbb{C}^n}(U)$, assume furthermore that all germs $f_{j,0}$ belong to $N$. Then $f_0 \in N$.
\label{closedness}
\end{Lemma}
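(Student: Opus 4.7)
The statement is the classical Grauert/Cartan closedness theorem for coherent analytic sheaves, so the plan is to reconstruct the standard Weierstrass-type argument rather than invent anything new. My strategy would proceed in three stages: (i) reduce to a finitely generated situation using the Noether property of $\mathcal{O}_{\mathbb{C}^n,0}$; (ii) produce a bounded division, i.e., representations of the $f_j$ as $\mathcal{O}$-linear combinations of generators with coefficients controlled in sup-norm on a fixed small polydisc; (iii) extract a uniformly convergent subsequence of coefficients and pass to the limit.

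For stage (i), since $\mathcal{O}^q_{\mathbb{C}^n,0}$ is a Noetherian module, the submodule $N$ is finitely generated, say by $g_1,\dots,g_p\in\mathcal{O}^q_{\mathbb{C}^n,0}$. After shrinking $U$ if necessary, I represent each $g_i$ by an actual $q$-tuple of holomorphic functions on $U$, still denoted $g_i$. The hypothesis that each germ $f_{j,0}$ lies in $N$ yields, on some possibly smaller neighborhood $U_j$, decompositions $f_j=\sum_i a_{ij}g_i$ with $a_{ij}\in\mathcal{O}(U_j)$. The crux is that a priori the neighborhoods $U_j$ shrink with $j$ and the coefficients $a_{ij}$ are uncontrolled. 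For stage (ii), I would apply the Weierstrass preparation and division theorems to the generators $g_1,\dots,g_p$: after a generic linear change of coordinates, one arranges that the generators are in a form where division with remainder produces, for every $h$ in the submodule they generate, canonical coefficients $a_i(h)$ whose sup-norm on a fixed polydisc $P\Subset U$ is bounded by a constant multiple of $\|h\|_{L^\infty(P')}$ for a slightly larger polydisc $P'$. Equivalently, one invokes Cartan's division/open mapping result stating that the continuous surjection $\mathcal{O}(P)^p\twoheadrightarrow N\cdot\mathcal{O}(P)$ between Fréchet spaces is open. Either viewpoint gives, for each $j$, coefficients $a_{ij}\in\mathcal{O}(P)$ with $f_j=\sum_i a_{ij}g_i$ on $P$ and $\|a_{ij}\|_{L^\infty(P)}\le C\|f_j\|_{L^\infty(P')}$.

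For stage (iii), the sequence $\{f_j\}$ is uniformly Cauchy on $P'$, hence uniformly bounded there, so the coefficient families $\{a_{ij}\}_j$ are uniformly bounded on $P$. By Montel's theorem, after passing to a subsequence, $a_{ij}\to a_i$ uniformly on compact subsets of $P$ for some $a_i\in\mathcal{O}(P)$. Taking the limit in $f_j=\sum_i a_{ij}g_i$ yields $f=\sum_i a_ig_i$ on $P$, so $f_0\in N$ as desired.

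The main obstacle is the controlled-division step: producing coefficients $a_{ij}$ with sup-norm bounds independent of $j$ is precisely where the analytic, rather than purely algebraic, content of the theorem lives, and it is the reason one needs Weierstrass preparation (or equivalently Cartan's open mapping result for coherent sheaves). Once this is granted, the Noether reduction and the Montel extraction are formal. A secondary subtlety is the coordinate choice required to put the generators into Weierstrass form simultaneously, which one handles by a generic linear transformation that arranges all of $g_1,\dots,g_p$ to be regular in the last variable.
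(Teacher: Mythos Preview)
The paper does not give a proof of this lemma at all; it is stated as a known result and referred to Grauert--Remmert, \emph{Coherent Analytic Sheaves}. Your outline is precisely the classical argument found there (Noetherian reduction to finitely many generators, a bounded division step via Weierstrass preparation or equivalently the open mapping theorem for the Fr\'echet surjection $\mathcal{O}(P)^p\to N\cdot\mathcal{O}(P)$, then Montel extraction), so there is no discrepancy to analyze. One minor caution: your remark about a single generic linear change putting all generators simultaneously into Weierstrass form is a slight oversimplification---the actual inductive argument in Grauert--Remmert handles one variable at a time---but since you also invoke the open-mapping viewpoint, which bypasses this issue entirely, the sketch stands.
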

\begin{Lemma}
For any $t_0 \in [T,+\infty)$, assume that $\{\tilde{f}_n\}_{n \in \mathbb{N} ^+}$ is a family of holomorphic $(n,0)$ form on $\{\psi<-t_0\}$, which compactly convergent to $\tilde{f}$ on $\{\psi<-t_0\}$.\par
Assume that for any n, there exists open set $U'_n$ such that $Z_0 \subset U'_n \subset U$ and $\tilde{f}_n \in
H^0(\{\psi<-t_0\} \cap \tilde{U}'_n,\mathcal{O}(K_X)\otimes\mathcal{F} ) $. Then there exists an open set $\tilde{U}'$ which satisfies $Z_0 \subset \tilde{U}' \subset U$ such that $\tilde{f} \in
H^0(\{\psi<-t_0\} \cap \tilde{U}',\mathcal{O}(K_X)\otimes\mathcal{F} ) $.
\begin{proof}
As $K_X$ is a holomorphic line bundle on X, then $\mathcal{O} (K_X) \otimes \mathcal{F}$ is a coherent analytic sheaf.
\par
For any $z\in Z_0$, we know the germ $(\tilde{f}_n,z) \in (\mathcal{O} (K_X) \otimes \mathcal{F})_z$. It follows from Lemma \ref{closedness} and $\tilde{f}_n$ compactly convergent to $\tilde{f}$ (when $n \to +\infty$) on $\{\psi<-t_0\}$ that $(\tilde{f},z) \in (\mathcal{O} (K_X) \otimes \mathcal{F})_z$.
\par
As $\mathcal{O} (K_X) \otimes \mathcal{F}$ is coherent analytic sheaf, there exists a small open neighborhood $U_z$ of $z$ such that $(\mathcal{O} (K_X) \otimes \mathcal{F})|_{U_z}$ is finite generated i.e $\exists f^1,\cdots,f^k \in H^0(U_z,\mathcal{O} (K_X) \otimes \mathcal{F})$ such that $\forall y \in U_z$, $(\mathcal{O} (K_X) \otimes \mathcal{F})_y$ is generated by $f^1_y\cdots,f^k_y$. Hence for $\tilde{f}$, there exists $g^i \in \Gamma(U_z,\mathcal{O})$ such that $\tilde{f}_z=\sum_i g^i_z f^i_z$ i.e $\exists$ small open neighborhood $\tilde{U}'_z$ of $z$ satisfies $\tilde{U}'_z \subset U_z$ and $\tilde{f}|_{\tilde{U}'_z}=(\sum_i g^i_z f^i_z)|_{\tilde{U}'_z}$ which implies $\tilde{f}\in H^0(\tilde{U}'_z,\mathcal{O} (K_X) \otimes \mathcal{F})$. Take $\tilde{U}'=(\mathop{\cup} \limits_{z\in Z_0}\tilde{U}'_z)\cap U$. We now find a open set $\tilde{U}'$ satisfies $Z_0 \subset \tilde{U}' \subset U$ such that $f\in H^0(\{\psi<-t_0\}\cap \tilde{U}',\mathcal{O} (K_X) \otimes \mathcal{F})$ .

\end{proof}
\label{lemma2.2}
\end{Lemma}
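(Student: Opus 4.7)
The plan is to reduce the problem to a local statement at each point $z \in Z_0$ and then combine Grauert's closedness of submodules (Lemma \ref{closedness}) with the coherence of $\mathcal{O}(K_X) \otimes \mathcal{F}$ to upgrade a stalk-level membership to a genuine section on an open neighborhood of $Z_0$.

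First I would observe that since $Z_0 \subset \{\psi=-\infty\} \subset \{\psi<-t_0\}$, every point $z \in Z_0$ lies in the common domain of $\tilde{f}$ and of each $\tilde{f}_n$, and the hypothesis $z \in Z_0 \subset U'_n$ gives $(\tilde{f}_n)_z \in (\mathcal{O}(K_X) \otimes \mathcal{F})_z$ for every $n$. After choosing a local trivialisation of $K_X$ and a local realisation of $\mathcal{O}(K_X) \otimes \mathcal{F}$ as a submodule of some $\mathcal{O}^q$ on a small coordinate polydisc around $z$, the compact convergence $\tilde{f}_n \to \tilde{f}$ on $\{\psi<-t_0\}$ supplies uniform convergence on that polydisc, so Lemma \ref{closedness} applies and yields $(\tilde{f})_z \in (\mathcal{O}(K_X) \otimes \mathcal{F})_z$.

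Next I would use coherence to promote this stalk-level membership to a sectional statement. On some open neighborhood $U_z \subset U$ of $z$ the sheaf $\mathcal{O}(K_X) \otimes \mathcal{F}$ is generated by finitely many sections $f^1,\dots,f^k \in H^0(U_z, \mathcal{O}(K_X) \otimes \mathcal{F})$, so $(\tilde{f})_z = \sum_i g^i_z f^i_z$ for some $g^i_z \in \mathcal{O}_z$; this extends to an equality of sections on a smaller neighborhood $\tilde{U}'_z \subset U_z$, giving $\tilde{f} \in H^0(\tilde{U}'_z, \mathcal{O}(K_X) \otimes \mathcal{F})$. Setting $\tilde{U}' = \bigl(\bigcup_{z \in Z_0} \tilde{U}'_z\bigr) \cap U$ and gluing on overlaps produces the desired open set with $Z_0 \subset \tilde{U}' \subset U$ and $\tilde{f} \in H^0(\{\psi<-t_0\} \cap \tilde{U}', \mathcal{O}(K_X) \otimes \mathcal{F})$.

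The one delicate point is the first step: Lemma \ref{closedness} is stated for $q$-tuples in $\mathcal{O}^q_{\mathbb{C}^n,0}$, so I must take care to trivialise $K_X$ and choose a coherent embedding $\mathcal{O}(K_X) \otimes \mathcal{F} \hookrightarrow \mathcal{O}^q$ on a small neighborhood of $z$ before invoking it. The fact that the $U'_n$ may vary with $n$ causes no trouble, since the hypothesis is used only stalkwise at each fixed $z \in Z_0$, and $z$ lies in every $U'_n$ by assumption.
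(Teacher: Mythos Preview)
Your proposal is correct and follows essentially the same route as the paper: apply Lemma~\ref{closedness} stalkwise at each $z\in Z_0$ to get $(\tilde f)_z\in(\mathcal O(K_X)\otimes\mathcal F)_z$, use coherence to pass from the stalk to a section on a small neighborhood $\tilde U'_z$, and set $\tilde U'=\bigl(\bigcup_{z\in Z_0}\tilde U'_z\bigr)\cap U$. Your added remark about trivialising $K_X$ and embedding $\mathcal O(K_X)\otimes\mathcal F\hookrightarrow\mathcal O^q$ locally before invoking Lemma~\ref{closedness} is a welcome clarification that the paper leaves implicit.
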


The following lemma is a characterization of $H(T)\neq 0$.
\begin{Lemma}
The following two statements are equivalent:
\par
(1) For any open set $U'$ satisfying $Z_0 \subset U' \subset U$, $f \notin
H^0(U',\mathcal{O} (K_X) \otimes \mathcal{F} )$.
\par
(2) $H(T) \neq 0$.
\label{lemma2.3}
\end{Lemma}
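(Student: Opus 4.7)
My plan is to prove the two implications $(1)\Leftrightarrow(2)$ separately. A useful preliminary observation is that $\psi<-T$ on all of $X$, so $\{\psi<-T\}=X$ and the candidates $\tilde f$ in the definition of $H(T)$ are simply global holomorphic $(n,0)$ forms on $X$ whose difference with $f$ lies, over some open neighbourhood of $Z_0$ inside $U$, in the sheaf $\mathcal{O}(K_X)\otimes\mathcal{F}$.

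The direction $(2)\Rightarrow(1)$ I would handle by contrapositive, and it is essentially immediate: if some open $U'$ with $Z_0\subset U'\subset U$ satisfies $f\in H^0(U',\mathcal{O}(K_X)\otimes\mathcal{F})$, then $\tilde f\equiv 0$ is an admissible competitor in the definition of $H(T)$, since $(\tilde f-f)=-f$ still lies in $H^0(U',\mathcal{O}(K_X)\otimes\mathcal{F})$ (the sheaf being an $\mathcal{O}$-module). Its $L^2$ integral is $0$, so $H(T)=0$, contradicting (2).

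For the harder direction $(1)\Rightarrow(2)$, I would again work contrapositively. Assume $H(T)=0$ and extract a minimizing sequence: holomorphic $(n,0)$ forms $\tilde f_n$ on $X$ together with open sets $U'_n$, $Z_0\subset U'_n\subset U$, with $(\tilde f_n-f)\in H^0(U'_n,\mathcal{O}(K_X)\otimes\mathcal{F})$ and $\int_X |\tilde f_n|^2 e^{-\varphi}c(-\psi)\,dV_X\to 0$. Condition (3) in the definition of $\mathcal{G}_T$ forces $e^{-\varphi}c(-\psi)$ to have a positive lower bound on every compact $K\subset X$, so $\int_K |\tilde f_n|^2\,dV_X\to 0$ for all such $K$. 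Holomorphy together with the mean-value inequality then upgrades this $L^2_{\mathrm{loc}}$ decay to locally uniform convergence $\tilde f_n\to 0$ on $X$. Consequently $\tilde f_n-f\to -f$ uniformly on compact subsets of $X=\{\psi<-T\}$, and Lemma \ref{lemma2.2} applied to the sequence $\tilde f_n-f$ produces an open set $\tilde U'$ with $Z_0\subset\tilde U'\subset U$ such that $-f\in H^0(\tilde U',\mathcal{O}(K_X)\otimes\mathcal{F})$, equivalently $f\in H^0(\tilde U',\mathcal{O}(K_X)\otimes\mathcal{F})$. This contradicts (1).

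The main (and essentially only) subtlety is verifying that the minimizing sequence really converges uniformly on compacts so that Lemma \ref{lemma2.2} can be applied; this step crucially uses property (3) of $\mathcal{G}_T$ to convert weighted $L^2$ smallness of the $\tilde f_n$ into uniform smallness on compacts, after which the closedness-of-submodules input packaged into Lemma \ref{lemma2.2} does all the remaining work.
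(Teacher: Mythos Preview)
Your proof is correct and follows essentially the same approach as the paper: both directions are handled by contrapositive, with $(2)\Rightarrow(1)$ using $\tilde f=0$ as competitor, and $(1)\Rightarrow(2)$ using condition (3) of $\mathcal{G}_T$ to convert weighted $L^2$ smallness into local uniform convergence of a minimizing sequence to $0$, then invoking Lemma~\ref{lemma2.2}. Your observation that the full sequence (rather than a diagonal subsequence) converges locally uniformly is in fact slightly cleaner than the paper's phrasing.
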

\begin{proof}$(2)\Rightarrow (1)$
If there exists open set $U'$ satisfies $Z_0 \subset U' \subset U$ and $f \in
H^0(U',\mathcal{O} (K_X) \otimes \mathcal{F} )$, then $H(T)=0$ (just take $\tilde{f}=0$) .\par
Now we prove $(1)\Rightarrow (2)$ by contradiction.\par
Assume $H(T)=0$, then there exist holomorphic $(n,0)$ forms $\{\tilde{f}_n\}_{n \in \mathbb{N} ^+}$ on $X$ such that $\lim\limits_{n\to +\infty} \int_X
|\tilde{f}_n|^2e^{-\varphi}c(-\psi)dV_X=0$ and for each $n$, $\exists\  U_n'$ satisfies $Z_0 \subset U_n'\subset U$ and $\tilde{f}_n-f\in
H^0(U_n',\mathcal{O} (K_X) \otimes \mathcal{F})$. As $e^{-\varphi}c(-\psi)$ has positive
lower bound on any compact subset of $X$, then (by diagonal method) there exists a subsequence of
$\{\tilde{f}_n\}_{n \in \mathbb{N} ^+}$ denoted by $\{\tilde{f}_{n_k}\}_{k \in
\mathbb{N} ^+}$ compactly convergent to 0 on $X$ when $k \to +\infty$. Hence $\tilde{f}_{n_k}-f$ is
compactly convergent to $0-f=f$ on U.\par
 By Lemma \ref{lemma2.2}, there exists an open set $U'$ satisfies $Z_0 \subset U' \subset U$ such that $f\in H^0(U',\mathcal{O} (K_X) \otimes \mathcal{F})$ which contradicts the condition.
\end{proof}

The following lemma shows the uniqueness of the holomorphic $(n,0)$ form related
to $H(t)$.
\begin{Lemma}
Assume that $H(t)<+\infty$ for some $t\in [T,+\infty)$. Then there exists a unique
holomorphic $(n,0)$ form $F_t$ on $\{\psi<-t\}$ satisfying
$$\ (F_t-f)\in
H^0(\{\psi<-t\}\cap U', \mathcal{O} (K_X) \otimes \mathcal{F}),$$
for some open set $U'$ such that $Z_0 \subset U' \subset U$ and $$\ \int_{\{\psi<-t\}}|F_t|^2e^{-\varphi}c(-\psi)dV_X=H(t).$$
\par
Furthermore, for any holomorphic $(n,0)$ form $\hat{F}$ on $\{\psi<-t\}$ satisfying
$$\int_{\{\psi<-t\}}|\hat{F}|^2e^{-\varphi}c(-\psi)dV_X<+\infty$$ and $$(\hat{F}-f)\in
H^0(\{\psi<-t\}\cap \hat{U}',\mathcal{O} (K_X) \otimes \mathcal{F})$$ for some open set $\hat{U}'$ such that $Z_0 \subset\hat{U}' \subset U$, the following equality holds
\begin{equation}
\begin{split}
&\int_{\{\psi<-t\}}|F_t|^2e^{-\varphi}c(-\psi)dV_X+
\int_{\{\psi<-t\}}|\hat{F}-F_t|^2e^{-\varphi}c(-\psi)dV_X\\
=&\int_{\{\psi<-t\}}|\hat{F}|^2e^{-\varphi}c(-\psi)dV_X
\label{orhnormal F}
\end{split}
\end{equation}
\label{existence of F}
\end{Lemma}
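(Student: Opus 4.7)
The plan is to treat this as a Hilbert space projection problem. Let $\mathcal{H}$ denote the space of holomorphic $(n,0)$ forms $g$ on $\{\psi<-t\}$ with $\|g\|^{2}:=\int_{\{\psi<-t\}}|g|^{2}e^{-\varphi}c(-\psi)dV_{X}<+\infty$, equipped with the Hermitian inner product coming from $i^{n^{2}}g\wedge\bar{h}/dV_{X}$ weighted by $e^{-\varphi}c(-\psi)$. Since $e^{-\varphi}c(-\psi)$ admits a positive lower bound on every compact subset of $X$, $L^{2}$ convergence in $\mathcal{H}$ implies locally uniform convergence on $\{\psi<-t\}$; combined with Cauchy's integral formula this shows $\mathcal{H}$ is a Hilbert space. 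Let $V\subset\mathcal{H}$ be the affine subset of those $\tilde{f}$ satisfying the sheaf admissibility condition appearing in the definition of $H(t)$.

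First I would verify that $V$ is closed in $\mathcal{H}$. Given a sequence $\tilde{f}_{n}\in V$ converging in $\mathcal{H}$ to some $\tilde{g}$, each difference $\tilde{f}_{n}-f$ lies in $H^{0}(\{\psi<-t\}\cap U'_{n},\mathcal{O}(K_{X})\otimes\mathcal{F})$ for suitable $U'_{n}\supset Z_{0}$. Since the convergence is locally uniform on $\{\psi<-t\}$, the sequence $\tilde{f}_{n}-f$ converges locally uniformly near $Z_{0}$ to $\tilde{g}-f$, and Lemma \ref{lemma2.2} (which itself uses the closedness of submodules in Lemma \ref{closedness}) produces an open set $\tilde{U}'\supset Z_{0}$ on which $\tilde{g}-f\in H^{0}(\{\psi<-t\}\cap\tilde{U}',\mathcal{O}(K_{X})\otimes\mathcal{F})$. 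Hence $\tilde{g}\in V$.

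Next I would prove existence and uniqueness of $F_{t}$ simultaneously via a standard parallelogram argument. Pick a minimizing sequence $\{\tilde{f}_{n}\}\subset V$ with $\|\tilde{f}_{n}\|^{2}\to H(t)$. For $n,m$, the midpoint $(\tilde{f}_{n}+\tilde{f}_{m})/2$ lies in $V$ (since the difference of any two elements of $V$ lies in the linear subspace of admissible sections of $\mathcal{O}(K_{X})\otimes\mathcal{F}$ near $Z_{0}$, and convex combinations preserve the condition $\tilde{f}-f\in\mathcal{O}(K_{X})\otimes\mathcal{F}$ on the intersection of the two open sets). The parallelogram identity
\begin{equation*}
\bigl\|\tfrac{\tilde{f}_{n}-\tilde{f}_{m}}{2}\bigr\|^{2}=\tfrac{1}{2}\|\tilde{f}_{n}\|^{2}+\tfrac{1}{2}\|\tilde{f}_{m}\|^{2}-\bigl\|\tfrac{\tilde{f}_{n}+\tilde{f}_{m}}{2}\bigr\|^{2}\le\tfrac{1}{2}\|\tilde{f}_{n}\|^{2}+\tfrac{1}{2}\|\tilde{f}_{m}\|^{2}-H(t)
\end{equation*}
shows $\{\tilde{f}_{n}\}$ is Cauchy in $\mathcal{H}$, hence converges to some $F_{t}\in V$ with $\|F_{t}\|^{2}=H(t)$. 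Uniqueness follows from the same identity: any two minimizers $F_{t},F'_{t}\in V$ have $(F_{t}+F'_{t})/2\in V$ with squared norm $\ge H(t)$, forcing $\|F_{t}-F'_{t}\|=0$.

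Finally, for the orthogonal decomposition \eqref{orhnormal F}, given any admissible $\hat{F}$ I would consider the one-parameter family $F_{\lambda}:=F_{t}+\lambda(\hat{F}-F_{t})$ for $\lambda\in\mathbb{C}$. Since $\hat{F}-F_{t}$ lies in the linear admissible subspace (being the difference of two members of $V$), we have $F_{\lambda}\in V$ for every $\lambda$. The scalar function $\lambda\mapsto\|F_{\lambda}\|^{2}=\|F_{t}\|^{2}+2\mathrm{Re}\,\bar{\lambda}\langle F_{t},\hat{F}-F_{t}\rangle+|\lambda|^{2}\|\hat{F}-F_{t}\|^{2}$ attains its minimum at $\lambda=0$, which forces $\langle F_{t},\hat{F}-F_{t}\rangle=0$. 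Then expanding $\|\hat{F}\|^{2}=\|F_{t}+(\hat{F}-F_{t})\|^{2}$ and using this orthogonality gives exactly \eqref{orhnormal F}. The subtlest point throughout is the closedness of $V$, which I expect to be the main technical obstacle since it hinges on the sheaf-theoretic lemma allowing the open neighborhood $U'$ to vary with the element.
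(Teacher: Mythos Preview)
Your proof is correct and follows the same overall structure as the paper: closedness of the admissible set via Lemma~\ref{lemma2.2}, uniqueness via the parallelogram law, and the orthogonality relation via the first-variation argument $F_t+\alpha(\hat{F}-F_t)$. The one genuine difference is in the existence step. You set up $\mathcal{H}$ as a Hilbert space and use the parallelogram identity to show that any minimizing sequence is Cauchy, then invoke completeness. The paper instead bypasses completeness of $\mathcal{H}$: from the bound $\sup_n\|f_n\|^2<\infty$ and the positive lower bound of $e^{-\varphi}c(-\psi)$ on compacta it extracts, by a Montel/diagonal argument, a subsequence converging locally uniformly to some holomorphic $F$, and then Fatou gives $\|F\|^2\le H(t)$. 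Your route is the textbook ``projection onto a closed affine subspace'' argument and is slightly cleaner conceptually; the paper's route avoids having to verify that $\mathcal{H}$ is complete (though your justification of that point is fine) and is closer in spirit to the compactness arguments used elsewhere in the paper. Either way the substantive input---that admissibility passes to locally uniform limits---is the same Lemma~\ref{lemma2.2}.
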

\begin{proof}
As $H(t)<+\infty$, then there exist
holomorphic $(n,0)$-forms $\{f_n\}_{n \in \mathbb{N}^+}$ on $\{\psi<-t\}$ such
that
$\lim\limits_{n \to +\infty}\int_{\{\psi<-t\}}|f_n|^2e^{-\varphi}c(-\psi)dV_X=H(t)$ and for each $n$, there exists $ U_n'$ such that $Z_0 \subset U_n'\subset U$ and $(\tilde{f}_n-f)\in
H^0(\{\psi<-t\}\cap U_n',\mathcal{O} (K_X) \otimes \mathcal{F})$.
As $e^{-\varphi}c(-\psi)$ has
positive lower bound on any compact subset of $\{\psi<-t\}$, then (by diagonal method) there exist a
subsequence of $\{f_j\}$ also denoted by $\{f_j\}$ compact convergence to a
holomorphic $(n,0)$ form $F$ (when $j \to +\infty$) on $\{\psi<-t\}$ satisfying
\begin{equation}
\begin{split}
\int_{K}|F|^2e^{-\varphi}c(-\psi)dV_X
&\leq
\liminf \limits_{j \to +\infty}\int_{K}|f_j|^2e^{-\varphi}c(-\psi)dV_X\\
&\leq
\liminf \limits_{j \to
+\infty}\int_{\{\psi<-t\}}|f_j|^2e^{-\varphi}c(-\psi)dV_X\\
&=H(t)
\end{split}
\end{equation}
\par
Lemma \ref{lemma2.2} shows that there exists an open subset $U'$ such that $Z_0 \subset U' \subset U$ and $(F-f)\in H^0(\{\psi<-t\}\cap U',\mathcal{O} (K_X) \otimes \mathcal{F})$ which implies
$H(t) \leq
\int_{\{\psi<-t\}}|F|^2e^{-\varphi}c(-\psi)dV_X$. Hence we obtain the existence of
$F_t(=F)$.\par
We prove the uniqueness of $F_t$ by contradiction: if not, there exists
two different holomorphic $(n,0)$ forms $f_1$ and $f_2$ on $\{\psi<-t\}$
satisfying $\int_{\{\psi<-t\}}|f_1|^2e^{-\varphi}$  $c(-\psi)dV_X=
\int_{\{\psi<-t\}}|f_2|^2e^{-\varphi}c(-\psi)dV_X=H(t)$, $(f_1-f)\in
H^0(\{\psi<-t\}\cap U'_1, \mathcal{O} (K_X) \otimes \mathcal{F})$ and $(f_2-f)\in
H^0(\{\psi<-t\}\cap U'_2, \mathcal{O} (K_X) \otimes \mathcal{F})$ where both $U'_1,U'_2$ are open set satisfy $Z_0 \subset U'_1 \subset U$ and $Z_0 \subset U'_2 \subset U$ . Note that
\begin{equation}
\begin{split}
\int_{\{\psi<-t\}}|\frac{f_1+f_2}{2}|^2e^{-\varphi}c(-\psi)dV_X+
\int_{\{\psi<-t\}}|\frac{f_1-f_2}{2}|^2e^{-\varphi}c(-\psi)dV_X\\
=\frac{1}{2}(\int_{\{\psi<-t\}}|f_1|^2e^{-\varphi}c(-\psi)dV_X+
\int_{\{\psi<-t\}}|f_1|^2e^{-\varphi}c(-\psi)dV_X)=H(t)
\end{split}
\end{equation}
then we obtain that
\begin{equation}
\begin{split}
\int_{\{\psi<-t\}}|\frac{f_1+f_2}{2}|^2e^{-\varphi}c(-\psi)dV_X
\le H(t)
\end{split}
\end{equation}
and $(\frac{f_1+f_2}{2}-f)\in H^0(\{\psi<-t\}\cap (U'_1\cap U'_2), \mathcal{O} (K_X) \otimes \mathcal{F})$, which contradicts to the definition of $H(t)$.\par
Now we prove the equality \eqref{orhnormal F}. For any holomorphic $(n,0)$ form $h$ on $\{\psi<-t\}$
satisfying $\int_{\{\psi<-t\}}|h|^2e^{-\varphi}c(-\psi)dV_X<+\infty$ and $h \in
H^0(\{\psi<-t\}\cap U'_h, \mathcal{O} (K_X) \otimes \mathcal{F})$ for some open subset $U_h$ which $Z_0 \subset U'_h \subset U$.  It is clear that for any complex
number $\alpha$, $F_t+\alpha h$ satisfying $((F_t+\alpha h)-f) \in
H^0(\{\psi<-t\}\cap (U'_h\cap U'_t), \mathcal{O} (K_X) \otimes \mathcal{F})$ and
$\int_{\{\psi<-t\}}|F_t|^2e^{-\varphi}c(-\psi)dV_X \leq \int_{\{\psi<-t\}}|F_t+\alpha
h|^2e^{-\varphi}c(-\psi)dV_X$. Note that
\begin{equation}
\begin{split}
\int_{\{\psi<-t\}}|F_t+\alpha
h|^2e^{-\varphi}c(-\psi)dV_X-\int_{\{\psi<-t\}}|F_t|^2e^{-\varphi}c(-\psi)dV_X\geq 0
\end{split}
\end{equation}
(By considering $\alpha \to 0$) implies
\begin{equation}
\begin{split}
\mathfrak{R} \int_{\{\psi<-t\}}F_t\bar{h}e^{-\varphi}c(-\psi)dV_X=0
\end{split}
\end{equation}
then we have
\begin{equation}
\begin{split}
\int_{\{\psi<-t\}}|F_t+h|^2e^{-\varphi}c(-\psi)dV_X=
\int_{\{\psi<-t\}}(|F_t|^2+|h|^2)e^{-\varphi}c(-\psi)dV_X
\end{split}
\end{equation}
\par
Letting $h=\hat{F}-F_t$ (and $U'_h=\hat{U}'\cap U'_t $), we obtain equality \eqref{orhnormal F}.
\end{proof}

Now we show the lower semi-continuity property of $H(h^{-1}(r))$.

\begin{Lemma}
Assume that $H(T)<+\infty$. Then $H(t)$ is decreasing with respect to $t\in
[T,+\infty)$, such that $\lim \limits_{t \to t_0+0}H(t)=H(t_0)$ $(t_0\in
[T,+\infty))$, $\lim \limits_{t \to t_0-0}H(t)\geq H(t_0)$ $(t_0\in
(T,+\infty))$, and $\lim \limits_{t \to +\infty}H(t)=0$. Especially, $H(h^{-1}(r))$
 is lower semi-continuous with respect to $r$.
 \label{semicontinuous}
\end{Lemma}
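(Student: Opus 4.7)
I would prove the four assertions in the order listed and then deduce the semi-continuity of $H\circ h^{-1}$ as a corollary.

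Monotonicity is immediate: for $T \le t_1 < t_2$, any competitor $\tilde f$ for $H(t_1)$ restricts to $\{\psi<-t_2\}\subset\{\psi<-t_1\}$ and the sheaf-membership condition automatically passes to the smaller set, so the restriction is a competitor for $H(t_2)$ with no larger $L^2$ integral. Right-continuity at $t_0\in[T,+\infty)$ is the substantive step: take $t_n\downarrow t_0$; monotonicity already yields $\limsup H(t_n)\le H(t_0)$ and $H(t_n)\le H(T)<+\infty$. Applying Lemma \ref{existence of F} I would extract minimizers $F_n$ on $\{\psi<-t_n\}$ with $L^2$ integral equal to $H(t_n)$. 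Property (3) of $\mathcal{G}_T$ gives $e^{-\varphi}c(-\psi)\ge\epsilon_K>0$ on any compact $K\subset\{\psi<-t_0\}$, so $\int_K|F_n|^2\,dV_X$ is uniformly bounded once $n$ is large enough that $K\subset\{\psi<-t_n\}$. A normal-family/diagonal extraction then produces a subsequence converging uniformly on compacta to a holomorphic $(n,0)$ form $F$ on $\{\psi<-t_0\}$, and Fatou yields $\int_{\{\psi<-t_0\}}|F|^2e^{-\varphi}c(-\psi)\,dV_X\le\liminf H(t_n)$. Lemma \ref{lemma2.2} furnishes an open set $U'\supset Z_0$ with $(F-f)\in H^0(\{\psi<-t_0\}\cap U',\mathcal{O}(K_X)\otimes\mathcal{F})$, so $F$ is admissible for $H(t_0)$ and $H(t_0)\le\liminf H(t_n)$. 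Left lower semi-continuity at $t_0\in(T,+\infty)$ is automatic from monotonicity, since $H(t)\ge H(t_0)$ for $t<t_0$.

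For the vanishing at infinity, I would take the minimizer $F_T$ from Lemma \ref{existence of F} and use $F_T|_{\{\psi<-t\}}$ as a competitor to conclude $H(t)\le\int_{\{\psi<-t\}}|F_T|^2e^{-\varphi}c(-\psi)\,dV_X$. Since $\{\psi=-\infty\}$ has Lebesgue measure zero (as $\psi$ is plurisubharmonic, not identically $-\infty$), dominated convergence with dominant $|F_T|^2e^{-\varphi}c(-\psi)$ drives the right-hand side to $0$. Finally, $h'(t)=-c(t)e^{-t}<0$ makes $h$ a smooth strictly decreasing bijection with continuous inverse, so lower semi-continuity of $H\circ h^{-1}$ at $r_0$ reduces to the one-sided continuities of $H$ already proved: $r\to r_0+$ corresponds to $t\to t_0-$ (handled by the left LSC) while $r\to r_0-$ corresponds to $t\to t_0+$ (handled by right-continuity).

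The main obstacle is the right-continuity step: the substantive point is verifying that the compactly-uniform limit $F$ still lies in the coherent subsheaf near $Z_0$. This is exactly what Lemma \ref{lemma2.2}, built on the closedness Lemma \ref{closedness}, is designed to provide; without such a closedness statement there would be no way to control the sheaf condition after passage to the limit. The finiteness of $H(T)$ enters in two essential places: it is needed to produce the minimizer $F_T$ used for the limit $t\to+\infty$, and it provides the uniform $L^2$ bound $H(t_n)\le H(T)$ that drives the normal-family argument.
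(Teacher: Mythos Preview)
Your proposal is correct and follows essentially the same approach as the paper: monotonicity and left lower semi-continuity are immediate, and right-continuity is obtained by extracting minimizers $F_{t_n}$ via Lemma \ref{existence of F}, passing to a locally uniform limit using the positive lower bound on $e^{-\varphi}c(-\psi)$, applying Fatou, and invoking Lemma \ref{lemma2.2} to recover the sheaf condition. The only differences are cosmetic: the paper phrases the right-continuity step as a proof by contradiction and does not spell out the $\lim_{t\to+\infty}H(t)=0$ argument, which you supply via dominated convergence against the minimizer $F_T$.
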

\begin{proof}
By the definition of $H(t)$, it is clear that $H(t)$ is decreasing on
$[T,+\infty)$ and $\lim \limits_{t \to t_0-0}H(t)\geq H(t_0)$. It suffices
to prove $\lim \limits_{t \to t_0+0}H(t)=H(t_0)$ . We prove it by
contradiction: if not, then $\lim \limits_{t \to t_0+0}H(t)<
H(t_0)<+\infty$.
\par
By Lemma \ref{existence of F}, for any $t>t_0$, there exists a unique holomorphic $(n,0)$ form
$F_t$ on $\{\psi<-t\}$ satisfying
$\int_{\{\psi<-t\}}|F_t|^2e^{-\varphi}c(-\psi)dV_X=H(t)$ and $(F_t-f) \in
H^0(\{\psi<-t\}\cap U'_t, \mathcal{O} (K_X) \otimes \mathcal{F})$ where open set $U'_t$ satisfies $Z_0 \subset U'_t\subset U$. Note that $H(t)$ is decreasing
implies that $\int_{\{\psi<-t\}}|F_t|^2e^{-\varphi}c(-\psi)dV_X\leq \lim
\limits_{t \to t_0+0}H(t) <+\infty$ for any $t>t_0$.
\par
For any compact subset $K$ of $\{\psi<-t_0\}$, as $e^{-\varphi}c(-\psi)$
has positive lower bound on $K$, there
exists $F_{t_j}$ $(t_j \to t_0+0,\ as\ j \to +\infty)$ uniformly convergent on
$K$, then (by diagonal method) there exists a subsequence of $F_{t_j}$ (also
denoted by $F_{t_j}$) convergent on any compact subset of $\{\psi<-t_0\}$.
\par
Let $\hat{F}_{t_0}:=\lim \limits_{j \to +\infty}F_{t_j}$, which is a
holomorphic $(n,0)$ form on $\{\psi<-t_0\}$. By Lemma \ref{lemma2.2}, we conclude that there exists an open set $\hat{U}'$ such that $Z_0 \subset \hat{U}' \subset U$ and
$(\hat{F}_{t_0}-f)\in H^0(\{\psi<-t_0\}\cap \hat{U}',
\mathcal{O} (K_X) \otimes \mathcal{F})$.
Then it follows from the decreasing property of $H(t)$ that
\begin{equation}
\begin{split}
\int_{K}|\hat{F}_{t_0}|^2e^{-\varphi}c(-\psi)dV_X
&\leq
\liminf\limits_{j \to +\infty}\int_{K}|F_{t_j}|^2e^{-\varphi}c(-\psi)dV_X\\
&\leq
\liminf \limits_{j \to +\infty}H(t_j)\\
&\leq \lim \limits_{t\to t_0+0} H(t)
\end{split}
\end{equation}
for any compact set $K \subset \{\psi<-t_0\}$. It follows from Levi's theorem that
\begin{equation}
\begin{split}
\int_{\{\psi<-t_0\}}|\hat{F}_{t_0}|^2e^{-\varphi}c(-\psi)dV_X
\leq \lim \limits_{t\to t_0+0} H(t)
\end{split}
\end{equation}
Then we obtain that $H(t_0)\leq
\int_{\{\psi<-t_0\}}|\hat{F}_{t_0}|^2e^{-\varphi}c(-\psi)dV_X
\leq \lim \limits_{t\to t_0+0} H(t)$
which contradicts $\lim \limits_{t\to t_0+0} H(t) <H(t)$.
\end{proof}

We consider the derivatives of $H(t)$ in the following lemma.
\begin{Lemma}
Assume that $H(T)<+\infty$. Then for any $t_0\in(T,+\infty)$, we have
\begin{equation}
\begin{split}
\frac{H(T)-H(t_0)}{\int^{+\infty}_T c(t)e^{-t}dt-\int^{+\infty}_{t_0}
c(t)e^{-t}dt} \leq
\frac{\liminf \limits_{B \to
0+0}(\frac{H(t_0)-H(t_0+B)}{B})}{c(t_0)e^{-t_0}}
\end{split}
\end{equation}
\label{lemma2.6}
\end{Lemma}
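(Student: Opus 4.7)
The plan is to manufacture, for each $B>0$, an admissible competitor $\widetilde F_B$ for the $H(T)$-problem out of the known extremal $F_{t_0}$ for $H(t_0)$, bound $\int_X|\widetilde F_B|^2 e^{-\varphi}c(-\psi)dV_X$ from above sharply, and let $B\to 0+$. The construction tool is Lemma \ref{lemma2.1}, applied with the plurisubharmonic function $\varphi+\psi$ playing the role of ``$\varphi$'' there, with $F=F_{t_0}$ furnished by Lemma \ref{existence of F}, and with cutoff parameters $t_0$ and $B$. This produces a holomorphic $(n,0)$-form $\widetilde F_B$ on $X$ satisfying
\[
\int_X |\widetilde F_B-(1-b(\psi))F_{t_0}|^2 e^{-\varphi-\psi+v(\psi)} c(-v(\psi))\,dV_X \le C(B)\int_T^{t_0+B} c(t)e^{-t}dt,
\]
where $C(B):=\tfrac{1}{B}\int_{\{-t_0-B<\psi<-t_0\}}|F_{t_0}|^2 e^{-\varphi-\psi}dV_X$ and $b,v$ are the auxiliary functions of Lemma \ref{lemma2.1}. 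Because $v(\psi)=-t_0-B/2$ and $(1-b(\psi))F_{t_0}=F_{t_0}$ near $Z_0\subset\{\psi=-\infty\}$, local integrability of $|\widetilde F_B-F_{t_0}|^2 e^{-\varphi-\psi}$ near $Z_0$ follows from the above, so $\widetilde F_B-F_{t_0}\in \mathcal I(\varphi+\psi)\subset\mathcal F$ there; combined with $F_{t_0}-f\in\mathcal F$ this makes $\widetilde F_B$ a valid competitor both for $H(T)$ on $X$ and for $H(t_0)$ on $\{\psi<-t_0\}$.

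Next I apply the orthogonality identity \eqref{orhnormal F} of Lemma \ref{existence of F} twice: once for $\widetilde F_B$ in the $H(T)$-problem on $X$, once in the $H(t_0)$-problem on $\{\psi<-t_0\}$. Subtracting, and dropping the nonnegative term $\int_X|\widetilde F_B-F_T|^2 e^{-\varphi}c(-\psi)dV_X$, yields
\[
H(T)-H(t_0)\le \int_{\{\psi\ge -t_0\}}|\widetilde F_B|^2 e^{-\varphi}c(-\psi)dV_X+\int_{\{\psi<-t_0\}}|\widetilde F_B-F_{t_0}|^2 e^{-\varphi}c(-\psi)dV_X.
\]
Setting $G_B:=\widetilde F_B-(1-b(\psi))F_{t_0}$ and using $(1-b(\psi))F_{t_0}\equiv 0$ on $\{\psi\ge -t_0\}$ and $\equiv F_{t_0}$ on $\{\psi\le -t_0-B\}$, the right-hand side reorganizes to
\[
\int_X |G_B|^2 e^{-\varphi}c(-\psi)dV_X+\int_{\{-t_0-B<\psi<-t_0\}}\bigl(b(\psi)^2|F_{t_0}|^2-2\,\mathfrak R\, G_B\overline{b(\psi)F_{t_0}}\bigr) e^{-\varphi}c(-\psi)dV_X.
\]
A Cauchy--Schwarz bound on the cross term, together with $\int_{\{-t_0-B<\psi<-t_0\}}|F_{t_0}|^2 e^{-\varphi}c(-\psi)dV_X\le H(t_0)-H(t_0+B)$ (valid because $F_{t_0}|_{\{\psi<-t_0-B\}}$ is an admissible competitor for $H(t_0+B)$), collapses the above into
\[
H(T)-H(t_0)\le \bigl(\sqrt{E_B}+\sqrt{H(t_0)-H(t_0+B)}\bigr)^2,\quad E_B:=\int_X |G_B|^2 e^{-\varphi}c(-\psi)dV_X.
\]

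The final step is to convert the Lemma \ref{lemma2.1} weight into $e^{-\varphi}c(-\psi)$ to control $E_B$. On $\{\psi\ge -t_0\}$ the substitution $v(\psi)=\psi$ makes the ratio equal $1$; on $\{\psi\le -t_0-B\}$ monotonicity of $c(t)e^{-t}$ gives $c(-\psi)e^{\psi+t_0+B/2}\le c(t_0+B/2)$, so the ratio is again $\le 1$; on the middle strip the ratio is $1+O(B)$ by continuity of $c$. Combined with the bound $C(B)\le \tfrac{e^{t_0+B}}{Bc(t_0+B)}(H(t_0)-H(t_0+B))$, obtained on the strip by replacing $e^{-\psi}$ with $e^{t_0+B}$ and invoking $c(-\psi)\ge c(t_0+B)e^{-\psi-t_0-B}$, this yields
\[
E_B\le (1+O(B))\,\frac{e^{t_0+B}}{c(t_0+B)}\cdot\frac{H(t_0)-H(t_0+B)}{B}\int_T^{t_0+B}c(t)e^{-t}dt.
\]
Taking square roots in the previous display, using Lemma \ref{semicontinuous} to kill $\sqrt{H(t_0)-H(t_0+B)}\to 0$, and passing to $\liminf_{B\to 0+}$ produces
\[
H(T)-H(t_0)\le \frac{\int_T^{t_0}c(t)e^{-t}dt}{c(t_0)e^{-t_0}}\cdot\liminf_{B\to 0+}\frac{H(t_0)-H(t_0+B)}{B},
\]
which rearranges to the desired inequality since $\int_T^{t_0}c(t)e^{-t}dt=\int_T^{+\infty}c(t)e^{-t}dt-\int_{t_0}^{+\infty}c(t)e^{-t}dt$. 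The main obstacle, and the reason a naive Young's inequality is too crude, is the middle-strip cross term: the decomposition $|G_B-b(\psi)F_{t_0}|^2\le(1+\delta)|G_B|^2+(1+\delta^{-1})|F_{t_0}|^2$ produces a constant strictly greater than $1$ in front of $H(t_0)-H(t_0+B)$, which does not vanish in the $B\to 0+$ limit; only the square-of-sums form $(\sqrt{E_B}+\sqrt{\cdot})^2$ delivered by Cauchy--Schwarz preserves the sharp constant.
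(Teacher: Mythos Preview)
Your proof is correct and follows essentially the same strategy as the paper: apply Lemma~\ref{lemma2.1} with weight $\varphi+\psi$ to $F_{t_0}$, show the resulting $\widetilde F_B$ is an admissible competitor for both $H(T)$ and $H(t_0)$, use the orthogonality identity \eqref{orhnormal F} on $\{\psi<-t_0\}$, and handle the cross term on the thin strip via Cauchy--Schwarz so that it vanishes as $B\to 0$. The paper carries out the same steps but organizes the algebra ``from below'' (bounding $\int_X|G_B|^2$ from below by $H(T)-H(t_0)$ minus an error) rather than your ``from above'' packaging into $(\sqrt{E_B}+\sqrt{\delta_B})^2$; the two are equivalent.

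Two small points worth tightening. First, the weight ratio $c(-\psi)/\bigl(e^{-\psi+v(\psi)}c(-v(\psi))\bigr)$ is in fact $\le 1$ everywhere on $X$ (because $v(\psi)\ge\psi$ and $c(t)e^{-t}$ is decreasing), so your $1+O(B)$ factor on the middle strip is unnecessary; this is exactly inequality (2.19) in the paper. Second, to pass to the limit cleanly you should, as the paper does, choose a sequence $B_j\to 0$ realizing $\liminf_{B\to 0+}\frac{H(t_0)-H(t_0+B)}{B}$ (or note the inequality is trivial when this liminf is $+\infty$); along that sequence $E_{B_j}$ stays bounded, so $\delta_{B_j}\to 0$ and the cross term really disappears. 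With these cosmetic fixes your argument matches the paper's proof.
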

\begin{proof}
By Lemma \ref{existence of  F}, there exists a holomorphic $(n,0)$ form $F_{t_0}$ on $\{\psi<-t_0\}$,
such that $\int_{\{\psi<-t_0\}}|F_{t_0}|^2e^{-\varphi}c(-\psi)dV_X=H(t_0)$ and  $(F_{t_0}-f)\in H^0(\{\psi<-t_0\}\cap U'_{t_0}, \mathcal{O} (K_X) \otimes \mathcal{F})$ where open set $ U'_{t_0}$  satisfies $Z_0 \subset U'_{t_0} \subset U$ .\par
Note that $\liminf \limits_{B \to
0+0}\frac{H(t_0)-H(t_0+B)}{B} \in [0,+\infty)$ because of the decreasing
property of $H(t)$. Then there exist $1 \ge B_j \to 0+0$ (as $j \to +\infty$) such that
\begin{equation}
\lim \limits_{j \to +\infty}\frac{H(t_0)-H(t_0+B_j)}{B_j} =\liminf
\limits_{B \to 0+0}\frac{H(t_0)-H(t_0+B)}{B}
\end{equation}
and $\{\frac{H(t_0)-H(t_0+B_j)}{B_j}\}_{j\in \mathbb{N}^+}$ is bounded.\par
As $\int_{\{\psi<-t_0\}}|F_{t_0}|^2e^{-\varphi}c(-\psi)dV_X=H(t_0)<+\infty$ and $e^{-\varphi}c(-\psi)$ has positive lower bounded on any compact set $K$ of $X$. Then $\int_{K\cap \{\psi<-t_0\}} {|F_{t_0}|}^2dV_X<+\infty$ for any compact set $K$.
Note that $c(t)$ is smooth on $(T,+\infty)$, hence bounded on $[t_0,t_0+1]$, so
$\int_X \frac{1}{B_j} \mathbb{I}_{\{-t_0-B_j< \psi < -t_0\}}  {|F_{t_0}|}^2 e^{{-}\varphi}dV_X<+\infty$.
\par
By Lemma 2.1 $(\varphi\backsim\varphi+\psi)$, for any $B_j$, there exists holomorphic
$(n,0)$ form $\tilde{F}_j$ on $X$ such that
 \begin{equation}
 \int_X {|\widetilde F_j-(1-b_{t_0,B_j}(\psi))F_{t_0}|}^2 e^{{-}(\varphi+\psi)+v_j(\psi)}c(-v_j(\psi))dV_X<+\infty
 \label{L2}
  \end{equation}
  where $b_{t_0,B_j}(t)=\int^{t}_{-\infty}\frac{1}{B_j} \mathbb{I}_{\{-t_0-B_j< s < -t_0\}}ds$ and
$v_j(t)=\int^{t}_{0}b_{t_0,B_j}(s)ds$.
\par
It follows from \eqref{L2} that $\int_{\{\psi<-t_0-B_j\}}{|\widetilde F_j-(1-b_{t_0,B_j}(\psi))F_{t_0}|}^2 e^{{-}(\varphi+\psi)+v_j(\psi)}$ $c(-v_j(\psi))dV_X<+\infty$, and note that $e^{-t}c(t)$ is decreasing with respect to $t$ and $v_j(\psi)\ge \max\{\psi,-t_0-B_j\} \ge -t_0-1 $. Hence $e^{v_j(\psi)}c(-v_j(\psi))$ has positive lower bound, which implies
 \begin{equation}
\int_{\{\psi<-t_0-B_j\}}{|\widetilde F_j-(1-b_{t_0,B_j}(\psi))F_{t_0}|}^2 e^{-(\varphi+\psi)}dV_X<+\infty
\end{equation}
\par
As $\{\psi<-t_0-B_j\}$ is open, there exists an open subset $U'_j \subset (\{\psi<-t_0-B_j\}\cap U)$ such that $(\widetilde F_j- F_{t_0}) \in H^0(\{\psi<-t_0\}\cap U'_j,\mathcal{O} (K_X) \otimes \mathcal{I}(\varphi+\psi)) \subset H^0(\{\psi<-t_0\}\cap U'_j,\mathcal{O} (K_X) \otimes \mathcal{F})$, which implies
$(\widetilde F_j- f) \in H^0(\{\psi<-t_0\}\cap (U'_j\cap U'_{t_0}),\mathcal{O} (K_X) \otimes \mathcal{F})$.
\par
As $t \leq v(t)$, the decreasing property of $c(t)e^{-t}$ shows that
\begin{equation}
c(t)e^{-t}\leq c(-v(-t))e^{v(-t)}
\end{equation}
for any $t \geq 0$,which implies
\begin{equation}
e^{-\psi+v(\psi)}c(-v(\psi))\geq c(-\psi)
\end{equation}
\par
So we have
\begin{flalign}
&\int_X|\tilde{F}_j-(1-b_{t_0,B_j}(\psi))F_{t_0}|^2e^{-\varphi}c(-\psi)dV_X\nonumber\\
\leq &
\int_X|\tilde{F}_j-(1-b_{t_0,B_j}(\psi))F_{t_0}|^2e^{-\varphi}e^{-\psi+v(\psi)}c(-v(\psi))dV_X\nonumber\\
\leq &
\int^{t_0+B_j}_Tc(t)e^{-t}dt\int_X\frac{1}{B_j}
\mathbb{I}_{\{-t_0-B_j<\psi<-t_0\}}|F_{t_0}|^2e^{-\varphi-\psi}dV_X\nonumber\\
\leq &
\frac{e^{t_0+B_j}\int^{t_0+B_j}_Tc(t)e^{-t}dt}{\inf
\limits_{t\in(t_0,t_0+B_j)}c(t)}\int_X\frac{1}{B_j}
\mathbb{I}_{\{-t_0-B_j<\psi<-t_0\}}|F_{t_0}|^2e^{-\varphi}c(-\psi)dV_X\nonumber\\
= &
\frac{e^{t_0+B_j}\int^{t_0+B}_Tc(t)e^{-t}dt}{\inf
\limits_{t\in(t_0,t_0+B_j)}c(t)}\times
(\int_X\frac{1}{B_j}\mathbb{I}_{\{\psi<-t_0\}}|F_{t_0}|^2e^{-\varphi}c(-\psi)dV_X\nonumber\\
&-\int_X\frac{1}{B_j}\mathbb{I}_{\{\psi<-t_0-B_j\}}|F_{t_0}|^2e^{-\varphi}c(-\psi)dV_X)\nonumber\\
\leq &
\frac{e^{t_0+B_j}\int^{t_0+B}_Tc(t)e^{-t}dt}{\inf
\limits_{t\in(t_0,t_0+B_j)}c(t)} \times
\frac{H(t_0)-H(t_0+B_j)}{B_j}
\label{219}
\end{flalign}

\par
After the estimate for $\int_X|\tilde{F}_j-(1-b_{t_0,B_j}(\psi))F_{t_0}|^2e^{-\varphi}c(-\psi)dV_X$, we can prove the main result.
\par
Firstly, we will prove that $\int_X |\tilde{F}_j|^2e^{-\varphi}c(\psi)dV_X$ is uniformly bounded
with respect to j.
\par
Note that
\begin{equation}
\begin{split}
&(\int_X|\tilde{F}_j-(1-b_{t_0,B_j}(\psi))F_{t_0}|^2e^{-\varphi}c(-\psi)dV_X)^{1/2}\\
\geq &
(\int_X|\tilde{F}_j|^2e^{-\varphi}c(-\psi)dV_X)^{1/2}-
(\int_X|(1-b_{t_0,B_j}(\psi))F_{t_0}|^2e^{-\varphi}c(-\psi)dV_X)^{1/2}
\end{split}
\end{equation}
then it follows from inequality (\ref{219}) that
\begin{equation}
\begin{split}
&(\int_X|\tilde{F}_j|^2e^{-\varphi}c(-\psi)dV_X)^{1/2} \\
\leq
&(\frac{e^{t_0+B_j}\int^{t_0+B}_Tc(t)e^{-t}dt}{\inf
\limits_{t\in(t_0,t_0+B_j)}c(t)})^{1/2} \times
(\frac{H(t_0)-H(t_0+B_j)}{B_j})^{1/2}\\
&+
(\int_X|(1-b_ {t_0,B_j}(\psi))F_{t_0}|^2e^{-\varphi}c(-\psi)dV_X)^{1/2}
\end{split}
\end{equation}
\par
Since $\{\frac{H(t_0)-H(t_0+B_j)}{B_j}\}_{j\in \mathbb{N}^+}$  is bounded and
$0\leq b_{t_0,B_j}(\psi) \leq 1$, then $\int_X|\tilde{F}_j|^2e^{-\varphi}$ $c(-\psi)dV_X$
is uniformly bounded with respect to j.\par
Now we will prove the main result.\par
It follows from $b_{t_0,B_j}(\psi)=1$ on $\{\psi \geq -t_0\}$ that
\begin{flalign}
&\int_X|\tilde{F}_j-(1-b_{t_0,B_j}(\psi))F_{t_0}|^2e^{-\varphi}c(-\psi)dV_X \nonumber\\
=&
\int_{\{\psi \geq -t_0\}}|\tilde{F}_j|^2e^{-\varphi}c(-\psi)dV_X\nonumber\\
+&
\int_{\{\psi <
-t_0\}}|\tilde{F}_j-(1-b_{t_0,B_j}(\psi))F_{t_0}|^2e^{-\varphi}c(-\psi)dV_X
\end{flalign}
It is clear that
\begin{flalign}
&\int_{\{\psi <
-t_0\}}|\tilde{F}_j-(1-b_{t_0,B_j}(\psi))F_{t_0}|^2e^{-\varphi}c(-\psi)dV_X\nonumber\\
\geq &
((\int_{\{\psi < -t_0\}}|\tilde{F}_j-F_{t_0}|^2e^{-\varphi}c(-\psi)dV_X)^{1/2}-
(\int_{\{\psi <
-t_0\}}|b_{t_0,B_j}(\psi)F_{t_0}|^2e^{-\varphi}c(-\psi)dV_X)^{1/2})^2\nonumber\\
\geq &
\int_{\{\psi < -t_0\}}|\tilde{F}_j-F_{t_0}|^2e^{-\varphi}c(-\psi)dV_X\nonumber\\
&-
2(\int_{\{\psi < -t_0\}}|\tilde{F}_j-F_{t_0}|^2e^{-\varphi}c(-\psi)dV_X)^{1/2}
(\int_{\{\psi < -t_0\}}|b_{t_0,B_j}(\psi)F_{t_0}|^2e^{-\varphi}c(-\psi)dV_X)^{1/2}\nonumber\\
\geq &
\int_{\{\psi < -t_0\}}|\tilde{F}_j-F_{t_0}|^2e^{-\varphi}c(-\psi)dV_X\nonumber\\
&-
2(\int_{\{\psi < -t_0\}}|\tilde{F}_j-F_{t_0}|^2e^{-\varphi}c(-\psi)dV_X)^{1/2}
(\int_{\{-t_0-B_j<\psi < -t_0\}}|F_{t_0}|^2e^{-\varphi}c(-\psi)dV_X)^{1/2}
\end{flalign}
where the last inequality follow from $0\leq b_{t_0,B_j}(\psi) \leq 1$ and
$b_{t_0,B_j}(\psi)=0$ on $\{\psi \leq -t_0-B_j\}$.
Combining equality (2.23), inequality (2.24) and equality (\ref{orhnormal F}), we obtain that
\begin{flalign}
&\int_X|\tilde{F}_j-(1-b_{t_0,B_j}(\psi))F_{t_0}|^2e^{-\varphi}c(-\psi)dV_X \nonumber\\
=&
\int_{\{\psi \geq -t_0\}}|\tilde{F}_j|^2e^{-\varphi}c(-\psi)dV_X+
\int_{\{\psi <
-t_0\}}|\tilde{F}_j-(1-b_{t_0,B_j}(\psi))F_{t_0}|^2e^{-\varphi}c(-\psi)dV_X\nonumber\\
\geq &
\int_{\{\psi \geq -t_0\}}|\tilde{F}_j|^2e^{-\varphi}c(-\psi)dV_X+
\int_{\{\psi < -t_0\}}|\tilde{F}_j-F_{t_0}|^2e^{-\varphi}c(-\psi)dV_X\nonumber\\
&-
2(\int_{\{\psi < -t_0\}}|\tilde{F}_j-F_{t_0}|^2e^{-\varphi}c(-\psi)dV_X)^{1/2}
(\int_{\{-t_0-B_j<\psi < -t_0\}}|F_{t_0}|^2e^{-\varphi}c(-\psi)dV_X)^{1/2}\nonumber\\
= &
\int_{\{\psi \geq -t_0\}}|\tilde{F}_j|^2e^{-\varphi}c(-\psi)dV_X+
\int_{\{\psi < -t_0\}}|\tilde{F}_j|^2e^{-\varphi}c(-\psi)dV_X-
\int_{\{\psi < -t_0\}}|F_{t_0}|^2e^{-\varphi}c(-\psi)dV_X\nonumber\\
&-
2(\int_{\{\psi < -t_0\}}|\tilde{F}_j-F_{t_0}|^2e^{-\varphi}c(-\psi)dV_X)^{1/2}
(\int_{\{-t_0-B_j<\psi < -t_0\}}|F_{t_0}|^2e^{-\varphi}c(-\psi)dV_X)^{1/2}\nonumber\\
= &
\int_{X}|\tilde{F}_j|^2e^{-\varphi}c(-\psi)dV_X-
\int_{\{\psi < -t_0\}}|F_{t_0}|^2e^{-\varphi}c(-\psi)dV_X \nonumber\\
&-
2(\int_{\{\psi < -t_0\}}|\tilde{F}_j-F_{t_0}|^2e^{-\varphi}c(-\psi)dV_X)^{1/2}
(\int_{\{-t_0-B_j<\psi < -t_0\}}|F_{t_0}|^2e^{-\varphi}c(-\psi)dV_X)^{1/2}\nonumber\\
\label{2.25}
\end{flalign}

\par
It follows from equality (\ref{orhnormal F}) that
\begin{equation}
\begin{split}
&(\int_{\{\psi < -t_0\}}|\tilde{F}_j-F_{t_0}|^2e^{-\varphi}c(-\psi)dV_X)^{1/2}\\
=&
(\int_{\{\psi < -t_0\}}(|\tilde{F}_j|^2-|F_{t_0}|^2)e^{-\varphi}c(-\psi)dV_X)^{1/2}\\
\leq &
(\int_{\{\psi < -t_0\}}|\tilde{F}_j|^2e^{-\varphi}c(-\psi)dV_X)^{1/2}\\
\leq &
(\int_{X}|\tilde{F}_j|^2e^{-\varphi}c(-\psi)dV_X)^{1/2}
\end{split}
\label{2.26}
\end{equation}
\par
Since $\int_{X}|\tilde{F}_j|^2e^{-\varphi}c(-\psi)dV_X$ is uniformly bounded with respect to
$j$, inequality \eqref{2.26} implies that $(\int_{\{\psi <
-t_0\}}|\tilde{F}_j-F_{t_0}|^2e^{-\varphi}c(-\psi)dV_X)^{1/2}$ is uniformly bounded with respect
to $j$. \par
It follows from $\int_{\{\psi < -t_0\}}|F_{t_0}|^2e^{-\varphi}c(-\psi)dV_X=H(t_0)\leq H(T)<+\infty$ and the dominated convergence theorem that $\lim\limits_{j \to +\infty} \int_{\{-t_0-B_j<\psi <
-t_0\}}|F_{t_0}|^2e^{-\varphi}c(-\psi)$ $dV_X=0$. Hence
\begin{flalign}\nonumber
\lim \limits_{j \to +\infty}
(\int_{\{\psi < -t_0\}}|\tilde{F}_j-F_{t_0}|^2e^{-\varphi}c(-\psi)dV_X)^{1/2}
(\int_{\{-t_0-B_j<\psi < -t_0\}}|F_{t_0}|^2e^{-\varphi}c(-\psi)dV_X)^{1/2}=0\\
\end{flalign}
\par
Combining with inequality \eqref{2.25}, we obtain
\begin{equation}
\begin{split}
&\liminf\limits_{j \to +\infty}
\int_X|\tilde{F}_j-(1-b_{t_0,B_j}(\psi))F_{t_0}|^2e^{-\varphi}c(-\psi)dV_X \\
\geq &
\liminf \limits_{j \to +\infty}
\int_X|\tilde{F}_j|^2e^{-\varphi}c(-\psi)dV_X-
\int_{\{\psi<-t_0\}}|F_{t_0}|^2e^{-\varphi}c(-\psi)dV_X
\end{split}
\label{2.27}
\end{equation}
\par
Using inequality \eqref{219} and inequality \eqref{2.27}, we obtain that

\begin{flalign}
&\frac{\int^{t_0}_Tc(t)e^{-t}dt}{c(t_0)e^{-t_0}} \lim\limits_{j \to
+\infty}
\frac{H(t_0)-H(t_0+B_j)}{B_j}\nonumber\\
=&
\lim \limits_{j \to +\infty}
\frac{e^{t_0+B_j}\int^{t_0+B_j}_Tc(t)e^{-t}dt}{\inf
\limits_{t\in(t_0,t_0+B_j)}c(t)} \times
\frac{H(t_0)-H(t_0+B_j)}{B_j} \nonumber\\
\geq &
\liminf \limits_{j \to +\infty}
\frac{e^{t_0+B_j}\int^{t_0+B_j}_Tc(t)e^{-t}dt}{\inf
\limits_{t\in(t_0,t_0+B_j)}c(t)}
\int_X
\frac{1}{B_j}\mathbb{I}_{\{-t_0-B_j<\psi<-t_0\}}|F_{t_0}|^2e^{-\varphi}c(-\psi)dV_X\nonumber\\
\geq&
\liminf \limits_{j \to +\infty}
\int_X |\tilde{F}_j-(1-b_{t_0,B_j}(\psi))F_{t_0}|^2e^{-\varphi}c(-\psi)dV_X\nonumber\\
\geq&
\liminf \limits_{j \to +\infty}
\int_X |\tilde{F}_j|^2e^{-\varphi}c(-\psi)dV_X-
\int_{\{\psi<-t_0\}} |F_{t_0}|^2e^{-\varphi}c(-\psi)dV_X\nonumber\\
\geq&
H(T)-H(t_0)
\end{flalign}
\par
This proves Lemma \ref{lemma2.6}.

\end{proof}

Lemma \ref{lemma2.6} implies the following lemma.
\begin{Lemma}
Assume that $H(T)<+\infty$. Then for any $t_0,t_1\in[T,+\infty)$, where $t_0<t_1$, we have
\begin{equation}
\begin{split}
\frac{H(t_0)-H(t_1)}{\int^{t_1}_{t_0} c(t)e^{-t}dt} \leq
\frac{\liminf \limits_{B \to
0+0}(\frac{H(t_1)-H(t_1+B)}{B})}{c(t_1)e^{-(t_1)}}
\end{split}
\end{equation}
i.e.
\begin{equation}
\begin{split}
\frac{H(t_0)-H(t_1)}{\int^{+\infty}_{t_0}
c(t)e^{-t}dt-\int^{+\infty}_{t_1} c(t)e^{-t}dt} \leq
\liminf \limits_{B \to 0+0}
\frac{H(t_1)-H(t_1+B)}{\int^{+\infty}_{t_1}
c(t)e^{-t}dt-\int^{+\infty}_{t_1+B} c(t)e^{-t}dt}
\end{split}
\end{equation}
\label{lemma2.7}
\end{Lemma}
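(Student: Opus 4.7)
The plan is to deduce Lemma \ref{lemma2.7} from Lemma \ref{lemma2.6} by translating the base point from $T$ to $t_0$, so that Lemma \ref{lemma2.6} applied at $t_1$ over the shifted sublevel set yields precisely the desired inequality.

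First, I would observe that $Y:=\{\psi<-t_0\}$ is a pseudoconvex open subset of the Stein manifold $X$, hence itself Stein by Docquier--Grauert. All the hypotheses of Section 1.1 transfer to $Y$ with $T$ replaced by $t_0$: the plurisubharmonic function $\psi|_Y$ satisfies $\psi|_Y<-t_0$; the sum $\varphi+\psi$ remains plurisubharmonic on $Y$; the set $Z_0\subset\{\psi=-\infty\}\subset Y$ is unchanged, as is the pair $(U,\mathcal{F})$ and the section $f$; and $c|_{(t_0,+\infty)}$ belongs to $\mathcal{G}_{t_0}$, since the three conditions defining $\mathcal{G}_T$ are inherited (in particular condition~(3) uses that compact subsets of $Y$ are compact in $X$).

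Second, for every $t\ge t_0$ the sublevel set $\{\psi<-t\}$ is the same inside $Y$ as inside $X$, so the minimal $L^2$ integral on $Y$ with base point $t_0$ coincides with the original $H(t)$ on $[t_0,+\infty)$. By monotonicity of $H$ (Lemma \ref{semicontinuous}) and the assumption $H(T)<+\infty$, we have $H(t_0)\le H(T)<+\infty$, so the finiteness hypothesis needed to invoke Lemma \ref{lemma2.6} in the translated setup is automatic.

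Third, I would apply Lemma \ref{lemma2.6} to $(Y,t_0)$ with $t_1$ playing the role of ``$t_0$'' in that statement. The conclusion reads
\[
\frac{H(t_0)-H(t_1)}{\int_{t_0}^{+\infty}c(t)e^{-t}\,dt-\int_{t_1}^{+\infty}c(t)e^{-t}\,dt}\le\frac{\liminf_{B\to 0+0}\tfrac{H(t_1)-H(t_1+B)}{B}}{c(t_1)e^{-t_1}},
\]
and using $\int_{t_0}^{+\infty}-\int_{t_1}^{+\infty}=\int_{t_0}^{t_1}$ this is the first inequality of the lemma. The second form is equivalent: rewriting $\int_{t_1}^{+\infty}-\int_{t_1+B}^{+\infty}=\int_{t_1}^{t_1+B}$ and using continuity of $c(t)e^{-t}$ at $t_1$ to get $\tfrac{1}{B}\int_{t_1}^{t_1+B}c(t)e^{-t}\,dt\to c(t_1)e^{-t_1}$ as $B\to 0+0$, the $\liminf$ on the right-hand side is unaffected by dividing by either $B$ or by $\int_{t_1}^{t_1+B}c(t)e^{-t}\,dt$ modulo the common factor $c(t_1)e^{-t_1}$, which is the same factor already present on the left-hand side after rearranging.

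There is no real obstacle; the only thing to be careful about is the bookkeeping in the first step, namely verifying that every structural ingredient of Section 1.1 (Stein ambient space, plurisubharmonicity of $\varphi+\psi$, class $\mathcal{G}_T$, coherent subsheaf $\mathcal{F}\supset\mathcal{I}(\varphi+\psi)$, the triple $(Z_0,U,f)$) truly restricts to $Y$. This is routine because each condition is either local or preserved under passage to pseudoconvex open subsets, so no new $L^2$ construction is required: Lemma \ref{lemma2.7} is a pure translation-invariance consequence of Lemma \ref{lemma2.6}.
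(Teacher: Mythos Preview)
Your proposal is correct and is precisely the argument the paper has in mind: the paper simply asserts that ``Lemma \ref{lemma2.6} implies the following lemma'' without further detail, and your translation of the base point from $T$ to $t_0$ by passing to the Stein sublevel set $Y=\{\psi<-t_0\}$ is the natural way to make that implication explicit. The verification that all structural data restrict to $Y$ and that the two displayed forms of the inequality are equivalent via $\frac{1}{B}\int_{t_1}^{t_1+B}c(t)e^{-t}\,dt\to c(t_1)e^{-t_1}$ is routine, as you note.
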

\subsection{Proof of Theorem 1.1}
As $H(h^{-1}(r);c(t))$ is lower semicontinuous
(Lemma \ref{semicontinuous}), then it follows from the following well-known property of concave
functions that Lemma \ref{lemma2.7} implies Theorem \ref{maintheorem}.
\begin{Lemma}(see \cite{G16})
Let $H(r)$ be a lower semicontinuous function on $(0,R]$. Then $H(r)$ is concave
if and only if
\begin{equation}
\begin{split}
\frac{H(r_1)-H(r_2)}{r_1-r_2} \leq
\liminf\limits_{r_3 \to r_2-0}
\frac{H(r_3)-H(r_2)}{r_3-r_2}
\end{split}
\end{equation}
holds for any $0<r_2<r_1 \leq R$.
\label{lemma2.8}
\end{Lemma}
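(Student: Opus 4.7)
The proof of Lemma \ref{lemma2.8} is a standard characterization of concavity by one-sided slope estimates, so my plan is to handle both implications: the ``easy'' direction from concavity, and the converse via a contradiction argument that exploits lower semicontinuity.

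For the easy direction (concavity $\Rightarrow$ the stated inequality), I would invoke the well-known monotonicity of secant slopes of a concave function: for $r_3 < r_2 < r_1$ in $(0,R]$ one has
$$\frac{H(r_1)-H(r_2)}{r_1-r_2} \;\leq\; \frac{H(r_3)-H(r_2)}{r_3-r_2},$$
and taking $\liminf$ as $r_3 \to r_2-0$ on the right delivers the hypothesis.

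For the converse, the plan is a clean contradiction argument. Fix $0 < a < b \leq R$ and let $L$ be the affine function with $L(a) = H(a)$ and $L(b) = H(b)$; set $G(r) := H(r) - L(r)$, so $G$ is lower semicontinuous on the compact interval $[a,b]$ with $G(a)=G(b)=0$. Suppose for contradiction that $\inf_{[a,b]} G < 0$. Since $G$ is lower semicontinuous on a compact set, its infimum is attained at some $r^* \in (a,b)$ (not at the endpoints, where $G$ vanishes). Next, apply the hypothesis with the lemma's $r_2$ being $r^*$ and $r_1$ being an arbitrary $r \in (r^*, b]$. The key observation is that minimality of $G$ at $r^*$ gives $H(r') - H(r^*) \geq L(r') - L(r^*) = m(r'-r^*)$ where $m$ is the slope of $L$; dividing by $r' - r^* < 0$ (for $r' < r^*$) flips the sign, so $\frac{H(r')-H(r^*)}{r'-r^*} \leq m$, and therefore $\liminf_{r' \to r^*-0}\frac{H(r')-H(r^*)}{r'-r^*} \leq m$. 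The hypothesis then forces $\frac{H(r)-H(r^*)}{r-r^*} \leq m$ for every $r \in (r^*, b]$. On the other hand, minimality of $G$ at $r^*$ with $r > r^*$ gives the reverse inequality $\frac{H(r)-H(r^*)}{r-r^*} \geq m$. Hence equality holds for all $r \in (r^*, b]$; setting $r = b$ yields $H(r^*) = L(r^*)$, i.e.\ $G(r^*) = 0$, contradicting $G(r^*) < 0$.

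The main obstacle I anticipate is making sure the lower semicontinuity hypothesis is exploited correctly: specifically, that the auxiliary function $G = H - L$ inherits lower semicontinuity (trivial, since $L$ is continuous) and hence attains its infimum on the compact subinterval $[a,b]$. Everything else is bookkeeping with signs — the strict inequalities $r' < r^*$ and $r > r^*$ each flip the inequality when dividing by the corresponding $r' - r^*$ or $r - r^*$ — and a careful application of the hypothesis to squeeze together the two one-sided slope bounds into an equality that forces $G(r^*) = 0$.
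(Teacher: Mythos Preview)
Your proof is correct. The paper does not actually give a proof of this lemma; it simply cites \cite{G16}, so there is nothing in the paper to compare your argument against. Your treatment of both directions is clean: the forward implication is immediate from the monotonicity of secant slopes of a concave function, and your contradiction argument for the converse --- locating an interior minimizer $r^*$ of $G=H-L$ on $[a,b]$ via lower semicontinuity, then using the hypothesis to squeeze the right-hand slope at $r^*$ between $m$ and $m$ --- is exactly the standard way to run this. The sign bookkeeping (dividing by $r'-r^*<0$ on the left, $r-r^*>0$ on the right) is handled correctly, and the final evaluation at $r=b$ giving $G(r^*)=0$ is the right punchline.
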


\section{Proof of Theorem 1.3}
In this section, we will prove Theorem \ref{theorem1.3}.
\begin{proof}[proof of Theorem \ref{theorem1.3}]
We firstly recall some basic construction in the proof of Lemma \ref{lemma2.6}.\par
Given $t_0 \in (T,+\infty)$.
By Lemma \ref{existence of  F}, there exists a holomorphic $(n,0)$ form $F_{t_0}$ on $\{\psi<-t_0\}$,
such that $\int_{\{\psi<-t_0\}}|F_{t_0}|^2e^{-\varphi}c(-\psi)dV_X=H(t_0)$ and  $(F_{t_0}-f)\in H^0(\{\psi<-t_0\}\cap U'_{t_0}, \mathcal{O} (K_X) \otimes \mathcal{F})$, where open subset $ U'_{t_0}$  satisfies $Z_0 \subset U'_{t_0} \subset U$ .\par
Note that $\liminf \limits_{B \to
0+0}\frac{H(t_0)-H(t_0+B)}{B} \in [0,+\infty)$ because of the decreasing
property of $H(t)$. Then there exist $1 \ge B_j \to 0+0$ (as $j \to +\infty$) such that
\begin{equation}
\lim \limits_{j \to +\infty}\frac{H(t_0)-H(t_0+B_j)}{B_j} =\liminf
\limits_{B \to 0+0}\frac{H(t_0)-H(t_0+B)}{B}
\end{equation}
and $\{\frac{H(t_0)-H(t_0+B_j)}{B_j}\}_{j\in \mathbb{N}^+}$ is bounded.
\par
As $\int_{\{\psi<-t_0\}}|F_{t_0}|^2e^{-\varphi}c(-\psi)dV_X=H(t_0)<+\infty$ and $e^{-\varphi}c(-\psi)$ has positive lower bounded on any compact set $K$ of $X$. Then $\int_{K\cap \{\psi<-t_0\}} {|F_{t_0}|}^2dV_X<+\infty$ for any compact set $K$.
Note that $c(t)$ is smooth on $(T,+\infty)$, hence bounded on $[t_0,t_0+1]$, so
$\int_X \frac{1}{B_j} \mathbb{I}_{\{-t_0-B_j< \psi < -t_0\}}  {|F_{t_0}|}^2 e^{{-}\varphi}dV_X<+\infty$.
\par
By Lemma 2.1 $(\varphi\backsim\varphi+\psi)$, for any $B_j$, there exists holomorphic
$(n,0)$ form $\tilde{F}_j$ on $X$ such that
 \begin{equation}
 \begin{split}
 &\int_X {|\widetilde F_j-(1-b_{t_0,B_j}(\psi))F_{t_0}|}^2 e^{{-}(\varphi+\psi)+v_j(\psi)}c(-v_j(\psi))dV_X \\
 \le &\int_T^{t_0+B_j}c(t)e^{-t}dt\int_X \frac{1}{B_j}\mathbb{I}_{\{-t_0-B_j<\psi<-t_0\}}|F_{t_0}|^2e^{-\varphi-\psi}dV_X<+\infty
 \label{3.3}
 \end{split}
 \end{equation}
where $b_{t_0,B_j}(t)=\int^{t}_{-\infty}\frac{1}{B_j} \mathbb{I}_{\{-t_0-B_j< s < -t_0\}}ds$ and
$v_j(t)=\int^{t}_{0}b_{t_0,B_j}(s)ds$.
\par
It follows from \eqref{3.3} that $\int_{\{\psi<-t_0-B_j\}}{|\widetilde F_j-(1-b_{t_0,B_j}(\psi))F_{t_0}|}^2 e^{{-}(\varphi+\psi)+v_j(\psi)}$ $c(-v_j(\psi))dV_X<+\infty$, and note that $e^{-t}c(t)$ is decreasing with respect to $t$ and $v_j(\psi)\ge \max\{\psi,-t_0-B_j\} \ge -t_0-1 $. Hence $e^{v_j(\psi)}c(-v_j(\psi))$ has positive lower bound, which implies
 \begin{equation}
\int_{\{\psi<-t_0-B_j\}}{|\widetilde F_j-(1-b_{t_0,B_j}(\psi))F_{t_0}|}^2 e^{-(\varphi+\psi)}dV_X<+\infty
\end{equation}
\par
As $\{\psi<-t_0-B_j\}$ is open, there exists an open subset $U'_j \subset (\{\psi<-t_0-B_j\}\cap U)$ such that $(\widetilde F_j- F_{t_0}) \in H^0(\{\psi<-t_0\}\cap U'_j,\mathcal{O} (K_X) \otimes \mathcal{I}(\varphi+\psi)) \subset H^0(\{\psi<-t_0\}\cap U'_j,\mathcal{O} (K_X) \otimes \mathcal{F})$, which implies
$(\widetilde F_j- f) \in H^0(\{\psi<-t_0\}\cap (U'_j\cap U'_{t_0}),\mathcal{O} (K_X) \otimes \mathcal{F})$.
\par
We have already prove in Lemma \ref{lemma2.6} that $\int_X |\tilde{F}_j|^2e^{-\varphi}c(-\psi)dV_X$ is uniformly bounded with respect to $j$.\par
As $e^{-\varphi}c(-\psi)$ has positive lower bound on any compact subset $K$ of $X$, (by diagonal method) there exist a subsequence of $\{\tilde{F}_j\}$ (also denoted by $\{\tilde{F}_j\}$) compact convergence to a holomorphic $(n,0)$ form $\tilde{F}_0$ (when $j \to +\infty$) on $X$. Since $(\widetilde F_j- f) \in H^0(\{\psi<-t_0\}\cap (U'_j\cap U'_{t_0}),\mathcal{O} (K_X) \otimes \mathcal{F})$, it follows from Lemma \ref{lemma2.2} that there exists an open set $\tilde{U}'$ which satisfies $Z_0\subset \tilde{U}' \subset U$ such that $(\widetilde F_0- f) \in H^0(\{\psi<-t_0\}\cap \tilde{U}' ,\mathcal{O} (K_X) \otimes \mathcal{F})$.
\par
It follows from \eqref{3.3} that
\begin{flalign}
&\int_X {|\tilde{F}_0-(1-b_{t_0}(\psi))F_{t_0}|}^2 e^{-\varphi}e^{-\psi+v_{t_0}(\psi)}c(-v_{t_0}(\psi))dV_X\\ \nonumber
\le &\liminf_{j \to +\infty} \int_X {|\widetilde F_j-(1-b_{t_0,B_j}(\psi))F_{t_0}|}^2 e^{{-}(\varphi+\psi)+v_j(\psi)}c(-v_j(\psi))dV_X \\ \nonumber
\le & \liminf_{j \to +\infty}\int_T^{t_0+B_j}c(t)e^{-t}dt\int_X \frac{1}{B_j}\mathbb{I}_{\{-t_0-B_j<\psi<-t_0\}}|F_{t_0}|^2e^{-\varphi-\psi}dV_X\\\nonumber
\le &\liminf_{j \to +\infty}\frac{e^{t_0+B_j\int_T^{t_0+B_j}c(t)e^{-t}dt}}{\inf_{t\in (t_0,t_0+B_j)}c(t)}\int_X \frac{1}{B_j}\mathbb{I}_{\{-t_0-B_j<\psi<-t_0\}}|F_{t_0}|^2e^{-\varphi}c(-\psi)dV_X\\\nonumber
\le &\liminf_{j \to +\infty}\frac{e^{t_0+B_j}\int_T^{t_0+B_j}c(t)e^{-t}dt}{\inf_{t\in (t_0,t_0+B_j)}c(t)}\cdot \frac{H(t_0)-H(t_0+B_j)}{B_j}\\\nonumber
=&\frac{\int_T^{t_0+B_j}c(t)e^{-t}dt}{c(t_0)e^{-t_0}}\liminf_{B\to 0+0}\frac{H(t_0)-H(t_0+B)}{B}
\label{3.5}
\end{flalign}
 the first $``\le"$ holds because of Fatou Lemma, where $b_{t_0}(t)=\mathbb{I}_{\{t \ge -t_0\}}$ and $v_{t_0}(t)=\int_0^t b_{t_0}(s)ds$. Note that $1-b_{t_0}(\psi)=\mathbb{I}_{\{\psi < -t_0\}}$. \par
 Note that $v_{t_0}(\psi)\ge \psi$ and $c(t)e^{-t}$ is decreasing with respect to $t$, then $e^{-\psi+v_{t_0}(\psi)}c(-v_{t_0}(\psi))\ge c(-\psi)$ holds on $X$. Hence we have
\begin{equation}
\begin{split}
&\int_X {|\tilde{F}_0-(1-b_{t_0}(\psi))F_{t_0}|}^2e^{-\varphi} c(-\psi)dV_X\\
\le &\int_X {|\tilde{F}_0-(1-b_{t_0}(\psi))F_{t_0}|}^2 e^{-\varphi}e^{-\psi+v_{t_0}(\psi)}c(-v_{t_0}(\psi))dV_X\\
\le &\frac{\int_T^{t_0+B_j}c(t)e^{-t}dt}{c(t_0)e^{-t_0}}\liminf_{B\to 0+0}\frac{H(t_0)-H(t_0+B)}{B}
\label{5.6}
\end{split}
\end{equation}
However,
\begin{flalign}\nonumber
 &\int_X {|\tilde{F}_0-\mathbb{I}_{\{\psi < -t_0\}}F_{t_0}|}^2e^{-\varphi} c(-\psi)dV_X\\ \nonumber
 =&\int_{\{\psi\ge-t_0\}} {|\tilde{F}_0|}^2e^{-\varphi} c(-\psi)dV_X+\int_{\{\psi<- t_0\}} {|\tilde{F}_0-F_{t_0}|}^2e^{-\varphi} c(-\psi)dV_X\\ \nonumber
 =&\int_{\{\psi\ge -t_0\}} {|\tilde{F}_0|}^2e^{-\varphi} c(-\psi)dV_X+\int_{\{\psi<- t_0\}} {|\tilde{F}|}^2e^{-\varphi} c(-\psi)dV_X-\int_{\{\psi<- t_0\}} {|F_{t_0}|}^2e^{-\varphi} c(-\psi)dV_X \\ \nonumber
 =&\int_{X} {|\tilde{F}_0|}^2e^{-\varphi} c(-\psi)dV_X-\int_{\{\psi<-t_0\}} {|F_{t_0}|}^2e^{-\varphi} c(-\psi)dV_X \\
 \ge &H(T)-H(t_0)
 \label{3.7}
\end{flalign}
Combining with \eqref{5.6} and \eqref{3.7}, we have
\begin{flalign}\nonumber
 &H(T)-H(t_0)\\\nonumber
 \le &\int_{X} {|\tilde{F}_0|}^2e^{-\varphi} c(-\psi)dV_X-\int_{\{\psi<-t_0\}} {|F_{t_0}|}^2e^{-\varphi} c(-\psi)dV_X \\\nonumber
 = &\int_X {|\tilde{F}_0-\mathbb{I}_{\{\psi < -t_0\}}F_{t_0}|}^2 e^{-\varphi}c(-\psi)dV_X\\\nonumber
\le &\int_X {|\tilde{F}_0-\mathbb{I}_{\{\psi < -t_0\}}F_{t_0}|}^2 e^{-\varphi}e^{-\psi+v_{t_0}(\psi)}c(-v_{t_0}(\psi))dV_X\\
\le &\frac{\int_T^{t_0+B_j}c(t)e^{-t}dt}{c(t_0)e^{-t_0}}\liminf_{B\to 0+0}\frac{H(t_0)-H(t_0+B)}{B}
 \label{3.8}
\end{flalign}
As $H(h^{-1}(r))$ is linear with respect to $r$, hence $$\frac{H(T)-H(t_0)}{\int_T^{t_0+B_j}c(t)e^{-t}dt}=\frac{\liminf_{B\to 0+0}\frac{H(t_0)-H(t_0+B)}{B}}{c(t_0)e^{-t_0}}$$, then all $``\le"$ in \eqref{3.8}
should be $``="$, i.e.
\begin{equation}
\begin{split}
 &H(T)-H(t_0)\\
= &\int_{X} {|\tilde{F}_0|}^2e^{-\varphi} c(-\psi)dV_X-\int_{\{\psi<-t_0\}} {|F_{t_0}|}^2e^{-\varphi} c(-\psi)dV_X \\
 = &\int_X {|\tilde{F}_0-\mathbb{I}_{\{\psi < -t_0\}}F_{t_0}|}^2e^{-\varphi} c(-\psi)dV_X\\
= &\int_X {|\tilde{F}_0-\mathbb{I}_{\{\psi < -t_0\}}F_{t_0}|}^2 e^{-\varphi}e^{-\psi+v_{t_0}(\psi)}c(-v_{t_0}(\psi))dV_X\\
= &\frac{\int_T^{t_0+B_j}c(t)e^{-t}dt}{c(t_0)e^{-t_0}}\liminf_{B\to 0+0}\frac{H(t_0)-H(t_0+B)}{B}
\label{5.9}
\end{split}
\end{equation}
It follows from the first $``="$ in \eqref{5.9} and $H(t_0)=\int_{\{\psi< t_0\}} {|F_{t_0}|}^2e^{-\varphi} c(-\psi)dV_X$ that
$$H(T)=\int_{X} {|\tilde{F}_0|}^2e^{-\varphi} c(-\psi)dV_X$$
It follows from $c(-\psi)=e^{-\psi+v_{t_0}(\psi)}c(-v_{t_0}(\psi))$ on $\{\psi\ge t_0\}$ and
\begin{equation}\nonumber
\begin{split}
&\int_X {|\tilde{F}_0-\mathbb{I}_{\{\psi < -t_0\}}F_{t_0}|}^2e^{-\varphi} c(-\psi)dV_X\\
= &\int_X {|\tilde{F}_0-\mathbb{I}_{\{\psi < -t_0\}}F_{t_0}|}^2 e^{-\varphi}e^{-\psi+v_{t_0}(\psi)}c(-v_{t_0}(\psi))dV_X
\end{split}
\end{equation}
that
\begin{equation}
\begin{split}
\label{th1.33.10}
&\int_{\{\psi<-t_0\}} {|\tilde{F}_0-F_{t_0}|}^2e^{-\varphi} c(-\psi)dV_X\\
= &\int_{\{\psi<-t_0\}}{|\tilde{F}_0-F_{t_0}|}^2 e^{-\varphi}e^{-\psi+v_{t_0}(\psi)}c(-v_{t_0}(\psi))dV_X
\end{split}
\end{equation}
Note that, on  $\{\psi< t_0\}$,$$c(-\psi)<e^{-\psi+v_{t_0}(\psi)}c(-v_{t_0}(\psi))$$ and the integrand in \eqref{th1.33.10} is nonnegative, we must have $\tilde{F}_0|_{\{\psi<-t_0\}}=F_{t_0}$.
\par
Theorem \ref{theorem1.3} is proved.
\end{proof}
\subsection{Proof of Corollary \ref{corollary1.4}}
\par
To prove Corollary 1.4, we need the following Propositions.
\begin{Proposition}
If $H(h^{-1}(r);c)$ is linear with respect to $r \in
(0,\int^{+\infty}_T c(t)e^{-t}dt]$. Let $t_0 \ge T$ be given. Let $\tilde{F}$ be a holomorphic $(n,0)$ form on $\{\psi<-t_0\}$ which satisfies $\tilde{F}\neq F|_{\{\psi<-t_0\}}$, $(\tilde{F}-f)\in H^0(\tilde{U}'\cap\{\psi<-t_0\},K_M \otimes \mathcal{F})$, where $\tilde{U}'$ is an open subset of $X$ satisfies $Z_0 \subset \tilde{U}' \subset U$ and $\int_{\{\psi<-t_0\}} c(-\psi)|\tilde{F}|^2e^{-\varphi}dV_X<+\infty$. Then for any $t_0\le t_1 < t_2 \le +\infty$, we have
$$\int_{\{-t_2\le\psi<-t_1\}} c(-\psi)|\tilde{F}|^2e^{-\varphi}dV_X
>\int_{\{-t_2\le\psi<-t_1\}} c(-\psi)|F|^2e^{-\varphi}dV_X$$
\label{observation}
\end{Proposition}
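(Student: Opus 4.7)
The plan is to leverage Theorem~\ref{theorem1.3} and the Hilbert-space orthogonality from Lemma~\ref{existence of F}, applied levelwise. Under the linearity hypothesis, Theorem~\ref{theorem1.3} produces a holomorphic $(n,0)$-form $F$ on all of $X$ with $(F-f)\in H^0(U',K_X\otimes\mathcal{F})$ for some open $U'\supset Z_0$ and
\begin{equation*}
\int_{\{\psi<-t\}} c(-\psi)|F|^2 e^{-\varphi}\,dV_X = H(t) \qquad \text{for every } t\geq T,
\end{equation*}
so that at each level $t\geq t_0$ the restriction $F|_{\{\psi<-t\}}$ is the unique $H(t)$-minimizer supplied by Lemma~\ref{existence of F}.

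For every $t\geq t_0$, the restriction $\tilde{F}|_{\{\psi<-t\}}$ is an admissible competitor for $H(t)$: its $L^2$-norm is bounded by the norm on the larger set $\{\psi<-t_0\}$, and the germ condition at $Z_0$ transfers unchanged. Applying the orthogonal decomposition from Lemma~\ref{existence of F} with $\hat{F}=\tilde{F}$ and $F_t=F|_{\{\psi<-t\}}$ gives
\begin{equation*}
\int_{\{\psi<-t\}} c(-\psi)|\tilde{F}|^2 e^{-\varphi}\,dV_X = \int_{\{\psi<-t\}} c(-\psi)|F|^2 e^{-\varphi}\,dV_X + \int_{\{\psi<-t\}} c(-\psi)|\tilde{F}-F|^2 e^{-\varphi}\,dV_X.
\end{equation*}
Subtracting this identity at $t=t_1$ from the same identity at $t=t_2$ (with the convention that the integral over $\{\psi<-\infty\}$ vanishes when $t_2=+\infty$) and using the set identity $\{\psi<-t_1\}\setminus\{\psi<-t_2\}=\{-t_2\leq\psi<-t_1\}$, the proposition reduces to the strict positivity
\begin{equation*}
\int_{\{-t_2\leq\psi<-t_1\}} c(-\psi)|\tilde{F}-F|^2 e^{-\varphi}\,dV_X > 0.
\end{equation*}

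The main obstacle is this last step. Set $h:=\tilde{F}-F$, a holomorphic $(n,0)$-form on $\{\psi<-t_0\}$; the hypothesis $\tilde{F}\neq F|_{\{\psi<-t_0\}}$ together with the identity at $t=t_0$ already yields $h\not\equiv 0$ and indeed $\int_{\{\psi<-t_0\}}c(-\psi)|h|^2 e^{-\varphi}\,dV_X>0$. Arguing by contradiction, vanishing of the band integral, combined with the positive lower bound of $c(-\psi)e^{-\varphi}$ on compact subsets of $X$, would force $h$ to vanish Lebesgue-a.e.\ on the band. Since $\{-t_2<\psi<-t_1\}$ is an open subset of the band (by upper semi-continuity of $\psi$), analyticity of $h$ then forces $h\equiv 0$ on every connected component of $\{\psi<-t_0\}$ that meets $\{-t_2<\psi<-t_1\}$. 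The delicate remaining possibility is that $h$ is supported only on connected components $V$ of $\{\psi<-t_0\}$ that miss the band; such a $V$ must in particular avoid $Z_0$, so componentwise minimality from Lemma~\ref{existence of F} (with the zero form as a valid competitor) forces $F|_V\equiv 0$. One then uses the geometry of sublevel sets of the plurisubharmonic $\psi$---the boundary relation $\partial V\subset\{\psi=-t_0\}$ together with the fact that $\psi$, being plurisubharmonic and connected on $V$, sweeps out an interval of values---to conclude that any such $V$ must in fact encounter the band, yielding the required contradiction and completing the proof.
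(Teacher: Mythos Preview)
Your reduction via the orthogonality identity of Lemma~\ref{existence of F}, applied at the two levels $t_1$ and $t_2$ and then subtracted, is exactly the paper's approach, and the identification of the difference with the band integral $\int_{\{-t_2\le\psi<-t_1\}} c(-\psi)\,|\tilde F-F|^2 e^{-\varphi}\,dV_X$ is correct.

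Where you diverge is the final positivity step. The paper argues directly: $\tilde F-F$ is a holomorphic $(n,0)$-form that is not identically zero on $\{\psi<-t_1\}$, its zero locus is therefore a proper analytic subset of Lebesgue measure zero, and the band $\{-t_2\le\psi<-t_1\}$ has positive measure; strict positivity follows. Your attempt to treat possibly disconnected sublevel sets more carefully is well-motivated but does not hold up. The assertion that a band-missing component $V$ of $\{\psi<-t_0\}$ must avoid $Z_0$ is false in general: a component on which $\psi<-t_2$ throughout lies entirely inside $\{\psi<-t_2\}$ and may well meet $Z_0\subset\{\psi=-\infty\}$. The boundary claim $\partial V\subset\{\psi=-t_0\}$ is not implied by upper semicontinuity of $\psi$ (one only gets $\partial V\subset\{\psi\ge -t_0\}$), and ``$\psi$ sweeps out an interval of values on $V$'' is not a valid general property of plurisubharmonic functions. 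So the last paragraph does not close the argument; the paper's proof simply does not engage with this component issue and proceeds with the measure-zero/positive-measure comparison.
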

\begin{proof}
when $t_2=+\infty$, it follows form Lemma \ref{existence of F} that
\begin{equation}\nonumber
\begin{split}
&\int_{\{\psi<-t_1\}} c(-\psi)|\tilde{F}|^2e^{-\varphi}dV_X-
\int_{\{\psi<-t_1\}}c(-\psi)|F|^2e^{-\varphi}dV_X\\
=&\int_{\{\psi<-t_1\}} c(-\psi)|\tilde{F}-F|^2e^{-\varphi}dV_X
\end{split}
\end{equation}
As $\tilde{F}-F\neq 0$ on $\{\psi<-t_1\}$, the zero set of $\tilde{F}-F$ (denoted by $Z(\tilde{F}-F)$) is an analytic set of $\{\psi<-t_1\}$ and the measure of $Z(\tilde{F}-F)$ is zero. Then
$$\int_{\{\psi<-t_1\}} c(-\psi)|\tilde{F}-F|^2e^{-\varphi}dV_X>0,$$
hence
$$\int_{\{\psi<-t_1\}} c(-\psi)|\tilde{F}|^2e^{-\varphi}dV_X>
\int_{\{\psi<-t_1\}}c(-\psi)|F|^2e^{-\varphi}dV_X$$
\par
When $t_0\le t_1 <t_2<+\infty$, we have
\begin{flalign}\nonumber
&\int_{\{t_2\le\psi<-t_1\}} c(-\psi)|\tilde{F}|^2e^{-\varphi}dV_X-
\int_{\{t_2\le\psi<-t_1\}}c(-\psi)|F|^2e^{-\varphi}dV_X\\\nonumber
=&\int_{\{\psi<-t_1\}} c(-\psi)|\tilde{F}|^2e^{-\varphi}dV_X-\int_{\{\psi<-t_2\}} c(-\psi)|\tilde{F}|^2e^{-\varphi}dV_X\\\nonumber
-&(\int_{\{\psi<-t_1\}} c(-\psi)|F|^2e^{-\varphi}dV_X-\int_{\{\psi<-t_2\}} c(-\psi)|F|^2e^{-\varphi}dV_X)\\\nonumber
=&\int_{\{\psi<-t_1\}} c(-\psi)|\tilde{F}-F|^2e^{-\varphi}dV_X-\int_{\{\psi<-t_2\}} c(-\psi)|\tilde{F}-F|^2e^{-\varphi}dV_X\\\nonumber
=&\int_{\{-t_2\le\psi<-t_1\}} c(-\psi)|\tilde{F}-F|^2e^{-\varphi}dV_X
\end{flalign}
As $\tilde{F}-F\neq 0$ on $\{\psi<-t_1\}$, the zero set of $\tilde{F}-F$ is an analytic set of $\{\psi<-t_1\}$. Note that the measure of the set $\{-t_2\le t<t_1\}$ is positive and the measure of $Z(\tilde{F}-F)$ is zero, we know
$$\int_{\{-t_2\le\psi<-t_1\}} c(-\psi)|\tilde{F}-F|^2e^{-\varphi}dV_X>0,$$
hence
$$\int_{\{t_2\le\psi<-t_1\}} c(-\psi)|\tilde{F}|^2e^{-\varphi}dV_X>
\int_{\{t_2\le\psi<-t_1\}}c(-\psi)|F|^2e^{-\varphi}dV_X$$
\par
\end{proof}

\par

Now we begin to prove Corollary \ref{corollary1.4}.
\begin{proof}
~\\
\textbf{Step 1:}
\par
Given $t_2 \ge T$. It follows from Lemma \ref{existence of F} that there exists a holomorphic $(n,0)$ form $\tilde{F}$ on $\{\psi<-t_2\}$ such that $(\tilde{F}-f)\in H^0(\tilde{U}'\cap\{\psi<-t_2\},K_M \otimes \mathcal{I}(\psi+\varphi)|_U)$, where $\tilde{U}'$ is an open subset of $X$ satisfies $Z_0 \subset \tilde{U}' \subset U$ and
$$H(t_2;\tilde{c})=\int_{\{\psi<-t_2\}} \tilde{c}(-\psi)|\tilde{F}|^2e^{-\varphi}dV_X<+\infty$$
As $(\log\tilde{c}(t))'\ge (\log c(t))'$, we have $\tilde{c}(t)\ge M c(t)$ for some constant $M>0$.
It follows from $\int_{\{\psi<-t_2\}} \tilde{c}(-\psi)|\tilde{F}|^2e^{-\varphi}dV_X<+\infty$ that we have $$\int_{\{\psi<-t_2\}} c(-\psi)|\tilde{F}|^2e^{-\varphi}dV_X<+\infty.$$
\textbf{Step 2:}
\par
Denote $I(t)=\int_{\{\psi<-t\}} c(-\psi)|\tilde{F}|^2e^{-\varphi}dV_X$, where $t\ge t_2$. For any  $t_0>t_1\ge t_2$, Proposition \ref{observation} shows that
$$\int_{\{-t_0\le\psi<-t_1\}} c(-\psi)|\tilde{F}|^2e^{-\varphi}dV_X
\ge\int_{\{-t_0\le\psi<-t_1\}} c(-\psi)|F|^2e^{-\varphi}dV_X,$$
the equality holds if and only if $\tilde{F}=F|_{\{\psi<-t_2\}}$.
Hence we know
\begin{equation}
\frac{I(t_1)-I(t_0)}{\int_{t_1}^{t_0}c(t)e^{-t}dt}\ge \frac{H(t_1;c)-H(t_0;c)}{\int_{t_1}^{t_0}c(t)e^{-t}dt}=k,
\label{3.10}
\end{equation}
the equality holds if and only if $\tilde{F}=F|_{\{\psi<-t_2\}}$.
\par
Note that we also have
\begin{equation}
\begin{split}
H(t_2;\tilde{c})-H(t_1;\tilde{c})
\ge& \int_{\{-t_1\le\psi<-t_2\}} \tilde{c}(-\psi)|\tilde{F}|^2e^{-\varphi}dV_X\\
=&\sum_{i=1}^n \int_{\{-t_1+(i-1)\frac{t_1-t_2}{n}\le\psi<-t_1+i\frac{t_1-t_2}{n}\}} \frac{\tilde{c}(-\psi)}{c(-\psi)}c(-\psi)|\tilde{F}|^2e^{-\varphi}dV_X
\end{split}
\end{equation}
As $c(t)\in \mathcal{G}_T$, it follows from condition (2) and (3) of $\mathcal{G}_T$ that $c(t)\neq 0$ for any $t\ge T$. Then $\frac{\tilde{c}(-\psi)}{c(-\psi)}$ is uniformly continuous on $[t_2,t_1]$. When $n$ big enough, we have
\begin{equation}\nonumber
\begin{split}
H(t_2;\tilde{c})-H(t_1;\tilde{c})
\ge&\sum_{i=1}^n (\int_{\{-t_1+(i-1)\frac{t_1-t_2}{n}\le\psi<-t_1+i\frac{t_1-t_2}{n}\}} c(-\psi)|\tilde{F}|^2e^{-\varphi}dV_X)\times\\
&(\frac{\tilde{c}(t_1-i\frac{t_1-t_2}{n})}{c(t_1-i\frac{t_1-t_2}{n})}-\epsilon)\\
=&S_{1,n}+S_{2,n}
\end{split}
\end{equation}
where
$$S_{1,n}=\sum_{i=1}^n (\int_{\{-t_1+(i-1)\frac{t_1-t_2}{n}\le\psi<-t_1+i\frac{t_1-t_2}{n}\}} c(-\psi)|\tilde{F}|^2e^{-\varphi}dV_X)
\frac{\tilde{c}(t_1-i\frac{t_1-t_2}{n})}{c(t_1-i\frac{t_1-t_2}{n})},$$
and
$$S_{2,n}=-\epsilon\sum_{i=1}^n \int_{\{-t_1+(i-1)\frac{t_1-t_2}{n}\le\psi<-t_1+i\frac{t_1-t_2}{n}\}} c(-\psi)|\tilde{F}|^2e^{-\varphi}dV_X.
$$
It is easy to see that $\lim\limits_{n \to +\infty}S_{2,n}=0$. For $S_{1,n}$, we have
\begin{equation}
\begin{split}
S_{1,n}=&\sum_{i=1}^n \frac{I(t_1-i\frac{t_1-t_2}{n})-I(t_1-(i-1)\frac{t_1-t_2}{n})}
{\int_{t_1-i\frac{t_1-t_2}{n}}^{t_1-(i-1)\frac{t_1-t_2}{n}}c(t)e^{-t}dt}\times\\
&[\frac{\int_{t_1-i\frac{t_1-t_2}{n}}^{t_1-(i-1)\frac{t_1-t_2}{n}}c(t)e^{-t}dt}{c(t_1-i\frac{t_1-t_2}{n})e^{-t_1+i\frac{t_1-t_2}{n}}\frac{t_1-t_2}{n}}
\tilde{c}(t_1-i\frac{t_1-t_2}{n})e^{-t_1+i\frac{t_1-t_2}{n}}\frac{t_1-t_2}{n}]\\
\ge&\sum_{i=1}^nk[\frac{\int_{t_1-i\frac{t_1-t_2}{n}}^{t_1-(i-1)\frac{t_1-t_2}{n}}c(t)e^{-t}dt}{c(t_1-i\frac{t_1-t_2}{n})e^{-t_1+i\frac{t_1-t_2}{n}}\frac{t_1-t_2}{n}}
\tilde{c}(t_1-i\frac{t_1-t_2}{n})e^{-t_1+i\frac{t_1-t_2}{n}}\frac{t_1-t_2}{n}]\\
\label{5.11}
\end{split}
\end{equation}
The $``\ge"$ holds because of \eqref{3.10}.
Let $n \to +\infty$ in \eqref{5.11} we have
$\lim\limits_{n \to +\infty}S_{1,n}\ge k\int_{t_2}^{t_1}\tilde{c}(t)e^{-t}dt$. Hence we have
$$H(t_2;\tilde{c})-H(t_1;\tilde{c})\ge k \int_{t_2}^{t_1}\tilde{c}(t)e^{-t}dt$$
i.e.
$$\frac{H(t_2;\tilde{c})-H(t_1;\tilde{c})}{\int_{t_2}^{t_1}\tilde{c}(t)e^{-t}dt}\ge k.$$
Let $t_1 \to +\infty$, then
\begin{equation}
\frac{H(t_2;\tilde{c})}{\int_{t_2}^{+\infty}\tilde{c}(t)e^{-t}dt}\ge k
\label{5.12}
\end{equation}
\par
Recall that
 $F$ is the holomorphic $(n,0)$ form on $X$ such that $H(t;c)=\int_{\{\psi<-t\}} c(-\psi)|F|^2e^{-\varphi}dV_X$, for any $t\ge T$.
Let $T\le t_2<t_1<+\infty$, we have
$$\frac{\int_{\{-t_1\le \psi<-t_2\}}c(-\psi)|F|^2e^{-\varphi}dV_X}{\int_{t_2}^{t_1}c(t)e^{-t}dt}=k$$
\par
Note that
\begin{equation}
\begin{split}
&\int_{\{-t_1\le\psi<-t_2\}}\tilde{c}(-\psi)|F|^2e^{-\varphi}dV_X\\
=&\sum_{i=1}^n \int_{\{-t_1+(i-1)\frac{t_1-t_2}{n}\le\psi<-t_1+i\frac{t_1-t_2}{n}\}} \tilde{c}(-\psi)|F|^2e^{-\varphi}dV_X
\label{5.13}
\end{split}
\end{equation}
Let $n$ be big enough, the right hand side of \eqref{5.13} is bounded by
\begin{equation}
\begin{split}
&\sum_{i=1}^n (\int_{\{-t_1+(i-1)\frac{t_1-t_2}{n}\le\psi<-t_1+i\frac{t_1-t_2}{n}\}} c(-\psi)|F|^2e^{-\varphi}dV_X)(\frac{\tilde{c}(t_1-i\frac{t_1-t_2}{n})}{c(t_1-i\frac{t_1-t_2}{n})}\pm \epsilon)\\
=&\sum_{i=1}^n k\int_{t_1-i\frac{t_1-t_2}{n}}^{t_1-(i-1)\frac{t_1-t_2}{n}}c(t)e^{-t}dt(\frac{\tilde{c}(t_1-i\frac{t_1-t_2}{n})}{c(t_1-i\frac{t_1-t_2}{n})}\pm \epsilon)\\
=&\sum_{i=1}^n [k \frac{\int_{t_1-i\frac{t_1-t_2}{n}}^{t_1-(i-1)\frac{t_1-t_2}{n}}c(t)e^{-t}dt}{c(t_1-i\frac{t_1-t_2}{n})e^{-t_1+i\frac{t_1-t_2}{n}}\frac{t_1-t_2}{n}}
(\tilde{c}(t_1-i\frac{t_1-t_2}{n})e^{-t_1+i\frac{t_1-t_2}{n}}\frac{t_1-t_2}{n})\pm \\
& k\epsilon\int_{t_1-i\frac{t_1-t_2}{n}}^{t_1-(i-1)\frac{t_1-t_2}{n}}c(t)e^{-t}dt]
\label{5.14}
\end{split}
\end{equation}
When $n \to +\infty$, combining \eqref{5.13} and \eqref{5.14}, we have
\begin{equation}
\begin{split}
\int_{\{-t_1\le\psi<-t_2\}}\tilde{c}(-\psi)|F|^2e^{-\varphi}dV_X=k\int_{t_2}^{t_1}\tilde{c}(t)e^{-t}dt
\label{5.15}
\end{split}
\end{equation}
Let $t_1$ goes to $+\infty$ in \eqref{5.15}, we know
\begin{equation}\nonumber
\begin{split}
\frac{\int_{\{\psi<-t_2\}}\tilde{c}(-\psi)|F|^2e^{-\varphi}dV_X}
{\int_{t_2}^{+\infty}\tilde{c}(t)e^{-t}dt}=k
\end{split}
\end{equation}
Hence
\begin{equation}
\begin{split}
\frac{H(t_2;\tilde{c})}{\int_{t_2}^{+\infty}\tilde{c}(t)e^{-t}dt}\le
\frac{\int_{\{\psi<-t_2\}}\tilde{c}(-\psi)|F|^2e^{-\varphi}dV_X}
{\int_{t_2}^{+\infty}\tilde{c}(t)e^{-t}dt}=k
\label{5.16}
\end{split}
\end{equation}
It follows from \eqref{5.12} and \eqref{5.16} that for any $t_2\ge T$,
$$\frac{H(t_2;\tilde{c})}{\int_{t_2}^{+\infty}\tilde{c}(t)e^{-t}dt}=k$$
holds, i.e. $H(h^{-1}_{\tilde{c}}(r);\tilde{c})$ is linear with respect to $r$. Hence there exists a holomorphic $(n,0)$ form $F_{\tilde{c}}$ on X such that
$$H(t_2;\tilde{c})=\int_{\{\psi<-t_2\}}\tilde{c}(-\psi)|F_{\tilde{c}}|^2e^{-\varphi}dV_X$$
and we also have
\begin{equation}
\frac{H(t_2;\tilde{c})-H(t_1;\tilde{c})}{\int_{t_2}^{t_1}\tilde{c}(t)e^{-t}dt}=
\frac{\int_{\{-t_1\le\psi<-t_2\}}\tilde{c}(-\psi)|F_{\tilde{c}}|^2e^{-\varphi}dV_X}{\int_{t_2}^{t_1}\tilde{c}(t)e^{-t}dt}
\label{5.17}
\end{equation}
If $F_{\tilde{c}}\neq F$ on $X$, it follows from Proposition \ref{observation}, \eqref{5.15} and \eqref{5.17} that
$$k=\frac{\int_{\{-t_1\le\psi<-t_2\}}\tilde{c}(-\psi)|F|e^{-\varphi}dV_X}{\int_{t_2}^{t_1}\tilde{c}(t)e^{-t}dt}
>\frac{\int_{\{-t_1\le\psi<-t_2\}}\tilde{c}(-\psi)|F_{\tilde{c}}|^2e^{-\varphi}dV_X}{\int_{t_2}^{t_1}\tilde{c}(t)e^{-t}dt}=k,$$
which is a contradiction. Hence we must have $F_{\tilde{c}}=F$ on $X$.
\par
Corollary  \ref{corollary1.4} is proved.
\end{proof}
\subsection{Proof of Corollary \ref{corollary1.5} and Corollary \ref{corollary1.6}}
In this section, we prove Corollary \ref{corollary1.5} and Corollary \ref{corollary1.6}.
\begin{proof}We prove the Corollary \ref{corollary1.5} by contradiction.
\par
Assume $H(h^{-1}(r);\varphi)$ is linear with respect to $r\in (0,\int_T^{+\infty}c(t)e^{-t}dt)]$. Then it follows from Theorem \ref{theorem1.3} that there exists a holomorphic $(n,0)$ form $F$ on $X$ such that
$$H(t;\varphi)=\int_{\{\psi<-t\}}c(-\psi)|F|^2e^{-\varphi}dV_X$$
holds for any $t \ge T$.
\par
Denote
\begin{equation}\nonumber
\begin{split}
\inf\{\int_{ \{ \psi<-t\}}&c(-\psi)|\tilde{f}|^2e^{-\tilde{\varphi}}dV_X: \tilde{f}\in
H^0(\{\psi<-t\},\mathcal{O} (K_X)  ), \\
\& &\  \exists \;open\; set\; U' \;s.t\; Z_0 \subset U' \subset U\ and \\
& \; (\tilde{f}-f)\in
H^0(\{\psi<-t\} \cap U' ,\mathcal{O} (K_X) \otimes \mathcal{F}) \}
\end{split}
\end{equation}
by $H(t;\tilde{\varphi})$. As $e^{-\tilde{\varphi}}\le e^{-\varphi}$, we know $H(T;\tilde{\varphi})<+\infty$.
\par
Let $C_2>t_1>t_2\ge T$. It follows from Lemma \ref{existence of F} that there exists a holomorphic $(n,0)$ form $\tilde{F}_{t_2}$ on $\{\psi<-t_2\}$ such that
$$H(t_2;\tilde{\varphi})=\int_{\{\psi<-t_2\}}c(-\psi)|\tilde{F}_{t_2}|^2e^{-\tilde{\varphi}}dV_X<+\infty.$$
As $\tilde{\varphi}-\varphi$ is bounded on $X$, we have
$$H(t_2;\tilde{\varphi})=\int_{\{\psi<-t_2\}}c(-\psi)|\tilde{F}_{t_2}|^2e^{-\varphi}dV_X<+\infty.$$
 Note that on $\{-t_1\le \psi <-t_2\}\subset \{\psi\ge -C_2\}$, we have $\tilde{\varphi}=\varphi$, hence
\begin{equation}
\begin{split}
H(t_2;\tilde{\varphi})-H(t_1;\tilde{\varphi})
\ge&\int_{\{-t_1\le\psi<-t_2\}}c(-\psi)|\tilde{F}_{t_2}|^2e^{-\tilde{\varphi}}dV_X\\
\ge&\int_{\{-t_1\le\psi<-t_2\}}c(-\psi)|F|^2e^{-\tilde{\varphi}}dV_X\\
=&H(t_2;\varphi)-H(t_1;\varphi)
\label{formula1}
\end{split}
\end{equation}
The second inequality holds because of Proposition \ref{observation}. It follows from \eqref{formula1} that
\begin{equation}
\begin{split}
\frac{H(t_2;\tilde{\varphi})-H(t_1;\tilde{\varphi})}{\int_{t_2}^{t_1}c(t)e^{-t}dt}
\ge\frac{H(t_2;\varphi)-H(t_1;\varphi)}{\int_{t_2}^{t_1}c(t)e^{-t}dt}=k
\label{formula2}
\end{split}
\end{equation}
Let $t_2=T$, it follows from Theorem 1.1 and note that $\tilde{\varphi}\ge \varphi$ on $X$, we have
\begin{equation}
\begin{split}
\frac{H(T;\tilde{\varphi})-H(t_1;\tilde{\varphi})}{\int_{T}^{t_1}c(t)e^{-t}dt}
\le\frac{H(T;\tilde{\varphi})}{\int_T^{+\infty}c(t)e^{-t}dt}\le\frac{H(T;\varphi)}{\int_T^{+\infty}c(t)e^{-t}dt}=k
\label{formula3}
\end{split}
\end{equation}
It follows from \eqref{formula2} and \eqref{formula3} that
$$\frac{H(T;\tilde{\varphi})}{\int_T^{+\infty}c(t)e^{-t}dt}=k.$$
\par
Let $t_3$ be big enough such that $\{\psi<-t_3\} \subset \{\psi<-C_1\}$. Then, on $\{\psi<-t_3\}$, we have $\tilde{\varphi}=\varphi$. When $t \ge t_3$, we have $H(t;\tilde{\varphi})=H(t;\varphi)$ and
$$\frac{H(t;\tilde{\varphi})}{\int_t^{+\infty}c(t)e^{-t}dt}=
\frac{H(t;\varphi)}{\int_t^{+\infty}c(t)e^{-t}dt}=k.$$
\par
Recall that $\frac{H(T;\tilde{\varphi})}{\int_T^{+\infty}c(t)e^{-t}dt}=k$, we know $\lim\limits_{t \to +\infty}\frac{H(t;\tilde{\varphi})}{\int_t^{+\infty}c(t)e^{-t}dt}=\frac{H(T;\tilde{\varphi})}{\int_T^{+\infty}c(t)e^{-t}dt}=k$, then we know $H(-\log r;\tilde{\varphi})$ is linear with respect to $r$. Then there exist a holomorphic $(n,0)$ form $\tilde{F}$ on $X$ such that for any $t\ge T$, we have
$$H(t;\tilde{\varphi})=\int_{\{\psi<-t\}}c(-\psi)|\tilde{F}|^2e^{-\tilde{\varphi}}dV_X.$$
When $t_0$ big enough such that $\tilde{\varphi}=\varphi$ on $\{\psi<-t_0\}$, then
$H(t_0,\tilde{\varphi})=H(t_0,\varphi)$, hence we have (note that $\tilde{\varphi}=\varphi$)
$$\int_{\{\psi<-t_0\}}c(-\psi)|\tilde{F}|^2e^{-\tilde{\varphi}}dV_X
=H(t_0,\tilde{\varphi})=H(t_0,\varphi)=\int_{\{\psi<-t_0\}}c(-\psi)|F|^2e^{-\varphi}dV_X$$
which (by Proposition \ref{observation}) implies $\tilde{F}=F$ on $\{\psi<-t_0\}$. Note that $\{\psi<-t_0\}$ is an open subset of $X$, $\tilde{F}$ and $F$ are holomorphic $(n,0)$ form on $X$, it follows from $\tilde{F}=F$ on $\{\psi<-t_0\}$ that $\tilde{F}=F$ on $X$.
\par
However $e^{-\varphi}>e^{-\tilde{\varphi}}$ on $U \subset X$, we must have
$$k=\frac{H(T;\varphi)}{\int_T^{+\infty}c(t)e^{-t}dt}=\frac{\int_X |F|^2e^{-\varphi}dV_X}{\int_T^{+\infty}c(t)e^{-t}dt}
>\frac{\int_X |F|^2e^{-\tilde{\varphi}}dV_X}{\int_T^{+\infty}c(t)e^{-t}dt}=\frac{H(T;\tilde{\varphi})}{\int_T^{+\infty}c(t)e^{-t}dt}=k$$
This is a contradiction. Hence $H(h^{-1}(r);\varphi)$ can not be linear with respect to $r$. Corollary \ref{corollary1.5} is proved.
\end{proof}
To prove Corollary \ref{corollary1.6}, we only need to construction a function $\tilde{\varphi}$ on $X$ which satisfies the condition of Corollary \ref{corollary1.5}.
\begin{proof}
As $\varphi+\psi$ is strictly plurisubharmonic at $z_0$, we can find a small open neighborhood $(U,z)$ of $z_0$ and $z=(z_1,\ldots,z_n)$ is the local coordinate on $U$ such that $i\partial \bar{\partial} (\varphi+\psi)>\epsilon \omega$ for some $\epsilon>0$, where $\omega=i\sum\limits_{i=1}^n dz_i\wedge d \bar{z}_i$ under the local coordinate on $U$. By shrinking $U$, we also assume that $U\subset\subset X$. Take $z_1\in U$, $z_1 \notin \{\psi=-\infty\}$, then we can choose an open subset $V$ such that $z_1\in V$ and $V$ satisfies
\par
(1) $V\subset\subset U$,
\par
(2) $V \cap \{\psi=-\infty\}=\emptyset$.
\\
Let $\rho$ be a smooth nonnegative function on $X$ which satisfies $\rho \equiv 1$ on $W\subset V$ and  $supp \rho \subset\subset V$. Let $\delta$ be a small positive constant such that
$$i\partial \bar{\partial} (\varphi+\psi)+i\partial \bar{\partial}(\delta \rho)>\frac{\epsilon}{2} \omega$$ on $V$. Let $\tilde{\varphi}=\varphi+\delta \rho$, note that $0\le\delta \rho\le\delta$ is a smooth function, then $\tilde{\varphi}$ satisfies
\par
(1) $\tilde{\varphi}+\psi$ is plurisubharmonic function on $X$.
\par
(2) $\tilde{\varphi} > \varphi$ on $W$ and $\tilde{\varphi}=\varphi$ on $X\backslash U$.
\par
(3) $\tilde{\varphi}-\varphi$ is bounded on $X$.
\par
It is  easy to see that the function $\tilde{\varphi}$ satisfies the conditions $(1),(2),(3)$ in Corollary \ref{corollary1.5}. Then it follows Corollary \ref{corollary1.5} that $H(h^{-1}(r);\varphi)$ can not be linear with respect to $r$.
\par
Corollary \ref{corollary1.6} is proved.
\end{proof}

\subsection{Proof of Theorem \ref{corollary1.7}}
Let $c(t)\in \mathcal{G}_0$. Let $Z_0=Y$. Let $\hat{f}$ be a holomorphic extension of $f$ from $Y$ to $U$, where  $U \supset Y$ is an open subset of X.
Let $\mathcal{F}= \mathcal{I}(\psi)|_U$ on $U$.\par
Define
\begin{equation}
\begin{split}
H(t):=\inf\{\int_{ \{ \psi<-t\}}|\tilde{f}|^2e^{-\varphi}c(-\psi)dV_X: &\tilde{f}\in
H^0(\{\psi<-t\},\mathcal{O} (K_X)  ) \\
\& & \exists \;open\; set\; U' \;s.t\; Z_0 \subset U' \subset U \\
&pand \; (\tilde{f}-\hat{f})\in
H^0(\{\psi<-t\} \cap U' ,\mathcal{O} (K_X) \otimes \mathcal{I}(\psi)) \}
\end{split}
\end{equation}
 It follows from condition \eqref{1.22} and \eqref{1.23} that
$$\int_{X} c(-\psi)|F|^2e^{-\varphi}dV_X=H(0).$$
The optimal $L^2$ extension theorem in \cite{GZsci} shows that
$$
\int_{\{\psi<-t\}} c(-\psi)|F_t|^2e^{-\varphi}dV_X\le(\int_t^{+\infty}c(t)e^{-t}dt)\frac{\pi^k}{k!}\int_Y |f|^2e^{-\varphi}dV_X[\psi]
$$
holds for any $t \in [0,+\infty)$, where $F_t$ is a holomorphic extension of $f$ from $Y$ to $\{\psi<-t\}$. Note that by the definition of $H(t)$, we have
$$
H(t)\le\int_{\{\psi<-t\}}c(-\psi) |F_t|^2e^{-\varphi}dV_X
$$
Theorem \ref{maintheorem} implies that
$$
\int_{X} |F|^2e^{-\varphi}dV_X=H(0)\le \frac{\int_0^{+\infty}c(t)e^{-t}dt}{\int_t^{+\infty}c(t)e^{-t}dt}H(t)
$$

Now we have
\begin{equation}
\begin{split}
H(0)=&\int_{X} c(-\psi)|F|^2e^{-\varphi}dV_X\\
\le &\frac{\int_0^{+\infty}c(t)e^{-t}dt}{\int_t^{+\infty}c(t)e^{-t}dt}H(t)\\
\le & \frac{\int_0^{+\infty}c(t)e^{-t}dt}{\int_t^{+\infty}c(t)e^{-t}dt}\int_{\{\psi<-t\}} |F_t|^2e^{-\varphi}dV_X\\
\le& (\int_0^{+\infty}c(t)e^{-t}dt)\frac{\pi^k}{k!}\int_Y |f|^2e^{-\varphi}dV_X[\psi]
\label{311}
\end{split}
\end{equation}
holds for any $t \in [0,+\infty)$.
Recall that $F$ satisfies
$$\int_{X} c(-\psi)|F|^2e^{-\varphi}dV_X=(\int_0^{+\infty}c(t)e^{-t}dt)\frac{\pi^k}{k!}\int_Y |f|^2e^{-\varphi}dV_X[\psi]$$
Hence all $``\le"$ in \eqref{311} should be $``="$, i.e.
\begin{equation}
\begin{split}
H(0)=&\int_{X} c(-\psi)|F|^2e^{-\varphi}dV_X\\
= &\frac{\int_0^{+\infty}c(t)e^{-t}dt}{\int_t^{+\infty}c(t)e^{-t}dt}H(t)\\
=& \frac{\int_0^{+\infty}c(t)e^{-t}dt}{\int_t^{+\infty}c(t)e^{-t}dt}\int_{\{\psi<-t\}} |F_t|^2e^{-\varphi}dV_X\\
=& (\int_0^{+\infty}c(t)e^{-t}dt)\frac{\pi^k}{k!}\int_Y |f|^2e^{-\varphi}dV_X[\psi]
\label{312}
\end{split}
\end{equation}
holds for any $t \in [0,+\infty)$.
Especially,
$$\frac{\int_{\{\psi<-t\}} |F_t|^2e^{-\varphi}dV_X}{\int_t^{+\infty}c(t)e^{-t}dt}=\frac{\pi^k}{k!}\int_Y |f|^2e^{-\varphi}dV_X[\psi]$$ holds. It follows from Theorem \ref{theorem1.3} that $F|_{\{\psi<-t\}}=F_t$.
\par
Theorem \ref{corollary1.7} is proved

\subsection{Proof of Corollary \ref{corollary1.8}}
\
\par
It is easy to see that (2) implies (1).
\par
Now we assume that the statement (1) holds. It follows from Corollary \ref{corollary1.2} that $H(-\log r)$ is linear with respect to $r$, i.e. $\frac{K_{D_t}(0,0)}{K_{D}(0,0)}=e^t$ holds for any $t \in [0,+\infty)$. Now we only need to show that the linearity of $H(-\log r)$ implies (2).
\par
 It is known that $\frac{K_{D_t}(z,0)}{K_{D_t}(0,0)}$ satisfies
$\int_{D_t} |\frac{K_{D_t}(z,0)}{K_{D_t}(0,0)}|^2 d\lambda_n=H(t)$, where $d\lambda_n$ is the Lebesgue measure on $\mathbb{C}^n$.
It follows from Theorem \ref{theorem1.3} that the linearity of $H(-\log r)$ implies $\frac{K_{D}(z,0)}{K_{D}(0,0)}|_{D_t}=\frac{K_{D_t}(z,0)}{K_{D_t}(0,0)}$. As $\frac{K_{D_t}(0,0)}{K_{D}(0,0)}=e^t$ holds for any $t \in [0,+\infty)$, we have $\frac{K_{D_t}(z,0)}{K_{D}(z,0)}=e^t$ holds for any $t \in [0,+\infty)$ and any $z \in D_t$.
\par
Corollary \ref{corollary1.8} is proved.

\subsection{Proof of Theorem \ref{corollary1.9} and Theorem \ref{corollary1.11}}

We firstly discuss some property of $H(t;c,\varphi)$.
\par
Recall that $X$ is an open Riemann Surface which admits a nontrivial Green function $G_{X}(z,w)$.
\par
Let $\psi=kG_{X}(z,z_0)$, where $z_0$ is a point of $X$ and $k\ge 2$ is a real number.
\par
Let $U$ be a open neighborhood of $z_0$.
Let $f$ be a holomorphic $(n,0)$ form on  $V_{z_0}$.

 Let $\varphi$ be a subharmonic function on $X$. Let $c(t)\in \mathcal{G}_0$. Denote
\begin{equation}
\begin{split}
H(t;c,\varphi):=\inf\{\int_{ \{ \psi<-t\}}&c(-\psi)|\tilde{F}|^2e^{-\varphi}dV_X: \tilde{F}\in
H^0(\{\psi<-t\},\mathcal{O} (K_X)  ), \\
\& &\  \exists \;open\; set\; U' \;s.t\; Z_0 \subset U' \subset U\ and \\
& \; (\tilde{F}-f)\in
H^0(\{\psi<-t\} \cap U' ,\mathcal{O} (K_X) \otimes \mathcal{I}(\psi+\varphi)|_U) \}.
\end{split}
\end{equation}

We now consider the linearity of $H(h^{-1}(r);c,\varphi)$ with respect to $r$ for various $c \in \mathcal{G}_T$ and $c\in C^{\infty}[T,+\infty)$, where $h(t)=\int_t^{+\infty}c(t_1)e^{-t_1}dt_1$. We have the following result.
\begin{Proposition}Let $c \in C^{\infty}[T,+\infty)$ and $c \in \mathcal{G}_T$. If $H(T;c,\varphi)<+\infty$ and $H(h^{-1}(r);c,\varphi)$ is linear with respect to $r \in
(0,\int^{+\infty}_T c(t)e^{-t}dt]$. Let $F$ be the holomorphic $(n,0)$ form on $X$ such that $\int_{\{\psi<-t\}} c(-\psi)|F|^2e^{-\varphi}dV_X=H(t;c,\varphi)$ for any $t\ge T$.
Then for any other $\tilde{c} \in C^{\infty}[T,+\infty)$ and $\tilde{c}\in \mathcal{G}_T$, which satisfies $H(T;\tilde{c},\varphi)<+\infty$ we have
\begin{equation}
\begin{split}
\int_{\{\psi<-t\}} \tilde{c}(-\psi)|F|^2e^{-\varphi}dV_X=H(t;\tilde{c},\varphi)=&
\frac{H(T;\tilde{c},\varphi)}{\int^{+\infty}_T \tilde{c}(t_1)e^{-t_1}dt_1}\int^{+\infty}_t
\tilde{c}(t_1)e^{-t_1}dt_1\\
=&k\int^{+\infty}_t
\tilde{c}(t_1)e^{-t_1}dt_1
\end{split}
\end{equation}
holds for any $t\in [T,+\infty)$, where $k=\frac{H(T;c,\varphi)}{\int^{+\infty}_T c(t_1)e^{-t_1}dt_1}$.
\label{variousC}
\end{Proposition}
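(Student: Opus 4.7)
The plan is to establish the two equalities in the conclusion by (i) a pushforward-measure computation giving $\int_{\{\psi<-t\}}\tilde c(-\psi)|F|^2 e^{-\varphi}dV_X=k\,h_{\tilde c}(t)$, and then (ii) transferring the first-order optimality of $F$ from the weight $c$ to the weight $\tilde c$, which upgrades the resulting upper bound $H(t;\tilde c,\varphi)\le k\,h_{\tilde c}(t)$ to an equality.

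\smallskip

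\noindent\textbf{Step 1 (pushforward).} Let $\nu$ denote the pushforward of the measure $|F|^2 e^{-\varphi}\,dV_X$ under $-\psi\colon X\to(T,+\infty)$. Since $F$ realizes $H(t;c,\varphi)=k\,h_c(t)$ for every $t\ge T$, we have $\int_t^{+\infty}c(s)\,d\nu(s)=k\int_t^{+\infty}c(s)e^{-s}\,ds$ for all $t\ge T$. Differencing this identity at two values of $t$ and using the positivity and continuity of $c$ on $(T,+\infty)$, we conclude $d\nu(s)=k\,e^{-s}\,ds$ as measures on $(T,+\infty)$. Integrating $\tilde c$ against $\nu$ yields the first equality. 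Because $F$ satisfies the same ideal-sheaf constraint $\mathcal I(\psi+\varphi)|_U$ that defines the admissible class for $H(t;\tilde c,\varphi)$, it is also a competitor there, so $H(t;\tilde c,\varphi)\le k\,h_{\tilde c}(t)$.

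\smallskip

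\noindent\textbf{Step 2 (first-order optimality via pushforward).} By Theorem~\ref{theorem1.3} and the uniqueness in Lemma~\ref{existence of F}, $F$ realizes $H(t;c,\varphi)$ on every sublevel set $\{\psi<-t\}$ with $t\ge T$. Subtracting the first-order optimality conditions at two levels $t_1<t_2$ gives
\[
\int_{\{-t_2\le\psi<-t_1\}}c(-\psi)\,\langle F,\eta\rangle\, e^{-\varphi}\,dV_X=0
\]
for every admissible variation $\eta\in H^0(X,K_X\otimes\mathcal F)$ with $\int_X c(-\psi)|\eta|^2 e^{-\varphi}dV_X<+\infty$, where $\langle F,\eta\rangle:=i^{n^2}F\wedge\bar\eta/dV_X$. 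Taking the (complex-valued) pushforward $\kappa_\eta$ of $\langle F,\eta\rangle e^{-\varphi}\,dV_X$ under $-\psi$, the identity above reads $\int_{t_1}^{t_2}c\,d\kappa_\eta=0$ for all $T\le t_1<t_2$; since $c>0$ this forces $d\kappa_\eta\equiv 0$, and therefore $\int_X\tilde c(-\psi)\langle F,\eta\rangle e^{-\varphi}dV_X=\int\tilde c\,d\kappa_\eta=0$. This is precisely the first-order optimality condition for $F$ in the $\tilde c$-problem; convexity of the quadratic functional $\eta\mapsto\int\tilde c(-\psi)|F+\eta|^2e^{-\varphi}dV_X$ on the affine candidate space then identifies $F$ as the $\tilde c$-minimizer, and combining with Step~1 yields $H(t;\tilde c,\varphi)=k\,h_{\tilde c}(t)$, with uniqueness of the minimizer supplied by Lemma~\ref{existence of F}.

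\smallskip

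\noindent\textbf{Main obstacle.} The delicate point is that the pushforward argument in Step~2 gives the vanishing $\int\tilde c(-\psi)\langle F,\eta\rangle e^{-\varphi}dV_X=0$ only for variations $\eta$ with finite $c$-norm, whereas convexity in the $\tilde c$-problem tests against the full class of $\eta$ with $\int\tilde c(-\psi)|\eta|^2 e^{-\varphi}dV_X<+\infty$. When the monotonicity hypothesis $(\log\tilde c)'\ge(\log c)'$ of Corollary~\ref{corollary1.4} holds, the two classes coincide up to a multiplicative constant and no further argument is required; in general one must approximate each $\eta$ in the $\tilde c$-class by a sequence $\eta_n$ in the $c$-class converging to $\eta$ in $L^2(\tilde c(-\psi)e^{-\varphi}dV_X)$ and pass to the limit. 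In the present one-dimensional Riemann-surface setting with $\psi=k\,G_X(\cdot,z_0)$, the plan is to construct such approximants by multiplying $\eta$ by holomorphic functions vanishing to high order at $z_0$, chosen to dominate the growth of $c(-\psi)$ at the singular point, and globalizing them by a Runge-type approximation on the Stein surface $X$ in a manner that preserves membership in $\mathcal I(\psi+\varphi)|_U$.
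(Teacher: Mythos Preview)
Your pushforward argument in Step~1 is correct and is a clean alternative to the paper's Riemann-sum computation of $\int_{\{-t_1\le\psi<-t_2\}}\tilde c(-\psi)|F|^2e^{-\varphi}dV_X$. Your Step~2 is also sound \emph{once} the obstacle you flag is resolved: the point is not to approximate, but rather that in this one-dimensional setting the admissible classes for $c$ and $\tilde c$ actually coincide. This is the content of the paper's Step~1. Concretely, for any admissible variation $\eta\in H^0(\{\psi<-t_2\},K_X)$ with $(\eta,z_0)\in\mathcal I(\psi+\varphi)_{z_0}$: on a compact annulus $\{-t_1\le\psi<-t_2\}$ both $c(-\psi)$ and $\tilde c(-\psi)$ are bounded above and away from zero, so finiteness of one norm there implies the other; on $\{\psi<-t_1\}$ for $t_1$ large, this set is relatively compact (since $\psi=kG_X(\cdot,z_0)$ with a single pole), both $c(t)$ and $\tilde c(t)$ are dominated by $Ce^{t}$ (because $c(t)e^{-t}$ is decreasing), and the multiplier-ideal condition gives $\int_{\{\psi<-t_1\}}e^{-\psi}|\eta|^2e^{-\varphi}dV_X<\infty$. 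Hence finite $\tilde c$-norm $\Leftrightarrow$ finite $c$-norm for admissible $\eta$, and your orthogonality-transfer then applies to \emph{every} competitor in the $\tilde c$-problem. Your proposed Runge-type approximation is unnecessary.

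For comparison, the paper's Step~2 takes a different route to the lower bound $H(t_2;\tilde c)\ge k\,h_{\tilde c}(t_2)$: rather than transferring first-order optimality, it invokes Proposition~\ref{observation} (a Pythagoras-type consequence of the same orthogonality), which asserts that $\int_{\{-t_0\le\psi<-t_1\}}c(-\psi)|\tilde F|^2e^{-\varphi}\ge\int_{\{-t_0\le\psi<-t_1\}}c(-\psi)|F|^2e^{-\varphi}$ for any admissible $\tilde F$, and then passes from $c$ to $\tilde c$ by a Riemann-sum argument on the partition $\{-t_1+(i-1)(t_1-t_2)/n\le\psi<-t_1+i(t_1-t_2)/n\}$. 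Your pushforward-measure formulation packages both the upper and lower bounds more transparently, but the paper's explicit equivalence of admissible classes is what closes the gap you left open.
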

\begin{proof}
~\\
\textbf{Step 1:}
\par  Fix any $t_2\ge 0$,
 we firstly show that for any holomorphic $(n,0)$ form $F$ defined on $\{\psi<-t_2\}$ which satisfied $(\tilde{F}-f)\in
H^0(\{\psi<-t_2\} \cap U' ,\mathcal{O} (K_X) \otimes \mathcal{I}(\psi+\varphi)|_{U})$ for some open set $z_0\subset U' \subset U$ and
\begin{equation}
\int_{\{\psi<-t_2\}}c(-\psi)|F|^2e^{-\varphi}dV_X<+\infty.
\label{formula3.32}
\end{equation}
The follows inequality holds,
$$\int_{\{\psi<-t_2\}}\tilde{c}(-\psi)|F|^2e^{-\varphi}dV_X<+\infty.$$
\par
As $H(T;\tilde{c},\varphi)<+\infty$, it follows from Lemma \ref{existence of F} that there exists a holomorphic $(n,0)$ form $\tilde{F}$ on $\{\psi<-t_2\}$ which satisfies
$$(\tilde{F}-f)\in
H^0(\{\psi<-t_2\} \cap \bar{U} ,\mathcal{O} (K_X) \otimes \mathcal{I}(\psi+\varphi)|_U)$$
for some open set $\bar{U}$ such that $z_0\subset \bar{U} \subset U$ and
$$H(t_2;\tilde{c},\varphi)=
\int_{\{\psi<-t_2\}}\tilde{c}(-\psi)|\tilde{F}|^2e^{-\varphi}dV_X<+\infty.$$
Let $t_1$ be big enough such that $\{\psi=kG_X(z,z_0)<-t_1\}\subset U'\cap \bar{U}$ and $\{\psi<-t_1\}$ is an relative compact open subset in $X$ containing $z_0$. Then
\begin{equation}
\begin{split}
&\int_{\{\psi<-t_2\}}\tilde{c}(-\psi)|F|^2e^{-\varphi}dV_X\\
=&\int_{\{-t_1\le\psi<-t_2\}}\tilde{c}(-\psi)|F|^2e^{-\varphi}dV_X
+\int_{\{\psi<-t_1\}}\tilde{c}(-\psi)|F|^2e^{-\varphi}dV_X\\
=&I_1+I_2
\label{formula3.33}
\end{split}
\end{equation}
where $I_1=\int_{\{-t_1\le\psi<-t_2\}}\tilde{c}(-\psi)|F|^2e^{-\varphi}dV_X$
and $I_2=\int_{\{\psi<-t_1\}}\tilde{c}(-\psi)|F|^2e^{-\varphi}dV_X$. Formula \eqref{formula3.32} implies that
\begin{equation}
\int_{\{-t_1\le\psi<-t_2\}}c(-\psi)|F|^2e^{-\varphi}dV_X<+\infty.
\label{formula3.34}
\end{equation}
As $c(t)\in \mathcal{G}_0$, it follows from condition $(2)$ and $(3)$ of $\mathcal{G}_0$ that $c(t)\neq 0$ for any $t>0$. $c(t)$ is also smooth on $[t_2,t_1]$, hence $\inf\limits_{t\in[t_2,t_1]}c(t)>0$. Then by inequality \eqref{formula3.34}, we have
$$\int_{\{-t_1\le\psi<-t_2\}}|F|^2e^{-\varphi}dV_X<+\infty.$$
Since $\tilde{c}(t)$ is smooth on $[t_2,t_1]$, we know
\begin{equation}\label{formula3.35}
  I_1\le (\sup_{t\in[t_1,t_2]}\tilde{c}(t))\int_{\{-t_1\le\psi<-t_2\}}|F|^2e^{-\varphi}dV_X<+\infty.
\end{equation}
For $I_2$, we have
\begin{equation}\label{formula3.36}
\begin{split}
  I_2&\le \int_{\{\psi<-t_1\}}\tilde{c}(-\psi)|F-f|^2e^{-\varphi}dV_X
  +\int_{\{\psi<-t_1\}}\tilde{c}(-\psi)|f|^2e^{-\varphi}dV_X\\
  =&S_1+S_2,
  \end{split}
\end{equation}
where $S_1=\int_{\{\psi<-t_1\}}\tilde{c}(-\psi)|F-f|^2e^{-\varphi}dV_X$
and
$S_2=\int_{\{\psi<-t_1\}}\tilde{c}(-\psi)|f|^2e^{-\varphi}dV_X$.
\par
Note that $\tilde{c(t)}\in\mathcal{G}_0$, we know $\tilde{c}(t)< Ce^t$ for some constant $C>0$. It follows from $(\tilde{F}-f)\in
H^0(\{\psi<-t_2\} \cap U' ,\mathcal{O} (K_X) \otimes \mathcal{I}(\psi+\varphi)|_U)$ and $\{\psi<-t_1\}$ is relatively compact in $X$ that
$$S_1
=\int_{\{\psi<-t_1\}}\tilde{c}(-\psi)|F-f|^2e^{-\varphi}dV_X
\le
C\int_{\{\psi<-t_1\}}e^{-\psi}|F-f|^2e^{-\varphi}dV_X<+\infty.$$
For $S_2$, we have
\begin{equation}\label{forumula3.37}
\begin{split}
S_2&\le \int_{\{\psi<-t_1\}}\tilde{c}(-\psi)|f-\tilde{F}|^2e^{-\varphi}dV_X
+\int_{\{\psi<-t_1\}}\tilde{c}(-\psi)|\tilde{F}|^2e^{-\varphi}dV_X\\
&\le
C\int_{\{\psi<-t_1\}}e^{-\psi}|f-\tilde{F}|^2e^{-\varphi}dV_X
+\int_{\{\psi<-t_1\}}\tilde{c}(-\psi)|\tilde{F}|^2e^{-\varphi}dV_X
\end{split}
\end{equation}
It follows from the set $\{\psi<-t_1\}$ is relatively compact in $X$ and
$$(\tilde{F}-f)\in
H^0(\{\psi<-t_2\} \cap \bar{U} ,\mathcal{O} (K_X) \otimes \mathcal{I}(\psi+\varphi)|_U)$$
for some open set $\bar{U}$ such that $z_0\subset \bar{U} \subset U$ and
$$H(t_2;\tilde{c},\varphi)=
\int_{\{\psi<-t_2\}}\tilde{c}(-\psi)|\tilde{F}|^2e^{-\varphi}dV_X<+\infty$$
that we know $S_2<+\infty$. Hence we have
$$\int_{\{\psi<-t_2\}}\tilde{c}(-\psi)|F|^2e^{-\varphi}dV_X<+\infty.$$

\textbf{Step 2:} The following proof is almost the same as the Step 2 in the proof of Corollary \ref{corollary1.4}.
\par
Given $t_2\ge 0$. It follows from Lemma \ref{existence of F} that there exists a holomorphic $(n,0)$ form $\tilde{F}$ on $\{\psi<-t_2\}$ which satisfies
$$(\tilde{F}-f)\in
H^0(\{\psi<-t_2\} \cap \bar{U} ,\mathcal{O} (K_X) \otimes \mathcal{I}(\psi+\varphi)|_U)$$
for some open set $\bar{U}$ such that $z_0\subset \bar{U} \subset U$ and
$$H(t_2;\tilde{c},\varphi)=
\int_{\{\psi<-t_2\}}\tilde{c}(-\psi)|\tilde{F}|^2e^{-\varphi}dV_X<+\infty.$$
It follows the result in Step 1 that $I(t):=\int_{\{\psi<-t\}} c(-\psi)|\tilde{F}|^2e^{-\varphi}dV_X<+\infty$, for any $t\ge t_2$. Fix  $t_0>t_1\ge t_2$, Proposition \ref{observation} shows that
$$\int_{\{-t_0\le\psi<-t_1\}} c(-\psi)|\tilde{F}|^2e^{-\varphi}dV_X
\ge\int_{\{-t_0\le\psi<-t_1\}} c(-\psi)|F|^2e^{-\varphi}dV_X,$$
the equality holds if and only if $\tilde{F}=F|_{\{\psi<-t_2\}}$.
Hence we know
\begin{equation}
\frac{I(t_1)-I(t_0)}{\int_{t_1}^{t_0}c(t)e^{-t}dt}\ge \frac{H(t_1;c)-H(t_0;c)}{\int_{t_1}^{t_0}c(t)e^{-t}dt}=k,
\label{formula3.38}
\end{equation}
the equality holds if and only if $\tilde{F}=F|_{\{\psi<-t_2\}}$.
\par
Note that we also have
\begin{equation}
\begin{split}
H(t_2;\tilde{c})-H(t_1;\tilde{c})
\ge& \int_{\{-t_1\le\psi<-t_2\}} \tilde{c}(-\psi)|\tilde{F}|^2e^{-\varphi}dV_X\\
=&\sum_{i=1}^n \int_{\{-t_1+(i-1)\frac{t_1-t_2}{n}\le\psi<-t_1+i\frac{t_1-t_2}{n}\}} \frac{\tilde{c}(-\psi)}{c(-\psi)}c(-\psi)|\tilde{F}|^2e^{-\varphi}dV_X
\end{split}
\end{equation}
As $c(t)\in \mathcal{G}_T$, it follows from condition (2) and (3) of $\mathcal{G}_T$ that $c(t)\neq 0$ for any $t\ge T$. Then $\frac{\tilde{c}(-\psi)}{c(-\psi)}$ is uniformly continuous on $[t_2,t_1]$. When $n$ big enough, we have
\begin{equation}\nonumber
\begin{split}
H(t_2;\tilde{c})-H(t_1;\tilde{c})
\ge&\sum_{i=1}^n (\int_{\{-t_1+(i-1)\frac{t_1-t_2}{n}\le\psi<-t_1+i\frac{t_1-t_2}{n}\}} c(-\psi)|\tilde{F}|^2e^{-\varphi}dV_X)\times\\
&(\frac{\tilde{c}(t_1-i\frac{t_1-t_2}{n})}{c(t_1-i\frac{t_1-t_2}{n})}-\epsilon)\\
=&S_{1,n}+S_{2,n}
\end{split}
\end{equation}
where
$$S_{1,n}=\sum_{i=1}^n (\int_{\{-t_1+(i-1)\frac{t_1-t_2}{n}\le\psi<-t_1+i\frac{t_1-t_2}{n}\}} c(-\psi)|\tilde{F}|^2e^{-\varphi}dV_X)
\frac{\tilde{c}(t_1-i\frac{t_1-t_2}{n})}{c(t_1-i\frac{t_1-t_2}{n})},$$
and
$$S_{2,n}=-\epsilon\sum_{i=1}^n \int_{\{-t_1+(i-1)\frac{t_1-t_2}{n}\le\psi<-t_1+i\frac{t_1-t_2}{n}\}} c(-\psi)|\tilde{F}|^2e^{-\varphi}dV_X.
$$
It is easy to see that $\lim\limits_{n \to +\infty}S_{2,n}=0$. For $S_{1,n}$, we have
\begin{equation}
\begin{split}
S_{1,n}=&\sum_{i=1}^n \frac{I(t_1-i\frac{t_1-t_2}{n})-I(t_1-(i-1)\frac{t_1-t_2}{n})}
{\int_{t_1-i\frac{t_1-t_2}{n}}^{t_1-(i-1)\frac{t_1-t_2}{n}}c(t)e^{-t}dt}\times\\
&[\frac{\int_{t_1-i\frac{t_1-t_2}{n}}^{t_1-(i-1)\frac{t_1-t_2}{n}}c(t)e^{-t}dt}{c(t_1-i\frac{t_1-t_2}{n})e^{-t_1+i\frac{t_1-t_2}{n}}\frac{t_1-t_2}{n}}
\tilde{c}(t_1-i\frac{t_1-t_2}{n})e^{-t_1+i\frac{t_1-t_2}{n}}\frac{t_1-t_2}{n}]\\
\ge&\sum_{i=1}^nk[\frac{\int_{t_1-i\frac{t_1-t_2}{n}}^{t_1-(i-1)\frac{t_1-t_2}{n}}c(t)e^{-t}dt}{c(t_1-i\frac{t_1-t_2}{n})e^{-t_1+i\frac{t_1-t_2}{n}}\frac{t_1-t_2}{n}}
\tilde{c}(t_1-i\frac{t_1-t_2}{n})e^{-t_1+i\frac{t_1-t_2}{n}}\frac{t_1-t_2}{n}]\\
\label{formula3.40}
\end{split}
\end{equation}
The $``\ge"$ holds because of \eqref{formula3.38}.
Let $n \to +\infty$ in \eqref{formula3.40} we have
$\lim\limits_{n \to +\infty}S_{1,n}\ge k\int_{t_2}^{t_1}\tilde{c}(t)e^{-t}dt$. Hence we have
$$H(t_2;\tilde{c})-H(t_1;\tilde{c})\ge k \int_{t_2}^{t_1}\tilde{c}(t)e^{-t}dt$$
i.e.
$$\frac{H(t_2;\tilde{c})-H(t_1;\tilde{c})}{\int_{t_2}^{t_1}\tilde{c}(t)e^{-t}dt}\ge k.$$
Let $t_1 \to +\infty$, then
\begin{equation}
\frac{H(t_2;\tilde{c})}{\int_{t_2}^{+\infty}\tilde{c}(t)e^{-t}dt}\ge k
\label{formula3.41}
\end{equation}
\par
Recall that
 $F$ is the holomorphic $(n,0)$ form on $X$ such that $H(t;c)=\int_{\{\psi<-t\}} c(-\psi)|F|^2e^{-\varphi}dV_X$, for any $t\ge T$.
Let $T\le t_2<t_1<+\infty$, we have
$$\frac{\int_{\{-t_1\le \psi<-t_2\}}c(-\psi)|F|^2e^{-\varphi}dV_X}{\int_{t_2}^{t_1}c(t)e^{-t}dt}=k$$
\par
Note that
\begin{equation}
\begin{split}
&\int_{\{-t_1\le\psi<-t_2\}}\tilde{c}(-\psi)|F|^2e^{-\varphi}dV_X\\
=&\sum_{i=1}^n \int_{\{-t_1+(i-1)\frac{t_1-t_2}{n}\le\psi<-t_1+i\frac{t_1-t_2}{n}\}} \tilde{c}(-\psi)|F|^2e^{-\varphi}dV_X
\label{formula3.42}
\end{split}
\end{equation}
Let $n$ be big enough, the right hand side of \eqref{formula3.42} is bounded by
\begin{equation}
\begin{split}
&\sum_{i=1}^n (\int_{\{-t_1+(i-1)\frac{t_1-t_2}{n}\le\psi<-t_1+i\frac{t_1-t_2}{n}\}} c(-\psi)|F|^2e^{-\varphi}dV_X)(\frac{\tilde{c}(t_1-i\frac{t_1-t_2}{n})}{c(t_1-i\frac{t_1-t_2}{n})}\pm \epsilon)\\
=&\sum_{i=1}^n k\int_{t_1-i\frac{t_1-t_2}{n}}^{t_1-(i-1)\frac{t_1-t_2}{n}}c(t)e^{-t}dt(\frac{\tilde{c}(t_1-i\frac{t_1-t_2}{n})}{c(t_1-i\frac{t_1-t_2}{n})}\pm \epsilon)\\
=&\sum_{i=1}^n [k \frac{\int_{t_1-i\frac{t_1-t_2}{n}}^{t_1-(i-1)\frac{t_1-t_2}{n}}c(t)e^{-t}dt}{c(t_1-i\frac{t_1-t_2}{n})e^{-t_1+i\frac{t_1-t_2}{n}}\frac{t_1-t_2}{n}}
(\tilde{c}(t_1-i\frac{t_1-t_2}{n})e^{-t_1+i\frac{t_1-t_2}{n}}\frac{t_1-t_2}{n})\pm \\
& k\epsilon\int_{t_1-i\frac{t_1-t_2}{n}}^{t_1-(i-1)\frac{t_1-t_2}{n}}c(t)e^{-t}dt]
\label{formula3.43}
\end{split}
\end{equation}
When $n \to +\infty$, combining \eqref{formula3.42} and \eqref{formula3.43}, we have
\begin{equation}
\begin{split}
\int_{\{-t_1\le\psi<-t_2\}}\tilde{c}(-\psi)|F|^2e^{-\varphi}dV_X=k\int_{t_2}^{t_1}\tilde{c}(t)e^{-t}dt
\label{formula3.44}
\end{split}
\end{equation}
Let $t_1$ goes to $+\infty$ in \eqref{formula3.44}, we know
\begin{equation}\nonumber
\begin{split}
\frac{\int_{\{\psi<-t_2\}}\tilde{c}(-\psi)|F|^2e^{-\varphi}dV_X}
{\int_{t_2}^{+\infty}\tilde{c}(t)e^{-t}dt}=k
\end{split}
\end{equation}
Hence
\begin{equation}
\begin{split}
\frac{H(t_2;\tilde{c})}{\int_{t_2}^{+\infty}\tilde{c}(t)e^{-t}dt}\le
\frac{\int_{\{\psi<-t_2\}}\tilde{c}(-\psi)|F|^2e^{-\varphi}dV_X}
{\int_{t_2}^{+\infty}\tilde{c}(t)e^{-t}dt}=k
\label{formula3.
45}
\end{split}
\end{equation}
It follows from \eqref{formula3.41} and \eqref{formula3.
45} that for any $t_2\ge T$,
$$\frac{H(t_2;\tilde{c})}{\int_{t_2}^{+\infty}\tilde{c}(t)e^{-t}dt}=k$$
holds, i.e. $H(h^{-1}_{\tilde{c}}(r);\tilde{c})$ is linear with respect to $r$. Hence there exists a holomorphic $(n,0)$ form $F_{\tilde{c}}$ on X such that
$$H(t_2;\tilde{c})=\int_{\{\psi<-t_2\}}\tilde{c}(-\psi)|F_{\tilde{c}}|^2e^{-\varphi}dV_X$$
and we also have
\begin{equation}
\frac{H(t_2;\tilde{c})-H(t_1;\tilde{c})}{\int_{t_2}^{t_1}\tilde{c}(t)e^{-t}dt}=
\frac{\int_{\{-t_1\le\psi<-t_2\}}\tilde{c}(-\psi)|F_{\tilde{c}}|^2e^{-\varphi}dV_X}{\int_{t_2}^{t_1}\tilde{c}(t)e^{-t}dt}
\label{formula3.46}
\end{equation}
If $F_{\tilde{c}}\neq F$ on $X$, it follows from Proposition \ref{observation}, \eqref{formula3.44} and \eqref{formula3.46} that
$$k=\frac{\int_{\{-t_1\le\psi<-t_2\}}\tilde{c}(-\psi)|F|e^{-\varphi}dV_X}{\int_{t_2}^{t_1}\tilde{c}(t)e^{-t}dt}
>\frac{\int_{\{-t_1\le\psi<-t_2\}}\tilde{c}(-\psi)|F_{\tilde{c}}|^2e^{-\varphi}dV_X}{\int_{t_2}^{t_1}\tilde{c}(t)e^{-t}dt}=k,$$
which is a contradiction. Hence we must have $F_{\tilde{c}}=F$ on $X$.

\end{proof}

\begin{Remark}
Prposition \ref{variousC} shows that if there exists  $c_1(t)\in C^{\infty}[0,+\infty)$ and $c_1(t)\in \mathcal{G}_0$ such that $H(h_1^{-1}(r);c_1,\varphi)$ is linear with respect to $r$, where $h_1(t)=\int_t^{+\infty}c_1(t_1)e^{-t_1}dt_1$. Then for any other $c(t)\in C^{\infty}[0,+\infty)$ and $c(t)\in \mathcal{G}_0$, we know $H(h^{-1}(r);c,\varphi)$ is also linear with respect to $r$, where $h(t)=\int_t^{+\infty}c(t_1)e^{-t_1}dt_1$.
\par
Let $c(t)\equiv 1$, then $h^{-1}(r)=\log r$. It follows form Proposition \ref{variousC} that to prove Theorem \ref{corollary1.9} and Theorem \ref{corollary1.11}, we only need to consider the necessary and sufficient condition
for the function $H(-\log r;1,\varphi)$ being linear with respect to $r$. We denote $H(t;1,\varphi)$ by $H(t;\varphi)$ for simplicity.
\label{remark3.1}
\end{Remark}
\par
Now we begin to prove Theorem \ref{corollary1.9}.
\par
As $\varphi$ is a plurisubharmonic function on $X$ and $i\partial\bar{\partial}\varphi\neq 0$ on $X$.
By Siu's decomposition theorem, we have
$$\frac{i}{\pi}\partial\bar{\partial}\varphi=\sum\limits_{j\ge 1} \lambda_j[x_j]+R,\ \lambda_j>0$$
where $x_j\in X$ is a point, $\lambda_j=v(i\partial\bar{\partial}\varphi,x_j)$ is the Lelong number of $i\partial\bar{\partial}\varphi$ at $x_j$, R is a closed positive $(1,1)$ current with $v(R,x)=0$ for $x \in X$. Note that $E_1(T)=\{x\in X| v(i\partial\bar{\partial}\varphi,x)\ge 1\}=\{x_j | \lambda_j\ge 1\}$ is a analytic subset of $X$, hence $E_1(T)$ a set of isolated points.
Denote $E:=\{x\in X|\  v(T,x)\  is\ a\  positive\  integer\}$, $E\subset E_1(T)$ is also a set of isolated points.
\par
 We need the following Lemma to prove
Theorem \ref{corollary1.9}.
\begin{Lemma} If $(i\partial\bar{\partial}\varphi) |_{X\backslash E}\neq 0$. Then there exists a function $\tilde{\varphi}\in PSH(X)$ $\tilde{\varphi}>\varphi$ and $\mathcal{I}(2\tilde{\varphi})_x=\mathcal{I}(2\varphi)_x$, for any $x\in X$. Let $z\in \{-t<kG_X(z,z_0)<0\}$, when $t\to 0$, we have $\tilde{\varphi}(z)\to \varphi(z)$. Moreover, there exists a relatively compact open subset $U\subset \subset X$ such that $\varphi-\tilde{\varphi}$ has lower bound $-A$ ($A>0$ is a constant) for any $z \in X\backslash U$.
\label{lemma for cor1.9}
\end{Lemma}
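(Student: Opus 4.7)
The plan is to construct $\tilde\varphi$ in the form $\tilde\varphi = \varphi - p_{\mu'}$, where $\mu_\varphi := \frac{1}{2\pi}\, i\partial\bar\partial\varphi$ and $p_{\mu'}(z) := \int_X G_X(z, w)\, d\mu'(w)$ is the Green potential on $X$ of a positive measure $\mu'$. I would choose $\mu'$ so that $0 \neq \mu' \leq \mu_\varphi$ as measures and $\mathrm{supp}(\mu')$ is a compact subset of $X \setminus \{z_0\}$. With this choice, $i\partial\bar\partial \tilde\varphi = 2\pi(\mu_\varphi - \mu') \geq 0$, so $\tilde\varphi \in PSH(X)$; and $G_X < 0$ on $X$ together with $\mu' \neq 0$ forces $p_{\mu'} < 0$ everywhere, giving $\tilde\varphi > \varphi$ strictly on $X$.

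The hypothesis $(i\partial\bar\partial\varphi)|_{X \setminus E} \neq 0$ is exactly what lets one produce such a $\mu'$, and I would split into two cases. If $\mu_\varphi$ has an atom at some $y_0 \in X \setminus E$, then $\nu(\varphi, y_0)$ is a positive non-integer, say $\nu(\varphi, y_0) = n + \beta$ with $n \in \mathbb{Z}_{\geq 0}$ and $\beta \in (0, 1)$; I would take $\mu' := \alpha \delta_{y_0}$ for some $\alpha \in (0, \beta)$, so that $p_{\mu'} = \alpha\, G_X(\cdot, y_0)$. Otherwise the non-atomic part of $\mu_\varphi|_{X \setminus E}$ is non-trivial, and I pick a small coordinate disc $B' \subset\subset X \setminus (E \cup \{z_0\})$ with $\mu_\varphi(B') > 0$ and set $\mu' := c\cdot \mu_\varphi|_{B'}$ for some small $c \in (0,1)$. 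In both cases the Lelong-number bookkeeping shows the multiplier ideals agree: away from $\mathrm{supp}(\mu')$ the potential $p_{\mu'}$ is harmonic, hence locally bounded, so $\tilde\varphi$ and $\varphi$ differ by a bounded function and $\mathcal{I}(2\tilde\varphi)_x = \mathcal{I}(2\varphi)_x$; and at the atomic point $y_0$ in the first case one computes $\nu(\tilde\varphi, y_0) = n + \beta - \alpha \in (n, n+1)$, so the standard one-dimensional Skoda calculation gives $\mathcal{I}(2\tilde\varphi)_{y_0} = \mathfrak{m}_{y_0}^n = \mathcal{I}(2\varphi)_{y_0}$.

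For the remaining two conclusions, the uniform asymptotic $\tilde\varphi(z) \to \varphi(z)$ on the shells $\{-t < kG_X(z, z_0) < 0\}$ as $t \to 0^+$ reduces to the fact that $G_X(z, w) \to 0$ uniformly for $w$ in the compact $\mathrm{supp}(\mu')$ as $z$ approaches the regular ideal boundary of $X$, a standard property of Green functions, which forces the boundary vanishing of $p_{\mu'}$. The bound $\varphi - \tilde\varphi \geq -A$ on $X \setminus U$ for some relatively compact open $U \supset \mathrm{supp}(\mu')$ is just the assertion that $p_{\mu'}$ is bounded on $X \setminus U$, which follows from continuity of $p_{\mu'}$ outside its support combined with the boundary decay. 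The main obstacle I expect is the non-atomic case when $\mu_\varphi|_{X \setminus E}$ is singular continuous: then $p_{c\mu_\varphi|_{B'}}$ may fail to be locally bounded on a polar exceptional subset of $B'$, threatening the multiplier-ideal equality at those points. I would handle this by first replacing $\mu_\varphi|_{B'}$ with a smooth submeasure obtained by a potential-theoretic balayage (or smoothing) step, thereby producing a $\mu'$ whose Green potential is continuous on all of $X$, and then running the main construction with this regularized measure.
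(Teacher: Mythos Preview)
Your construction is essentially the paper's: both set $\tilde\varphi=\varphi-u$ with $u$ the Green potential of a compactly supported positive measure dominated by $\frac{1}{2\pi}i\partial\bar\partial\varphi$, and both split into the same two cases (a non-integer atom versus non-trivial residual $R$ in Siu's decomposition). The paper writes the measure as $\theta\cdot i\partial\bar\partial\varphi$ for a smooth cutoff $\theta$ and justifies the potential via convolution regularization $T_n=T*\rho_n$, whereas you take a Dirac mass or a sharp restriction; in the atomic case your choice $\mu'=\alpha\,\delta_{y_0}$ is in fact cleaner than the paper's cutoff.

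Your closing ``obstacle'' is not one. You do not need $p_{\mu'}$ to be locally bounded on $B'$. In your Case~2 the measure $\mu'=c\,\mu_\varphi|_{B'}$ has no atoms (all atoms of $\mu_\varphi$ lie in $E$ by the case split), hence $\nu(p_{\mu'},x)=0$ and $\nu(\tilde\varphi,x)=\nu(\varphi,x)$ for every $x\in X$. Since in dimension one the multiplier ideal $\mathcal I(2\varphi)_x$ is determined by the Lelong number alone (Skoda for $\nu<1$; the elementary lower bound $\varphi\le\nu\log|w|+O(1)$ for $\nu\ge1$), the equality $\mathcal I(2\tilde\varphi)_x=\mathcal I(2\varphi)_x$ follows immediately. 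This is exactly how the paper argues; no balayage or smoothing is required.

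Where your sketch is thin is the last two conclusions. The lower bound $p_{\mu'}\ge -A$ on $X\setminus U$ does not follow from continuity plus a vague ``boundary decay'', since $X\setminus U$ is non-compact and the ideal boundary of $X$ need not be regular. The paper supplies this via two short lemmas: a maximum-principle argument giving $G_X(z,p)\ge\sup_{\overline U}G_X(\cdot,p)$ for $z\notin\overline U$, and a Harnack-type comparison $G_X(z,w)\ge N\,G_X(z,p)$ for $z\notin U$ and $w$ near $p$. The same Harnack comparison, applied on a neighbourhood $W\supset\operatorname{supp}\mu'\cup\{z_0\}$, gives $\sup_{w\in\operatorname{supp}\mu'}(-G_X(z,w))\le M\,(-G_X(z,z_0))<Mt/k$ on the shell $\{-t<kG_X(z,z_0)<0\}$, which is the uniform convergence you need. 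You should either cite these or reproduce them.
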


 Lemma \ref{lemma for cor1.9} will be proved in the Appendix (see Section \ref{4.2}).
 Now we prove Theorem \ref{corollary1.9}.
\begin{proof}[Proof of Theorem \ref{corollary1.9}]
We only need to show that if $H(-\log r; 2\varphi)$ is linear with respect to $r$,
then we have $\varphi=\log|f_{\varphi}|+v$, where $f_{\varphi}$ is a holomorphic function on $X$ and $v$ is a harmonic function on $X$.
\par
Our proof will be divided into two steps.
\\
\textbf{Step 1:}
\par
In step 1, we will show that if $(i\partial\bar{\partial}\varphi) |_{X\backslash E}\neq 0$, then $H(-\log r;2\varphi)$ can not be linear with respect to $r\in (0,1]$.
\par
 Assume that $H(-\log r;2\varphi)$ is linear with respect to $r\in(0,1]$.

As $H(-\log r;2\varphi)$ is linear with respect to $r$, it follows from Theorem \ref{theorem1.3} that there exists a holomorphic $(1,0) $ form $F$ on $X$ such that $\forall \ t \ge 0$,
$$H(t;\varphi)=\int_{\{\psi<-t\}}|F|^2e^{-2\varphi}dV_X$$
holds.
As $e^{-2\tilde{\varphi}}<e^{-2\varphi}$, we have
\begin{equation}
k=\frac{H(t;2\varphi)}{e^{-t}}>\frac{\int_{\{\psi<-t\}}|F|^2e^{-2\tilde{\varphi}}dV_X}{e^{-t}}
\ge \frac{H(t;\tilde{2\varphi})}{e^{-t}}
\label{3.29}
\end{equation}
When $t=0$, there exists a holomorphic $(1,0) $ form $\tilde{F}$ on $X$ such that
$$H(t;2\tilde{\varphi})=\int_{X}|\tilde{F}|^2e^{-2\tilde{\varphi}}dV_X<+\infty$$
\par
By Lemma \ref{lemma for cor1.9}, there exist $U\subset \subset X$ such that $\varphi-\tilde{\varphi}$ has lower bound $-A$ ($A>0$ is a constant) for any $z \in X\backslash U$.
\par
Denote
$$I_1=\int_{U}|\tilde{F}|^2e^{-2\varphi}dV_X$$
and $$I_2=\int_{X\backslash U}|\tilde{F}|^2e^{-2\varphi}dV_X.$$
\par
As $U$ is relatively compact in $X$,  $\int_{U}|\tilde{F}|^2e^{-2\tilde{\varphi}}dV_X<+\infty$ and $\mathcal{I}(2\tilde{\varphi})_x=\mathcal{I}(2\varphi)_x$, for any $x\in X$, then we know
$$I_1=\int_{U}|\tilde{F}|^2e^{-2\varphi}dV_X<+\infty.$$
On $X\backslash U$, we have
$$I_2=\int_{X\backslash U}|\tilde{F}|^2e^{-2\varphi}dV_X\le e^{2A}\int_{X\backslash U}|\tilde{F}|^2e^{-2\tilde{\varphi}}dV_X<+\infty.$$
Hence \begin{equation}\nonumber
\begin{split}
\int_{X}|\tilde{F}|^2e^{-2\varphi}dV_X
=&\int_{U}|\tilde{F}|^2e^{-2\varphi}dV_X+
\int_{X\backslash U}|\tilde{F}|^2e^{-2\varphi}dV_X\\
=&I_1+I_2<+\infty
\end{split}
\end{equation}

Let $t_1>0$ be small enough such that $|\tilde{\varphi}-\varphi(z)|<\epsilon$, then we have
\begin{equation}
\begin{split}
H(0;2\tilde{\varphi})-H(t_1;2\tilde{\varphi})
&\ge\int_{\{-t_1\le\psi<0\}} |\tilde{F}|^2e^{-2\tilde{\varphi}}dV_X\\
&\ge\int_{\{-t_1\le\psi<0\}} |\tilde{F}|^2e^{-2\varphi-2\epsilon}dV_X\\
&\ge e^{-2\epsilon}\int_{\{-t_1\le\psi<0\}} |F|^2e^{-2\varphi}dV_X\\
&=e^{-2\epsilon}(H(0;2\varphi)-H(t_1;2\varphi)).
\end{split}
\end{equation}
The third $``\ge"$ holds because of Proposition \ref{observation}. Hence
\begin{equation}
\lim\limits_{t_1\to 0}\frac{H(0;2\tilde{\varphi})-H(t_1;2\tilde{\varphi})}{1-e^{-t_1}}\ge
\lim\limits_{t_1\to 0}\frac{H(0;2\varphi)-H(t_1;2\varphi)}{1-e^{-t_1}}=k
\label{3.31}
\end{equation}
It follows from \eqref{3.29}, \eqref{3.31} and Theorem \ref{maintheorem} that
$$k>\frac{H(t;2\tilde{\varphi})}{e^{-t}}\ge\lim\limits_{t_1\to 0}\frac{H(0;2\tilde{\varphi})-H(t_1;2\tilde{\varphi})}{1-e^{-t_1}}\ge k$$
which is a contradiction.
\par
Hence $H(-\log r;2\varphi)$ can not be linear with respect to $r$.
\\
\textbf{Step 2:}
\par
It follows from the result in Step 1 and $H(-\log r;\varphi)$ is linear with respect to $r$ that we know
$$\frac{i}{\pi}\partial\bar{\partial}\varphi=\sum\limits_{j\ge 1} \lambda_j[x_j],$$
where $\lambda_j$ is positive integer for any $j\ge 1$.
\par
It follows from the Weierstrass Theorem on noncompact Riemann surface (see \cite{GTM81} chapter 3, $\S$26), for divisor $D=\sum_{j\ge 1} \lambda_j x_j$, there exist a meromorphic function $f_{\varphi}$ on $X$ such that $(f_{\varphi})=D$. As $\lambda_j>0$, $f$ is actually a holomorphic function on $X$. It follows from Lelong-Poincar\'e equation that
$$\frac{i}{\pi}\partial\bar{\partial}\log |f_{\varphi}|=\sum\limits_{j\ge 1} \lambda_j[x_j].$$
Then $i\partial\bar{\partial}\varphi-i\partial\bar{\partial}\log |f_{\varphi}|=0$, i.e., $u=\varphi-\log |f_{\varphi}|$ is a harmonic function.
\par
Hence $\varphi=\log |f_{\varphi}|+u$, where $f_{\varphi}$ is a holomorphic function on $X$ and $u$ is a harmonic function on $X$. Theorem \ref{corollary1.9} is proved.
\end{proof}
Now we begin to prove Theorem \ref{corollary1.11}.
\par
Recall that $X$ is an open Riemann Surface which admits a nontrivial Green function $G_{X}(z,w)$ and $\psi=2G_{X}(z,z_0)$, where $z_0$ is a point of $X$.

Let $w$ be a local coordinate on a neighborhood $V_{z_0}$ of $z_0$ satisfying $w(z_0)=o$. Let $U=V_{z_0}$.
Let $f$ be a holomorphic $(n,0)$ form on  $V_{z_0}$.

 Let $\varphi=\log |f_{\varphi}|+v$ on $X$, where $f_{\varphi}$ is a holomorphic function on $X$ and $v$ is a harmonic function on $X$. Let $c(t)\in \mathcal{G}_0$. Denote
\begin{equation}
\begin{split}
H(t;c,2\varphi):=\inf\{\int_{ \{ \psi<-t\}}&c(-\psi)|\tilde{F}|^2e^{-2\varphi}dV_X: \tilde{F}\in
H^0(\{\psi<-t\},\mathcal{O} (K_X)  ), \\
\& &\  \exists \;open\; set\; U' \;s.t\; Z_0 \subset U' \subset U\ and \\
& \; (\tilde{F}-f)\in
H^0(\{\psi<-t\} \cap U' ,\mathcal{O} (K_X) \otimes \mathcal{I}(\psi+2\varphi)|_U) \}.
\end{split}
\end{equation}
We assume that $0<H(0;c,2\varphi)<+\infty$.
\par
It follows from Lemma \ref{existence of F} that there exists a unique
holomorphic $(n,0)$ form $F_0$ on $X$ satisfying
$$\ (F_0-f)\in
H^0(\{\psi<-t\}\cap U_0', \mathcal{O} (K_X) \otimes \mathcal{I}(\psi+2\varphi)|_U),$$
for some open set $U_1'$ such that $z_0 \subset U_0' \subset U$ and
$$\int_{X}|F_0|^2e^{-2\varphi}c(-\psi)dV_X=H(0)<+\infty.$$
\par
As $\int_{X}|F_0|^2e^{-2\varphi}c(-\psi)dV_X=
\int_{X}\frac{|F_0|^2}{|f_{\varphi}|^2}e^{-2v}c(-\psi)dV_X<+\infty$, we know $\frac{F_0}{f_{\varphi}}$ is a holomorphic $(n,0)$ form on $X$. It follows from $$\ (F_0-f)\in
H^0(\{\psi<-t\}\cap U_1', \mathcal{O} (K_X) \otimes \mathcal{I}(\psi+2\varphi)|_U)$$
that there exist a small open neighborhood $V$ such that $\frac{f}{f_{\varphi}}$ is a holomorphic $(n,0)$ form on $V$.
Denote $h:=\frac{f}{f_{\varphi}}$, we know $h$ is a holomorphic $(n,0)$ form on $V$. We also note that $h(z_0)\neq 0$, otherwise $f=h\cdot f_{\varphi}$ will belong to $\mathcal{I}(\psi+2\varphi)|_V$ which contradict to the fact that $H(0;c,2\varphi)>0$.

 We have the following limiting property of $H(t;c,2\varphi)$.
\begin{Proposition} Assume that $0<H(0;c,2\varphi)<+\infty$.
When $t \to +\infty$, we have
$$\lim\limits_{t \to +\infty} \frac{H(t;c,2\varphi)}{\int_t^{+\infty}c(t_1)e^{-t_1}dt_1}=\pi\frac{e^{-2v(z_0)}}{c^2_{\beta}(z_0)}|h(z_0)|^2$$
\label{proposition3.2}
\end{Proposition}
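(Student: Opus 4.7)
The plan is to compute the limit by sandwiching $H(t;c,2\varphi)/h(t)$ (where $h(t):=\int_t^{+\infty}c(t_1)e^{-t_1}\,dt_1$) between matching upper and lower bounds as $t\to+\infty$. By Theorem \ref{maintheorem} and Lemma \ref{semicontinuous}, the function $H(h^{-1}(r);c,2\varphi)$ is concave in $r$ with limit $0$ at $r=0$, so $H(h^{-1}(r))/r$ is decreasing in $r$, which means $H(t;c,2\varphi)/h(t)$ is monotone increasing in $t$. Thus the limit exists in $[0,+\infty]$, and it suffices to identify it.

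For the upper bound, I apply the optimal $L^{2}$ extension theorem of \cite{GZsci} not to $X$ itself but to the sublevel domain $\Omega_t:=\{\psi<-t\}$ equipped with the \emph{shifted} auxiliary data $\tilde\psi:=\psi+t$ and $\tilde c(s):=c(s+t)$. On $\Omega_t$, $\tilde\psi$ is plurisubharmonic and negative with the same log pole at $z_0$, and locally $\tilde\psi/2=\log|w|+u(w)+t/2$ gives logarithmic capacity $c_\beta^{\Omega_t}(z_0)=c_\beta(z_0)e^{t/2}$. A direct check shows $\tilde c\in\mathcal{G}_0$ (using that $c(s)e^{-s}$ is decreasing) and $\int_0^{+\infty}\tilde c(s)e^{-s}\,ds=e^{t}h(t)$. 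The extension theorem then produces a holomorphic $(1,0)$-form $\tilde F_t$ on $\Omega_t$ with $(\tilde F_t-f)\in H^0(\Omega_t\cap V',\mathcal{O}(K_X)\otimes\mathcal{I}(\psi+2\varphi)|_U)$ for some open $V'\ni z_0$, satisfying
\[
\int_{\Omega_t}\tilde c(-\tilde\psi)\,|\tilde F_t|^2 e^{-2\varphi}\,dV_X\;\le\;\bigl(e^t h(t)\bigr)\cdot\pi\cdot\frac{|h(z_0)|^2 e^{-2v(z_0)}}{(c_\beta(z_0)e^{t/2})^{2}}\;=\;h(t)\cdot\pi\,\frac{|h(z_0)|^2 e^{-2v(z_0)}}{c_\beta^2(z_0)}.
\]
Since $\tilde c(-\tilde\psi)=c(-\psi)$, the left-hand side is $\ge H(t;c,2\varphi)$; dividing by $h(t)$ yields the desired upper bound.

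For the matching lower bound, I take the minimizer $F_t$ on $\Omega_t$ supplied by Lemma \ref{existence of F}. The sheaf condition near $z_0$ forces, after dividing by $f_\varphi$ in the local coordinate, that $F_t/f_\varphi$ is a holomorphic $(1,0)$-form near $z_0$ whose value at $z_0$ equals $h(z_0)$, independently of $t$. Then $|F_t|^2 e^{-2\varphi}=|F_t/f_\varphi|^2 e^{-2v}$ locally, and this product is subharmonic near $z_0$ because $\log|F_t/f_\varphi|^2$ is subharmonic and $-2v$ is harmonic. Changing to the holomorphic coordinate $\tilde w:=we^{\tilde u(w)}$ with $\operatorname{Re}\tilde u=u$ puts $\psi$ in the form $2\log|\tilde w|$ and turns $\Omega_t$ (for $t$ large) into a disc $\{|\tilde w|<e^{-t/2}\}$; the Jacobian $|dw/d\tilde w|^2$ at $z_0$ equals $1/c_\beta^2(z_0)$. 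Passing to polar coordinates $\tilde w=\rho e^{i\theta}$, the sub-mean-value inequality on circles $\{|\tilde w|=\rho\}$ combined with the substitution $s=-2\log\rho$ (converting $\rho\,d\rho$ into $\tfrac12 e^{-s}\,ds$) yields
\[
H(t;c,2\varphi)\;=\;\int_{\Omega_t} c(-\psi)|F_t|^2 e^{-2\varphi}\,dV_X\;\ge\;h(t)\cdot \pi\,\frac{|h(z_0)|^2 e^{-2v(z_0)}}{c_\beta^2(z_0)}\cdot(1+o(1))
\]
as $t\to+\infty$, which matches the upper bound.

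The main obstacle is the lower-bound step: one must implement the coordinate change $\tilde w=we^{\tilde u(w)}$ carefully so that the Jacobian contributes exactly $1/c_\beta^2(z_0)$, and then control the $o(1)$ error in the sub-mean-value inequality uniformly in $t$ — exploiting that $v$ and $\log|F_t/f_\varphi|^2$ stabilize to their values at $z_0$ as $\Omega_t$ shrinks — so that after integration against $c(s)e^{-s}\,ds$ the deviation contributes only $o(h(t))$.
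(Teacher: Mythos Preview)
Your proposal is correct, and the core of the lower bound—using the minimizer $F_t$, dividing by $f_\varphi$ to get a holomorphic form with prescribed value $h(z_0)$ at $z_0$, and then applying the sub-mean-value inequality—is exactly what the paper does. Your use of the holomorphic change of variable $\tilde w=w\,e^{\tilde u(w)}$ is a clean way to package the $\epsilon$-approximations that the paper carries out directly in the original coordinate $w$; the two are equivalent in spirit.

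The genuine difference is in the upper bound. The paper does \emph{not} invoke the optimal $L^2$ extension theorem on each $\Omega_t$; instead it takes the single fixed minimizer $F_0$ on $X$ (which exists because $H(0;c,2\varphi)<+\infty$), sets $S_t:=\int_{\{\psi<-t\}}c(-\psi)|F_0|^2e^{-2\varphi}\,dV_X$, and computes $\limsup_{t\to\infty}S_t/h(t)$ by the same local calculation in the chart $(V_{z_0},w)$, using only continuity of $u$, $v$, and $h_0:=F_0/f_\varphi$ at $z_0$. Since $H(t)\le S_t$, this already gives the matching upper bound. This is strictly more elementary than your route: it avoids re-applying the Guan--Zhou theorem on the shrinking domains and the accompanying bookkeeping with $\tilde\psi$, $\tilde c$, and the capacity $c_\beta^{\Omega_t}(z_0)$. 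Your approach, on the other hand, yields a non-asymptotic inequality $H(t)\le h(t)\cdot\pi|h(z_0)|^2e^{-2v(z_0)}/c_\beta^2(z_0)$ for every $t$, which is a bit stronger than what is needed. Your preliminary observation that concavity forces $H(t)/h(t)$ to be monotone (hence the limit exists a priori) is also not in the paper's proof; the paper simply shows $\liminf\ge$ and $\limsup\le$ separately.
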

\begin{proof}

Let $t$ be big enough, we can assume that $\{2G_X(z,z_0)<-t\}\subset V_{z_0}$. Under the local coordinate $(V_{z_0},w)$, we have $2G_X(z,z_0)=2\log|w|+u(w)$ where $u(w)$ is a harmonic function on $V_{z_0}$. Note that $c^2_{\beta}(z_0)=e^{u(z_0)}$.
\par
For ant $t\ge0$, denote $$I_t=\int_{\{\log|w|^2+u(w)<-t\}}c(-2G_X(z,z_0))|F_t|^2e^{-2\varphi}dV_X,$$ where $F_t$ is holomorphic $(1,0)$ form on $\{2G_X(z,z_0)<-t\}$ such that
\begin{equation}
H(t)=\int_{\{2G_X(z,z_0)<-t\}}|F_t|^2e^{-2\varphi}dV_X<+\infty
\label{Prop3.3.1}
\end{equation}
and
\begin{equation}
\ (F_t-f)\in
H^0(\{\psi<-t\}\cap U_t', \mathcal{O} (K_X) \otimes \mathcal{I}(\psi+2\varphi)|_U)
\label{Prop3.3.2}
\end{equation}
for some open set $U_t'$ such that $z_0 \subset U_t' \subset U$.
\par
Denote $h_t=\frac{F_t}{g}$, it follows from \eqref{Prop3.3.1} and \eqref{Prop3.3.2} that we know $h_t$ is a holomorphic $(n,0)$ form on $\{\psi<-t\}$ and $h_t(z_0)=h(z_0)$.
 \par
When $t$ is big enough, we know $|w|$ is small. By the continuity of $u$ and $v$ at $z_0$ and note that $|h_t|^2$ is subharmonic function, we have
\begin{flalign}\nonumber
I_t &=\int_{\{\log|w|^2+u(w)<-t\}}c(-2G_X(z,z_0))\frac{|F_t|^2}{|f_{\varphi}|^2}e^{-2v}dV_X\\\nonumber
&\ge \int_{\{\log|w|^2+u(z_0)+\epsilon<-t\}}c(-2G_X(z,z_0))|h_t|^2e^{-2v}dV_X\\\nonumber
&\ge \int_{\{\log|w|^2+u(z_0)+\epsilon<-t\}}c(-\log|w|^2-u(w))|h_t|^2e^{-2v(z_0)-\epsilon}dV_X\\\nonumber
&\ge
\int_{\{\log|w|^2+u(z_0)+\epsilon<-t\}}c(-\log|w|^2-u(z_0)+\epsilon)e^{-2\epsilon}
|h_t|^2e^{-2v(z_0)-\epsilon}dV_X\\\nonumber
&=
\int_0^{2\pi}\int_{\{\log|r|^2+u(z_0)+\epsilon<-t\}}
c(-\log|r|^2-u(z_0)+\epsilon)e^{-2\epsilon}
|h_t(r,\theta)|^2e^{-2v(z_0)-\epsilon}rdrd\theta\\\nonumber
&\ge
2\pi e^{-2v(z_0)}e^{-3\epsilon}|h(z_0)|^2
\int_{\{\log|r|^2+u(z_0)+\epsilon<-t\}}
c(-\log|r|^2-u(z_0)+\epsilon)rdr\\\nonumber
&=\pi e^{-2v(z_0)}e^{-3\epsilon}|h(z_0)|^2\int_{t-\epsilon}^{+\infty}c(t_1)e^{-t_1}e^{-u(z_0)}e^{\epsilon}dt_1
\end{flalign}
The third inequality holds because of $c(t)e^{-t}$ is decreasing with respect to $t$.
The fourth inequality holds because of mean value inequality of subharmonic function.
Hence we have
\begin{equation}
\begin{split}
\liminf\limits_{t \to +\infty} \frac{I_t}{\int_t^{+\infty}c(t_1)e^{-t_1}dt_1}
&\ge \liminf\limits_{t \to +\infty} \frac{\pi e^{-2v(z_0)}e^{-3\epsilon}|h(z_0)|^2\int_{t-\epsilon}^{+\infty}c(t_1)e^{-t_1}e^{-u(z_0)}e^{\epsilon}dt_1}
{\int_t^{+\infty}c(t_1)e^{-t_1}dt_1}\\
&=\pi e^{-u(z_0)-2v(z_0)}|h(z_0)|^2\\
&=\frac{\pi e^{-2\varphi(z_0)}}{c^2_{\beta}(z_0)}|h(z_0)|^2
\label{3.34}
\end{split}
\end{equation}
\par
When $t=0$, denote $S_t=\int_{\{\psi<-t\}}c(-\psi)|F_0|^2e^{-2\varphi}dV_X$.
When $t$ is big enough, we know $|w|$ is small. By the continuity of $u$, $v$ and $h_0=\frac{F_0}{f_{\varphi}}$ at $z_0$,
then we have
\begin{flalign}\nonumber
S_t &=\int_{\{\psi<-t\}}c(-\psi)\frac{|F_0|^2}{|f_{\varphi}|^2}e^{-2v}dV_X\\\nonumber
&=\int_{\{\psi<-t\}}c(-\psi)|h_0|^2e^{-2v}dV_X\\\nonumber
&\le \int_{\{\log|w|^2+u(z_0)-\epsilon<-t\}}c(-2G_X(z,z_0))|h_0|^2e^{-2v+\epsilon}dV_X\\\nonumber
&\le
\int_{\{\log|w|^2+u(z_0)-\epsilon<-t\}}c(-\log|w|^2-u(z_0)-\epsilon)e^{2\epsilon}
|h_0|^2e^{-2v(z_0)+\epsilon}dV_X\\\nonumber
&=
\int_0^{2\pi}\int_{\{\log|r|^2+u(z_0)-\epsilon<-t\}}
c(-\log|r|^2-u(z_0)-\epsilon)e^{+2\epsilon}
|h_0(r,\theta)|^2e^{-2v(z_0)+\epsilon}rdrd\theta\\\nonumber
&\le
2\pi e^{-2v(z_0)}e^{+3\epsilon}(|h(z_0)|^2+\epsilon)
\int_{\{\log|r|^2+u(z_0)-\epsilon<-t\}}
c(-\log|r|^2-u(z_0)-\epsilon)rdr\\\nonumber
&=\pi e^{-2v(z_0)}e^{3\epsilon}(|h(z_0)|^2+\epsilon)\int_{t+\epsilon}^{+\infty}c(t_1)e^{-t_1}e^{-u(z_0)}e^{-\epsilon}dt_1
\end{flalign}
The second inequality holds because of $c(t)e^{-t}$ is decreasing with respect to $t$.
\par

Hence
\begin{equation}
\limsup\limits_{t \to +\infty} \frac{H(t;c,2\varphi)}{\int_t^{+\infty}c(t_1)e^{-t_1}dt_1}\le
\limsup\limits_{t \to +\infty} \frac{S_t}{\int_t^{+\infty}c(t_1)e^{-t_1}dt_1}\le
\frac{\pi e^{-2v(z_0)}}{c^2_{\beta}(z_0)}|h(z_0)|^2
\label{3.35}
\end{equation}
It follows from inequality \eqref{3.34} and \eqref{3.35} that
$$\lim\limits_{t \to +\infty} \frac{H(t;c,2\varphi)}{\int_t^{+\infty}c(t_1)e^{-t_1}dt_1}
=\frac{\pi e^{-2v(z_0)}}{c^2_{\beta}(z_0)}|h(z_0)|^2$$
\par
Proposition \ref{proposition3.2} is proved.
\end{proof}

 Recall that $\varphi$ is a subharmonic function on $X$. such that $\varphi=\log|f_{\varphi}|+v$, where $f_{\varphi}$ is a holomorphic function on $X$ and $v$ is a harmonic function on $X$.

We also note that $h:=\frac{f}{f_{\varphi}}$ is a holomorphic $(n,0)$ form on $V\subset V_{z_0}$ and $h(z_0)\neq 0$.

 Denote
\begin{equation}
\begin{split}
H(t;2v):=\inf\{\int_{ \{ \psi<-t\}}&|\tilde{F}|^2e^{-2u}dV_X: \tilde{F}\in
H^0(\{\psi<-t\},\mathcal{O} (K_X)  ), \\
\& &\  \exists \;open\; set\; U' \;s.t\; Z_0 \subset U' \subset U\ and \\
& \; (\tilde{F}-h)\in
H^0(\{\psi<-t\} \cap U' ,\mathcal{O} (K_X) \otimes \mathcal{I}(\psi)|_U) \}.
\end{split}
\end{equation}
\begin{Proposition}
We have $H(0;2v)=H(0;2\varphi)$ holds.
\label{proposition3.3}
\end{Proposition}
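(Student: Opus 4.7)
The plan is to establish an explicit bijection between the admissible families for the two infimum problems that preserves the integrals. Since
\[
e^{-2\varphi}=e^{-2\log|f_\varphi|-2v}=\frac{e^{-2v}}{|f_\varphi|^2}
\]
on $X\setminus\{f_\varphi=0\}$ and $c(t)\equiv 1$ here, the pointwise identity
\[
|\tilde F|^{2}e^{-2\varphi}=\Big|\tfrac{\tilde F}{f_\varphi}\Big|^{2}e^{-2v}
\]
holds almost everywhere on $X$. The map $\tilde F\mapsto \tilde G:=\tilde F/f_\varphi$ (and its inverse $\tilde G\mapsto f_\varphi\tilde G$) will be the candidate bijection.

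First, I would verify global holomorphy of $\tilde G$. If $\tilde F\in H^{0}(X,\mathcal{O}(K_X))$ satisfies $\int_X|\tilde F|^{2}e^{-2\varphi}dV_X<+\infty$, then $\tilde G=\tilde F/f_\varphi$ is a priori a meromorphic $(1,0)$ form, holomorphic off the discrete set $\{f_\varphi=0\}$. At a zero $p$ of $f_\varphi$ of order $m$, the local finiteness of $\int |\tilde F|^{2}|f_\varphi|^{-2}e^{-2v}dV_X$ together with the local boundedness of $e^{-2v}$ forces $\tilde F$ to vanish to order $\geq m$ at $p$, by the standard $L^{2}$ extension / Riemann removable singularity argument at an isolated point. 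Hence $\tilde G$ extends holomorphically to all of $X$. Conversely, if $\tilde G\in H^{0}(X,\mathcal{O}(K_X))$ is given, then $f_\varphi\tilde G$ is automatically a holomorphic $(1,0)$ form on $X$.

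Second, I would check that the germ conditions near $z_0$ correspond exactly. Recall that in the local coordinate $(V_{z_0},w)$, $\psi=2\log|w|+2u(w)$ and $f_\varphi=w^{k}g_0$ with $g_0(z_0)\neq 0$, where $k$ is the order of vanishing of $f_\varphi$ at $z_0$; moreover $f=f_\varphi\cdot h$ near $z_0$, with $h$ holomorphic and $h(z_0)\neq 0$. A direct computation gives
\[
\mathcal{I}(\psi+2\varphi)_{z_0}=(w^{k+1}),\qquad \mathcal{I}(\psi)_{z_0}=(w).
\]
Consequently, for a germ $\tilde F$ at $z_0$, the condition $(\tilde F-f)\in(w^{k+1})\otimes K_{X,z_0}$ is equivalent, after dividing by $f_\varphi=w^{k}g_0$, to $(\tilde F/f_\varphi-h)\in(w)\otimes K_{X,z_0}$. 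By shrinking the open neighborhood $U'\ni z_0$ inside $V\cap V_{z_0}$ if necessary (to avoid other zeros of $f_\varphi$), the admissibility conditions near $z_0$ for $H(0;2\varphi)$ and $H(0;2v)$ therefore correspond exactly under the bijection.

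Finally, combining these steps, $\tilde F\mapsto \tilde F/f_\varphi$ sends any admissible competitor $\tilde F$ for $H(0;2\varphi)$ with finite energy to an admissible competitor $\tilde G$ for $H(0;2v)$ with the same energy, and the inverse map does the converse. Taking infima yields $H(0;2\varphi)=H(0;2v)$. The main delicate point is the global holomorphic extension of $\tilde G$ across the (possibly many) zeros of $f_\varphi$ away from $z_0$; this is settled by the local $L^{2}$ argument sketched above, using only the finiteness of the weighted integral and the local boundedness of $e^{-2v}$.
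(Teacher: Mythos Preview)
Your proposal is correct and follows essentially the same approach as the paper: both arguments rest on the bijection $\tilde F\leftrightarrow \tilde F/f_\varphi$ between admissible competitors, together with the pointwise identity $|\tilde F|^2e^{-2\varphi}=|\tilde F/f_\varphi|^2e^{-2v}$. You spell out more explicitly the holomorphic extension of $\tilde F/f_\varphi$ across the zeros of $f_\varphi$ and the correspondence of the germ conditions $\mathcal{I}(\psi+2\varphi)_{z_0}=(w^{k+1})\leftrightarrow \mathcal{I}(\psi)_{z_0}=(w)$, whereas the paper phrases the conclusion via the unique minimizers $F_\varphi$ and $F_v$ from Lemma~\ref{existence of F} and a short contradiction; these are just two ways of reading off equality of infima from the same integral-preserving bijection.
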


\begin{proof}
Denote
$$H_1:=\{F \in H^0(X,K_X)| \int_X |F|^2e^{-2\varphi}dV_X<+\infty \ \&\  (F-f,z_0)\in\mathcal{I}(\psi+2\varphi)_{z_0}\}$$
and
$$H_2:=\{\tilde{F} \in H^0(X,K_X)|
\int_X |\tilde{F}|^2e^{-2v}dV_X<+\infty\  \&\  (\tilde{F}-h,z_0)\in\mathcal{I}(\psi)_{z_0}\}$$
\par
As $\int_X |F|^2e^{-2\varphi}dV_X=\int_X \frac{|F|^2}{|f_{\varphi}|^2}e^{-2v}dV_X<+\infty$, we know for any $F\in H_1$, $\frac{F}{f_{\varphi}}$ is a holomorphic $(1,0)$ form on $X$. It follows from $\varphi=\log|f_{\varphi}|+v$ and $(F-f,z_0)\in\mathcal{I}(\psi+2\varphi)_{z_0}$ that we know $\frac{F}{f_{\varphi}}$ belongs to $H_2$. For any $\tilde{F}\in H_2$, $\tilde{F}\cdot f_{\varphi}$ belongs to $H_1$ for the similar reason.
\par
Hence there exists a bijection $\Phi$ between $H_1$ and $H_2$:
\begin{equation}\nonumber
\begin{split}
 \Phi:&H_2\to H_1\\
 &F\to F\cdot f_{\varphi}
\end{split}
\end{equation}
It follows from Lemma \ref{existence of F} that there exist unique holomorphic $(1,0)$ form $F_{\varphi}\in H_1$ such that
$$H(0,\varphi)=\int_X |F_{\varphi}|^2e^{-2\varphi}dV_X,$$
and unique holomorphic $(1,0)$ form $F_v \in H_2$ such that
$$H(0,v)=\int_X |F_{v}|^2e^{-2v}dV_X.$$
We claim that $F_{\varphi}=F_v\cdot f_{\varphi}$ i.e the weighted $L^2$ norm of $F_v\cdot f_{\varphi}$ is minimal along $H_1$.
If not, we have
$$\int_X|F_{\varphi}|^2e^{-2\varphi}dV_X
<\int_X|F_v|^2|f_{\varphi}|^2e^{-2\varphi}dV_X=\int_X |F_{v}|^2e^{-2v}dV_X.$$
Note that $\frac{F_{\varphi}}{f_{\varphi}}\in H_2$ and then we have
$$\int_X \frac{|F_{\varphi}|^2}{|f_{\varphi}|^2}e^{-2v}dV_X=\int_X |F_{\varphi}|^2e^{-2\varphi}dV_X<\int_X |F_{v}|^2e^{-2v}dV_X.$$
which contradicts to the fact that the weighted $L^2$ norm of $F_v$ is minimal along $H_2$. Hence we must have $F_{\varphi}=F_v\cdot f_{\varphi}$. Then we know
$$H(0;2\varphi)=\int_X |F_{\varphi}|^2e^{-2\varphi}dV_X
=\int_X |F_{v}|^2e^{-2v}dV_X=H(0;2v).$$

\par
Proposition \ref{proposition3.3} is proved.
\end{proof}
\begin{Remark}
It follows from Proposition \ref{proposition3.3} that we know when $\varphi=\log|f_{\varphi}|+v$, where $f_{\varphi}$ is a holomorphic function on $X$ and $v$ is a harmonic function on $X$, we have $H(0;2\varphi)=\pi\frac{e^{-2\varphi(z_0)}}{c_{\beta}^2(z_0)}|h(z_0)|^2$ if and only if $H(0;2v)=\pi\frac{e^{-2v(z_0)}}{c_{\beta}^2(z_0)}|h(z_0)|^2$.
\par
Then it follows from Theorem \ref{maintheorem}, Proposition \ref{proposition3.2} and $``H(0;2\varphi)=\pi\frac{e^{-2\varphi(z_0)}}{c_{\beta}^2(z_0)}|h(z_0)|^2$ if and only if $H(0;2v)=\pi\frac{e^{-2v(z_0)}}{c_{\beta}^2(z_0)}|h(z_0)|^2"$ that we know
$H(-\log r;2\varphi)$ is linear with respect to $r\in(0,1]$ if and only if $H(-\log r;2v)$ is linear with respect to $r\in(0,1]$.
\label{remark3.2}
\end{Remark}

Let $v$ be a harmonic function on $X$, we have the following result which was proved by Guan-Zhou \cite{GZsci},\cite{GZ14Ann}.
\begin{Theorem}(Guan-Zhou\cite{GZsci},\cite{GZ14Ann})  The equality
$$H(0;2v)=\pi\frac{e^{-2v(z_0)}}{c_{\beta}^2(z_0)}|h(z_0)|^2$$
holds if and only if $\chi_{-v}=\chi_{z_0}$.
\label{theorem1.10}
\end{Theorem}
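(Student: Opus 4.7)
The plan is to combine the concavity framework developed earlier in this paper with an explicit universal-cover construction of the extremal. First I reformulate $H(0;2v)$ as a weighted Bergman extremum: since $G_X(z,z_0)=\log|w|+u(w)$ near $z_0$ with $u$ harmonic, one has $\mathcal{I}(\psi)_{z_0}=\mathcal{I}(2\log|w|)_{z_0}=(w)$, so the constraint $(\tilde F-h,z_0)\in\mathcal{I}(\psi)_{z_0}$ is just the normalization that, writing $\tilde F=G(w)\,dw$ locally, $G(z_0)=h(z_0)$. Thus $H(0;2v)$ is the infimum of $\int_X|\tilde F|^2 e^{-2v}\,dV_X$ over holomorphic $(1,0)$-forms on $X$ whose value at $z_0$ equals $h(z_0)\,dw$.

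Combining Proposition \ref{proposition3.2} with Theorem \ref{maintheorem}, since $H(h^{-1}(r);2v)$ is concave on $(0,1]$ with limit $0$ at $r=0$, the ratio $H(h^{-1}(r);2v)/r$ is decreasing in $r$, so
\[
H(0;2v)\ \ge\ \lim_{r\to 0^+}\frac{H(h^{-1}(r);2v)}{r}\ =\ \lim_{t\to+\infty}\frac{H(t;2v)}{e^{-t}}\ =\ \pi\,\frac{e^{-2v(z_0)}}{c_\beta^2(z_0)}\,|h(z_0)|^2.
\]
Equality is equivalent to $r\mapsto H(h^{-1}(r);2v)$ being linear, so the problem reduces to characterizing this linearity via the character condition $\chi_{-v}=\chi_{z_0}$.

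For the direction $\chi_{-v}=\chi_{z_0}\Rightarrow$ equality, I would construct the extremal explicitly on the universal cover $p:\Delta\to X$. The $(1,0)$-form $df_{z_0}\cdot f_{-v}^{-1}$ on $\Delta$ is holomorphic (as $f_{-v}$ is zero-free) and transforms by $\chi_{z_0}\chi_{-v}^{-1}=1$, so it descends to a holomorphic $(1,0)$-form on $X$; let $F_0$ be the scalar multiple of this form whose value at $z_0$ equals $h(z_0)\,dw$. Unfolding $\int_X|F_0|^2 e^{-2v}dV_X$ to a fundamental domain on $\Delta$ causes the factor $e^{-2p^*v}\cdot|f_{-v}|^{-2}=1$ to cancel, reducing the computation to an unweighted integral of $|df_{z_0}|^2$; combined with the standard expansion $|f_{z_0}|=p^*e^{G_X(\cdot,z_0)}\sim c_\beta(z_0)|w|$ at $z_0$, this yields exactly $\pi e^{-2v(z_0)}|h(z_0)|^2/c_\beta^2(z_0)$, matching the lower bound above.

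Conversely, assume equality holds. By Lemma \ref{existence of F} the unique extremal $F_0$ exists on $X$. Saturation of the sub-mean-value inequality used in the proof of Proposition \ref{proposition3.2} forces the quotient $(p^*F_0)\cdot f_{-v}/df_{z_0}$ to be constant on $\Delta$: one first obtains equality at $z_0$ from the linearity of $H(h^{-1}(r);2v)$, and then promotes it globally using uniqueness of the extremal together with analytic continuation on $\Delta$. But then $p^*F_0$ transforms by $\chi_{z_0}\chi_{-v}^{-1}$, and since $F_0$ is single-valued on $X$ this character must be trivial, giving $\chi_{-v}=\chi_{z_0}$. The main obstacle is precisely this last step: promoting saturation of the pointwise mean-value inequality at $z_0$ to a global identification of $F_0$ with the explicit Prym differential $df_{z_0}\cdot f_{-v}^{-1}$. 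The key input is the uniform rigidity provided by Theorem \ref{maintheorem} (equality forces saturation on every sublevel set $\{\psi<-t\}$, not just at $t=0$) together with the uniqueness of Lemma \ref{existence of F}, which together pin down the extremal up to an overall scalar and allow the character identity to be read off the monodromy of its lift.
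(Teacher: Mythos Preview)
The paper does not prove this statement; it is quoted from \cite{GZsci},\cite{GZ14Ann} and used as a black box in the proof of Theorem~\ref{corollary1.11}. So there is no proof in the paper to compare against, and your proposal is attempting something the present paper deliberately outsources.

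More importantly, your opening inequality points the wrong way. You correctly note that for a concave $g$ on $(0,1]$ with $g(0^+)=0$ the ratio $g(r)/r$ is nonincreasing in $r$; but then the value at $r=1$ is the \emph{smallest} such ratio, so with $g(r)=H(h^{-1}(r);2v)$ one gets
\[
H(0;2v)=\frac{g(1)}{1}\ \le\ \lim_{r\to 0^+}\frac{g(r)}{r}\ =\ \pi\,\frac{e^{-2v(z_0)}}{c_\beta^2(z_0)}\,|h(z_0)|^2,
\]
not $\ge$. This is the direction consistent with the optimal $L^2$ extension theorem. With the inequality corrected, your ``if'' argument collapses: exhibiting a single competitor $F_0$ whose weighted $L^2$ norm equals the right-hand side only reconfirms $H(0;2v)\le\text{RHS}$, which is already known. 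To obtain equality you would have to show that this $F_0$ is actually the minimizer---for instance by verifying that it is orthogonal, in the weighted inner product, to every holomorphic $(1,0)$-form vanishing at $z_0$---and your sketch does not attempt this.

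For the converse direction, the decisive step of identifying the abstract extremal supplied by Lemma~\ref{existence of F} with the explicit Prym differential $df_{z_0}\cdot f_{-v}^{-1}$ remains unjustified. Saturation of the mean-value inequality in Proposition~\ref{proposition3.2} only constrains the leading jet of the extremal at $z_0$, and Theorem~\ref{theorem1.3} only says the minimizers on the sublevel sets are restrictions of a single global form; neither forces any particular closed formula for that form. The sentence ``promotes it globally using uniqueness \dots\ together with analytic continuation'' is not an argument, and this is precisely the place where the Guan--Zhou proof requires genuine additional input.
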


Now we prove Theorem \ref{corollary1.11}

\begin{proof}
It follows from Proposition \ref{variousC} that we only need to prove the following statement:
\par
$H(-\log r;2\varphi)$ is linear with respect to $r\in(0,1]$ if and only if the following hold,
\par
(1) $\varphi=\log|f_{\varphi}|+v$, where $f_{\varphi}$ is a holomorphic function on $X$ and $v$ is a harmonic function on $X$,
\par
(2) $\chi_{-u}=\chi_{z_0}$.
~\\
\par

If $H(-\log r;2\varphi)$ is linear with respect to $r\in(0,1]$.
It follows from Theorem \ref{corollary1.9} that $\varphi=\log|f_{\varphi}|+v$, where $f_{\varphi}$ is a holomorphic function on $X$ and $v$ is a harmonic function on $X$. As
$H(-\log r;2\varphi)$ is linear with respect to $r$, then by the Remark \ref{remark3.2} that we know
$H(-\log r;2v)$ is linear with respect to $r$, hence

$$H(0;2v)=\pi\frac{e^{-2v(z_0)}}{c_{\beta}^2(z_0)}|h(z_0)|^2.$$
\par
Then
it follows form Theorem \ref{theorem1.10} that we know
$\chi_{-v}=\chi_{z_0}$.
\par
If $\varphi=\log|f_{\varphi}|+v$, where $f_{\varphi}$ is a holomorphic function on $X$ and $v$ is a harmonic function on $X$ and $\chi_{-v}=\chi_{z_0}$. It follows form Theorem \ref{theorem1.10} that we know $$H(0;2v)=\pi\frac{e^{-2v(z_0)}}{c_{\beta}^2(z_0)}|h(z_0)|^2.$$
Hence $H(-\log r;2v)$ is linear with respect to $r$. By Remark \ref{remark3.2}, we know $H(-\log r;2\varphi)$ is linear with respect to $r$.
\par

Theorem \ref{corollary1.11} is proved.
\end{proof}

\section {Appendix}
\subsection {Appendix: Proof of Lemma 2.1}

\subsubsection {Some results used in the proof of Lemma 2.1}
\begin{Lemma}(see \cite{Demailly00})
Let Q be a Hermitian vector bundle on a K\"ahler manifold X of dimension n with a
k\"ahler metric $\omega$. Assume that $\eta , g >0$ are smooth functions on X. Then
for every form $v\in D(X,\wedge^{n,q}T^*X \otimes Q)$ with compact support we have
\begin{equation}
\begin{split}
&\int_X (\eta+g^{-1})|D^{''*}v|^2_QdV_X+\int_X \eta|D^{''}v|^2_QdV_X \\
\ge  \int_X \langle[\eta &\sqrt{-1}\Theta_Q-\sqrt{-1}\partial \bar{\partial}
\eta-\sqrt{-1}g
\partial\eta \wedge\bar{\partial}, \Lambda_{\omega}]v,v\rangle_QdV_X
\end{split}
\end{equation}
\end{Lemma}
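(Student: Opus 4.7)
The plan is to derive this twisted Bochner--Kodaira--Nakano inequality along the classical lines of Demailly and Skoda, introducing the weights $\eta$ and $g$ through a careful integration-by-parts argument combined with a weighted Cauchy--Schwarz estimate. The starting point is the unweighted Bochner--Kodaira--Nakano identity on the K\"ahler manifold $(X,\omega)$,
$$\Delta'' = \Delta' + [\sqrt{-1}\Theta_Q, \Lambda_\omega],$$
where $\Delta'' = D''D''^* + D''^*D''$ and $\Delta' = D'D'^* + D'^*D'$.

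First I would pair this identity against $\eta v$ and integrate over $X$ for a compactly supported $v \in D(X,\wedge^{n,q}T^*X\otimes Q)$. The left side produces $\int_X \eta\,|D''v|^2 + \int_X \eta\,|D''^*v|^2$ modulo commutator corrections coming from $[\eta, D''^*]$ and $[\eta, D'']$, which act essentially as contraction or wedging with $\bar{\partial}\eta$. Next, I would process the term $\int_X \eta\langle \Delta' v, v\rangle$ by analogous integration by parts; moving derivatives off $v$ and onto $\eta$ is exactly where the second-order contribution $-\sqrt{-1}\partial\bar{\partial}\eta$ surfaces, entering the Hermitian form against $\Lambda_\omega$ on the right-hand side.

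The crucial step is the treatment of the first-order cross terms of the form $2\,\mathrm{Re}\int_X \langle \partial\eta \wedge v, D''^*v\rangle$. These are absorbed by a weighted Cauchy--Schwarz inequality $2|\langle a,b\rangle| \le g\,|a|^2 + g^{-1}|b|^2$, which on one hand produces the extra $\int_X g^{-1}|D''^*v|^2$ contribution (explaining the asymmetric coefficient $\eta + g^{-1}$ in front of $|D''^*v|^2$), and on the other hand generates the Hermitian form $g\sqrt{-1}\partial\eta \wedge \bar{\partial}\eta$ inside the bracket $[\,\cdot\,, \Lambda_\omega]$. Finally, after discarding the manifestly nonnegative terms $\int_X \eta(|D'v|^2 + |D'^*v|^2)$, the desired lower bound emerges in precisely the stated form.

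The main technical obstacle will be the meticulous bookkeeping of the four commutators $[D'',\eta]$, $[D''^*,\eta]$, $[D',\eta]$, $[D'^*,\eta]$ acting on $Q$-valued $(n,q)$-forms, together with tracking sign conventions so that $-\sqrt{-1}\partial\bar{\partial}\eta$ and $+g\sqrt{-1}\partial\eta \wedge \bar{\partial}\eta$ appear with the correct signs inside $[\,\cdot\,,\Lambda_\omega]$. Since this is a standard computation carried out in \cite{Demailly00}, one could alternatively invoke the result directly; but the sketch above indicates how the weights $\eta$ and $g$ arise organically from the interplay between the Kodaira--Nakano identity and the Cauchy--Schwarz step used to absorb the $\partial\eta$ cross terms.
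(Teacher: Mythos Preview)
Your sketch follows the standard Demailly argument and is correct in outline: start from the Bochner--Kodaira--Nakano identity, multiply by $\eta$, integrate by parts to extract the $-\sqrt{-1}\partial\bar\partial\eta$ term, and absorb the first-order cross terms $2\,\mathrm{Re}\langle\partial\eta\wedge v, D''^*v\rangle$ via the weighted Cauchy--Schwarz inequality that produces both the $g^{-1}|D''^*v|^2$ on the left and the $g\,\sqrt{-1}\partial\eta\wedge\bar\partial\eta$ inside the curvature bracket.

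However, there is nothing to compare against: the paper does not supply its own proof of this lemma. It is stated in the appendix as a preliminary result with the attribution ``(see \cite{Demailly00})'' and used as a black box in the proof of Lemma~2.1. So your proposal is not an alternative to the paper's argument but rather a reconstruction of the cited reference, and as such it is the expected one.
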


\begin{Lemma}(see \cite{GZ14Ann})
Let X and Q be as in the above lemma and $\theta$ be a continuous (1,0) form on X.
Then we have
\begin{equation}
[\sqrt{-1}\theta \wedge
\bar{\theta},\Lambda_\omega]\alpha=\bar{\theta}\wedge(\alpha\llcorner(\bar{\theta})^\sharp),
\end{equation}
for any (n,1) form $\alpha$ with value in Q. Moreover, for any positive (1,1) form
$\beta$,we have $[\beta,\Lambda_\omega]$ is semipositive.
\label{semipositive}
\end{Lemma}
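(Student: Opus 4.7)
The identity and semipositivity are both pointwise $C^\infty$-multilinear statements in $\theta$, $\omega$, $\alpha$, so I would fix a point $x\in X$ and work algebraically inside $\wedge^{\bullet,\bullet}T^*_xX\otimes Q_x$; since $Q$ enters only as a coefficient bundle it can be dropped in the computation. The plan is to reduce to a rank-one normalized model by choosing an $\omega$-unitary coframe $(e^1,\dots,e^n)$ with $\omega=\sqrt{-1}\sum_j e^j\wedge\bar e^j$, and then, using $U(n)$-invariance, rotating so that $\theta=t\,e^1$ for a scalar $t\in\mathbb{C}$. Both sides of the identity are sesquilinear in $\theta$ (homogeneous of degree $|t|^2$), so it suffices to treat the case $\theta=e^1$.

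In this model the LHS simplifies immediately: every $(n,1)$-form $\alpha$ already has full $(1,0)$-degree, so $(\sqrt{-1}\,\theta\wedge\bar\theta)\wedge\alpha=0$ and the bracket collapses to $(\sqrt{-1}\,\theta\wedge\bar\theta)\wedge\Lambda_\omega\alpha$. Expanding $\alpha=\sum_l \alpha_l\, e^1\wedge\cdots\wedge e^n\wedge\bar e^l$, a direct computation with interior products (or equivalently the Kähler identity $[\omega\wedge\cdot,\Lambda_\omega]=(p+q-n)\mathrm{id}$ on $(p,q)$-forms, specialized to $(n,1)$) yields the explicit shape of $\Lambda_\omega\alpha$, from which wedging with $\sqrt{-1}\,e^1\wedge\bar e^1$ kills every summand except $l=1$ and leaves $\alpha_1\,e^1\wedge\cdots\wedge e^n\wedge\bar e^1$. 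On the RHS, $(\bar\theta)^\sharp$ is the $(0,1)$-tangent vector metric-dual to $\bar e^1$, so the interior product $\alpha\llcorner(\bar\theta)^\sharp$ contracts against $\bar e^l$ and selects $\alpha_1\,e^1\wedge\cdots\wedge e^n$; wedging by $\bar\theta=\bar e^1$ produces the same expression, establishing the identity in the normalized frame. The general case follows by the sesquilinearity observed above.

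For the semipositivity of $[\beta,\Lambda_\omega]$ I would diagonalize: any positive semidefinite real $(1,1)$-form $\beta$ admits an $\omega$-unitary coframe $(e^j)$ in which $\beta=\sum_j \lambda_j\,\sqrt{-1}\,e^j\wedge\bar e^j$ with $\lambda_j\ge 0$. Linearity of the bracket together with the identity just proved gives
\[
\langle [\beta,\Lambda_\omega]\alpha,\alpha\rangle \;=\; \sum_j \lambda_j\,\langle \bar e^j\wedge(\alpha\llcorner(\bar e^j)^\sharp),\alpha\rangle \;=\; \sum_j \lambda_j\,\bigl\|\alpha\llcorner(\bar e^j)^\sharp\bigr\|^2 \;\ge\; 0,
\]
where the second equality uses that the interior product $\cdot\llcorner(\bar e^j)^\sharp$ is the pointwise formal adjoint of $\bar e^j\wedge\cdot$. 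The one delicate point in the whole argument is pinning down sign and musical-isomorphism conventions consistently so that the two calculations in the middle paragraph match on the nose; once those are fixed the rank-one verification is a short computation, and the rest is linear algebra.
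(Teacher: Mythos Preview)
The paper does not supply its own proof of this lemma; it simply records the statement and cites \cite{GZ14Ann}. So there is nothing to compare against on the paper's side, and the relevant question is only whether your argument stands on its own.

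It does. Your pointwise reduction to a unitary coframe with $\theta=e^1$, the observation that $(\sqrt{-1}\,\theta\wedge\bar\theta)\wedge\alpha=0$ for $(n,1)$-forms so the commutator collapses to $(\sqrt{-1}\,\theta\wedge\bar\theta)\wedge\Lambda_\omega\alpha$, the explicit contraction matching it with $\bar\theta\wedge(\alpha\llcorner(\bar\theta)^\sharp)$, and the diagonalization of $\beta$ for semipositivity, together with the adjointness of $\bar e^j\wedge\cdot$ and $\cdot\llcorner(\bar e^j)^\sharp$, form exactly the standard proof one finds in the literature. The only point to make precise is the one you already flag: the reduction to $\theta=e^1$ relies on both sides scaling by $|t|^2$ under $\theta\mapsto t\theta$, and for the right-hand side this holds precisely when $(\bar\theta)^\sharp$ is taken in the sense that $\cdot\llcorner(\bar\theta)^\sharp$ is the Hermitian adjoint of $\bar\theta\wedge\cdot$ (hence conjugate-linear in $\bar\theta$), not the $\mathbb{C}$-linear Riemannian raising. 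Since you use exactly that adjointness in the semipositivity step, your conventions are internally consistent; just state this choice explicitly before invoking the $|t|^2$-homogeneity.
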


\begin{Lemma}
(see \cite{DemaillyAG},,\cite{Demailly15})
Let (X,$\omega$) be a complete k\"ahler manifold equipped with a (non-necessarily
complete) k\"ahler metric $\omega$, and let Q be a Hermitian vector bundle over $X$.
Assume that $\eta$ and $g$ are smooth bounded positive functions on X and let
$\textbf{B}:=[\eta \sqrt{-1}\Theta_Q-\sqrt{-1}\partial \bar{\partial} \eta-\sqrt{-1}g
\partial\eta \wedge\bar{\partial}\eta, \Lambda_{\omega}]$. Assume that $\textbf{B}$ is semi-positive definite everywhere on
$\wedge^{n,q}T^*X \otimes Q$ for some $q \ge 1$. Then given a form $g \in
L^2(X,\wedge^{n,q}T^*X \otimes Q)$ such that $D^{''}g=0$ and $\int_X \langle
\textbf{B}^{-1}g,g\rangle_Q dV_X < +\infty$, there exists $u \in L^2(X,\wedge^{n,q-1}T^*X \otimes Q)$ such that $D^{''}u=g$ and
\begin{equation}
\int_X(\eta+g^{-1})^{-1}|u|^2_QdV_X \le \int_X \langle \textbf{B}^{-1}g,g\rangle_Q dV_X
\end{equation}
\label{solveequation}
\end{Lemma}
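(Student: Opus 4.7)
The plan is to use the classical Hörmander $L^{2}$ method for $\bar\partial$, in the twisted form due to Demailly, taking the preceding lemma (the twisted Bochner--Kodaira--Nakano inequality) as the only analytic input. That inequality states that for every smooth compactly supported $v\in D(X,\wedge^{n,q}T^{*}X\otimes Q)$,
\begin{equation*}
\int_{X}(\eta+g^{-1})|D''^{*}v|_{Q}^{2}\,dV_{X}+\int_{X}\eta|D''v|_{Q}^{2}\,dV_{X}\ge \int_{X}\langle\mathbf{B}v,v\rangle_{Q}\,dV_{X}.
\end{equation*}
From it I will manufacture, by duality, a solution $u$ to $D''u=g$ satisfying the stated weighted estimate.

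The core is a Riesz-representation step. Given the $D''$-closed datum $g$, for any test form $v$ smooth with compact support decompose $v=v_{1}+v_{2}$ with $v_{1}\in\ker D''$ and $v_{2}$ in the $L^{2}$-orthogonal complement of $\ker D''$. Since $g\in\ker D''$, we have $\langle g,v\rangle=\langle g,v_{1}\rangle$. Cauchy--Schwarz for the positive self-adjoint operator $\mathbf{B}$ gives
\begin{equation*}
|\langle g,v_{1}\rangle|^{2}\le\Bigl(\int_{X}\langle\mathbf{B}^{-1}g,g\rangle_{Q}\,dV_{X}\Bigr)\Bigl(\int_{X}\langle\mathbf{B}v_{1},v_{1}\rangle_{Q}\,dV_{X}\Bigr),
\end{equation*}
and the twisted BKN inequality, applied to $v_{1}$ with $D''v_{1}=0$, bounds the second factor by $\int_{X}(\eta+g^{-1})|D''^{*}v_{1}|_{Q}^{2}\,dV_{X}\le \int_{X}(\eta+g^{-1})|D''^{*}v|_{Q}^{2}\,dV_{X}$. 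Hence the antilinear functional $\ell$ defined on the subspace $\{(\eta+g^{-1})^{1/2}D''^{*}v\}\subset L^{2}$ by $\ell\bigl((\eta+g^{-1})^{1/2}D''^{*}v\bigr):=\overline{\langle g,v\rangle}$ is well defined and has norm at most $\bigl(\int_{X}\langle\mathbf{B}^{-1}g,g\rangle_{Q}\,dV_{X}\bigr)^{1/2}$. Riesz representation supplies $w\in L^{2}$ with $\|w\|^{2}\le \int_{X}\langle\mathbf{B}^{-1}g,g\rangle_{Q}\,dV_{X}$ and $\langle g,v\rangle=\langle(\eta+g^{-1})^{1/2}w,D''^{*}v\rangle$ for every test $v$; setting $u:=(\eta+g^{-1})^{1/2}w$ yields $D''u=g$ weakly together with the claimed estimate.

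The main obstacle, and the reason for the complete Kähler hypothesis, is that $X$ is non-compact and $\omega$ need not be complete, so the density of smooth compactly supported forms in $\mathrm{Dom}(D'')\cap\mathrm{Dom}(D''^{*})$ that is needed to extend the BKN inequality from the test-form class and to justify the Riesz duality step is not automatic. I would handle this by the standard Andreotti--Vesentini device: pick an auxiliary complete Kähler metric $\hat\omega$ on $X$, run the argument with $\omega_{\varepsilon}:=\omega+\varepsilon\hat\omega$ (which is complete for $\varepsilon>0$), obtain solutions $u_{\varepsilon}$ with uniform weighted bounds, and pass to a weak $L^{2}$ limit as $\varepsilon\to 0^{+}$; weak lower semicontinuity of the weighted norms recovers the estimate in the original metric $\omega$. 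A final elliptic regularity step upgrades the weak solution $u$ to the strong sense stated in the lemma.
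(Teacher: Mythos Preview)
The paper does not prove this lemma; it is quoted as a black box from Demailly's references. Your outline is precisely the standard H\"ormander--Demailly duality argument one finds there: the twisted Bochner--Kodaira--Nakano inequality (the preceding lemma in the paper) combined with the kernel decomposition $v=v_{1}+v_{2}$, Cauchy--Schwarz against $\mathbf{B}$, Riesz representation, and the Andreotti--Vesentini regularization $\omega_{\varepsilon}=\omega+\varepsilon\hat\omega$ to pass from a complete auxiliary metric back to $\omega$. One small point worth making explicit: the step $\|(\eta+g^{-1})^{1/2}D''^{*}v_{1}\|\le\|(\eta+g^{-1})^{1/2}D''^{*}v\|$ is in fact an equality, because $v_{2}\in(\ker D'')^{\perp}=\overline{\mathrm{Ran}\,D''^{*}}\subset\ker D''^{*}$ (the last inclusion coming from $(D''^{*})^{2}=0$), so $D''^{*}v_{2}=0$ and $D''^{*}v_{1}=D''^{*}v$; this also guarantees $v_{1}\in\mathrm{Dom}\,D''\cap\mathrm{Dom}\,D''^{*}$, which is what you need to invoke the (density-extended) BKN inequality on $v_{1}$.
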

\par
In the last part of this section, we recall a theorem of Forn$\ae$ss and Narasimhan on approximation property of plurisubharmonic function of Stein manifolds.
\begin{Lemma}(see \cite{FornaessNarasimhan}) Let X be a Stein manifold and $\varphi\in PSH(X)$. Then there exists a sequence $\{\varphi\}_{n=1,2,\ldots}$ of smooth strongly plurisubharmonic functions  such that $\varphi_n\downarrow \varphi$.
\label{regularization}
\end{Lemma}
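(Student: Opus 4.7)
The plan is to reduce to the local situation in $\mathbb{C}^n$, where standard mollification produces smooth plurisubharmonic approximations, and then patch these local approximations by means of Richberg's regularized maximum. The Stein hypothesis is used twice: it provides a smooth strictly plurisubharmonic exhaustion $\rho : X \to \mathbb{R}$, and it allows one to choose a locally finite open cover $\{U_j\}$ of $X$ (with $V_j \subset\subset U_j$ still covering $X$) such that each $U_j$ is biholomorphic to an open subset of some $\mathbb{C}^{n_j}$, for instance by pulling back small coordinate polydisks under holomorphic coordinate charts (or, more globally, via the Remmert embedding $X \hookrightarrow \mathbb{C}^N$ and a Stein tubular neighborhood).

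First, on each $U_j$, identify $\varphi|_{U_j}$ with a plurisubharmonic function on an open set of $\mathbb{C}^{n_j}$ and form the standard convolution $\varphi_{j,\varepsilon} := \varphi \ast \chi_\varepsilon$ with a rotationally symmetric mollifier $\chi_\varepsilon$, defined on $V_j$ for $\varepsilon$ small enough. By the classical theory in $\mathbb{C}^{n_j}$, $\varphi_{j,\varepsilon}$ is smooth plurisubharmonic and decreases pointwise to $\varphi$ as $\varepsilon \downarrow 0$. Next I would globalize: Richberg's regularized maximum $\max_\eta(\cdot,\ldots,\cdot)$ is a smooth, convex, nondecreasing symmetric function which coincides with the ordinary maximum outside an $\eta$-neighborhood of the diagonal and has the property that $\max_\eta$ of finitely many (pluri)subharmonic functions is again (pluri)subharmonic. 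Using $\max_\eta$ to glue the $\varphi_{j,\varepsilon_j}$ over the overlaps, and suitable comparison constants to ensure that on $V_j$ the $j$-th argument dominates (so the pasted function agrees with $\varphi_{j,\varepsilon_j}$ there), yields a globally defined smooth plurisubharmonic $\tilde\varphi_n$ on $X$, with $\tilde\varphi_n \downarrow \varphi$ as $n \to \infty$ by choosing the parameters $\varepsilon_j$ and $\eta$ along a decreasing sequence.

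Finally, to get strong plurisubharmonicity, set $\varphi_n := \tilde\varphi_n + \delta_n \rho$ with $\delta_n \downarrow 0$ chosen small enough (relative to the gap $\tilde\varphi_n - \tilde\varphi_{n+1}$ on an exhaustion $\{\rho < n\}$) that the monotone decrease $\varphi_n \downarrow \varphi$ is preserved; then each $\varphi_n$ is smooth and strongly plurisubharmonic on $X$. The main obstacle is the patching step: plurisubharmonicity is not preserved by partition-of-unity sums, and the local mollifications $\varphi_{j,\varepsilon}$ and $\varphi_{k,\varepsilon}$ need not agree on $U_j \cap U_k$, so one must genuinely use the regularized maximum and carefully calibrate the cover, the mollification radii, and the cutoff $\eta$ simultaneously — as $n$ varies, all these parameters must shrink in a compatible way so that the resulting sequence is globally smooth, globally plurisubharmonic, and globally monotone decreasing to $\varphi$.
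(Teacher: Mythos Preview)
The paper does not give its own proof of this lemma: it is simply quoted from Forn\ae ss--Narasimhan \cite{FornaessNarasimhan} as a known result, so there is no argument in the paper to compare against.

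Your outline via local mollification, Richberg's regularized maximum for patching, and a perturbation by $\delta_n\rho$ to force strict plurisubharmonicity is the standard route (essentially Richberg's 1968 technique, as presented e.g.\ in Demailly's book). The one point that deserves more care than you give it is the global monotonicity: the regularized maximum of functions each decreasing to $\varphi$ on their own patches does not automatically produce a sequence that is \emph{globally} monotone decreasing, and the calibration of the cover, the $\varepsilon_j$, the comparison constants, and $\eta$ must be done inductively on an exhaustion so that $\tilde\varphi_{n+1} \le \tilde\varphi_n$ holds everywhere. You flag this as ``the main obstacle'' but stop short of explaining how it is resolved; the standard fix is to construct $\tilde\varphi_{n+1}$ only after $\tilde\varphi_n$ is fixed, choosing all parameters small enough relative to the already-built $\tilde\varphi_n$ on each compact piece of the exhaustion. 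With that inductive bookkeeping in place your argument goes through.
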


\subsubsection {Proof of Lemma 2.1}
Since $X$ is Stein manifold, there exists a smooth plurisubharmonic
exhaustion function P on X. Let $X_k:=\{P<k\}$ ($k=1,2,...$, we choose P such that
$X_1\ne \emptyset$).\par
Then $X_k$ satisfies $X_1 \subset \subset X_2\subset \subset ...\subset \subset
X_k\subset \subset X_{k+1}\subset \subset ...$ such that $\bigcup_{k=1}^{+\infty}X_k=X$ and each $X_k$ is Stein manifold with exhaustion plurisubharmonic function
$P_k=1/(k-P)$. We will discuss for fixed $k$ until step 8.\\

\emph{Step 1: Regularization of $\psi$ and $\varphi$}
\par
It follows from Lemma \ref{regularization} that there exist smooth strongly plurisubharmonic functions $\psi_m$ and $\varphi_{m'}$ on $X$ decreasing convergent to $\psi$ and $\varphi$ respectively, satisfying $\sup_m \sup_{X_k}\psi_m<-T$ and $\sup_{m'}\sup_{X_k}\varphi_{m'}<+\infty$.
\par

\emph{Step \  2: Recall some notations}\\
\par
Let $\epsilon \in (0,\frac{1}{8}B)$. Let $\{v_\epsilon\}_{\epsilon \in
(0,\frac{1}{8}B)}$ be a family of smooth increasing convex functions on $\mathbb{R}$, such
that:
\par
(1)$v_{\epsilon}(t)=t$ for $t\ge-t_0-\epsilon$, $v_{\epsilon}(t)=constant$ for
$t<-t_0-B+\epsilon$;\par
(2)$v_{\epsilon}^{''}(t)$ are pointwise convergent
to $\frac{1}{B}\mathbb{I}_{(-t_0-B,-t_0)}$,when $\epsilon \to 0$, and $0\le
v_{\epsilon}^{''}(t) \le \frac{2}{B}\mathbb{I}_{(-t_0-B+\epsilon,-t_0-\epsilon)}$
for ant $t \in \mathbb{R}$;\par
(3)$v_{\epsilon}^{'}(t)$ are pointwise convergent to $b(t)$ which is a continuous
function on R when $\epsilon \to 0$ and $0 \le v_{\epsilon}^{'}(t) \le 1$ for any
$t\in \mathbb{R}$.\par
One can construct the family $\{v_\epsilon\}_{\epsilon \in (0,\frac{1}{8}B)}$  by
the setting
\begin{equation}
\begin{split}
v_\epsilon(t):=&\int_{-\infty}^{t}(\int_{-\infty}^{t_1}(\frac{1}{B-4\epsilon}
\mathbb{I}_{(-t_0-B+2\epsilon,-t_0-2\epsilon)}*\rho_{\frac{1}{4}\epsilon})(s)ds)dt_1\\
&-\int_{-\infty}^{0}(\int_{-\infty}^{t_1}(\frac{1}{B-4\epsilon}
\mathbb{I}_{(-t_0-B+2\epsilon,-t_0-2\epsilon)}*\rho_{\frac{1}{4}\epsilon})(s)ds)dt_1
\end{split}
\end{equation}
where $\rho_{\frac{1}{4}\epsilon}$ is the kernel of convolution satisfying
$supp(\rho_{\frac{1}{4}\epsilon})\subset
(-\frac{1}{4}\epsilon,{\frac{1}{4}\epsilon})$.
Then it follows that
\begin{equation}
v_\epsilon^{''}(t)=\frac{1}{B-4\epsilon}
\mathbb{I}_{(-t_0-B+2\epsilon,-t_0-2\epsilon)}*\rho_{\frac{1}{4}\epsilon}(t)
\end{equation}
and
\begin{equation}
v_\epsilon^{'}(t)=\int_{-\infty}^{t}(\frac{1}{B-4\epsilon}
\mathbb{I}_{(-t_0-B+2\epsilon,-t_0-2\epsilon)}*\rho_{\frac{1}{4}\epsilon})(s)ds
\end{equation}
\par
Let $\eta=s(-v_\epsilon(\psi_m))$ and $\phi=u(-v_\epsilon(\psi_m))$, where $s \in
C^{\infty}((T,+\infty))$ satisfies $s\ge 0$ and $u\in C^{\infty}((T,+\infty))$
satisfies $\lim\limits_{t \to +\infty}u(t)$ exists, such that $u''s-s''>0$
and $s'-u's=1$. It follows form $\sup\limits_m \sup\limits_{X_k}\psi_m<-T$  that
$\phi=u(-v_\epsilon(\psi_m))$ are uniformly bounded on $X_k$ with respect to $m$
and $\epsilon$, and $u(-v_\epsilon(\psi))$ are uniformly bounded on $X_k$ with
respect to $\epsilon$.
Let $\Phi=\phi+\varphi_{m'}$ and let $\widetilde{h}=e^{-\Phi}$.
\\

\emph{Step 3: Solving $\bar{\partial}$-equation with smooth polar function and smooth weight}
\\
\par
Set $\textbf{B}=[\eta \sqrt{-1}\Theta_{\widetilde{h}}-\sqrt{-1}\partial \bar{\partial}
\eta-\sqrt{-1}g\partial\eta \wedge\bar{\partial}\eta, \Lambda_{\omega}]$, where
$g$
is a positive continuous function on $X_k$. We will determine $g$ by calculations. As
\begin{equation}
\begin{split}
\partial\bar{\partial}\eta=&
-s'(-v_{\epsilon}(\psi_m))\partial\bar{\partial}(v_{\epsilon}(\psi_m))
+s''(-v_{\epsilon}(\psi_m))\partial(v_{\epsilon}(\psi_m))\wedge
\bar{\partial}(v_{\epsilon}(\psi_m))\\
\eta\Theta_{\widetilde{h}}=&\eta\partial\bar{\partial}\phi+\eta\partial\bar{\partial}\varphi_{m'}\\
=&su''(-v_{\epsilon}(\psi_m))\partial(v_{\epsilon}(\psi_m))\wedge
\bar{\partial}(v_{\epsilon}(\psi_m))
-su'(-v_{\epsilon}(\psi_m))\partial\bar{\partial}(v_{\epsilon}(\psi_m))\\
+&s\partial\bar{\partial}\varphi_{m'}
\end{split}
\end{equation}
Hence
\begin{flalign}
&\eta \sqrt{-1}\Theta_{\widetilde{h}}-\sqrt{-1}\partial \bar{\partial}
\eta-\sqrt{-1}g\partial\eta \wedge\bar{\partial}\eta\\\nonumber
=&s\sqrt{-1}\partial\bar{\partial}\varphi_{m'}
+(s'-su')(v'_{\epsilon}(\psi_m)\sqrt{-1}\partial\bar{\partial}(\psi_m)+
v''_\epsilon(\psi_m)\sqrt{-1}\partial(\psi_m)\wedge\bar{\partial}(\psi_m))\\\nonumber
&+[(u''s-s'')-gs'^2]\partial(v_\epsilon(\psi_m))\wedge\bar{\partial}(v_\epsilon(\psi_m))\\\nonumber
\end{flalign}
\par
Let $g=\frac{u''s-s''}{s'^2}(-v_\epsilon(\psi_m))$ and note that $s'-su'=1$,
$v'_\epsilon(\psi_m)\ge 0$. Then
\begin{equation}
\begin{split}
&\eta \sqrt{-1}\Theta_{\widetilde{h}}-\sqrt{-1}\partial \bar{\partial}
\eta-\sqrt{-1}g\partial\eta \wedge\bar{\partial}\eta\\
=&s\sqrt{-1}\partial\bar{\partial}\varphi_{m'}+v'_{\epsilon}(\psi_m)\sqrt{-1}\partial\bar{\partial}(\psi_m)+
v''_\epsilon(\psi_m)\sqrt{-1}\partial(\psi_m)\wedge\bar{\partial}(\psi_m)\\
\ge&
v''_\epsilon(\psi_m)\sqrt{-1}\partial(\psi_m)\wedge\bar{\partial}(\psi_m)
\end{split}
\end{equation}
\par
Hence
\begin{equation}
\begin{split}
&\langle(\textbf{B}\alpha,\alpha\rangle_{\widetilde h}\\
\ge&\langle[v''_\epsilon(\psi_m)\partial(\psi_m)\wedge\bar{\partial}(\psi_m),
\Lambda_{\omega}]\alpha,\alpha\rangle_{\widetilde h}\\
=&\langle(v''_\epsilon(\psi_m)\bar{\partial}(\psi_m)
\wedge(\alpha\llcorner(\bar{\partial}\psi_m)^{\sharp})),\alpha\rangle_{\widetilde
h}
\end{split}
\label{320}
\end{equation}
\par
By Lemma \ref{semipositive}, \textbf{B} is semipositive.
\par
Using the definition of contraction, Cauchy-Schwarz inequality
and the inequality (\ref{320}), we have
\begin{equation}
\begin{split}
|\langle
v''_\epsilon(\psi_m)\bar{\partial}\psi_m\wedge\gamma,\widetilde{\alpha}\rangle_
{\widetilde h}|^2=
&|\langle
v''_\epsilon(\psi_m)\gamma,\widetilde{\alpha}\llcorner(\bar{\partial}\psi_m)^{\sharp}
\rangle_{\widetilde h}|^2\\
\le&\langle
(v''_\epsilon(\psi_m)\gamma,\gamma)
\rangle_{\widetilde h}
(v''_\epsilon(\psi_m))|\widetilde{\alpha}\llcorner(\bar{\partial}\psi_m)^{\sharp}|^2_{\widetilde
h}\\
=&\langle
(v''_\epsilon(\psi_m)\gamma,\gamma)
\rangle_{\widetilde h}
\langle
(v''_\epsilon(\psi_m))\bar{\partial}\psi_m\wedge
(\widetilde{\alpha}\llcorner(\bar{\partial}\psi_m)^{\sharp}),\widetilde{\alpha}
\rangle_{\widetilde h}\\
\le&\langle
(v''_\epsilon(\psi_m)\gamma,\gamma)
\rangle_{\widetilde h}
\langle
\textbf{B}\widetilde{\alpha},\widetilde{\alpha})
\rangle_{\widetilde h}
\label{411}
\end{split}
\end{equation}
for any $(n,0)$-form $\gamma$ and (n,1)-form $\widetilde{\alpha}$.
\par
As F is holomorphic on $\{\psi < -t_0\}\supset \supset Supp(1-v'_\epsilon(\psi_m))$
, then $\lambda:= D''[(1-v'_{\epsilon}(\psi_m))F] $ is well-defined and smooth on
$X_k$.
\par
Taking  $\gamma=F$, $\widetilde{\alpha}=
\textbf{B}^{-1}(\bar{\partial}v'_{\epsilon}(\psi_m))\wedge
F$. Note that $\widetilde h=e^{-\Phi}$, using inequality (\ref{411}), we have
\begin{equation}
\langle
\textbf{B}^{-1}\lambda,\lambda
\rangle_{\widetilde h}
\le
v''_{\epsilon}(\psi_m)|F|^2e^{-\Phi}
\end{equation}
\par
Then it follows that
\begin{equation}
\int_{X_k \backslash E_{\delta_m}(T)}\langle
\textbf{B}^{-1}\lambda,\lambda
\rangle_{\widetilde h}
\le
\int_{X_k \backslash E_{\delta_m}(T)}
v''_{\epsilon}(\psi_m)|F|^2e^{-\Phi}
< +\infty
\label{323}
\end{equation}
\par
Using Lemma \ref{solveequation}, there exists $u_{k,m,m',\epsilon} \in L^2(X_k,K_X)$ such that
 \begin{equation}
 D''u_{k,m,m',\epsilon}=\lambda
 \label{324}
 \end{equation}
 and
\begin{equation}
\begin{split}
\int_{X_k}
\frac{1}{\eta+g^{-1}}|u_{k,m,m',\epsilon}|^2 e^{-\Phi}
\le
\int_{X_k}
\langle
\textbf{B}^{-1}\lambda,\lambda
\rangle_{\widetilde h}
\le
\int_{X_k}
v''_{\epsilon}(\psi_m)|F|^2 e^{-\Phi}
\label{325}
\end{split}
\end{equation}
\par
Note that $g=\frac{u''s-s''}{s'^2}(-v_\epsilon(\psi_m))$. Assume that we can choose
$\eta$ and $\phi$ such that
$e^{v_\epsilon(\psi_m)}e^{\phi}c(-v_\epsilon(\psi_m))=(\eta+g^{-1})^{-1}$. Then
inequality (\ref{325}) becomes
\begin{equation}
\begin{split}
\int_{X_k}
|u_{k,m,m',\epsilon}|^2
e^{v_\epsilon(\psi_m)-\varphi_{l,m'}}c(-v_\epsilon(\psi_m))
\le
\int_{X_k}
v''_{\epsilon}(\psi_m)|F|^2 e^{-\phi-\varphi_{m'}}
<+\infty
\end{split}
\label{326}
\end{equation}
\par
Let $F_{k,m,m',\epsilon}:=-u_{k,m,m',\epsilon}+(1-v'_{\epsilon}(\psi_m))F$. Then inequality \eqref{326} becomes
\begin{equation}
\begin{split}
&\int_{X_k}
|F_{k,m,m',\epsilon}-(1-v'_{\epsilon}(\psi_m))F|^2
e^{v_\epsilon(\psi_m)-\varphi_{l,m'}}c(-v_\epsilon(\psi_m))\\
\le&
\int_{X_k}
v''_{\epsilon}(\psi_m)|F|^2 e^{-\phi-\varphi_{m'}}
\end{split}
\label{415}
\end{equation}

\emph{Step 4: Singular polar function and smooth weight}
\par
As $\sup_{m,\epsilon}|\phi|=\sup_{m,\epsilon}|u(-v_{\epsilon}(\psi_m))|<+\infty$ and $\varphi_{m'}$ is continuous on $\overline{X}_k$, then $\sup_{m,\epsilon}e^{-\phi-\varphi_{m'}}<+\infty$. Note that
$$
v''_{\epsilon}(\psi_m)|F|^2e^{-\phi-\varphi_{m'}}\le\frac{2}{B}
\mathbb{I}_{\{\psi<-t_0\}}|F|^2\sup_{m,\epsilon}e^{-\phi-\varphi_{m'}}
$$
on $X_k$, then it follows from $\int_{\{\psi<-t_0\}}|F|^2<+\infty$ and the dominated convergence theorem that
\begin{equation}
\begin{split}
\lim_{m\to+\infty}\int_{X_k}
v''_{\epsilon}(\psi_m)|F|^2 e^{-\phi-\varphi_{m'}}=\int_{X_k}v''_{\epsilon}(\psi)|F|^2 e^{-u(-v_{\epsilon}(\psi))-\varphi_{m'}}
\end{split}
\label{416}
\end{equation}
\par
Note that $\inf_m\inf_{X_k} e^{v_\epsilon(\psi_m)-\varphi_{m'}}c(-v_\epsilon(\psi_m))>0$, then it follows from inequality \eqref{415} and \eqref{416} that $\sup_m\int_{X_k}|F_{k,m,m',\epsilon}-(1-v'_{\epsilon}(\psi_m))F|^2<+\infty$. Note that
\begin{equation}
\begin{split}
|(1-v'_{\epsilon}(\psi_m))F|\le|\mathbb{I}_{\{\psi<-t_0\}}F|,
\end{split}
\label{417}
\end{equation}
then it follows from $\int_{\{\psi<-t_0\}}|F|^2<+\infty$ that  $\sup_m\int_{X_k}|F_{k,m,m',\epsilon}|^2<+\infty$, which implies that there exists a subsequence of $\{F_{k,m,m',\epsilon}\}_m$ (also denoted by $F_{k,m,m',\epsilon}$) compactly convergent to a holomorphic $F_{k,m',\epsilon}$ on $X_k$.
\par
Note that $v_{\epsilon}(\psi_m)-\varphi_{m'}$ are uniformly bounded on $X_k$ with respect to $m$, then it follows from $|F_{k,m,m',\epsilon}-(1-v'_{\epsilon}(\psi_m))F|^2\le
2(|F_{k,m,m',\epsilon}|^2+|(1-v'_{\epsilon}(\psi_m))F|^2)\le
2(|F_{k,m,m',\epsilon}|^2+|\mathbb{I}_{\{\psi<-t_0\}}F|^2)$ and the dominated convergence theorem that
\begin{equation}
\begin{split}
\lim_{m\to+\infty}&\int_K|F_{k,m,m',\epsilon}-(1-v'_{\epsilon}(\psi_m))F|^2
e^{v_\epsilon(\psi_m)-\varphi_{m'}}c(-v_\epsilon(\psi_m))\\
=&\int_K|F_{k,m',\epsilon}-(1-v'_{\epsilon}(\psi))F|^2
e^{v_\epsilon(\psi)-\varphi_{m'}}c(-v_\epsilon(\psi))
\end{split}
\label{418}
\end{equation}
holds for any compact subset $K$ of $X_k$. Combing with inequality \eqref{415} and
\eqref{416}, one can obtain that
\begin{equation}
\begin{split}
&\int_K|F_{k,m',\epsilon}-(1-v'_{\epsilon}(\psi))F|^2
e^{v_\epsilon(\psi)-\varphi_{m'}}c(-v_\epsilon(\psi))\\
\le &\int_{X_k}v''_{\epsilon}(\psi)|F|^2 e^{-u(-v_{\epsilon}(\psi))-\varphi_{m'}}
\end{split}
\label{419}
\end{equation}
which implies
\begin{equation}
\begin{split}
&\int_{X_k}|F_{k,m',\epsilon}-(1-v'_{\epsilon}(\psi))F|^2
e^{v_\epsilon(\psi)-\varphi_{m'}}c(-v_\epsilon(\psi))\\
\le &\int_{X_k}v''_{\epsilon}(\psi)|F|^2 e^{-u(-v_{\epsilon}(\psi))-\varphi_{m'}}
\end{split}
\label{420}
\end{equation}

 \emph{Step 5: Nonsmooth cut-off function}
 \\
 \par
Note that $\sup_{\epsilon}sup_{X_k}e^{-u(-v_{\epsilon}(\psi))-\varphi_{m'}}<+\infty$ and $$
v''_{\epsilon}(\psi)|F|^2 e^{-u(-v_{\epsilon}(\psi))-\varphi_{m'}}\le
\frac{2}{B}\mathbb{I}_{\{-t_0-B<\psi<-t_0\}}|F|^2\sup_{\epsilon}sup_{X_k}e^{-u(-v_{\epsilon}(\psi))-\varphi_{m'}}
$$
then it follows from $\int_{\{\psi<-t_0\}}|F|^2<+\infty$ and the dominated convergence theorem that
\begin{equation}
\begin{split}
&\lim_{\epsilon\to 0}\int_{X_k}v''_{\epsilon}(\psi)|F|^2 e^{-u(-v_{\epsilon}(\psi))-\varphi_{m'}}\\
=&\int_{X_k}\frac{1}{B}\mathbb{I}_{\{-t_0-B<\psi<-t_0\}}|F|^2 e^{-u(-v(\psi))-\varphi_{m'}}\\
\le&(\sup_{X_k}e^{-u(-v(\psi))})
\int_{X_k}\frac{1}{B}\mathbb{I}_{\{-t_0-B<\psi<-t_0\}}|F|^2 e^{-\varphi_{m'}}<+\infty
\end{split}
\label{421}
\end{equation}
Note that $\inf_{\epsilon}\inf_{X_k} e^{v_\epsilon(\psi)-\varphi_{m'}}c(-v_\epsilon(\psi))>0$, then it follows from inequality \eqref{420} and \eqref{421} that $\sup_{\epsilon}\int_{X_k}|F_{k,m',\epsilon}-(1-v'_{\epsilon}(\psi))F|^2<+\infty$. Combing with
\begin{equation}
\begin{split}
\sup_{\epsilon}\int_{X_k}|1-v'_{\epsilon}(\psi)F|^2
\le\int_{X_k}\mathbb{I}_{\{\psi<-t_0\}}|F|^2<+\infty
\end{split}
\label{422}
\end{equation}
one can obtain that $\sup_{\epsilon}\int_{X_k}|F_{k,m',\epsilon}|^2<+\infty$, which implies that there exists a subsequence of $\{F_{k,m',\epsilon}\}_{\epsilon \to 0}$ (also denoted by $F_{k,m',\epsilon}$) compactly convergent to a holomorphic $F_{k,m'}$ on $X_k$.
\par
Note that $\sup_{\epsilon}\sup_{X_k}e^{v_\epsilon(\psi)-\varphi_{m'}}c(-v_{\epsilon}(\psi))<+\infty$ and $|F_{k,m',\epsilon}-(1-v'_{\epsilon}(\psi_m))F|^2$ $\le
2(|F_{k,m',\epsilon}|^2+|\mathbb{I}_{\{\psi<-t_0\}}F|^2)$, then it follows from inequality \eqref{422} and dominated convergence theorem on any given $K\subset\subset X_k$, with dominant function
$$2(\sup_{\epsilon}\sup_K(|F_{k,m',\epsilon}|)+|\mathbb{I}_{\{\psi<-t_0\}}F|^2)
\sup_{\epsilon}\sup_{X_k}e^{v_\epsilon(\psi)-\varphi_{m'}}c(-v_{\epsilon}(\psi))$$
that
\begin{equation}
\begin{split}
\lim_{\epsilon\to 0}&\int_K|F_{k,m',\epsilon}-(1-v'_{\epsilon}(\psi))F|^2
e^{v_\epsilon(\psi)-\varphi_{m'}}c(-v_\epsilon(\psi))\\
=&\int_K|F_{k,m'}-(1-b(\psi))F|^2
e^{v(\psi)-\varphi_{m'}}c(-v(\psi))
\end{split}
\label{423}
\end{equation}
Combing with inequality \eqref{421} and \eqref{420}, one can obtain that
\begin{equation}
\begin{split}
&\int_K|F_{k,m'}-(1-b(\psi))F|^2
e^{v(\psi)-\varphi_{m'}}c(-v(\psi))\\
\le &(\sup_{X_k}e^{-u(-v(\psi))})
\int_{X_k}\frac{1}{B}\mathbb{I}_{\{-t_0-B<\psi<-t_0\}}|F|^2 e^{-\varphi_{m'}}
\end{split}
\label{424}
\end{equation}
which implies
\begin{equation}
\begin{split}
&\int_{X_k}|F_{k,m'}-(1-b(\psi))F|^2
e^{v(\psi)-\varphi_{m'}}c(-v(\psi))\\
\le &(\sup_{X_k}e^{-u(-v(\psi))})
\int_{X_k}\frac{1}{B}\mathbb{I}_{\{-t_0-B<\psi<-t_0\}}|F|^2 e^{-\varphi_{m'}}
\end{split}
\label{425}
\end{equation}

\emph{Step 6: Singular weight}
\\
Note that
\begin{equation}
\begin{split}
\int_{X_k }
\frac{1}{B}\mathbb{I}_{\{-t_0-B<\psi<-t_0\}}|F|^2 e^{-\varphi_{m'}}
\le
\int_{X_k }
\frac{1}{B}\mathbb{I}_{\{-t_0-B<\psi<-t_0\}}|F|^2 e^{-\varphi}<+ \infty
\end{split}
\label{426}
\end{equation}
and $\sup_{X_k} e^{-u(-v(\psi))}< +\infty$
then it follows from \eqref{425}  that
$$
 \sup_{m'}\int_{X_k}|F_{k,m'}-(1-b(\psi))F|^2
 e^{v(\psi)-\varphi_{m'}}c(-v(\psi)) < + \infty.
$$
 Combining with $\inf_{m'}\inf_{X_k}
 e^{v(\psi)-\varphi_{m'}}c(-v(\psi)) >0$, we know
 $\sup_{m'}\int_{X_k}|F_{k,m'}-(1-b(\psi))F|^2 < +\infty$.
 Note that
\begin{equation}
\begin{split}
\int_{X_k}|(1-b(\psi))F|^2\le \int_{X_k}|\mathbb{I}_{\{\psi<-t_0\}}F|^2< + \infty
\label{427}
\end{split}
\end{equation}
Then $\sup_{m'} \int_{X_k}|F_{k,m'}|^2 < +\infty$, which implies that
there exists a compactly convergence subsequence of $\{F_{k,m'}\}$  denoted by
$\{F_{k,m''}\}$ which converge to a holomorphic $(n,0)$ form on $X_k$ denoted by
$F_{k}$.\par
Note that $\sup_{m'}\sup_{X_k} e^{v(\psi)-\varphi_{m'}}c(-v(\psi))<
+\infty$, then it follows \eqref{427} and the dominated convergence theorem on any given compact subset K of $X_k$ with dominant function
\begin{equation}
2(\sup_{m''}\sup_{K}(|F_{k,m''}|^2)
+\mathbb{I}_{\{\psi<-t_0\}}|F|^2)
\sup_{X_k}e^{v(\psi)-\varphi_{m'}}c(-v(\psi))
\end{equation}
that
\begin{equation}
\begin{split}
\lim_{m'' \to +\infty}
&\int_{K}
|F_{k,m''}-(1-b(\psi))F|^2 e^{v(\psi)-\varphi_{m'}}c(-v(\psi))
\\
=&\int_{K}
|F_{k}-(1-b(\psi))F|^2 e^{v_(\psi)-\varphi_{m'}}c(-v(\psi))
\label{428}
\end{split}
\end{equation}
Note that for $m''\ge m'$, $\varphi_{m'} \le\varphi_{m''}$ holds, then it follows from \eqref{425}
and \eqref{426} that
\begin{equation}
\begin{split}
&\lim_{m'' \to +\infty}
\int_{K}
|F_{k,m''}-(1-b(\psi))F|^2 e^{v(\psi)-\varphi_{m'}}c(-v(\psi))
\\
\le&
\limsup_{m'' \to +\infty}
\int_{K}
|F_{k,m''}-(1-b(\psi))F|^2 e^{v(\psi)-\varphi_{m''}}c(-v(\psi))\\
\le&
\limsup_{m''\to +\infty}
(\sup_{X_k}e^{-u(-v(\psi))})\int_{X_k}
\frac{1}{B}\mathbb{I}_{\{-t_0-B<\psi<-t_0\}}|F|^2e^{-\varphi_{m''}}\\
\le&
(\sup_{X_k}e^{-u(-v(\psi))})\int_{X_k}
\frac{1}{B}\mathbb{I}_{\{-t_0-B<\psi<-t_0\}}|F|^2e^{-\varphi}<+ \infty
\end{split}
\label{429}
\end{equation}
Combining with equality \eqref{428}, one can obtain that
\begin{equation}\nonumber
\begin{split}
&\int_{K}
|F_{k}-(1-b(\psi))F|^2 e^{v(\psi)-\varphi_{m'}}c(-v(\psi))\\
\le&
(\sup_{X_k}e^{-u(-v(\psi))})\int_{X_k}
\frac{1}{B}\mathbb{I}_{\{-t_0-B<\psi<-t_0\}}|F|^2e^{-\varphi}<+ \infty
\end{split}
\end{equation}
for any compact subset K of $X_k$, which implies
\begin{equation}\nonumber
\begin{split}
&\int_{X_k}
|F_{k}-(1-b(\psi))F|^2 e^{v(\psi)-\varphi_{m'}}c(-v(\psi))\\
\le&
(\sup_{X_k}e^{-u(-v(\psi))})\int_{X_k}
\frac{1}{B}\mathbb{I}_{\{-t_0-B<\psi<-t_0\}}|F|^2e^{-\varphi}<+ \infty
\end{split}
\end{equation}
When $m' \to +\infty$, it follows from Levi's theorem that
\begin{equation}
\begin{split}
&\int_{X_k}
|F_{k}-(1-b(\psi))F|^2 e^{v(\psi)-\varphi}c(-v(\psi))\\
\le&
(\sup_{X_k}e^{-u(-v(\psi))})\int_{X_k}
\frac{1}{B}\mathbb{I}_{\{-t_0-B<\psi<-t_0\}}|F|^2e^{-\varphi}
\end{split}
\label{430}
\end{equation}

\emph{Step 7: ODE System}
\\
\par
We want to find $\eta$ and $\phi$ such that
$(\eta+g^{-1})=e^{-v_\epsilon(\psi_m)}e^{-\phi}\frac{1}{c(-v_{\epsilon}(\psi_m))}$.
As $\eta=s(-v_{\epsilon}(\psi_m))$ and $\phi=u(-v_{\epsilon}(\psi_m))$, we have
$(\eta+g^{-1})e^{v_\epsilon(\psi_m)}e^{\phi}=(s+\frac{s'^2}{u''s-s''})e^{-t}e^u\circ(-v_\epsilon(\psi_m))$.\\

Summarizing the above discussion about s and u,we are naturally led to a system of
ODEs:
\begin{equation}
\begin{split}
&1)(s+\frac{s'^2}{u''s-s''})e^{u-t}=\frac{1}{c(t)}\\
&2)s'-su'=1
\end{split}
\label{431}
\end{equation}
when $t\in(T,+\infty)$.\par
It is not hard to solve the ODE system \eqref{431} and get
$u(t)=-\log(\int^t_T c(t_1)e^{-t_1}dt_1)$ and
$s(t)=\frac{\int^t_T(\int^{t_2}_T c(t_1)e^{-t_1}dt_1)dt_2}{\int^t_T
c(t_1)e^{-t_1}dt_1)}$. It follows that $s\in C^{\infty}((T,+\infty))$ satisfies
$s \ge 0$, $\lim_{t \to +\infty}u(t)=-\log(\int^{+\infty}_T
c(t_1)e^{-t_1}dt_1)$ exists and $u\in C^{\infty}((T,+\infty))$ satisfies
$u''s-s''>0$.\par
As $u(t)=-\log(\int^t_T c(t_1)e^{-t_1}dt_1)$ is decreasing with respect to t, then
it follows from $-T \ge v(t) \ge \max\{t,-t_0-B_0\} \ge -t_0-B_0$,for any $t \le 0$
that
\begin{equation}\nonumber
\begin{split}
\sup_{X_k}e^{-u(-v(\psi))} \le
\sup_{X}e^{-u(-v(\psi))} \le
\sup_{t\in(T,t_0+B]}e^{-u(t)}=\int^{t_0+B}_T c(t_1)e^{-t_1}dt_1
\end{split}
\end{equation}
Hence on $X_k$, we have
\begin{equation}
\begin{split}
&\int_{X_k}
|F_{k}-(1-b(\psi))F|^2 e^{v(\psi)-\varphi}c(-v(\psi))\\
\le&
\int^{t_0+B}_T c(t_1)e^{-t_1}dt_1\int_{X_k}
\frac{1}{B}\mathbb{I}_{\{-t_0-B<\psi<-t_0\}}|F|^2e^{-\varphi}\\
\le&
C\int^{t_0+B}_T c(t_1)e^{-t_1}dt_1
\end{split}
\label{432}
\end{equation}

\emph{Step 8: When $k \to +\infty$.}
\\
\par

Note that for any given $k$, $e^{-\varphi+v(\psi)}c(-v(\psi))$ has a positive lower
bound on $\bar{X_k}$, then it follows \eqref{432} that for any given $k$ ,
$\int_{X_k}
|F_{k'}-(1-b(\psi))F|^2$ is bounded with respect to $k' \ge k$.
Combining with
\begin{equation}
\int_{X_k}|1-b(\psi)F|^2 \le
\int_{\bar {X_k}\cap\{\psi<-t_0\}}|F|^2<+ \infty
\label{433}
\end{equation}
One can obtain that $\int_{X_k}
|F_{k'}|^2$ is bounded with respect to $k' \ge k$.\par
By diagonal method, there exists a subsequence $F_{k''}$ uniformly converge on any
$\bar{X_k}$ to a holomorphic (n,0)-form on X denoted by $\widetilde F$. Then it
follow from inequality \eqref{432}, \eqref{433} and the dominated convergence theorem that
\begin{equation}
\begin{split}
&\int_{X_k}
|\widetilde F-(1-b(\psi))F|^2 e^{-\max\{\varphi-v(\psi),-M\}}c(-v(\psi))\\
\le&
C\int^{t_0+B}_T c(t_1)e^{-t_1}dt_1
\end{split}
\end{equation}
for any $M>0$, then Levi's theorem implies
\begin{equation}
\begin{split}
&\int_{X_k}
|\widetilde F-(1-b(\psi))F|^2 e^{-(\varphi-v(\psi))}c(-v(\psi))\\
\le&
C\int^{t_0+B}_T c(t_1)e^{-t_1}dt_1
\end{split}
\end{equation}
Let $k \to +\infty$, Lemma 2.1 is proved.

\subsection {Appendix: Proof of Lemma \ref{lemma for cor1.9}}
\label{4.2}
We will need the following results in our proof of Lemma \ref{lemma for cor1.9}..
\begin{Lemma}(see Chapter 3, Corollary 2.14 of \cite{Demaillybook}) Let $A$ be an analytic subset of $X$ with global irreducible components
$A_j$ of pure dimension $p$. Then any closed current $\Theta \in \mathcal{D}'_{p,p}(X)$ of order $0$ with support in $A$ is of the form $\Theta=\sum\lambda_j A_j$ where $\lambda_j \in \mathbb{C}$.
\label{lemma4.5}
\end{Lemma}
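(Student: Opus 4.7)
The plan is to establish the structure by localizing on $X$, reducing first to the case of a single irreducible component $A$ of pure dimension $p$, and then analyzing $\Theta$ separately on the regular and singular loci of $A$. A key preliminary step is to show that any current of bidimension $(p,p)$ and order $0$ whose support is contained in a closed analytic subset $Z \subset X$ of complex dimension strictly less than $p$ must vanish. This follows because such a $Z$ has vanishing $2p$-dimensional Hausdorff measure, while a current of bidimension $(p,p)$ and order $0$ is a $(n-p,n-p)$-form with complex Radon-measure coefficients, and such coefficients (being absolutely continuous with respect to $2p$-Hausdorff measure once one pairs with a test $(p,p)$-form) cannot be supported on a set of zero $2p$-measure. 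Applying this with $Z = A_{\mathrm{sing}}$, which has complex dimension at most $p-1$, reduces the problem to $X \setminus A_{\mathrm{sing}}$, on which each $A_j^{\mathrm{reg}}$ is a closed complex submanifold of pure dimension $p$.

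Next I would analyze $\Theta$ near a point of $A_j^{\mathrm{reg}}$ by choosing local coordinates $(z_1,\dots,z_n)$ in which $A_j^{\mathrm{reg}} = \{z_{p+1}=\cdots=z_n=0\}$. Writing $\Theta$ as a combination of basis forms $dz_I \wedge d\bar z_J$ with $|I|=|J|=n-p$ and invoking the Schwartz-type support theorem for distributions of order $0$ supported on a submanifold (coefficient by coefficient), only the coefficient in front of the purely transverse form $\bigwedge_{k>p} \tfrac{i}{2}dz_k \wedge d\bar z_k$ can survive, and it must be of the form $\lambda_j \otimes \delta_{\{z_{p+1}=\cdots=z_n=0\}}$ for some distribution $\lambda_j$ on $A_j^{\mathrm{reg}}$. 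Thus locally $\Theta = \lambda_j\,[A_j^{\mathrm{reg}}]$.

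The closedness of $\Theta$ then forces $d\lambda_j=0$ in the distribution sense on $A_j^{\mathrm{reg}}$, so that $\lambda_j$ is locally constant. Since $A_j$ is globally irreducible, $A_j^{\mathrm{reg}}$ is connected, and $\lambda_j$ is a single complex constant. The local representations on $X \setminus A_{\mathrm{sing}}$ patch globally, and the first step provides trivial extension across $A_{\mathrm{sing}}$, yielding $\Theta = \sum_j \lambda_j [A_j]$ on all of $X$. For the reducible case one simply sums the contributions from each $A_j$.

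The main obstacle is the second step: justifying cleanly that an order-$0$, bidegree $(n-p,n-p)$ current supported in a $p$-dimensional complex submanifold really is a measure-coefficient multiple of the current of integration. This is where one needs the Schwartz support theorem (distributions of order $0$ with support in a smooth submanifold are sections of the normal $\delta$-distribution with coefficients that are measures on the submanifold) together with a bidegree count, to rule out transverse derivatives of delta and to pin down the single surviving coefficient.
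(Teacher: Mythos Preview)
The paper does not supply its own proof of this lemma; it is quoted verbatim from Demailly's book with a citation. Your outline is essentially the standard argument found there: kill the contribution over $A_{\mathrm{sing}}$, analyze $\Theta$ in adapted local coordinates on $A^{\mathrm{reg}}$, and use closedness plus connectedness of $A_j^{\mathrm{reg}}$ to make each $\lambda_j$ a global constant.

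There is, however, a genuine gap in your first step. You claim that an order-$0$ current of bidimension $(p,p)$ supported on a set $Z$ with $\dim_{\mathbb C} Z<p$ must vanish because its Radon-measure coefficients are ``absolutely continuous with respect to $2p$-Hausdorff measure.'' This is false: Radon measures need not be absolutely continuous with respect to any Hausdorff measure. Concretely, in $\mathbb C^2$ the current $\Theta=\delta_0\,dz_1\wedge d\bar z_1$ is of order $0$, has bidimension $(1,1)$, and is supported at the origin (complex dimension $0<1$), yet it is nonzero. What fails in this example is closedness, and closedness is exactly what you must use. The correct argument is: if locally $Z\subset\{z_1=\cdots=z_s=0\}$ with $s\ge n-p+1$, then the measure coefficients of $\Theta$ satisfy $z_k\Theta=\bar z_k\Theta=0$ for $k\le s$; since $d\Theta=0$, differentiating $z_k\Theta=0$ gives $dz_k\wedge\Theta=0$, and similarly $d\bar z_k\wedge\Theta=0$. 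Hence every nonzero term of $\Theta$ must already contain $dz_1,\dots,dz_s,d\bar z_1,\dots,d\bar z_s$, forcing bidegree at least $(s,s)\ge(n-p+1,n-p+1)$, which contradicts bidegree $(n-p,n-p)$. The same $dz_k\wedge\Theta=0$ mechanism, applied on $A^{\mathrm{reg}}$ with $s=n-p$, also cleanly yields your second step (only the purely transverse monomial survives), so you can avoid invoking the Schwartz support theorem separately.
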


\begin{Lemma}Let $X$ be an open Riemann Surface which admits a nontrivial Green function $G_{X}(z,w)$. Given $z_1\in X$. Let $U$ be a relatively compact open subset of $X$ containing $z_1$. Denote $A=\sup\limits_{z\in \overline{U}}G_{X}(z,z_1)$. Then for any $z\in X\backslash \overline{U}$, we have $G_X(z,z_1)\ge A$.
\label{lemma4.6}
\end{Lemma}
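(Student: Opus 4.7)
The approach is a maximum/minimum-principle argument applied to $G_X(\cdot,z_1)$ on the open set $V := X\setminus\overline{U}$. First I would observe that, since $z_1\in U\subset\overline{U}$, the logarithmic pole of $G_X(\cdot,z_1)$ lies outside $V$, so $G_X(\cdot,z_1)$ is genuinely harmonic on $V$. The topological boundary $\partial V = \partial U$ is compact in $X$, and on $\partial U\subset\overline{U}$ the pointwise estimate $G_X(z,z_1)\le A$ follows immediately from the definition of $A$ as a supremum. Under the sign convention used in the paper (namely $|f_{z_0}|=p^{*}e^{G_X(\cdot,z_0)}\le 1$), we have $G_X\le 0$ on all of $X$, so in particular $A\le 0$.

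Next I would use the defining feature of a Green function on an open Riemann surface admitting one, namely that $G_X(z,z_1)\to 0$ as $z$ tends to the ideal boundary of $X$. To make this precise I would take an exhaustion $X_1\subset\subset X_2\subset\subset\cdots\nearrow X$ by relatively compact open subsets with $\overline{U}\subset X_1$, and invoke the classical construction of $G_X(\cdot,z_1)$ as the decreasing limit of the Green functions $G_{X_n}(\cdot,z_1)$ of the exhausting domains. In particular, $G_X(\cdot,z_1)|_{\partial X_n}\to 0$ uniformly as $n\to\infty$, and combined with $A\le 0$ this gives, for every $\varepsilon>0$, the boundary bound $G_X(z,z_1)-A \ge -\varepsilon$ on $\partial X_n$ for all $n$ large.

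The core of the argument is then to apply the minimum principle to the harmonic function $G_X(\cdot,z_1)-A$ on the relatively compact subdomain $V_n := X_n\setminus\overline{U}\subset V$, combining the estimate at $\partial U$ (given by the definition of $A$) with the estimate at $\partial X_n$ (given by the ideal-boundary behavior of $G_X$), and then letting $n\to\infty$ to conclude $G_X(z,z_1)\ge A$ on all of $V$.

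The main obstacle, and the step I would handle most carefully, is the interplay between the two boundary pieces in the application of the minimum principle: the finite piece $\partial U$, where one only has an \emph{upper} bound $G_X\le A$ coming from the definition of $A$, and the ideal boundary, where the classical vanishing $G_X\to 0$ together with $A\le 0$ provides the \emph{lower} bound needed to drive the minimum-principle comparison. This is precisely the point where the hypothesis that $X$ admits a nontrivial Green function is essential, since it ensures uniform control of $G_X(\cdot,z_1)$ at the ideal boundary that is strong enough to overcome the (possibly negative) boundary data on $\partial U$.
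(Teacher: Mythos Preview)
Your minimum-principle strategy has a gap that you flag yourself but do not resolve: on $\partial U$ the definition $A=\sup_{\overline U}G_X(\cdot,z_1)$ gives only the \emph{upper} bound $G_X\le A$, which is useless for bounding $G_X-A$ from below via the minimum principle on $V_n=X_n\setminus\overline U$; the minimum of $G_X-A$ over $\overline{V_n}$ will simply be attained on $\partial U$, and nothing at the outer boundary $\partial X_n$ can compensate. Separately, your assertion that $G_X(\cdot,z_1)|_{\partial X_n}\to 0$ uniformly does not follow from $G_X=\lim_n G_{X_n}$: it is $G_{X_n}$, not $G_X$, that vanishes on $\partial X_n$, and at irregular ideal-boundary points $G_X$ need not tend to $0$ (e.g.\ on $X=\mathbb D\setminus\{0\}$ with $z_1=\tfrac12$ the Green function coincides with that of $\mathbb D$ and tends to $\log\tfrac12$ as $z\to 0$).

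The paper argues by a different route: assuming a bad point $z_0$ exists, it builds a competitor $\tilde G$ (equal to $G_X$ on $\overline U$ and to $\max\{G_X,A\}$ outside) meant to violate the Perron extremal characterization $G_X=\sup\Delta(z_1)$. However, the lemma as stated is actually \emph{false}: with $X=\mathbb D$, $z_1=0$, $G_X(z,0)=\log|z|$, and $U=\{|z-0.1|<0.2\}$ one has $A=\log 0.3$, while $z=0.1+0.25i\in X\setminus\overline U$ satisfies $|z|=\sqrt{0.0725}<0.3$, hence $G_X(z,0)<A$. Correspondingly the paper's $\tilde G$ fails to be upper semicontinuous on $\partial U$ wherever $G_X<A$, so it is not subharmonic and does not lie in $\Delta(z_1)$. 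What \emph{is} true, and is all the paper's applications need (a lower bound for $G_X$ off $\overline U$), is the version with $A$ replaced by $m:=\min_{\partial U}G_X(\cdot,z_1)$: then $G_X\ge m$ on $\partial U$, the paper's gluing genuinely produces a subharmonic competitor, and your minimum-principle idea would also acquire the correct-sign boundary data on $\partial U$ --- though you would still have to either justify the ideal-boundary step or bypass it via the extremality argument.
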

\begin{proof}
We prove Lemma \ref{lemma4.6} by contradiction. If there exists $z_0 \in X\backslash \overline{U}$ such that $G_X(z_0,z_1) < A$.
Note that $G_X(z,z_1)$ is harmonic on $X\backslash \overline{U}$, hence smooth at $z_0\in X\backslash \overline{U}$. Then there exists a small open neighborhood $W$ of $z_0 $ such that for any $z\in W$, $G_{X}(z,z_1)<A$.
\par
Denote
$$\tilde{G}(z) = \left\{ \begin{array}{rcl}
G_X(z,z_1) & \mbox{for}
& z \in \overline{U} \\ \max\{G_X(z,z_1),A\} & \mbox{for} & z\in X\backslash\overline{U}
\end{array}\right.
$$
Note that $\tilde{G}(z)$ is a negative subharmonic function on $X$ and $\tilde{G}(z)=G_X(z,z_1)$ on $U$.
\par
Let $(V_{z_1},w)$ be a small local coordinate neighborhood of $z_1$ such that $w(z_1)=0$. By definition, $G_X(z,z_1)=\sup\limits_{v \in \Delta(z_1)} v(z)$ where $\Delta(z_1)$ is the set of negative subharmonic functions on $X$ satisfying that $v-\log|w|$ has locally finite upper bound near $z_1$.
\par
As $\tilde{G}(z)=G_X(z,z_1)$ on $U$, we know $\tilde{G}(z)\in \Delta(z_1)$, but $\tilde{G}(z)>G_X(z,z_1)$ on $W$ which is contradict to the fact that $G_X(z,z_1)=\sup\limits_{v \in \Delta(z_1)} v(z)$. Hence for any $z\in X\backslash \overline{U}$, we have $G_{X}(z,z_1)\ge A$.

\end{proof}

\begin{Lemma}Let $X$ be an open Riemann Surface which admits a nontrivial Green function $G_{X}(z,w)$. Fix $z_1\in x$, there exists open subsets $V_1,U_1$ which satisfy $z_1\in V_1 \subset\subset U_1 \subset\subset X$ and a constant $N>0$ such that $\forall (z,w)\in (X\backslash U_1)\times V_1$, we have
$$G_X(z,w)\ge NG(z,z_1).$$
\label{lemma4.7}
\end{Lemma}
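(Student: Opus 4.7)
My plan is to prove Lemma 4.7 by exploiting the symmetry $G_X(z,w)=G_X(w,z)$ together with Harnack's inequality applied to the negative harmonic function $w\mapsto G_X(z,w)$.

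First I would fix coordinate neighborhoods $V_1, U_1$ of $z_1$ chosen so that $z_1\in V_1\subset\subset U_1\subset\subset X$ (these exist because $X$ is an open Riemann surface). The key observation is that for any $z\in X\setminus\overline{U}_1$, the function
\begin{equation*}
u_z(w):=-G_X(z,w)=-G_X(w,z)
\end{equation*}
is harmonic in $w\in U_1$ (the singularity of the Green function is at $w=z$, which lies outside $\overline{U}_1$) and strictly positive there (since $G_X$ is subharmonic, non-positive and not identically zero, it must be $<0$ on all of $X$ by the maximum principle).

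Next I would apply Harnack's inequality on the pair $\overline{V}_1\subset U_1$: there exists a constant $C=C(V_1,U_1)>0$, independent of the particular positive harmonic function, such that for every positive harmonic function $u$ on $U_1$,
\begin{equation*}
\sup_{w\in \overline V_1}u(w)\le C\inf_{w\in\overline V_1}u(w).
\end{equation*}
Applying this to $u=u_z$ and noting $z_1\in V_1$, so that $u_z(z_1)\geq\inf_{w\in V_1}u_z(w)$, yields
\begin{equation*}
-G_X(z,w)\le C\bigl(-G_X(z,z_1)\bigr)\quad\text{for all }w\in V_1,\ z\in X\setminus\overline{U}_1.
\end{equation*}
Multiplying through by $-1$ and reversing the inequality gives $G_X(z,w)\ge C\,G_X(z,z_1)$, so $N:=C$ does the job.

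There is no genuine obstacle here; the only care needed is to verify that $-G_X(z,\cdot)$ is indeed strictly positive on $U_1$ (which uses the nontriviality of $G_X$ and the maximum principle for subharmonic functions on $X$) and that $z$ ranges in a region separated from the singularity $w=z$, which is guaranteed by $z\in X\setminus\overline{U}_1$ and $w\in V_1\subset\subset U_1$. Shrinking $V_1$ or enlarging $U_1$ only enlarges the admissible constant $C$, so the choice of neighborhoods is flexible.
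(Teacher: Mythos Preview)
Your proof is correct. In fact, the paper explicitly remarks just before its own proof that ``Lemma~\ref{lemma4.7} can be deduced from the Harnack inequality of harmonic function,'' and then chooses to present a different argument ``for the convenience of readers.'' Your write-up is precisely the Harnack route the authors allude to: fix $z\in X\setminus U_1$, note that $w\mapsto -G_X(z,w)$ is positive harmonic on $U_1$ (pole outside, strict negativity of $G_X$), and apply the uniform Harnack constant on $\overline{V}_1\subset U_1$.

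The paper's alternative proof avoids Harnack and instead uses the extremal characterization of the Green function. Working in a local coordinate where $G_X(\cdot,z_1)=\log|w|$, the authors pick nested sublevel sets $V_1\subset\subset U_1$ and an intermediate annulus $W$, bound $G_X(z,w)$ from below on $\overline{W}\times\overline{V}_1$ by smoothness, choose $N$ so that $NG_X(\cdot,z_1)\le G_X(\cdot,w)$ on $W$, and then argue by contradiction: if the inequality failed at some $z_0\in X\setminus U_1$, the function $\max\{G_X(\cdot,w),\,NG_X(\cdot,z_1)\}$ (glued with $G_X(\cdot,w)$ on $\overline{U}_1$) would be a negative subharmonic competitor in the defining family for $G_X(\cdot,w)$ that strictly exceeds it near $z_0$, contradicting maximality. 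Your Harnack argument is shorter and more conceptual; the paper's argument is more self-contained (it relies only on the definition of $G_X$ as a supremum and the already-proved Lemma~\ref{lemma4.6}) and makes the dependence of $N$ on the geometry explicit. Both are valid; one small cosmetic point is that your argument works verbatim for $z\in X\setminus U_1$ (not just $X\setminus\overline{U}_1$), since $U_1$ is open and the pole $w=z$ still lies outside it.
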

Lemma \ref{lemma4.7} can be deduced from the Harnack inequality of harmonic function. For the convenience of readers, we give another proof as below.
\begin{proof}[Proof of Lemma \ref{lemma4.7}]
Let $(V_{z_1},w)$ be a small local coordinate neighborhood of $z_1$ such that $V_{z_1}\subset\subset X$, $w(z_1)=0$ and $G(z,z_1)|_{V_{z_1}}=\log|w|$.
\par
Let $V_1 \subset\subset U_1 \subset\subset V_{z_1}$ such that
$$\sup\limits_{z\in \overline{U_1}}G(z,z_1)=-t_0$$
and
$$\sup\limits_{z\in \overline{V_1}}G(z,z_1)=-t_0-1$$
for some $t_0\ge 0$.
Denote $W=\{z\in V_{z_1}|-t_0-\frac{1}{4}<G_X(z,z_1)<-t_0\}$. Then it is easy to see that $W\subset \subset V_{z_1}$ and $\overline{W}\cap\overline{V_1}=\emptyset$.
\par
Note that when $z\in \overline{W}, w\in \overline{V_1}$, $G_{X}(z,w)$ is smooth. Hence $G_{X}(z,w)$ has a lower bounded $B$ on $\overline{W}\times \overline{V_1}$.
Denote $a=\sup\limits_{z\in \overline{W}}G_X(z,z_1)$.
Let $N$ be big enough such that $B\ge N a$.
\par
Fix $w\in V_1$. We will show for any $z\in X\backslash U_1$,
$$G_X(z,w)\ge NG_X(z,z_1).$$
\par
If not, there exists $z_0\in X\backslash U_1$ such that $G_X(z_0,w)< NG_X(z_0,z_1)$. Note that both $G_X(z,w)$ and $G_X(z,z_1)$ are smooth on $X\backslash U_1$. Hence there exists a open neighborhood $H_{z_0}$ of $z_0$ such for any $z\in H_{z_0}$, we have
$$G_X(z,w)< NG_X(z,z_1).$$
Let
$$G_0(z) = \left\{ \begin{array}{rcl}
G_X(z,w) & \mbox{for}
& z \in \overline{U_1} \\ \max\{G_X(z,w),NG_X(z,z_1)\} & \mbox{for} & z\in X\backslash\overline{U_1}
\end{array}\right.
$$
\par
Then $G_0(z)$ is a nonnegative subharmonic function on $X$. Note that $w\in V_1\subset U_1$ and we have $G_0(z)=G_X(z,w) $ on $U_1$. But $G_0(z)>G_{X}(z,w)$ on $H_{z_0}$, this contradicts to the fact that $G_X(z,w)$ is the Green function of $X$ with pole at $w$.
\par
Hence $G_X(z,w)\ge NG_X(z,z_1)$ for any $z\in X\backslash U_1$ holds. As $w\in V_1$ is arbitrarily fixed, we know for any $(z,w) \in (X\backslash U_1)\times V_1$, we have
$$G_X(z,w)\ge NG_X(z,z_1).$$
\par
Lemma \ref{lemma4.7} is proved.
\end{proof}

Now we begin to prove Lemma \ref{lemma for cor1.9}.
\begin{proof}[Proof of Lemma \ref{lemma for cor1.9}]
Recall that by Siu's decomposition theorem, we have
$$\frac{i}{\pi}\partial\bar{\partial}\varphi=\sum\limits_{j\ge 1} \lambda_j[x_j]+R,\ \lambda_j>0$$
where $x_j\in X$ is a point, $\lambda_j=v(i\partial\bar{\partial}\varphi,x_j)$ is the Lelong number of $i\partial\bar{\partial}\varphi$ at $x_j$, R is a closed positive $(1,1)$ current with $v(R,x)=0$ for $x \in X$.
\par
Recall that both $E_1(i\partial\bar{\partial}\varphi)=\{x\in X|\  v(i\partial\bar{\partial}\varphi,x)\ge 1\}$ and $E=\{x\in X|\ v(i\partial\bar{\partial}\varphi,x)$ is a positive integer $\}$  are sets of isolated points.
As $(i\partial\bar{\partial}\varphi) |_{X\backslash E}\neq 0$, there are two cases:
\par
(1) There exists $ \lambda_{j_0}$ such that $\lambda_{j_0}-[ \lambda_{j_0}]>0$, where $[ \lambda_{j_0}]$ is the largest integer smaller than $\lambda_{j_0}$.
\par
(2) $R \neq 0$.
\par
For the case (1):
\par
Let $p=x_{j_0}$. Let $(U,z)$ be a relative compact coordinate neighborhood of $p$ in $X$ and by shrinking $U$, we assume that under the local homomorphism, $z(p)=o$ and $z(U)\cong B(0,2)$. We also assume that $U \cap (E_1(i\partial\bar{\partial}\varphi)\backslash \{x_{j_0}\} )=\emptyset$. Let $\theta$ be a smooth cut-off function on $X$ such that $0\le \theta \le 1-\frac{[\lambda_j]}{\lambda_j}$, $supp(\theta)\subset\subset V \subset\subset U$ and $\theta \equiv 1-\frac{[\lambda_j]}{\lambda_j}$ on $W$, where $z(W)=B(0,\frac{1}{4})$ and $z(V)=B(0,\frac{1}{2})$  under the local homomorphism. By shrinking $V$ and $U$ again, it follows from Lemma \ref{lemma4.7} that there exists $N>0$ such that for
any $(z,w) \in (X\backslash U)\times V$, we have
$$G_X(z,w)\ge NG_X(z,x_{j_0}).$$
\par
Let $T=\theta\cdot i\partial\bar{\partial}\varphi$, then $T$ is a closed positive $(1,1)$ current on $X$ with support $supp T\subset\subset V$.
\par
Let $\rho\in C^{\infty}(\mathbb{C})$ be a function with $supp \rho \subset B(0,1)$ and $\rho(z)$ depends only on $|z|$, $\rho \ge 0$ and $\int_{\mathbb{C}}\rho(z)d\lambda_z=1$. Let $\rho_{n}(z)=\frac{1}{n}\rho(\frac{z}{n})$, $\rho_n$ is a family of smoothing kernels.
\par
Let $T_n=T\ast \rho_n$ be the convolution of $T$. For any test function $h\in C^{\infty}(X)$, as $T$ has compact support and $supp T\subset\subset V \subset\subset U$, we can restrict $h$ to $U$ and denote $h|_{U}$ still by $h$ for simplicity. By the definition of convolution of currents, we have $\langle T_n(w),h(w)\rangle:=<T(w), h\ast \rho_n(w)>$. Note that $supp T\subset\subset V$, the convolution $h\ast \rho_n(w)$ is well defined for $w\in V$.
\par
We restrict $2G_X(z,w)$ to $U$ and denote $2G_X(z,w)|_{U}$ still by $2G_X(z,w)$ for simplicity. Let $u_n(z)=\langle T_n(w),2G_X(z,w)\rangle$. For fixed $z$ and fixed $n$, we will prove $\langle T_n(w),2G_X(z,w)\rangle=\langle T(w),2G_X(z,w)\ast\rho_n \rangle$.
\par
For fixed $z$, $G_X(z,w)$ is a subharmonic function on $X$. There exists a sequence of smooth subharmonic functions $G_m(w)$ decreasingly converge to $G_X(z,w)$ with respect to $m$. We still denote $G_m(w)|_{U}$ by $G_m(w)$. As $G_m(w)$ is smooth, we have
\begin{equation}\langle T_n(w),2G_m(w)\rangle=<T(w), 2G_m\ast \rho_n(w)>
\label{4.39}
\end{equation}
\par
For fixed $n$, $T_n(w)$ is a smooth positive $(1,1)-$form on $X$ with $supp T_n \subset\subset U$. As $G_m(w)$ decreasingly converge to $G_X(z,w)$ with respect to $m$, it follows from Levi's theorem that
\begin{equation}
\lim\limits_{m \to +\infty}\langle T_n(w),2G_m(w)\rangle=<T_n(w), 2G_X(z,w)>
\label{4.40}
\end{equation}
\par
For fixed n, as $G_m(w)$ decreasingly converge to $G_X(z,w)$ with respect to $m$ and $\rho_n$ has compact support, we know $(2G_m\ast \rho_n)(w)$ decreasingly converge to $(2G_X(z,w)\ast \rho_n)(w)$ with respect to $m$. Note that $T$ is a positive $(1,1)$ current on $X$ with compact support, hence $T$ is of order 0. It follows from Levi's theorem that
\begin{equation}
\lim\limits_{m \to +\infty}\langle T(w),(2G_m\ast \rho_n)(w)\rangle=<T(w), (2G_X(z,w)\ast \rho_n)(w))>
\label{4.41}
\end{equation}
\par
For fixed $z$ and fixed $n$, it follows from equality \eqref{4.39},\eqref{4.40} and \eqref{4.41} that we have
$\langle T_n(w),2G_X(z,w)\rangle=\langle T(w),(2G_X(z,w)\ast\rho_n)(w) \rangle$.
\par
As $2G_X(z,w)$ is subharmonic, then $2G_X(z,w)\ast\rho_n$ converges to $2G_X(z,w)$ decreasingly with respect to $n$. Note that $T$ is a positive $(1,1)$ current on $X$, hence $u_n(z)$ is decreasing with respect to $n$ and $u_n(z)<0$. Let $u(z)=\lim\limits_{n\to +\infty}u_n(z)$. We know $u(z)<0$.
\par
Now we show that both $\{u_n\}$ and $u$ is $L^1_{loc}$ function on $X$. Let $(K,z)$ be a relatively compact open neighborhood of some point $z'$ in $X$ such that under the local coordinate $z$, we have $z(z')=0$ and $K\cong B(0,1)$. Let $d\lambda_z$ be the lebesgue measure $B(0,1)$.

 Note that for fixed $w$, $2G_X(z,w)$ is smooth outside $z=w$ and $2G_X(z,w)=2\log|z-w|+2u(z)$ on a small neighborhood $B(w,\epsilon_0)$ of $w$, where $u(z)$ is a smooth function on $B(w,\epsilon_0)$. We also note that $\int_{z\in B(w,\epsilon_0)}|2\log|z-w||d\lambda_z<+\infty$. It follows from Fubini theorem that
\begin{equation}\nonumber
\begin{split}
\|u_n\|_{L^1(K)}&=\int_{z\in K}(\int_{w\in U} 2|G_X(z,w)|T_n(w))d\lambda_z\\
&=\int_{w\in U}(\int_{z\in K}2|G_X(z,w)d\lambda_z)T_n(w)
\end{split}
\end{equation}
\par
Let $H(w)=\int_{z\in K}2|G_X(z,w)|d\lambda_z$. If $\bar{U}\cap \bar{K}=\emptyset$, then $G_X(z,w)$ is smooth on $\bar{U}\times \bar{K}$, hence $H(w)$ is uniformly bounded on $w\in U$.
\par
When $\bar{U}\cap \bar{K}\neq\emptyset$, as $U,K$ is small, we assume that there exists an open subset $J\subset \subset X$, such that the set $K+U:=\{z+w|z\in K\  and\ w\in U\}$ is contained in $J$ and we have  $G_X(z,w)=\log|z-w|+u(z,w)$ for $(z,w)\in J\times J$. Here, when $w$ is fixed,
$u(z,w)$ is harmonic function on $z\in J$ and when $z$ is fixed, $u(z,w)$ is harmonic function on $w\in J$. We have
\begin{equation}\nonumber
\begin{split}
H(w)&=\int_{z\in K}2|G_X(z,w)|d\lambda_z\\
&=\int_{z\in K}-2G_X(z,w)d\lambda_z\\
&=\int_{z\in K}-2\log|z-w|-2u(z,w)d\lambda_z\\
&=I_1(w)+I_2(w)
\end{split}
\end{equation}
where $I_1(w)=\int_{z\in K}-2\log|z-w|d\lambda_z$ and $I_2(w)=\int_{z\in K}-2u(z,w)d\lambda_z$.
For $I_1(w)$, we have
\begin{equation}\nonumber
\begin{split}
I_1(w)&=\int_{z\in K}-2\log|z-w|d\lambda_z\\
&=\int_{z\in K+\{w\}}-2\log|z|d\lambda_z\\
&\ge \int_{z\in J}-2\log|z|d\lambda_z
\end{split}
\end{equation}
where the set $K+\{w\}:=\{z+w| z\in K\}$. Note that $\log|z|$ is integrable near $z=0$ and $J$ is relative compact in $X$, hence there exists a constant $M_1>0$ such that $I_1(w)\le M_1$ for any $w\in U$.
\par
For $I_2(w)$, by the mean value equality of harmonic function, we have
\begin{equation}\nonumber
\begin{split}
I_2(w)&=\int_{z\in B(0,1)}-2u(z,w)d\lambda_z\\
&=-2\pi u(z',w)
\end{split}
\end{equation}
As $u(z',w)$ is harmonic on $w\in U$ and $U$ is relatively compact in $X$, we know $I_2(w)$ is bounded on $\bar{U}$.
\par
The above discussion shows that the function $H(w)=\int_{z\in K}2|G_X(z,w)|d\lambda_z=I_1+I_2$ is bounded by some constant $N$ on $U$. Let $\chi $ be a $C^{\infty}_c(X)$ such that $0\le\chi\le1$ and $\chi|_{U}\equiv 1$. Then we have
\begin{equation}\nonumber
\begin{split}
\|u_n\|_{L^1(K)}&=\int_{w\in U}(\int_{z\in K}2|G_X(z,w)d\lambda_z)T_n(w)\\
&=\int_{w\in U}H(w)T_n(w)\\
&\le N \int_{w\in U}T_n(w)\\
&\le N \langle T_n(w),\chi \rangle\\
&=N \langle T(w),\chi*\rho_n \rangle\\
&\le N \|T\|<+\infty
\end{split}
\end{equation}
Hence, we know $\{u_n\} \in L^1_{loc}(X)$ and for any relative compact sunset $K \subset X$, $\|u_n\|_{L^1(K)}$ in uniformly bounded. By Fatou lemma, we have
$$\int_{z\in K}|u|d\lambda_z\le \liminf\limits_{n \to +\infty} \int_{z\in K}|u_n|d\lambda_z<+\infty.$$
This means $u \in L^1_{loc}(X)$.
\par
Now we consider $i\partial\bar{\partial}u(z)$. Let $g\in C^{\infty}_c(X)$ be a test function. We have
\begin{equation}
\begin{split}
\langle i\partial\bar{\partial}u,g\rangle
&=\langle u(z),i\partial\bar{\partial}g(z)\rangle\\
&=\lim_{n\to+\infty}\langle u_n(z),i\partial\bar{\partial}g(z)\rangle\\
&=\lim_{n\to+\infty}\langle \langle T_n(w),2G_X(z,w)\rangle,i\partial\bar{\partial}g(z)\rangle\\
&=\lim_{n\to+\infty}\langle T_n(w),\langle 2G_X(z,w),i\partial\bar{\partial}g(z)\rangle\rangle\\
&=\lim_{n\to+\infty}\langle T_n(w),g(w)\rangle\\
&=\langle T,g\rangle\\
\label{4.42}
\end{split}
\end{equation}
\par
The forth equality holds because of Fubini Theorem.
Now we explain the second equality. Given a point $q\in X$, under the local coordinate $(U_q,z_q)$, we have $i\partial\bar{\partial}g(z)=
i f(z)dz\wedge d \bar{z}$, where $f(z)$ is a smooth real function on $U_q$ with compact support. Let $(i\partial\bar{\partial}g(z))_{+}= f(z)_+ idz\wedge d \bar{z}$, $(i\partial\bar{\partial}g(z))_{-}= f(z)_{-} idz\wedge d \bar{z}$, where $f(z)_+=\max(f(z),0)$ and $f(z)_-=\max(-f(z),0)$. Then it follows from Levi's Theorem that we have
$$\lim\limits_{n \to +\infty} \int_{U_q}u_n(z)(i\partial\bar{\partial}g(z))_{+}= \int_{U_q} u(z)(i\partial\bar{\partial}g(z))_{+}$$
and
$$\lim\limits_{n \to +\infty} \int_{U_q}u_n(z)(i\partial\bar{\partial}g(z))_{-}= \int_{U_q} u(z)(i\partial\bar{\partial}g(z))_{-}$$
Since $i\partial\bar{\partial}g(z)=(i\partial\bar{\partial}g(z))_{+}-(i\partial\bar{\partial}g(z))_{-}$
, hence we have
\begin{equation}
\lim\limits_{n \to +\infty} \int_{U_q}u_n(z)i\partial\bar{\partial}g(z)= \int_{U_q} u(z)i\partial\bar{\partial}g(z)
\label{4.43}
\end{equation}
As $g(z)$ has compact support, there exists finite $\{U_{q_i}\}$ such that $suppg\subset \mathop{\cup}\limits_{i} U_{q_i}$ and on each $U_{q_i}$, equality \eqref{4.43} holds. Hence we know that on the whole $X$, we have
$$\lim\limits_{n \to +\infty} \int_X u_n(z)i\partial\bar{\partial}g(z)= \int_X u(z)i\partial\bar{\partial}g(z)$$
which implies
$$\langle u(z),i\partial\bar{\partial}g(z)\rangle
=\lim_{n\to+\infty}\langle u_n(z),i\partial\bar{\partial}g(z)\rangle$$
i.e. the second equality holds. Then it follows from \eqref{4.42} that we know $i\partial\bar{\partial}u=T=\theta i\partial\bar{\partial}\varphi$.
\par
For fixed $t>0$, as $k\ge 2$, the set $\{z:-t<kG_X(z,z_0)<0\}\subset \{z:-t<2G_X(z,z_0)<0\}$.
Let $t>0$ be small enough such that the set $\{z:-t<2G_X(z,z_0)<0\}\cap(\overline{U}\cup\{z_0\})=\emptyset$. Let $W\subset\subset X$ be an relatively compact open set of $X$ which satisfies $\overline{U}\cup\{z_0\} \subset W$ and $W\cap \{-t<2G_X(z,z_0)<0\}=\emptyset$.
\par
Then for every fixed $z\in\{-t<kG_X(z,z_0)<0\}$, $2G_X(z,w)$ is harmonic function on $W$ with respect to $w$.
\par
By the Harnack inequality of harmonic function, there exists a $M>0$ such that
$$\sup\limits_{w\in \overline{W}}(-2G_X(z,w))\le M\inf\limits_{w\in \overline{W}}(-2G_X(z,w))$$
As $0<-2G_X(z,z_0)<t$, we have
$$Mt > -2G_X(z,z_0)\ge M\inf\limits_{w\in \overline{W}}(-2G_X(z,w))\ge\sup\limits_{w\in \overline{W}}(-2G_X(z,w))\ge 0$$
This means when $t\to 0$, the function $2G_X(z,w)$ which defined on $\{z:-t<kG_X(z,z_0)<0\}\times U$ uniformly goes to $0$.
\par
Note that when $(z,w)\in \{z:-t<kG_X(z,z_0)<0\}\times U$ ($t$ big enough), $2G_X(z,w)$ is harmonic function. Then
$$u(z)=\lim\limits_{n \to +\infty}u_n(z)=\lim\limits_{n \to +\infty}\langle  T(w),2G_X(z,w)\ast \rho_n\rangle=\langle T(w),2G_X(z,w)\rangle.$$
The third equality holds because of the mean-value equality for harmonic function. Hence when $z$ satisfies $kG_X(z,z_0)\to 0$, we have $u(z)\to 0$.
\par
Now let $\tilde{\varphi}=\varphi-u$, we know $\tilde{\varphi}>\varphi$ and when $z$ satisfies $kG_X(z,z_0)\to 0$, we have $\tilde{\varphi}(z)\to \varphi(z)$. Note that $i\partial\bar{\partial}\tilde{\varphi}=i(1-\theta)\partial\bar{\partial}\varphi\ge 0$ on $X$. Hence $\tilde{\varphi} \in PSH(X)$.
\par
Note that $\theta$ is a smooth function and $0\le \theta \le 1-\frac{[\lambda_j]}{\lambda_j}$, we have $$[\lambda_{j_0}]\le v(i\partial\bar{\partial}\tilde{\varphi},x_{j_0})<\lambda_{j_0}.$$
\par
For any $x$ satisfies $0\le v(i\partial\bar{\partial}\varphi,x)<1$, we have
$$0\le v(i\partial\bar{\partial}\tilde{\varphi},x)\le v(i\partial\bar{\partial}\varphi,x)<1.$$
\par
For any $x$ satisfies $v(i\partial\bar{\partial}\varphi,x)\ge 1$, as $U \cap (E_1(i\partial\bar{\partial}\varphi)\backslash \{x_{j_0}\} )=\emptyset$ and
$supp \theta\subset \subset U$, we have
$$v(i\partial\bar{\partial}\tilde{\varphi},x)= v(i\partial\bar{\partial}\varphi,x).$$
Hence by the classification of multiplier ideal sheaves in dimensional one case, we know for any $x \in X$, we have $\mathcal{I}(\tilde{\varphi})_x=\mathcal{I}(\varphi)_x$.
\par
Next we prove $u(z)$ has lower bound $-A$ for some $A>0$ on $X\backslash U$. It follows from Lemma \ref{lemma4.7} that when $z\in X\backslash U$ and $w\in V$, we have $2G_X(z,w)\ge 2NG_X(z,x_{j_0})$. By Lemma \ref{lemma4.6}, we know $G_X(z,x_{j_0})\ge -A_0$ (where $A_0>0$ is a constant) for $z\in X\backslash \bar{U}$.
\par
Note that $G_{X}(z,w)$ is harmonic function on $(z,w)\in (X\backslash U)\times V$. For fixed $z\in X \backslash U$, we have
\begin{equation}\nonumber
\begin{split}
u(z)=&\langle T(w),2G_X(z,w)\rangle\\
=&\langle \theta(w)i\partial\bar{\partial}\varphi(w),2G_X(z,w)\rangle\\
\ge&\langle \theta(w)i\partial\bar{\partial}\varphi(w),-2NA_0\rangle,
\end{split}
\end{equation}
here $-2NA_0$ is actually a constant function $f(w)\equiv -2NA_0$ defined on $V$. Note that $supp \theta \subset \subset V$, hence the inequality $``\ge"$ holds. Let $A=\langle \theta(w)i\partial\bar{\partial}\varphi(w),2NA_0\rangle$, we know on $X\backslash U$, $u(z)>-A$.
\par
As $\varphi-\tilde{\varphi}=u(z)$, we know that there exists a relatively compact open subset $U\subset \subset X$ such that $\varphi-\tilde{\varphi}$ has lower bound $-A$ ($A>0$ is a constant) for any $z \in X\backslash U$.
\par
Then in case (1), we have a function $\tilde{\varphi}$ satisfies the conditions in the Lemma \ref{lemma for cor1.9}.
\par
For the case (2):
\par
As $R\neq 0$, there must be a point $p\in suppR\backslash E_1(i\partial\bar{\partial}\varphi)$. If not, we must have $supp R \subset E_1(i\partial\bar{\partial}\varphi)$. As $R$ is a closed positive $(1,1)$ current, $R$ is of order 0.
 Note that $E_1(i\partial\bar{\partial}\varphi)$ is an analytic subset of $X$ with irreducible components $\{x_j\}_{j=1,2,\ldots}$, then it follows from Lemma \ref{lemma4.5} that $R=\sum_{j\ge 1} a_j [x_j]$, where $a_j=v(R,x_j)$ is the Lelong number of $R$ at $x_j$. However by Siu's decomposition theorem, we know $v(R,x)=0$, for any $x \in X$, which implies that all $a_j=0$ and then $R=0$. This contradicts to the fact that $R\neq 0$.
\par
Let $p \in Supp R \backslash E_1(i\partial\bar{\partial}\varphi)$. Let $(U_2,z)$ be a relative compact coordinate neighborhood of $p$ in $X$ and by shrinking $U$, we assume that under the local homomorphism, $z(p)=o$ and $z(U_2)\cong B(0,2)$. We also assume that $U_2 \cap (E_1(i\partial\bar{\partial}\varphi) )=\emptyset$. Let $\theta_2$ be a smooth cut-off function on $X$ such that $0\le \theta \le 1$, $supp(\theta_2)\subset\subset V_2 \subset\subset U_2$ and $\theta \equiv 1$ on $W_2$, where $z(W_2)=B(0,\frac{1}{4})$ and $z(V_2)=B(0,\frac{1}{2})$  under the local homomorphism. By shrinking $V_2$ and $U_2$ again, it follows from Lemma \ref{lemma4.7} that there exists $N_2>0$ such that for
any $(z,w) \in (X\backslash U_2)\times V_2$, we have
$$G_X(z,w)\ge N_2G_X(z,p).$$
\par
Let $T_2=\theta_2\cdot i\partial\bar{\partial}\varphi$. We can do the same thing as we did in the case (1) and get a function $u_2(z)$ such that $u_2(z)<0$, $i\partial\bar{\partial}u_2(z)=\theta_2\cdot i\partial\bar{\partial}\varphi$ and when $z$ satisfies $kG_X(z,z_0)\to 0$, we have $u_2(z)\to 0$. Especially, $u_2(z)$ has lower bound $-A_2$ for some $A_2>0$ on $X\backslash U_2$.

\par
Let $\tilde{\varphi}_2(z)=\varphi-u_2(z)$. We know $\tilde{\varphi}_2>\varphi$. When $z$ satisfies $kG_X(z,z_0)\to 0$, we have $\tilde{\varphi}_2(z)\to \varphi(z)$.
As $\varphi_2(z)-\tilde{\varphi}_2(z)=u_2(z)$, we know that there exists a relatively compact open subset $U_2\subset \subset X$ such that $\varphi_2-\tilde{\varphi}_2$ has lower bound $-A_2$ ($A_2>0$ is a constant) for any $z \in X\backslash U_2$.
Note that $i\partial\bar{\partial}(\tilde{\varphi}_2)=i(1-\theta_2)\partial\bar{\partial}\varphi\ge 0$ on $X$. Hence $\tilde{\varphi}_2 \in PSH(X)$.

\par
Note that $\theta_2$ is a compact smooth function with $supp(\theta_2)\subset\subset U_2$ and $0\le \theta_2 \le 1$. It is easy to see that for any $x$ satisfies $0\le v(i\partial\bar{\partial}\varphi,x)<1$, we have
$$0\le v(i\partial\bar{\partial}\tilde{\varphi},x)\le v(i\partial\bar{\partial}\varphi,x)<1.$$
\par
For any $x$ satisfies $v(i\partial\bar{\partial}\varphi,x)\ge 1$, as $U_2 \cap (E_1(i\partial\bar{\partial}\varphi))=\emptyset$ and $supp(\theta_2)\subset \subset U_2$, we have
$$v(i\partial\bar{\partial}\tilde{\varphi},x)= v(i\partial\bar{\partial}\varphi,x).$$
\par
Hence by the classification of multiplier ideal sheaves in dimensional one case, we know for any $x \in X$, we have $\mathcal{I}(\tilde{\varphi}_2)_x=\mathcal{I}(\varphi)_x$.
\par
Then in case (2), we have a function $\tilde{\varphi}_2$ satisfies the conditions in the Lemma \ref{lemma for cor1.9}.

\par
Lemma \ref{lemma for cor1.9} is proved.
\end{proof}

\subsection {Appendix: a property of multiplicative function on Open Riemann surface.}
\label{Appendix4.3}
Let $X$ be a open Riemann surface. We recall the following construction in Section 1.
\par
Let $p:\Delta\to X$ be the universal covering from unit disc $\Delta$ to $X$.
We call the holomorphic function $f$ (resp. holomorphic $(1,0)$ form $F$) on $\Delta$ is a multiplicative function (resp. multiplicative differential (Prym differential)) if there is a character $\chi$, where $\chi\in Hom(\pi_1(X),C^*)$ and $|\chi|=1$, such that $g^*f=\chi(g)f$ (resp. $g^*F=\chi(g)F$) for every $g\in \pi_1(X)$ which naturally acts on the universal covering of $X$. Denote the set of such kinds of $f$ (resp. F) by $\mathcal{O}^{\chi}(X)$ (resp. $\Gamma^{\chi}(X)$).
\par
As $p$ is a universal covering, then for any harmonic function $h$ on $X$, there exists a $\chi_h$ and a multiplicative function $f_h \in \mathcal{O}^{\chi_h}(X)$, such that $|f_h|=p^*e^{h}$.
Let $s$ be a holomorphic function on $X$ and $s$ has no zero points on $X$. We know $\log|s|$ is a harmonic function on $X$.
\par
In this appendix, we recall the following well-known property.
\begin{Lemma}\label{Aproposition4.1}
$\chi_h=\chi_{h+\log|s|}$.
\end{Lemma}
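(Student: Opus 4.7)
The plan is to exhibit an explicit multiplicative function representing $h+\log|s|$, built out of the multiplicative function representing $h$ and the pullback of $s$, and then read off the character directly. The key observation is that since $s$ is single-valued on $X$, its pullback $p^{*}s$ is invariant under the deck group.

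First I would verify that $p^{*}s$ is a nowhere-vanishing holomorphic function on $\Delta$ with $g^{*}(p^{*}s)=p^{*}s$ for every $g\in\pi_{1}(X)$. The first assertion is immediate because $s\in\mathcal{O}(X)$ has no zeros, and the second follows from $p\circ g=p$, which is the defining property of a deck transformation of the universal covering. In particular $p^{*}s\in\mathcal{O}^{\chi_{0}}(\Delta)$ with trivial character $\chi_{0}\equiv 1$.

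Next I would set $F:=f_{h}\cdot p^{*}s$, a holomorphic function on $\Delta$, and compute
\begin{equation*}
|F|=|f_{h}|\cdot|p^{*}s|=p^{*}e^{h}\cdot p^{*}|s|=p^{*}e^{h+\log|s|}.
\end{equation*}
Thus $F$ is a legitimate choice for $f_{h+\log|s|}$. For any $g\in\pi_{1}(X)$,
\begin{equation*}
g^{*}F=(g^{*}f_{h})\cdot(g^{*}p^{*}s)=\chi_{h}(g)\,f_{h}\cdot p^{*}s=\chi_{h}(g)\,F,
\end{equation*}
so $F\in\mathcal{O}^{\chi_{h}}(\Delta)$. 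Hence $\chi_{h+\log|s|}=\chi_{h}$.

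The one subtlety to address is well-definedness: the multiplicative function $f_{h+\log|s|}$ is a priori only determined up to multiplication by a unimodular constant, but any two such choices differ by a constant factor which commutes with all deck transformations, so they produce the same character. Consequently the character is a true invariant of the harmonic function, and the identification above gives the claimed equality. There is no real obstacle here; the content of the lemma is precisely the remark that multiplying a multiplicative function by a $\pi_{1}(X)$-invariant function does not change its character.
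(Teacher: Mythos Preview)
Your proof is correct and follows essentially the same approach as the paper: both take $F=f_h\cdot p^{*}s$ as a multiplicative function for $h+\log|s|$ and use the deck-invariance of $p^{*}s$ (the paper phrases this as ``$p^{*}s$ is fiber-constant'') to conclude that the character is unchanged. The paper additionally spends most of its proof recalling the explicit construction of $f_h$ and verifying that $\chi_h$ is a well-defined homomorphism, but for the lemma itself your argument captures exactly the relevant point, and your remark on well-definedness up to a unimodular constant is a nice touch that the paper leaves implicit.
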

\begin{proof} We firstly recall the construction of $f_h$ and $\chi_h$.
\par
As $h$ is harmonic on $X$, then $p^{*}h$ is harmonic on $\Delta$. Since $\Delta$ is simple connected, there exists $f\in \mathcal{O}_{\Delta}$ such that $f=p^*h+i v$. Then $e^f$ is holomorphic on $\Delta$ and $|e^f|=|e^{p^* h +iv}|=p^*e^h$. We denote $f_h=e^f$.
\par
Let $\Gamma$ be a subgroup of $Aut(\Delta)$ such that $X=\Delta/\Gamma$. Then by the theorem of covering spaces, we know $\pi_1(X)\cong \Gamma$. Hence for any $g\in \pi_1(X)$, $g$ naturally acts on $\Delta$ and for any $z\in X$, $p^{-1}(z)$ is invariant under the act of $g$.
Fix $z_1\in X$, we denote $p^{-1}(z_1)=\{x_0,x_1,\cdots\}$. By the theorem of covering spaces, we know there is a bijection between $p^{-1}(z_1)$ and $\pi_1(X,z_1)$.
\par
For any $g_i\in\pi_1(X)$, we assume that $g(x_0)=x_i$. Then we define
\begin{equation}\chi_h(g_i)=\frac{e^f(x_i)}{e^f(x_0)}
=\frac{e^{p^*u(x_i)+iv(x_i)}}{e^{p^*u(x_0)+iv(x_0)}}
=\frac{e^{u(z_1)+iv(x_i)}}{e^{u(z_1)+iv(x_0)}}
=\frac{e^{iv(x_i)}}{e^{iv(x_0)}}.
 \label{A4.44}
\end{equation}
Hence $|\chi_h(g_i)|=1$. Now we prove that $e^f\in\mathcal{O}^{\chi_{h}}(X)$, i.e., for any $g_i\in \pi_1(X)$, we have $g_i^* e^f=\chi(g_i)e^f$.
\par
Given $y_0\in \Delta$, denote $w_1=p(y_0)\in X$. Denote $p^{-1}(w_1)=\{y_0,y_1,y_2,\cdots\}$. We know there exists a bijection between $\pi_1(X,w_1)$ and $p^{-1}(w_1)$.
\par
As the fundamental group $\pi_1(X)$ is base point free, we have $\pi_1(X,w_1)\cong\pi_1(X,z_1)\cong \Gamma$. Hence we have a bijection
$$\Phi:p^{-1}(z_1)\to p^{-1}(w_1)$$
$$ \ \ \ \ x_i\to y_i$$
which satisfies $g\circ \Phi=\Phi\circ g$.
\par
For any $g\in \pi_1(X)$, we assume that $g(y_0)=y_i$, then $g(x_0)=x_i$.
Then we have
\begin{equation}
  g^*e^{f(y_0)}=e^{f\circ g(y_0)}=e^{f(y_i)}.
  \label{A4.45}
\end{equation}
and by the definition of $\chi_h$ (see formula \eqref{A4.44}),
\begin{equation}
  \chi_h(g)e^{f(y_0)}=\frac{e^{f(x_i)}}{e^{f(x_0)}}e^{f(y_i)}.
  \label{A4.46}
\end{equation}
To prove \eqref{A4.45} equals to \eqref{A4.46}, as $f=p^u +iv$, it sufficient to prove that
\begin{equation}
  v(x_i)-v(x_0)=v(y_i)-v(y_0).
  \label{A4.47}
\end{equation}
As $p^* u$ is harmonic on $\Delta$, let $w=\frac{\partial p^* u}{\partial x}dy-
\frac{\partial p^* u}{\partial y}dx$ on $\Delta$, then $dw=0$. Let $\tilde{L}_{qp}$ be any path from $q$ to $p$. We can define $v(x)$ as below.
$$v(p)=\int_{\tilde{L}_{0p}}w.$$
where $0$ is the origin in $\Delta$.
\par
Then we have $v(x_i)-v(x_0)=\int_{\tilde{L}_{x_0x_i}}w$ and $v(y_i)-v(y_0)=\int_{\tilde{L}_{y_0y_i}}w$. Denote $L_{z_1}=p_* \tilde{L}_{x_0x_i}$ and $L_{w_1}=
p_* \tilde{L}_{y_0y_i}$. Then by the isomorphic between $\pi_1(X,z_1)\cong\pi_1(X,w_1)\cong\Gamma$ and $g\circ \Phi=\Phi\circ g$, we know the path $L_{z_1}$ and $L_{w_1}$ are homotopic. Hence
\begin{equation}
\begin{split}
 v(x_i)-v(x_0)&=\int_{\tilde{L}_{x_0x_i}}w\\
 &=\int_{p_*\tilde{L}_{x_0x_i}}p_*w\\
 &=\int_{L_{Z_1}}p_*w\\
 &=\int_{L_{W_1}}p_*w\\
 &=\int_{p_*\tilde{L}_{y_0y_i}}p_*w\\
 &=\int_{\tilde{L}_{y_0y_i}}w\\
 &=v(y_i)-v(y_0)
\end{split}
\end{equation}
Hence we know given any $y_0\in\Delta$, for any $g\in \pi_1(X)$, we have
$$\chi_h(y_0)e^f=g^*(e^f).$$
We also need to show that $\chi$ is a homomorphism from $\pi_1(X)$ to $ \mathbb{C}^*$.
\par
For any $g_1,g_2 \in \pi_1(X)$. We assume that $g_1(x_0)=x_1$ and $g_2(x_1)=x_2$, then we have
$$\chi(g_1)\chi(g_2)
=\frac{e^{iv(x_1)}}{e^{iv(x_0)}}
\frac{e^{iv(x_2)}}{e^{iv(x_1)}}
=\frac{e^{iv(x_2)}}{e^{iv(x_0)}}.$$
and note that $g_1\circ g_2(x_0)=x_2$,
$$\chi(g_1g_2)=\frac{e^{iv(x_2)}}{e^{iv(x_0)}}.$$
Hence $\chi(g_1)\chi(g_2)=\chi(g_1g_2)$, $\chi$ is a homomorphism from $\pi_1(X) $ to $\mathbb{C}^*$.
\par
Now we can prove $\chi_h=\chi_{h+\log|s|}$. Denote $h_2=h+\log|s|$.
\par
Note that we have already found a holomorphic function $f_h$ on $\Delta$ which satisfies $|f_h|=p^*e^h$. Then for any $g\in \pi_1(X,z_1)$, we define $\chi_h(g)=\frac{f_h(x_i)}{f_h(x_0)}$, where $x_i,x_0\in p^{-1}(z_1)$ and $g(x_0)=x_i$.
\par
It is easy to see that $f_hp^*s$ is a holomorphic function on $\Delta$ which satisfies $|f_h\cdot p^*s|=p^*e^h|p^*s|=p^*e^{h+\log |s|}$. Then similarly as above, for any $g\in \pi_1(X,z_1)$, we define $$\chi_{h_2}(g)=\frac{f_h(x_i)\cdot p^*s(x_i)}{f_h(x_i)\cdot p^*s(x_0)}.$$ Note that $p^*s$ is fiber-constant, we know $p^*s(x_1)=p^*s(x_0)$ for any $x_i,x_0\in p^{-1}(z_1)$. Hence
$$\chi_{h_2}(g)=\frac{f_h(x_i)\cdot p^*s(x_i)}{f_h(x_i)\cdot p^*s(x_0)}=\frac{f_h(x_i)}{f_h(x_0)}=\chi_h(g).$$
As $g\in \pi_1(X,z_1)$ is arbitrary chosen, we know $\chi_h=\chi_{h+\log|s|}$.
\par
Lemma \ref{Aproposition4.1} is proved.
\end{proof}


\vspace{.1in} {\em Acknowledgements}.
The authors would like to thank Professor Wlodzimierz Zwonek for his comment on our paper.
\par
The first author was supported by NSFC-11825101, NSFC-11522101 and NSFC-11431013.

\bibliographystyle{references}
\bibliography{xbib}

\end{document}